\numberwithin{equation}{section}
\providecommand{\abs}[1]{\left\vert#1\right\vert}
\providecommand{\norm}[1]{\left\Vert#1\right\Vert}
\providecommand{\Rn}[1]{\mathbb{R}^{#1}}
\providecommand{\br}[1]{\langle #1 \rangle}
\providecommand{\ns}[1]{\norm{#1}^2}
\providecommand{\bs}[1]{\left[#1\right]_\ell^2}
\providecommand{\ip}[2]{\left(#1,#2\right)}
\providecommand{\jump}[1]{\left\llbracket #1 \right\rrbracket }
\def\nab{\nabla}
\def\dt{\partial_t}
\def\hal{\frac{1}{2}}
\def\ls{\lesssim}
\def\p{\partial}
\def\sg{\mathbb{D}}
\def\da{\Delta_{\mathcal{A}}}
\def\naba{\nab_{\mathcal{A}}}
\def\diva{\diverge_{\mathcal{A}}}
\def\Sa{S_{\mathcal{A}}}
\def\H1{{_0}H^1(\Omega)}
\def\sd{\mathcal{D}}
\def\se{\mathcal{E}}
\def\sdb{\mathcal{D}_{\shortparallel}}
\def\sdbb{\bar{\mathcal{D}}_{\shortparallel}}
\def\seb{\mathcal{E}_{\shortparallel}}
\providecommand{\abs}[1]{\left\vert#1\right\vert}
\providecommand{\norm}[1]{\left\Vert#1\right\Vert}
\providecommand{\ns}[1]{\norm{#1}^2}
\providecommand{\Rn}[1]{\mathbb{R}^{#1}}
\providecommand{\jump}[1]{\left\llbracket #1 \right\rrbracket }
\providecommand{\br}[1]{\langle #1 \rangle}
\providecommand{\pp}[1]{\left( \mspace{-2.0mu} \left( #1 \right) \mspace{-2.0mu} \right)}
\providecommand{\sd}[1]{\mathcal{D}_{#1}}
\providecommand{\se}[1]{\mathcal{E}_{#1}}
\providecommand{\sdb}[1]{\bar{\mathcal{D}}_{#1}}
\providecommand{\seb}[1]{\bar{\mathcal{E}}_{#1}}
\def\hal{\frac{1}{2}}
\def\ls{\lesssim}
\def\jg{\jump{\gamma}}
\def\nab{\nabla}
\def\dt{\partial_t}
\def\p{\partial}
\def\da{\Delta_{\mathcal{A}}}
\def\naba{\nab_{\mathcal{A}}}
\def\diva{\diverge_{\mathcal{A}}}
\def\Sa{S_{\mathcal{A}}}
\def\sg{\mathbb{D}}
\def\sga{\mathbb{D}_{\mathcal{A}}}
\def\Hz{{_0}H^1(\Omega)}
\def\Hzz{{_0^0}H^1(\Omega)}
\def\Lz{\mathring{H}^0(\Omega)}
\def\SH0{\mathcal{H}^0(\Omega)}
\def\SHz{{_0}\mathcal{H}^1(\Omega)}
\def\SHzz{{_0^0}\mathcal{H}^1(\Omega)}
\def\lz{ \overset{\circ}{H}{}^0}
\def\oH{\mathring{H}}
\def\oW{\mathring{W}}
\def\A{\mathcal{A}}
\def\D{\mathcal{D}}
\def\E{\mathcal{E}}
\def\G{\mathcal{G}}
\def\H{\mathcal{H}}
\def\L{\mathcal{L}}
\def\N{\mathcal{N}}
\def\P{\mathcal{P}}
\def\Q{\mathcal{Q}}
\def\R{\mathcal{R}}
\def\V{\mathcal{V}}
\def\W{\mathcal{W}}
\def\af{\mathfrak{A}}
\def\sv{\mathscr{V}}
\def\sw{\mathscr{W}}
\def\swh{\hat{\mathscr{W}}}
\def\st{\;\vert\;}
\def\XXint#1#2#3{{\setbox0=\hbox{$#1{#2#3}{\int}$ }
\vcenter{\hbox{$#2#3$ }}\kern-.6\wd0}}
\DeclareMathOperator{\sgn}{sgn}
\DeclareMathOperator{\diverge}{div}
\DeclareMathOperator{\dist}{dist}
\DeclareMathOperator{\ran}{Ran}
\newtheorem{lem}{Lemma}[section]
\newtheorem{cor}[lem]{Corollary}
\newtheorem{prop}[lem]{Proposition}
\newtheorem{thm}[lem]{Theorem}
\newtheorem{remark}[lem]{Remark}
\newtheorem{dfn}[lem]{Definition}
\title[Stability of contact lines in fluids]{Stability of contact lines in fluids: 2D Stokes Flow}
\author{Yan Guo}
\address{
Division of Applied Mathematics\\
Brown University \\
182 George St., Providence, RI 02912, USA
}
\email[Y. Guo]{guoy@dam.brown.edu}
\thanks{Y. Guo was supported in part by NSFC grant 10828103 and NSF grant DMS-grant 1209437.}
\author{Ian Tice}
\address{
Department of Mathematical Sciences\\
Carnegie Mellon University\\
Pittsburgh, PA 15213, USA
}
\email[I. Tice]{iantice@andrew.cmu.edu}
\begin{document}

\begin{abstract}
In an effort to study the stability of contact lines in fluids, we consider the dynamics of an incompressible viscous Stokes fluid evolving in a two-dimensional open-top vessel under the influence of gravity.  This is a free boundary problem: the interface between the fluid in the vessel and the air above (modeled by a trivial fluid) is free to move and experiences capillary forces.  The three-phase interface where the fluid, air, and solid vessel wall meet is known as a contact point, and the angle formed between the free interface and the vessel is called the contact angle.  We consider a model of this problem that allows for fully dynamic contact points and angles.  We develop a scheme of a priori estimates for the model, which then allow us to show that for initial data sufficiently close to equilibrium, the model admits global solutions that decay to equilibrium exponentially fast.

\end{abstract}

\maketitle

\section{Introduction }

\subsection{Formulation in Eulerian coordinates }

Consider a viscous incompressible fluid evolving in a two-dimensional open-top vessel.  We model the vessel as a bounded, connected, open set $\V \subseteq \Rn{2}$ subject to the following two assumptions. First, we assume that  
\begin{equation}
\V_{top} :=  \V \cap \{y \in \Rn{2} \st y_2 \ge 0 \} = \{y \in \Rn{2} \st -\ell < y_1 < \ell, 0 \le y_2 < L \}
\end{equation}
for some $\ell, L >0$.  This means that the ``top'' part of the vessel, $\V_{top}$, consists of a rectangular channel.  Second, we assume that $\p \V$ is $C^2$ away from the points $(\pm \ell, L)$.  We will write 
\begin{equation}
\V_{btm} :=  \V \cap \{y \in \Rn{2} \st y_2 \le 0 \} 
\end{equation}
for the ``bottom'' part of the vessel.  See Figure \ref{fig:vessel} for a cartoon of two possible vessels.

We will assume that the fluid fills the entirety of the bottom part of the vessel and partially fills the top.  More precisely, we assume that the fluid occupies the moving domain
\begin{equation}
\Omega(t) =  \V_{btm} \cup  \{ y \in \Rn{2} \st -\ell < y_1 < \ell, 0 < y_2 < \zeta(y_1,t)\},
\end{equation}
where the free surface of the fluid is given as the graph of a function $\zeta:[-\ell,\ell] \times \Rn{+} \to \Rn{}$ such that $0 < \zeta(\pm \ell,t) \le L$ for all $t \in \Rn{+}$, which guarantees that the fluid does not ``spill'' out of the vessel top.  We write $\Sigma(t) = \{(y_1,\zeta(y_1,t)) \;\vert \; \abs{y_1} < \ell\}$ for the free surface and $\Sigma_s(t) = \p \Omega(t) \backslash \Sigma(t)$ for the interface between the fluid and the fixed solid walls of the vessel.   See Figure \ref{fig:omega} for a cartoon of two possible fluid domains.

For each $t\ge0$, the fluid is described by its velocity and pressure functions $(u,P) :\Omega(t) \to \Rn{2} \times \Rn{}$.  The viscous stress tensor is determined in terms of $P$ and $u$ according to 
\begin{equation}\label{stress_def}
 S(P,u):= PI - \mu \sg u,
\end{equation}
where $I$ is the $2 \times 2$ identity matrix,  $(\mathbb{D} u)_{ij} = \partial_i u_j + \partial_j u_i$ is the symmetric gradient of $u$, and $\mu>0$ is the viscosity of the fluid.  We write $\diverge S(P,u)$ for the vector with components $\diverge S(P,u)_i = \p_j S(P,u)_{i,j}$; note that if $\diverge{u}=0$ then $\diverge S(P,u) = \nab P - \mu \Delta u$.  

Before stating the equations of motion we define a number of terms that will appear.  We will write $g>0$ for the strength of gravity, $\sigma>0$ for the surface tension coefficient along the free surface, and  $\beta > 0$ for the Navier slip friction coefficient on the vessel side walls.  The coefficients $\gamma_{sv}, \gamma_{sf} \in \Rn{}$ are a measure of the free-energy per unit length associated to the solid-vapor and solid-fluid interaction, respectively.  We set $\jg := \gamma_{sv} - \gamma_{sf}$ and assume that the classical Young relation \cite{young} holds:
\begin{equation}\label{gamma_assume}
 \frac{\abs{\jg}}{\sigma} < 1.
\end{equation}
Finally, we define the contact point velocity response function $\sv: \Rn{} \to \Rn{}$ to be a $C^2$ increasing diffeomorphism such that $\sv(0) =0$.  We will refer to its inverse as $\sw := \sv^{-1} \in C^2(\Rn{})$.

\begin{figure}
\includegraphics[width=2.5 in]{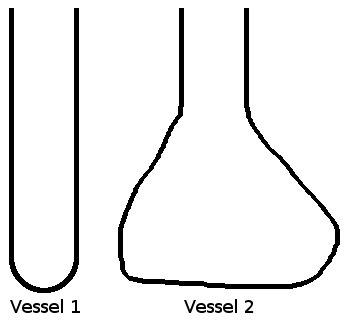}
\caption{Two possible vessels}
\label{fig:vessel}
\end{figure}

\begin{figure}
\includegraphics[width=2.5 in]{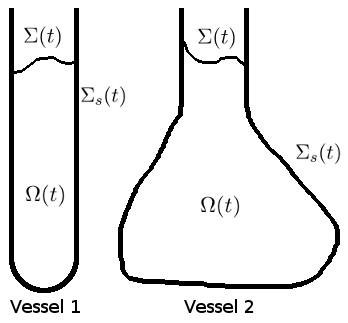}
\caption{Fluid domains}
\label{fig:omega}
\end{figure}

We require that $(u, P, \zeta)$ satisfy the gravity-driven free-boundary incompressible Stokes equations in $\Omega(t)$ for $t>0$: 
\begin{equation}\label{ns_euler}
\begin{cases}
 \diverge{S(P,u)} = \nab P - \mu \Delta u = 0 & \text{in }\Omega(t) \\ 
\diverge{u}=0 & \text{in }\Omega(t) \\ 
S(P,u) \nu = g \zeta \nu - \sigma \H(\zeta) \nu & \text{on } \Sigma(t) \\ 
(S(P,u)\nu - \beta u)\cdot \tau =0 &\text{on } \Sigma_s(t) \\
u \cdot \nu =0 &\text{on } \Sigma_s(t) \\
\partial_t \zeta = u_2 - u_1 \partial_{y_1}\zeta  &\text{on } \Sigma(t) \\ 
\dt \zeta(\pm \ell,t) = \sv\left( \jg \mp \sigma \frac{\p_1 \zeta}{\sqrt{1+\abs{\p_1 \zeta}^2}}(\pm \ell,t) \right)
\end{cases}
\end{equation}
for $\nu$ the outward-pointing unit normal, $\tau$ the associated unit tangent, and
\begin{equation}\label{H_def}
 \mathcal{H}(\zeta) := \p_1 \left( \frac{\p_1 \zeta}{\sqrt{1+\abs{\p_1 \zeta}^2}} \right)
\end{equation}
twice the mean-curvature operator.  Note that in \eqref{ns_euler} we have shifted the gravitational forcing to the boundary and eliminated the constant atmospheric pressure, $P_{atm}$, in the usual way by adjusting the actual pressure $\bar{P}$ according to $P = \bar{P} + g y_2 - P_{atm}$.  Note also that the final equation in \eqref{ns_euler} is equivalent to 
\begin{equation}
\sw( \dt \zeta(\pm \ell,t)) =  \jg \mp \sigma \frac{\p_1 \zeta}{\sqrt{1+\abs{\p_1 \zeta}^2}}(\pm \ell,t),
\end{equation}
which is a more convenient version for our analysis.

We assume that initial data are specified with the initial mass of the fluid given as
\begin{equation}
 M_0 := \abs{\Omega(0)} = \abs{\V_{btm}} +  \int_{-\ell}^\ell \zeta(y_1,0) dy_1.
\end{equation}
We may identify the second term with the mass of the fluid in the top of the vessel, 
\begin{equation}
 M_{top} := \int_{-\ell}^\ell \zeta(y_1,0) dy_1.
\end{equation}
The mass of the fluid is conserved in time since $\dt \zeta = u \cdot \nu \sqrt{1 + \abs{\p_1 \zeta}^2}$:
\begin{equation}\label{avg_prop}
\frac{d}{dt} \abs{\Omega(t)} = \frac{d}{dt}  \int_{-\ell}^\ell \zeta =  \int_{-\ell}^\ell \dt \zeta  = \int_{\Sigma(t)} u \cdot \nu = \int_{\Omega(t)} \diverge{u} = 0.
\end{equation}

We defer a deeper explanation of the system \eqref{ns_euler} to Section \ref{sec_model} and turn now to the construction of equilibrium solutions to \eqref{ns_euler}.

\subsection{Equilibrium state }

A steady state equilibrium solution to \eqref{ns_euler} corresponds to setting $u =0$, $P(y,t) = P_0 \in \Rn{}$, and $\zeta(y_1,t) = \zeta_0(y_1)$ with $\zeta_0$ and $P_0$ solving 
\begin{equation}\label{zeta0_eqn}
 \begin{cases}
 g \zeta_0 - \sigma \H(\zeta_0) = P_0 & \text{on } (-\ell,\ell) \\ 
 \sigma \frac{\p_1 \zeta_0}{\sqrt{1+\abs{\p_1 \zeta_0}^2}}(\pm \ell) = \pm \jg.
 \end{cases}
\end{equation}
Here the boundary conditions follow from the assumptions on the inverse of the velocity response function, $\sw$, which in particular require that $\sw(z) =0$ if and only if $z=0$.  A solution to \eqref{zeta0_eqn} is called an equilibrium capillary surface.

The constant $P_0$ in \eqref{zeta0_eqn} is determined through the fixed-mass condition 
\begin{equation}\label{zeta0_constraint}
 M_{top} =  \int_{-\ell}^\ell \zeta_0(y_1) dy_1.
\end{equation}
Indeed, this allows us to integrate the first equation in \eqref{zeta0_eqn} to compute $P_0$:
\begin{equation}
 2\ell P_0 = \int_{-\ell}^\ell P_0 = \int_{-\ell}^\ell g \zeta_0 - \sigma \H(\zeta_0) = g M_{top} -\sigma \left. \frac{\p_1 \zeta_0}{\sqrt{1+\abs{\p_1 \zeta_0}^2}} \right\vert_{-\ell}^\ell 
= g M_{top} -2 \jg,
\end{equation}
which means that
\begin{equation}\label{p0_def}
 P_0 = \frac{g M_{top} -2 \jg}{2\ell}.
\end{equation}

The problem \eqref{zeta0_eqn} is variational in nature.  Indeed, solutions correspond to critical points of the energy functional $\mathscr{I} : W^{1,1}(-\ell,\ell) \to \Rn{}$ given by
\begin{equation}\label{zeta0_energy}
 \mathscr{I}(\zeta) = \int_{-\ell}^\ell \frac{g}{2} \abs{\zeta}^2 + \sigma \sqrt{1 + \abs{\zeta'}^2} - \jg(\zeta(\ell) + \zeta(-\ell))
\end{equation}
subject to the mass constraint $M_{top} = \int_{-\ell}^\ell \zeta$.  The equilibrium pressure $P_0$ arises as a Lagrange multiplier associated to the constraint.  We now state a well-posedness result for \eqref{zeta0_eqn}, the proof of which we defer to Appendix \ref{app_surf}.

\begin{thm}\label{zeta0_wp}
There exists a constant $M_{min} \ge 0$ such that if $M_{top} > M_{min}$ then there exists a unique solution $\zeta_0 \in C^\infty([-\ell,\ell])$ to \eqref{zeta0_eqn} that satisfies \eqref{zeta0_constraint} with $P_0$ given by \eqref{p0_def}.  Moreover, $\min_{[-\ell,\ell]} \zeta_0 >0$, and if $\mathscr{I}$ is given by \eqref{zeta0_energy}, then $\mathscr{I}(\zeta_0) \le \mathscr{I}(\psi)$ for all $\psi \in W^{1,1}((-\ell,\ell))$ such that $\int_{-\ell}^\ell \psi = M_{top}$.
\end{thm}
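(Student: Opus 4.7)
The plan is to reduce the problem to an unconstrained variational one for $\mathscr{I}$, produce a smooth solution by the direct method, and then recover the mass-constrained solution by a constant shift. First I would set $\zeta_* := \zeta_0 - P_0/g$. Since $\H$ depends only on derivatives of its argument, this shift converts $g\zeta_0 - \sigma\H(\zeta_0)=P_0$ into the homogeneous equation $g\zeta_* - \sigma\H(\zeta_*)=0$, while the Young boundary condition is preserved. Thus $\zeta_*$ is exactly an unconstrained critical point of $\mathscr{I}$, and the constant $P_0$ will emerge at the end as the Lagrange multiplier enforcing the mass condition \eqref{zeta0_constraint}.

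To construct $\zeta_*$ I would run the direct method on $\mathscr{I}$ over $W^{1,1}(-\ell,\ell)$ (or, strictly speaking, over its relaxation to $BV(-\ell,\ell)$). The key coercivity estimate uses assumption \eqref{gamma_assume}: combining the elementary one-dimensional trace inequality $|\zeta(\pm \ell)|\le \tfrac{1}{2\ell}\|\zeta\|_{L^1}+\|\zeta'\|_{L^1}$ with $\sqrt{1+|\zeta'|^2}\ge |\zeta'|$ yields
\begin{equation}
 \mathscr{I}(\zeta)\ge \tfrac{g}{2}\|\zeta\|_{L^2}^2 + (\sigma-|\jg|)\|\zeta'\|_{L^1} - \tfrac{|\jg|}{\ell}\|\zeta\|_{L^1},
\end{equation}
and since $\sigma-|\jg|>0$, Young's inequality absorbs the $L^1$ term into the $L^2$ term and gives a uniform lower bound. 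A minimizing sequence is therefore bounded in $BV\cap L^2$, so Helly's selection theorem provides an $L^1$-convergent subsequence; lower semicontinuity of the area functional, of $\|\cdot\|_{L^2}$, and of the boundary trace terms in 1D $BV$ makes the limit $\zeta_*$ a minimizer. Strict convexity of the integrands gives uniqueness, and classical one-dimensional ODE regularity applied to the Euler--Lagrange equation lifts $\zeta_*$ to $C^\infty([-\ell,\ell])$.

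Next I would fix $P_0$ and verify the constrained minimization. Integrating the Euler--Lagrange equation $g\zeta_* = \sigma\H(\zeta_*)$ and using the Young boundary condition gives
\begin{equation}
 g\int_{-\ell}^\ell \zeta_* = \sigma\left[\frac{\p_1\zeta_*}{\sqrt{1+|\p_1\zeta_*|^2}}\right]_{-\ell}^\ell = 2\jg,
\end{equation}
so the definition $\zeta_0 := \zeta_* + P_0/g$ with $P_0$ as in \eqref{p0_def} immediately yields $\int\zeta_0 = M_{top}$ and the inhomogeneous equation $g\zeta_0 - \sigma\H(\zeta_0)=P_0$, while the boundary condition is unaffected by the constant shift. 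For the minimization property, given any $\psi \in W^{1,1}$ with $\int\psi = M_{top}$, set $\phi:=\psi-\zeta_0$, so $\int\phi=0$. Convexity of $\mathscr{I}$ gives $\mathscr{I}(\psi)\ge \mathscr{I}(\zeta_0)+D\mathscr{I}(\zeta_0)[\phi]$, and integration by parts together with the Young boundary condition collapses $D\mathscr{I}(\zeta_0)[\phi]$ to $P_0\int\phi = 0$. Finally, since $\zeta_*$ is a fixed smooth function, choosing $M_{min}:=\max\{0,\, 2\jg/g - 2\ell\min_{[-\ell,\ell]}\zeta_*\}$ forces $P_0/g > -\min\zeta_*$ whenever $M_{top}>M_{min}$, and hence $\min\zeta_0 > 0$.

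The main obstacle is the coercivity-and-lower-semicontinuity step of the direct method. Because the area integrand has only linear growth, $W^{1,1}$ is nonreflexive, and one is forced into $BV(-\ell,\ell)$ to obtain compactness. There one must verify that the trace-dependent term $\jg(\zeta(\ell)+\zeta(-\ell))$ behaves well under weak-$*$ $BV$ convergence and that no low-energy endpoint jumps can appear in the limit; this is precisely where the structural assumption $|\jg|<\sigma$ from \eqref{gamma_assume} is indispensable, since it is what makes the combined boundary/surface energy lower semicontinuous. Everything after this step is a standard convexity/Lagrange-multiplier calculation together with classical ODE regularity.
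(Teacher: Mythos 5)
You take a genuinely different route from the paper. The paper never invokes the direct method: it constructs an explicit solution $\chi$ of the homogeneous problem $g\chi - \sigma\H(\chi)=0$ by quadrature (the formula involving $\Xi^{-1}$ and $C(g,\sigma,|\jg|,\ell)$), proves uniqueness separately by the monotonicity identity for $f(z)=z/\sqrt{1+z^2}$, and then obtains $\zeta_0$ and the variational characterization by a constant shift plus a convexity computation. Your plan replaces the first two steps by a variational existence argument for the unconstrained minimizer $\zeta_*$; the shift-by-$P_0/g$ to enforce the mass constraint, the identity $\int\zeta_*=2\jg/g$, the recovery of $\int\zeta_0=M_{top}$ with $P_0$ as in \eqref{p0_def}, the value of $M_{min}$, and the convexity/Lagrange-multiplier proof of $\mathscr{I}(\zeta_0)\le\mathscr{I}(\psi)$ all coincide with the paper in substance. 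Your route would be the one to take in higher dimensions where no closed form exists, so it is a legitimate alternative.

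However, there is a real gap exactly where you flag it, and it is not a minor footnote. Because the area integrand has linear growth, the correct arena is $BV(-\ell,\ell)$, and in $BV$ the boundary trace is \emph{not} lower semicontinuous under weak-$*$ convergence; a minimizing sequence can concentrate a derivative jump at $\pm\ell$, lowering the trace term while the area contribution of the jump is only picked up by the relaxed surface measure. Your earlier sentence, which lists ``lower semicontinuity of ... the boundary trace terms in $1$D $BV$'' among the ingredients, is therefore internally inconsistent with your later (correct) remark that this is precisely the step that must be verified. What one actually has to show is that the \emph{relaxed} functional on $BV$ --- the one with $\sigma\sqrt{1+|D^a\zeta|^2}+\sigma|D^s\zeta|$ plus boundary jump terms weighted by $\sigma$ --- dominates the naive functional and is l.s.c., and that under \eqref{gamma_assume} the relaxed minimizer has no interior or boundary jumps (the net cost of a boundary jump of size $s$ is $(\sigma-|\jg|)s>0$). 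Only then can you invoke ODE regularity, and even then passing from a $BV$ minimizer to a $C^\infty([-\ell,\ell])$ solution requires an a priori gradient bound (e.g.\ a barrier argument giving $|\zeta_*'|\le |\jg|/\sqrt{\sigma^2-\jg^2}$). None of this is carried out in the proposal. The paper sidesteps the entire issue because its explicit $\chi$ is manifestly smooth with a bounded slope, so no compactness, relaxation, or regularity theory is needed.

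One small point that is correct but worth making explicit: your coercivity bound requires applying the averaging trace inequality to the \emph{sum} $\zeta(\ell)+\zeta(-\ell)$ (giving $|\zeta(\ell)+\zeta(-\ell)|\le\tfrac{1}{\ell}\|\zeta\|_{L^1}+\|\zeta'\|_{L^1}$), not to each trace separately; otherwise you only get the coefficient $\sigma-2|\jg|$, which can be negative.
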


\begin{remark}
Throughout the rest of the paper we make two assumptions on the parameters.  First, we assume that $M_{top} > M_{min}$ in order to have $\zeta_0$ as in Theorem \ref{zeta0_wp}.  Second we assume that the parameter $L >0$, the height of the side-walls of $\V_{top}$, satisfies the condition $\zeta_0(\pm\ell) < L$, which means that the equilibrium fluid does not spill out of $\V_{top}$.  
\end{remark}

\subsection{Discussion of the model }\label{sec_model}

We now turn to a discussion of the model \eqref{ns_euler}.  The interface between a fluid, solid, and vapor phase, known as a contact line in 3D or contact point in 2D, has been the subject of serious research for two centuries, dating to the early work of Young in 1805 and continuing to this day.   We refer to the exhaustive survey by de Gennes \cite{degennes} for a more thorough discussion.

Equilibrium configurations (given by \eqref{zeta0_energy}) were studied first by Young \cite{young}, Laplace \cite{laplace}, and Gauss \cite{gauss}.  They determined that the equilibrium contact angle $\theta_{eq}$, the angle formed between the solid wall and the fluid (see Figure \ref{fig:angle}), is determined through a variational principle.  This lead's to the well-known equation of Young: 
\begin{equation}\label{young_relat}
 \cos(\theta_{eq}) = \frac{\gamma_{sf} - \gamma_{sv}}{\sigma} = -\frac{\jg}{\sigma},
\end{equation}
which is related to the assumption \eqref{gamma_assume}.  This relation is enforced in \eqref{zeta0_eqn} in the boundary conditions.

\begin{figure}
\includegraphics[width=2.5 in]{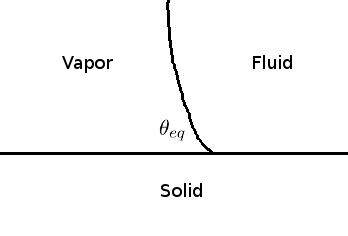}
\caption{Equilibrium angle}
\label{fig:angle}
\end{figure}

The behavior of a contact line or point in a dynamical setting is a much more complicated issue.  The basic problem stems from the incompatibility of the standard no-slip boundary conditions for viscous fluids ($u=0$ at the fluid-solid interface) and the free boundary kinematics ($\dt \zeta = u\cdot \nu\sqrt{1 + \abs{\nab \zeta}^2}$).  Combining the two at the contact point shows that the fluid cannot move along the solid, which is clearly in gross contradiction with the actual behavior of fluids in solid vessels.  This suggests that the no-slip condition is inappropriate for modeling fluid-solid-vapor junctions and that slip must be introduced into the model.

Much work has gone into the study of contact line motion: we refer to the surveys of Drussan \cite{drussan} and Blake \cite{blake} for a thorough discussion of theoretical and experimental studies.  The general picture that has emerged is that the contact line moves as a result of the deviation of the dynamic contact angle $\theta_{dyn}$ from the equilibrium angle $\theta_{eq}$ determined by \eqref{young_relat}.  More precisely, these quantities are related via 
 \begin{equation}\label{cl_motion}
  V_{cl} = F( \cos(\theta_{eq}) - \cos(\theta_{dyn}) ),
 \end{equation}
where $V_{cl}$ is the contact line normal velocity and $F$ is some increasing function such that $F(0)=0$.  These assumptions on $F$ show that the slip of the contact line acts to restore the equilibrium angle.  Equations of the form \eqref{cl_motion} have been derived in a number of ways.  Blake-Haynes \cite{blake_haynes} combined thermodynamic and molecular kinetics arguments to arrive at $F(z) = A \sinh(Bz)$ for material constants $A,B>0$.  Cox \cite{cox} used matched asymptotic analysis and hydrodynamic arguments to derive \eqref{cl_motion} with a different $F$ but of the same general form.  Ren-E \cite{ren_e} performed molecular dynamics simulations to probe the physics near the contact line and also found an equation of the form \eqref{cl_motion}.  Ren-E \cite{ren_e_deriv} also derived \eqref{cl_motion} from constitutive equations and thermodynamic principles.

The above studies confirm that an appropriate model for contact line dynamics involves an equation of the form \eqref{cl_motion} and slip.  The molecular dynamics simulations of Ren-E \cite{ren_e} found that the slip of the  fluid along the solid obeys the well-known Navier-slip condition 
\begin{equation}\label{navier_slip}
u \cdot \nu =0 \text{ and }  S(P,u) \nu \cdot \tau = \beta u \cdot \tau
\end{equation}
for some parameter $\beta >0$.  The former is a no-penetration condition, which prevents the fluid from flowing into the solid, while the latter states that the fluid experiences a tangential stress due to the slipping of the fluid along the solid wall.  Note that the limiting case $\beta=\infty$ corresponds to the no-slip condition.  

The equations \eqref{ns_euler} studied in this article combine the Navier-slip boundary conditions \eqref{navier_slip} with a general form of the contact point equation \eqref{cl_motion}.  Indeed, the last equation in \eqref{ns_euler} may be rewritten as 
\begin{equation}
V_{cl} =  \dt \zeta =  \sv\left( \jg \mp \sigma \frac{\p_1 \zeta}{\sqrt{1+\abs{\p_1 \zeta}^2}}(\pm \ell,t) \right)  = \sv( \sigma (\cos(\theta_{eq}) - \cos(\theta_{dyn} )),
\end{equation}
which is clearly of the form \eqref{cl_motion}. Our assumptions on $\sv$ then correspond to the assumptions on $F$ mentioned above.  Note that in principle we only need $\sv : [-2\sigma,2\sigma] \to \Rn{}$ to be increasing with $\sv(0)=0$, but we can easily make a $C^2$ extension so that $\sv : \Rn{} \to \Rn{}$ is a $C^2$ diffeomorphism.  Thus, it is no loss of generality to assume that $\sv$ is defined on all of $\Rn{}$.  We do this because this makes it simpler to work with $\sw = \sv^{-1}$.  

We use the Stokes equations for the bulk fluid mechanics in \eqref{ns_euler} in order to somewhat simplify the problem and focus on the contact point dynamics.  Thus in some sense the problem \eqref{ns_euler} is quasi-stationary:  the problem is time-dependent and the free boundary moves, but the fluid dynamics are stationary at each time.  In future work, based on the techniques we develop in this paper, we will address the full Navier-Stokes equations coupled to the boundary conditions in \eqref{ns_euler}.

Much work has been devoted to studying contact lines and points in simplified thin-film models; we will not attempt to enumerate these results here and instead refer to the survey by Bertozzi \cite{bertozzi}.  By contrast, there are relatively few results in the literature related to models in which the full fluid mechanics are considered, and to the best of our knowledge none that allow for both dynamic contact point and dynamic contact angle.  Schweizer \cite{schweizer} studied a 2D Navier-Stokes problem with a fixed contact angle of $\pi/2$.  Bodea studied a similar problem with fixed $\pi/2$ contact angle in 3D channels with periodicity in one direction.  Kn\"upfer-Masmoudi studied the dynamics of a 2D drop with fixed contact angle when the fluid is assumed to be governed by Darcy's law.  Related analysis of the fully stationary Navier-Stokes system with free, but unmoving boundary, was carried out in 2D by Solonnikov \cite{solonnikov} with contact angle fixed at $\pi$, by Jin \cite{jin} in 3D with angle $\pi/2$,  and by Socolowsky \cite{socolowsky} for 2D coating problems with fixed contact angles.

Given that our choice of boundary conditions can be derived in all the different ways mentioned above, we believe that the system \eqref{ns_euler} is a good general model for the dynamics of a viscous fluid with dynamic contact points and contact angles.  The purpose of our work is to show that the problem \eqref{ns_euler} is globally well-posed for data sufficiently close to the equilibrium state and that these solutions return to equilibrium exponentially fast.  In other words, we will prove that the equilibrium capillary surfaces are asymptotically stable in this model.  This provides further evidence for the validity of the model.

Let us now turn to a simple motivation for why we should expect solutions to \eqref{ns_euler} to exist globally and decay to equilibrium.  Solutions to the system \eqref{ns_euler} satisfy the following energy-dissipation equality:
\begin{equation}\label{fund_en_evolve}
 \frac{d}{dt} \mathscr{I}( \zeta(\cdot,t)) + \int_{\Omega(t)} \frac{\mu}{2} \abs{\sg u(\cdot,t)}^2 + \int_{\Sigma_s(t)} \frac{\beta}{2} \abs{u(\cdot,t) \cdot \tau} + \sum_{a=\pm 1} \dt \zeta(a \ell,t) \sw(\dt \zeta(a \ell,t))= 0,
\end{equation}
where $\mathscr{I}$ is the energy functional given by \eqref{zeta0_energy} and $\sw = \sv^{-1}$.  Indeed, this can be shown in the usual way by taking the dot product of the first equation in \eqref{ns_euler} with $u$, and integrating by parts over $\Omega(t)$, employing all of the other equations in \eqref{ns_euler}.  Rather than prove the result we refer to Theorem \ref{linear_energy}, which provides the proof of a similar result.

Since $\sw$ is increasing and vanishes precisely at the origin we have that $z \sw(z) >0$ for $z \neq 0$.  This means that the third term on the left of \eqref{fund_en_evolve} provides positive definite control of $\dt \zeta$ at the fluid-solid-vapor contact point.  Thus the latter three terms on the left of \eqref{fund_en_evolve} serve as a positive-definite dissipation functional, and so we can use \eqref{fund_en_evolve} as the basis of a nonlinear energy method.  We also deduce from \eqref{fund_en_evolve} that  $\mathscr{I}(\zeta(\cdot,t))$ is non-increasing, and since $\zeta_0$ is the unique minimizer of $\mathscr{I}$ this suggests that solutions return to equilibrium as $t \to \infty$.  However, the exponential rate of decay to equilibrium is not obvious from \eqref{fund_en_evolve} and requires deeper analysis.

\subsection{Reformulation of \eqref{ns_euler}}

Let $\zeta_0 \in C^\infty[-\ell,\ell]$ be the equilibrium capillary surface given by Theorem \ref{zeta0_wp}.  We then define the equilibrium domain $\Omega \subseteq \Rn{2}$ by
\begin{equation}\label{omega_def}
 \Omega := \V_{btm} \cup  \{ x \in \Rn{2} \;\vert\; -\ell < x_1 < \ell \text{ and } 0 < x_2 < \zeta_0(x_1) \}.
\end{equation}
We write $\p \Omega = \Sigma \sqcup \Sigma_s$, where
\begin{equation}\label{sigma_def}
 \Sigma := \{ x \in \Rn{2} \;\vert\; -\ell < x_1 < \ell \text{ and } x_2 = \zeta_0(x_1) \} \text{ and }
 \Sigma_s := \p \Omega \backslash \Sigma.
\end{equation}
Here $\Sigma$ is the equilibrium free surface, and $\Sigma_s$ denotes the ``sides'' of the equilibrium fluid configuration.  We will write $x \in \Omega$ as the spatial coordinate in the equilibrium domain.

Let us now assume that the free surface to $\Omega(t)$ is given as a perturbation of $\zeta_0$, i.e. we assume that
\begin{equation}
 \zeta(x_1,t) = \zeta_0(x_1) + \eta(x_1,t) \text{ for } \eta:(-\ell,\ell) \times \Rn{+} \to \Rn{}.
\end{equation}
We define 
\begin{equation}\label{extension_def}
 \bar{\eta}(x,t) = \P E \eta(x_1,x_2 - \zeta_0(x_1),t), 
\end{equation}
where $E:H^s(-\ell,\ell) \to H^s(\Rn{})$ is a bounded extension operator for all $0 \le s \le 3$ and $\P$ is the lower Poisson extension given by
\begin{equation}\label{poisson_def}
 \mathcal{P}f(x_1,x_2) = \int_{\Rn{}} \hat{f}(\xi) e^{2\pi \abs{\xi}x_2} e^{2\pi i x_1 \xi} d\xi.
\end{equation}
Let $\phi \in C^\infty(\Rn{})$ be such that $\phi(z) =0$ for $z \le \frac{1}{4} \min \zeta_0$ and $\phi(z) =z$ for $z \ge \hal \min \zeta_0$.  The extension $\bar{\eta}$ allows us to map the equilibrium domain to the moving domain $\Omega(t)$ via the mapping
\begin{equation}\label{mapping_def}
 \Omega \ni x \mapsto   \left( x_1,x_2 +  \frac{\phi(x_2)}{\zeta_0(x_1)} \bar{\eta}(x,t) \right) := \Phi(x,t) = (y_1,y_2) \in \Omega(t).
\end{equation}
Note that 
\begin{equation}
\begin{split}
  \Phi(x_1,\zeta_0(x_1),t) &= (x_1, \zeta_0(x_1) + \eta(x_1,t)) = (x_1,\zeta(x_1,t))  \Rightarrow \Phi(\Sigma,t) = \Sigma(t) \\
  \Phi(\V_{btm},t) &= \V_{btm} \\
  \Phi(\pm \ell, x_2,t) &= (\pm \ell, x_2+ \phi(x_2)\bar{\eta}(\pm\ell ,x_2)/\zeta_0(\pm \ell)) \\
&\Rightarrow  \Phi(\Sigma_s \cap \{x_1 = \pm \ell, x_2 \ge 0\},t) = \Sigma_s(t) \cap \{y_1 = \pm \ell, y_2 \ge 0\}.
\end{split}
\end{equation}
If $\eta$ is sufficiently small (in an appropriate Sobolev space), then the mapping $\Phi$ is a $C^1$ diffeomorphism of $\Omega$ onto $\Omega(t)$ that maps the components of $\p \Omega$ to the corresponding components of $\p \Omega(t)$.

We have
\begin{equation}\label{A_def}
 \nab \Phi = 
\begin{pmatrix}
 1 & 0  \\
 A & J 
\end{pmatrix}
\text{ and }
 \mathcal{A} := (\nab \Phi^{-1})^T = 
\begin{pmatrix}
 1 & -A K \\
 0 & K
\end{pmatrix}
\end{equation}
for 
\begin{equation}\label{AJK_def}
 W = \frac{\phi}{\zeta_0}, \quad
 A = W \p_1 \bar{\eta} - \frac{W}{\zeta_0} \p_1 \zeta_0 \bar{\eta}, \quad 
 J = 1 + W \p_2 \bar{\eta} + \frac{\phi' \bar{\eta}}{\zeta_0}, \quad
 K = J^{-1}.
\end{equation}
Here $J = \det{\nab \Phi}$ is the Jacobian of the coordinate transformation.

We will assume that in fact $\Phi$ is a diffeomorphism.  This allows us to transform the problem \eqref{ns_euler} to one on the fixed spatial domain $\Omega$ for $t \ge 0$.  In the new coordinates, the PDE \eqref{ns_euler} becomes
\begin{equation}\label{geometric_full}
 \begin{cases}
  \diva \Sa(P,u) =  -\mu \da u + \naba P     =0 & \text{in } \Omega \\
 \diva u = 0 & \text{in }\Omega \\
 \Sa(P,u) \N = g\zeta \N -\sigma \H(\zeta) \N & \text{on } \Sigma \\
 (\Sa(P,u)\nu - \beta u)\cdot \tau =0 &\text{on }\Sigma_s \\
 u\cdot \nu =0 &\text{on }\Sigma_s \\
 \dt \zeta = u \cdot \N & \text{on } \Sigma \\
 \sw(\dt \zeta(\pm \ell,t)) =  \jg \mp \sigma \frac{\p_1 \zeta}{\sqrt{1+\abs{\p_1 \zeta}^2}}(\pm \ell,t)   \\
 u(x,0) = u_0(x), \zeta(x_1,0) = \zeta_0(x_1) + \eta_0(x_1).
 \end{cases}
\end{equation}
Here we have written the differential operators $\naba$, $\diva$, and $\da$ with their actions given by $(\naba f)_i := \A_{ij} \p_j f$, $\diva X := \A_{ij}\p_j X_i$, and $\da f = \diva \naba f$ for appropriate $f$ and $X$; for $u\cdot \naba u$ we mean $(u \cdot \naba u)_i := u_j \A_{jk} \p_k u_i$.  We have also written  $\N := -\p_1 \zeta e_1  + e_2$ for the non-unit normal to $\Sigma(t)$,  and we write $\Sa(P,u)  = (P I  - \mu \sg_{\A} u)$ for the stress tensor, where $I$ the $2 \times 2$ identity matrix and $(\sg_{\A} u)_{ij} = \A_{ik} \p_k u_j + \A_{jk} \p_k u_i$ is the symmetric $\A-$gradient.  Note that if we extend $\diva$ to act on symmetric tensors in the natural way, then $\diva \Sa(P,u) = \naba P - \mu \da u$ for vector fields satisfying $\diva u=0$.

Recall that $\A$ is determined by $\eta$ through the relation \eqref{A_def}.  This means that all of the differential operators in \eqref{geometric_full} are connected to $\eta$, and hence to the geometry of the free surface.  This geometric structure is essential to our analysis, as it allows us to control high-order derivatives that would otherwise be out of reach.

\subsection{Perturbation }

We want to consider solutions as perturbations around the equilibrium state $(0,P_0,\zeta_0)$ given by Theorem \ref{zeta0_wp}, i.e. we assume that  $u =0 + u$, $P = P_0 + p$, $\zeta = \zeta_0 + \eta$ for new unknowns $(u,p,\eta)$.  We will now reformulate the equations \eqref{geometric_full} in terms of the perturbed unknowns.

To begin, we use a Taylor expansion in $z$ to write
\begin{equation}
 \frac{y+z}{(1+\abs{y+z}^2)^{1/2}} = \frac{y}{(1+\abs{y}^2)^{1/2}} + \frac{z}{(1+\abs{y}^2)^{3/2}} +   \R(y,z),
\end{equation}
where $\R \in C^\infty(\Rn{2})$ is given by
\begin{equation}\label{R_def}
 \R(y,z) =  \int_0^z 3 \frac{(s-z)(s+y)}{(1+ \abs{y+s}^2)^{5/2}}  ds.
\end{equation}

Then
\begin{equation}
 \frac{\p_1 \zeta}{(1+\abs{\p_1 \zeta}^2)^{1/2}} = \frac{\p_1 \zeta_0}{(1+\abs{\p_1 \zeta_0}^2)^{1/2}} + \frac{\p_1 \eta}{(1+\abs{\p_1 \zeta_0}^2)^{3/2}} +   \R(\p_1 \zeta_0,\p_1 \eta),
\end{equation}
which allows us to use \eqref{zeta0_eqn} to compute
\begin{multline}\label{pert_comp_1}
 g \zeta - \sigma \H(\zeta) = \left( g \zeta_0 - \sigma \H(\zeta_0)\right) 
+ g \eta - \sigma \p_1 \left(\frac{\p_1 \eta}{(1+\abs{\p_1 \zeta_0}^2)^{3/2}}\right) 
-\sigma \p_1 \left(  \R(\p_1 \zeta_0,\p_1 \eta) \right) \\
= P_0  + g \eta - \sigma \p_1 \left(\frac{\p_1 \eta}{(1+\abs{\p_1 \zeta_0}^2)^{3/2}}\right) 
-\sigma \p_1 \left(  \R(\p_1 \zeta_0,\p_1 \eta) \right)
\end{multline}
and 
\begin{multline}
 \jg \mp \sigma \frac{\p_1 \zeta}{\sqrt{1+\abs{\p_1 \zeta}^2}}(\pm \ell,t) =
\jg \mp \frac{\sigma \p_1 \zeta_0}{(1+\abs{\p_1 \zeta_0}^2)^{1/2}}(\pm \ell) \mp \frac{\sigma \p_1 \eta}{(1+\abs{\p_1 \zeta_0}^2)^{3/2}}(\pm \ell,t) \\
\mp   \sigma \R(\p_1 \zeta_0,\p_1 \eta)(\pm \ell,t) 
=  \mp \frac{\sigma \p_1 \eta}{(1+\abs{\p_1 \zeta_0}^2)^{3/2}}(\pm \ell,t) \mp \sigma  \R(\p_1 \zeta_0,\p_1 \eta)(\pm \ell,t).
\end{multline}
On the other hand, 
\begin{multline}\label{pert_comp_2}
\diva \Sa(P,u) = \diva \Sa(p,u) \text{ in } \Omega, \; \Sa(P,u) \N = \Sa(p,u) \N + P_0 \N \text{ on }\Sigma, \\ 
\text{and } \Sa(P,u)\nu \cdot \tau = \Sa(p,u)\nu \cdot \tau \text{ on }\Sigma_s. 
\end{multline}

Next we expand the velocity response function inverse $\sw \in C^2(\Rn{})$.  Since $\sw$ is increasing, we may set 
\begin{equation}\label{kappa_def}
 \kappa = \sw'(0) >0. 
\end{equation}
We then define the perturbation $\swh \in C^2(\Rn{})$ as 
\begin{equation}\label{V_pert}
 \swh(z) = \frac{1}{\kappa} \sw(z) - z.
\end{equation}

We now plug \eqref{pert_comp_1}--\eqref{pert_comp_2} and \eqref{V_pert} into \eqref{geometric_full} to see that $(u,p,\eta)$ solve
\begin{equation}\label{geometric}
 \begin{cases}
 \diva \Sa(p,u)  = -\mu \da u + \naba p     =0 & \text{in } \Omega \\
 \diva u = 0 & \text{in }\Omega \\
 \Sa(p,u) \N = g\eta \N  - \sigma \p_1 \left(\frac{\p_1 \eta}{(1+\abs{\p_1 \zeta_0}^2)^{3/2}} +  \R(\p_1 \zeta_0,\p_1 \eta) \right)\N & \text{on } \Sigma \\
 (\Sa(p,u)\nu - \beta u)\cdot \tau =0 &\text{on }\Sigma_s \\
 u\cdot \nu =0 &\text{on }\Sigma_s \\
 \dt \eta = u \cdot \N & \text{on } \Sigma \\
 \kappa \dt \eta(\pm \ell,t) + \kappa \swh(\dt \eta(\pm \ell,t)) = \mp \sigma \left( \frac{\p_1 \eta}{(1+\abs{\p_1 \zeta_0}^2)^{3/2}} +   \R(\p_1 \zeta_0,\p_1 \eta)\right)(\pm \ell,t)  \\
 u(x,0) = u_0(x), \eta(x_1,0) =  \eta_0(x_1).
 \end{cases}
\end{equation}
Here we still have that $\N$, $\A$, etc are determined in terms of $\zeta = \zeta_0 + \eta$.  Throughout the paper we will write 
\begin{equation}\label{n_0_def}
 \N_0 = -\p_1 \zeta_0 e_1 + e_2
\end{equation}
for the non-unit normal associated to the equilibrium surface.  Then 
\begin{equation}
 \N = \N_0 - \p_1 \eta e_1.
\end{equation}

\section{Main results and discussion}

\subsection{Main results }

In order to state our main results we must first define a number of energy and dissipation functionals.  We define the basic or ``parallel'' (since temporal derivatives are the only ones parallel to the boundary) energy as 
\begin{equation}\label{ed_def_1}
\seb =  \sum_{j=0}^2  \ns{\dt^j \eta}_{H^1((-\ell,\ell))}, 
\end{equation}
and we define the basic dissipation as
\begin{equation}\label{ed_def_2}
 \sdbb = \sum_{j=0}^2 \ns{\dt^j u}_{H^1(\Omega)} +\ns{\dt^j u}_{H^0(\Sigma_s)} + \bs{\dt^j u \cdot \N},
\end{equation}
where we have written
\begin{equation}
 \bs{f} = [f,f]_\ell.
\end{equation}
We also define the improved basic dissipation as
\begin{equation}\label{ed_def_3}
 \sdb = \sdbb +  \sum_{j=0}^2     \ns{\dt^j p}_{H^0(\Omega)} +   \ns{\dt^j \eta}_{H^{3/2}((-\ell,\ell))}.
\end{equation}
The basic energy and dissipation arise through a version of the energy-dissipation equation \eqref{fund_en_evolve}.  However, once we control these terms we are then able to control much more.  This extra control is encoded in the full energy and dissipation, which are defined as follows:
\begin{equation}\label{ed_def_4}
 \se = \seb +  \ns{\eta}_{W^{5/2}_\delta(\Omega)} + \ns{\dt \eta}_{H^{3/2}((-\ell,\ell))}  + \ns{u}_{W^{2}_\delta(\Omega)} + \ns{\dt u}_{H^1(\Omega)} + \ns{p}_{\oW^{1}_\delta(\Omega)} + \ns{\dt p}_{H^0(\Omega)},   
\end{equation}
and
\begin{equation}\label{ed_def_5}
 \sd = \sdb + \ns{\eta}_{W^{5/2}_\delta(\Omega)} + \ns{\dt \eta}_{W^{5/2}_\delta(\Omega)} + \ns{\dt^3 \eta}_{W^{1/2}_\delta(\Omega)}   + \ns{u}_{W^{2}_\delta(\Omega)} + \ns{\dt u}_{W^{2}_\delta(\Omega)} + \ns{p}_{\oW^{1}_\delta(\Omega)} + \ns{\dt p}_{\oW^{1}_\delta(\Omega)}.
\end{equation}
In \eqref{ed_def_4} and \eqref{ed_def_5} the spaces $W^{r}_\delta$ are weighted Sobolev spaces, as defined in Appendix \ref{app_weight}, for a fixed weight parameter $\delta \in (0,1)$.

We now state our main results.  In these we make reference to the corner angles of $\Omega$, which we call $\omega \in (0,\pi)$.  These are related to the equilibrium contact angle, $\theta_{eq} \in (0,\pi)$ given by \eqref{young_relat}, via 
\begin{equation}
 \theta_{eq} + \omega = \pi,
\end{equation}
i.e. $\omega$ is supplementary to $\theta_{eq}$.  Thus we have have that $\omega$ may be computed via 
\begin{equation}
\cos(\omega) = \frac{\zeta_0'(-\ell)}{\sqrt{1+\abs{\zeta_0'(-\ell)}^2}}.
\end{equation}
We now state our a priori estimates for solutions to \eqref{geometric}.

\begin{thm}\label{main_apriori}
Let $\omega \in (0,\pi)$ be the angle formed by $\zeta_0$ at the corners of $\Omega$,  $\delta_\omega \in [0,1)$ be given by \eqref{crit_wt}, and $\delta \in (\delta_\omega,1)$. There exists a universal constant $\gamma >0$ such that if a solution to \eqref{geometric} exists on the temporal interval $[0,T]$ and obeys the estimate
\begin{equation}
 \sup_{0 \le t \le T} \E(t) + \int_0^T \D(t) dt \le \gamma,
\end{equation}
then there exist universal constants $\lambda,C >0$ such that
\begin{multline}
 \sup_{0\le t \le T} \left( \E(t) + e^{\lambda t} \left[ \seb(t) +  \ns{u(t)}_{H^1(\Omega)} + \ns{u(t)\cdot \tau}_{H^0(\Sigma_s)} + \bs{u\cdot \N(t)} + \ns{p(t)}_{H^0(\Omega)} \right]    \right) \\
 + \int_0^T \D(t) dt \le C \E(0).
\end{multline}
\end{thm}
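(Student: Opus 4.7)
The plan is to carry out a multi-tier a priori analysis combining an energy-dissipation identity, elliptic/Stokes regularity in the corner domain $\Omega$, and a coercivity bound that forces the basic energy to decay exponentially. At the top level, I would differentiate the system \eqref{geometric} in time zero, one, and two times, and apply an analog of the identity \eqref{fund_en_evolve} to each differentiated system. The dominant quadratic terms produce a differential inequality of the schematic form
\begin{equation}
\frac{d}{dt}\seb(t) + c\,\sdbb(t) \le \R_{nl}(t),
\end{equation}
where $\R_{nl}$ collects commutators with $\dt$, the curvature remainder $\R(\p_1\zeta_0,\p_1\eta)$ from \eqref{R_def}, the difference $\Sa-S$ between geometric and flat stress tensors, and the contact point nonlinearity $\swh(\dt \eta(\pm\ell,t))$ from \eqref{V_pert}. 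Using the Sobolev product structure encoded in $\E$ and $\D$ together with trace inequalities into the weighted corner spaces $W^r_\delta$, I would show $|\R_{nl}|\lesssim \sqrt{\E}\,\D$, so that the smallness assumption on $\sup \E$ allows these remainders to be absorbed into the left-hand side.

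The next step is to upgrade $\sdbb$ to the stronger $\sdb$ by recovering the pressure and the $H^{3/2}$ boundary regularity of $\dt^j\eta$. The pressure control follows by testing the momentum equation against the solution of a Bogovskii-type problem on $\Omega$ adapted to the mixed slip/free boundary, which at each temporal order $j=0,1,2$ gives $\|\dt^j p\|_{H^0(\Omega)}$ in terms of $\sdbb$. The improved trace norm on $\eta$ comes from reading the dynamic boundary conditions in \eqref{geometric}: the contact angle equation in the interior of $\Sigma$, inverted through the elliptic operator $-\p_1((1+|\p_1\zeta_0|^2)^{-3/2}\p_1(\cdot))$ with Robin-type conditions coming from the contact point equation $\kappa\dt\eta(\pm\ell)+\kappa\swh(\dt\eta(\pm\ell))=\cdots$, recovers $\|\dt^j\eta\|_{H^{3/2}}$ in terms of $\|\dt^j u\|_{H^1}$ and $\|\dt^j p\|_{H^0}$ modulo nonlinear remainders absorbable via smallness.

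With $\sdb$ in hand, I then upgrade to the full $\E$ and $\D$ by running elliptic regularity for the stationary Stokes problem \eqref{geometric} at each frozen time. Because $\Omega$ has two corners at $(\pm\ell,\zeta_0(\pm\ell))$ with opening $\omega$, standard $H^k$ estimates fail; here is where the weighted spaces $W^r_\delta$ from Appendix \ref{app_weight} and the threshold $\delta_\omega$ encoded in \eqref{crit_wt} are essential. Viewing the system as an inhomogeneous Stokes problem on $\Omega$ with data determined by lower-order quantities controlled by $\sdb$, the weighted Kondratiev/Agranovich-type estimates yield $\|u\|_{W^2_\delta}$, $\|p\|_{\oW^1_\delta}$, and, via the kinematic and angle equations, $\|\eta\|_{W^{5/2}_\delta}$. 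Differentiating in time once more and repeating produces the remaining pieces of $\E$ and $\D$, with the smallness hypothesis again used to absorb geometric and contact-nonlinearity remainders.

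Finally, to extract the exponential factor, I would establish the coercivity estimate $\seb \lesssim \sdbb$: the dissipation controls $\sg u$ on $\Omega$, $u\cdot\tau$ on $\Sigma_s$, and $\dt\eta(\pm\ell)$ via the positivity $z\sw(z)>0$, and then Korn, Poincar\'e, and the kinematic identity $\dt\eta=u\cdot\N$ on $\Sigma$ promote these into control of $\dt^j\eta$ in $H^1$. Combined with the absorbed differential inequality, this yields $\frac{d}{dt}\seb + \lambda\seb \le 0$ (up to the absorbable nonlinear terms) and hence $e^{\lambda t}\seb(t) \le C\E(0)$. The remaining time-integrated bound on $\D$ follows by integrating the differential inequality over $[0,T]$, and the $\sup_t \E$ bound follows by adding the elliptic upgrade estimates pointwise in time. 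The main obstacle I anticipate is the pressure/velocity/height coupling at the corners: verifying that the Bogovskii construction and the weighted Stokes estimates produce constants uniform enough, and with the right structure, to allow nonlinear absorption under the single smallness constant $\gamma$, while remaining compatible with the contact point ODE coming from $\sw$.
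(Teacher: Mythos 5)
Your sketch follows the same three-tier scheme the paper actually uses: apply the energy-dissipation identity (Theorem~\ref{linear_energy}, via Corollary~\ref{basic_energy}) to the undifferentiated and once- and twice-time-differentiated problems to get a differential inequality for $\seb$ with dissipation $\sdbb$ and nonlinear remainders bounded by $\sqrt{\E}\D$; upgrade $\sdbb$ to $\sdb$ by recovering pressure (Theorem~\ref{pressure_est}, via a Lagrange-multiplier construction rather than Bogovskii, but these are cousins) and the $H^{3/2}$ trace of $\dt^j\eta$ from the boundary elliptic problem with Robin data at the contact points (Theorem~\ref{xi_est}); then upgrade to the full $\D$ using the weighted $\eta$-coupled Stokes regularity theory in the corner domain (Theorem~\ref{A_stokes_stress_solve} feeding Theorems~\ref{elliptic_est_dt} and~\ref{elliptic_est_no_dt}); and finally close via absorption under the smallness of $\E$, coercivity $\seb \lesssim \D$, and Gronwall for the decay.

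One step in your proposal is stated too casually and would not go through as written. You assert that ``the $\sup_t \E$ bound follows by adding the elliptic upgrade estimates pointwise in time.'' This fails because the weighted elliptic estimates bound the high-order pieces of $\E$ in terms of $\sdb$ plus absorbable remainders, and $\sdb$ is a dissipation quantity: it is only controlled after time integration, not pointwise. The paper's Theorem~\ref{ap_bound} handles this with the elementary observation that for $f\in H^1([0,T];X)$,
\begin{equation}
 \ns{f(t)}_X \le \ns{f(0)}_X + \int_0^t\left(\ns{f(s)}_X + \ns{\dt f(s)}_X\right)ds,
\end{equation}
so that the higher-order terms in $\E(t)$ are bounded by their initial values plus $\int_0^t \D(s)\,ds$, which in turn is controlled by the integrated differential inequality. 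Without this time-integration step your argument would need a pointwise-in-time bound on $\D$, which is not available. A related technical detail you omit (but which is needed to make $|\R_{nl}|\lesssim \sqrt{\E}\D$ honest) is that certain cubic contact-angle remainders are not directly absorbable and instead must be pulled under the time derivative: the paper introduces the auxiliary quantity $\mathfrak{F}$ in \eqref{fed_def_3} for this purpose, shows $|\mathfrak{F}|\le\tfrac12\mathfrak{E}$ (Proposition~\ref{ap_fed_ests}), and runs Gronwall on $\mathfrak{E}+\mathfrak{F}$ rather than on $\seb$ itself.
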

\begin{proof}
 The result is proved later in Theorems \ref{ap_decay} and \ref{ap_bound}.
\end{proof}

The a priori estimates of Theorem \ref{main_apriori} couple with a local existence theory, which we will develop in a companion paper, to prove the existence of global decaying solutions.

\begin{thm}\label{main_gwp} 
Let $\omega \in (0,\pi)$ be the angle formed by $\zeta_0$ at the corners of $\Omega$,  $\delta_\omega \in [0,1)$ be given by \eqref{crit_wt}, and $\delta \in (\delta_\omega,1)$.  There exists a universal smallness parameter $\gamma >0$ such that if 
\begin{equation}
 \E(0) \le \gamma,
\end{equation}
then there exists a unique global solution triple $(u,p,\eta)$ to \eqref{geometric} such that 
\begin{multline}
 \sup_{t \ge 0} \left( \E(t) + e^{\lambda t} \left[ \seb(t) +  \ns{u(t)}_{H^1(\Omega)} + \ns{u(t)\cdot \tau}_{H^0(\Sigma_s)} + \bs{u\cdot \N(t)} + \ns{p(t)}_{H^0(\Omega)} \right]    \right) \\
 + \int_0^\infty \D(t) dt \le C \E(0).
\end{multline}
where $\lambda,C >0$ are  universal constants.
\end{thm}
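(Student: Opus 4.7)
The plan is to deduce Theorem \ref{main_gwp} from Theorem \ref{main_apriori} via a standard continuation argument, using the forthcoming local well-posedness theorem as the ``seed'' and the a priori estimate as the mechanism that keeps the solution within the smallness regime for all time. First I would invoke the local existence theorem (from the companion paper) to produce, for any initial data $(u_0,\eta_0)$ satisfying the necessary compatibility conditions with $\E(0) \le \gamma_0$, a unique solution $(u,p,\eta)$ to \eqref{geometric} on some interval $[0,T_0]$ with $T_0 = T_0(\E(0))>0$, and such that $\sup_{[0,T_0]} \E(t) + \int_0^{T_0} \D(t)\,dt$ is controlled by a continuous function of $\E(0)$ that vanishes at $0$.

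Next, fix the universal constants $\gamma,\lambda,C$ provided by Theorem \ref{main_apriori}, and shrink $\gamma$ further if necessary so that the local theory applies whenever the energy is $\le \gamma$. Define the continuation time
\begin{equation}
T^\ast = \sup\Bigl\{ T \ge 0 \st (u,p,\eta) \text{ exists on } [0,T] \text{ and } \sup_{0\le t \le T} \E(t) + \int_0^T \D(t)\,dt \le \gamma \Bigr\}.
\end{equation}
Choose the smallness parameter $\gamma_\ast$ in the statement of Theorem \ref{main_gwp} so that $C \gamma_\ast \le \gamma/2$. Assuming $\E(0) \le \gamma_\ast$, the local theory shows $T^\ast > 0$. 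On $[0,T^\ast)$ the hypothesis of Theorem \ref{main_apriori} is satisfied, so
\begin{equation}
\sup_{0 \le t < T^\ast} \E(t) + \int_0^{T^\ast} \D(t)\,dt \le C \E(0) \le \frac{\gamma}{2},
\end{equation}
together with the exponential decay bound for $\seb$, $\ns{u}_{H^1}$, the boundary traces, and $\ns{p}_{H^0}$.

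Now I would argue $T^\ast = \infty$ by contradiction. If $T^\ast < \infty$, then the above strict inequality $C\E(0) \le \gamma/2$ together with continuity in time of $\E$ and finiteness of the dissipation integral lets one reapply the local existence theorem with initial data $(u(\cdot,T^\ast -s),\eta(\cdot,T^\ast - s))$ for small $s>0$; because $\E(T^\ast - s) \le \gamma/2 < \gamma_0$, this extends the solution past $T^\ast$ while keeping the energy and cumulative dissipation below $\gamma$, contradicting the definition of $T^\ast$. Hence $T^\ast = \infty$. Finally, the conclusion of Theorem \ref{main_gwp} is just the conclusion of Theorem \ref{main_apriori} with $T = \infty$, which is legal by monotone convergence in the dissipation integral and by the uniform-in-$T$ character of the decay estimate.

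The one genuinely delicate point — and the main place where care is needed — is the compatibility of the local existence theorem with the weighted/corner function spaces appearing in $\E$ and $\D$: the energy $\E$ involves $W^{5/2}_\delta$ and $\oW^1_\delta$ norms near the corners, and the local theory must produce solutions in exactly this class and propagate the compatibility conditions at $t=0$ (divergence-free constraint, slip conditions, and the contact-point relation involving $\sw$). Assuming the companion paper provides such a local theorem in the same functional framework that Theorem \ref{main_apriori} uses, the continuation argument above is routine; the only real calibration is matching the universal constants so that $C\E(0)<\gamma$ gives a strict improvement that survives one application of the local theory.
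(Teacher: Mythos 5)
Your proposal is correct and takes essentially the same approach as the paper: the paper's proof (Theorem \ref{gwp}) simply cites the a priori estimates of Theorems \ref{ap_bound} and \ref{ap_decay} together with the local existence theory from the companion paper and a standard continuation argument, which is exactly the argument you have spelled out in detail.
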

\begin{proof}
 The result is proved later in Theorem \ref{gwp}.
\end{proof}

In Theorems \ref{main_apriori} and \ref{main_gwp} we have stated the estimates in terms of a mixture of standard and weighted Sobolev norms.  The weighted norms do provide control of standard norms.  For instance, we find in Appendix \ref{app_weight} that 
\begin{equation}
 W^2_\delta \hookrightarrow H^s \text{ and } W^1_\delta \hookrightarrow H^{s-1}
\end{equation}
where our choice of $\delta$ in the theorems allows for the choice of any any 
\begin{equation}
 1 < s < \min\{\frac{\pi}{\omega},2\}.
\end{equation}
We also have that 
\begin{equation}
 W^{5/2}_\delta((-\ell,\ell)) \hookrightarrow H^{s+1/2}((-\ell,\ell))
\end{equation}
so that we can control $\ns{\eta}_{H^{s+1/2}} + \ns{\dt \eta}_{H^{s+1/2}}$.

Theorem \ref{main_gwp} and the above embeddings tell us that 
\begin{equation}
 \sup_{t \ge 0} \left[ \ns{u(t)}_{H^s} + \ns{p(t)}_{H^{s-1}} + \ns{\eta(t)}_{H^{s+1/2}} + \ns{\dt \eta(t)}_{H^{3/2}}  \right] \le C \E(0)
\end{equation}
and that
\begin{equation}
 \sup_{t \ge 0} e^{\lambda t}\left[ \ns{u(t)}_{H^1} + \ns{p(t)}_{H^0} + \ns{\eta(t)}_{H^1} + \ns{\dt \eta(t)}_{H^1}  \right] \le C \E(0).
\end{equation}
We may interpolate between these two estimates to deduce the following.

\begin{cor}
Under the assumptions of Theorem \ref{main_gwp} we have that the following hold.  For any $1 < r < s$ there exists $\lambda_r >0$ such that
\begin{equation}
 \sup_{t \ge 0} e^{\lambda_r t}\left[ \ns{u(t)}_{H^r} + \ns{p(t)}_{H^0}  \right] \le C \E(0).
\end{equation}
For any $1 < r < s+1/2$ there exists $\beta_r >0$ such that
\begin{equation}
 \sup_{t \ge 0} e^{\lambda_r t}\ns{\eta(t)}_{H^r}  \le C \E(0).
\end{equation}
 For any $1 < r < 3/2$ there exists $\beta_r >0$ such that
\begin{equation}
 \sup_{t \ge 0} e^{\lambda_r t}  \ns{\dt \eta(t)}_{H^r}  \le C \E(0).
\end{equation}
\end{cor}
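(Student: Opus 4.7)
The plan is to combine the two bounds furnished by Theorem \ref{main_gwp} with the standard Sobolev interpolation inequality on the Lipschitz domains $\Omega$ and $(-\ell,\ell)$.

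First I would record the two input estimates. By Theorem \ref{main_gwp} together with the embeddings $W^2_\delta(\Omega) \hookrightarrow H^s(\Omega)$, $W^1_\delta(\Omega) \hookrightarrow H^{s-1}(\Omega)$, and $W^{5/2}_\delta((-\ell,\ell)) \hookrightarrow H^{s+1/2}((-\ell,\ell))$ discussed immediately before the corollary, one has the uniform endpoint
\begin{equation*}
\sup_{t \ge 0} \bigl[\ns{u(t)}_{H^s} + \ns{p(t)}_{H^{s-1}} + \ns{\eta(t)}_{H^{s+1/2}} + \ns{\dt\eta(t)}_{H^{3/2}}\bigr] \le C \E(0)
\end{equation*}
and the exponentially-decaying endpoint
\begin{equation*}
\sup_{t \ge 0} e^{\lambda t}\bigl[\ns{u(t)}_{H^1} + \ns{p(t)}_{H^0} + \ns{\eta(t)}_{H^1} + \ns{\dt\eta(t)}_{H^1}\bigr] \le C \E(0),
\end{equation*}
which are precisely the two displays sitting immediately above the corollary.

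Next I would invoke the standard Sobolev interpolation inequality: for $0 \le r_0 < r < r_1$ and $\theta = (r_1-r)/(r_1-r_0) \in (0,1)$,
\begin{equation*}
\norm{f}_{H^r} \le C \norm{f}_{H^{r_0}}^{\theta}\norm{f}_{H^{r_1}}^{1-\theta}.
\end{equation*}
For the velocity with $1 < r < s$, set $\theta = (s-r)/(s-1)$ and interpolate between $H^1$ and $H^s$; squaring and combining with the endpoint estimates above yields
\begin{equation*}
\ns{u(t)}_{H^r} \le C \norm{u(t)}_{H^1}^{2\theta}\norm{u(t)}_{H^s}^{2(1-\theta)} \le C e^{-\lambda\theta t}\E(0).
\end{equation*}
Choosing $\lambda_r := \min\{\lambda,\lambda(s-r)/(s-1)\} > 0$ then handles both the $u$ and $p$ contributions of the first claim, as the $p$ term already decays at rate $\lambda$. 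The same scheme with endpoints $(H^1,H^{s+1/2})$ produces the second claim with $\beta_r = \lambda(s+1/2-r)/(s-1/2)$, and with endpoints $(H^1,H^{3/2})$ it produces the third claim with $\beta_r = 2\lambda(3/2-r)$.

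I do not anticipate any serious obstacle; the argument is essentially a routine interpolation. The only mild technical point is to justify the Sobolev interpolation inequality on the corner domain $\Omega$, which follows either from the real $K$-method on bounded Lipschitz domains or by composing a bounded Sobolev extension $H^{r_k}(\Omega) \to H^{r_k}(\mathbb{R}^2)$ with the classical Euclidean identity $[H^{r_0}(\mathbb{R}^2),H^{r_1}(\mathbb{R}^2)]_\theta = H^r(\mathbb{R}^2)$ and restricting back to $\Omega$.
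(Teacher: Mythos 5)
Your argument is correct and is exactly the route the paper indicates (the text preceding the corollary says simply ``We may interpolate between these two estimates to deduce the following,'' with no further detail supplied). The interpolation exponents and resulting decay rates you compute are all right, and the remark about extending from $\Omega$ to $\mathbb{R}^2$ to justify Sobolev interpolation on the corner domain is a reasonable way to dispose of the only technical point.
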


\subsection{Sketch of proof and summary of methods }

We now provide a summary of the principal difficulties encountered in the analysis of \eqref{geometric} and our techniques for overcoming them.  We employ a nonlinear energy method that combines energy estimates, enhanced dissipation estimates, and elliptic estimates in weighted Sobolev spaces into a closed scheme of a priori estimates.

\textbf{Energy estimates:}  The starting point for our analysis is the basic energy-dissipation equation satisfied by solutions to \eqref{geometric}, which comes in essentially the form of a perturbation of \eqref{fund_en_evolve}.  The control provided by the terms in this basic energy-dissipation equation is insufficient for closing a scheme of a priori estimates, so we must move to a higher-regularity context.  In order to employ the energy-dissipation equality to this end, we can only apply differential operators to \eqref{geometric} that are compatible with the boundary conditions.  This leads us to apply temporal derivatives to \eqref{geometric} since they are indeed compatible with the boundary conditions.  Upon doing so and summing we arrive at an equation of the form 
\begin{equation}\label{sum_1}
 \frac{d}{dt} \seb + \sdbb = \mathscr{N},
\end{equation}
where $\seb$ and $\sdbb$ are given by \eqref{ed_def_1} and \eqref{ed_def_2}, and where $\mathscr{N}$ denotes nonlinear interaction terms.   Of course, temporal derivatives do not commute with the other differential operators in \eqref{geometric}, so the nonlinear terms $\mathscr{N}$ become more complicated as the number of applied derivatives grows.

The identity \eqref{sum_1}, which we derive more precisely in Theorem \ref{linear_energy}, forms the basis of our scheme of a priori estimates.  The key term in $\sdbb$ is the third term, which comes from the linearization of $\sw$ in \eqref{fund_en_evolve}.  It provides dissipative control of the contact line velocity $u\cdot \N = \dt \eta$ and its temporal derivatives.  This is essential in our analysis due to the second-to-last equation in \eqref{geometric}, which provides a Neumann-type boundary condition for $\dt^j \eta$ that is compatible with the linearized mean curvature operator appearing in the third equation.  As we describe later, we crucially exploit this in order to gain higher-regularity control of $\dt^j \eta$ in terms of the dissipation.

The goal of a nonlinear energy method is to control the nonlinear term $\mathscr{N}$ in such a way that it can be absorbed onto the left side.  Roughly speaking, we aim to show that 
\begin{equation}\label{sum_3}
 \abs{\mathscr{N}} \le C \seb^\theta \sdbb \text{ for some } \theta >0,
\end{equation}
which when coupled with a bound of the form $\seb \le \delta$ for $\delta>0$ sufficiently small allows for the nonlinear term to be absorbed onto the left side of \eqref{sum_1}, leading to an inequality of the form 
\begin{equation}\label{sum_4}
 \frac{d}{dt} \seb + \hal \sdbb \le 0.
\end{equation}
This requires two crucial ingredients.  First, the nonlinear terms must not exceed the level of regularity controlled by the energy or dissipation.  Second, the nonlinear terms must obey the structured estimates of the form \eqref{sum_3}; for instance, the estimate $\abs{\mathscr{N}} \le C \sdbb^\theta \seb$ cannot be used to derive \eqref{sum_4}.  For the problem \eqref{geometric} neither of these ingredients is available for the energy and dissipation coming directly from the equations.  This dictates that we seek to augment the control provided by $\seb$ and $\sdbb$ by appealing to auxiliary estimates.  Even with these auxiliary estimates in hand, delicate care is still required to show that these ingredients are available.  Our choice of the coordinate system and of the form of the differential operators in \eqref{geometric} are made for this reason, as they have already proven successful in this regard in the analysis of other free boundary problems \cite{gt_hor,gt_inf,jtw}.

The regularity demands needed to prove an estimate of the form \eqref{sum_3} dictate that we move beyond the estimates available strictly through energy-type estimates.  For example, we could not prove \eqref{sum_3} with weighted estimates for first derivatives of $u$.  To control higher-order derivatives we need elliptic estimate, but the ones we would use in smooth domains are unavailable due to the presence of the corners.  Consequently, we are forced to employ weighted Sobolev estimates, which work well in domains with corners.  However, the estimates for the boundary terms are not available in the basic energy or dissipation, so we must first prove some enhance dissipation estimates in order to form a bridge between the basic energy-dissipation estimate and the weighted elliptic theory.

\textbf{Enhanced dissipation estimates:}  Notice that the basic dissipation $\sdbb$ provides no control of either $\eta$ or $p$.  However, by appealing to the structure of the PDEs in \eqref{geometric} we can achieve such control and thereby derive enhanced dissipation estimates.  Our control of the pressure is based on the technique of viewing it as a Lagrange multiplier associated to the diverge-free condition.  Here we adapt this argument to the context of a function space appropriate for weak solutions to \eqref{geometric}.  Interestingly, the estimate for the pressure decouples from $\eta$, and so the control of $u$ provided by $\sdbb$ is sufficient to control $\ns{\dt^j p}_{H^0(\Omega)}$ for $j=0,1,2$.  We prove this in Theorem \ref{pressure_est}.

With the above control of $u$ and $p$ we formally expect from the third equation in \eqref{geometric} that $\p_1^2 \eta \in H^{-1/2}$ and hence that $\eta \in H^{3/2}$.  It turns out that this formal derivative counting can be made rigorous by using the system \eqref{geometric} to derive a weak elliptic problem for $\eta$.  Here the control of the contact line velocity in the basic dissipation is essential, as it provides control of a Neumann boundary condition for $\eta$.  Thus in Theorem \ref{xi_est} we are able to use elliptic estimates to gain control of $\ns{\dt^j \eta}_{H^{3/2}((-\ell,\ell))}$ for $j=0,1,2$.  Interestingly, these estimates decouple from $p$ and are determined only by $u$ terms in $\sdb$.

The decoupling of these estimates is ultimately related to the fact that \eqref{geometric} involves the Stokes problem for the fluid mechanics rather than the Navier-Stokes problem.  Note, though, that we would be able to achieve the same decoupling with the stationary Navier-Stokes problem.   The control provided by the above estimates allows us to move from the dissipation functional $\sdbb$ to the functional $\sdb$, as defined by \eqref{ed_def_3}.  Roughly speaking, this means that \eqref{sum_1} also holds with $\sdb$ replacing $\sdbb$, at the price of introducing more terms to $\N$.

\textbf{Elliptic estimates in weighted Sobolev spaces:}  The enhanced dissipation estimates for $(u,p,\eta)$ are what we would expect for weak solutions to the $\eta-$coupled Stokes problem in \eqref{geometric}.  Here $\eta-$coupling means that $\eta$ is treated as an unknown with an elliptic operator on the boundary rather than as a forcing term (compare \eqref{A_stokes_stress} vs. \eqref{A_stokes_beta}).  The trade-off for this coupling to a new unknown is that we must consider an extra boundary condition on $\Sigma$; indeed, in \eqref{A_stokes_stress} there are three scalar boundary conditions on $\Sigma$, whereas in \eqref{A_stokes_beta} there are only two because there is one fewer unknown. 

The above suggests that we might be able to invoke the higher-order elliptic regularity results of Agmon-Douglis-Nirenberg \cite{adn_2}.  However, the equilibrium domain (and even the dynamic domain at any time) is piecewise $C^2$ but only globally Lipschitz because of the corners formed at the contact points.  It is well-known that the usual elliptic regularity theory fails in domains with corners, as the corners allow for weak singularities, and the results of \cite{adn_2} are inapplicable.  This means that while we can derive the weak-formulation estimates for $(u,p,\eta) \in H^1 \times H^0 \times H^{3/2}$,  we cannot in general expect even $u \in H^2$.  Moreover, the analysis of \cite{adn_2} only applies if we view $\eta$ as a forcing term, and is thus unsuitable for the $\eta-$coupled problem.

The extension of elliptic regularity theory to domains with corner singularities, which originated in the work of \`{E}skin \cite{eskin}, Lopatinski\u{i} \cite{lopatinskii}, and Kondrat'ev \cite{kondra}, is achieved by replacing the standard Sobolev spaces with their weighted counterparts.  The weights are tuned to cancel out the singular behavior near the corners.  It is now well-understood that the relationship between the weights, the corner angles, and the choice of boundary conditions is determined by the eigenvalues of certain operator pencils: we refer to the books of Grisvard \cite{grisvard,grisvard_2} and Kozlov-Maz'ya-Rossman \cite{kmr_1,kmr_2,kmr_3} for exhaustive surveys.  

The particular choice of boundary conditions we use in \eqref{geometric} does not seem to be directly available in the literature, so we have to develop the theory here.  This is the content of Section \ref{sec_elliptics}, which culminates in Theorem \ref{A_stokes_stress_solve}, the weighted $\eta-$coupled Stokes regularity theory for the triple $(u,p, \eta)$.  Fortunately, the boundary conditions in \eqref{geometric} essentially amount to a compact perturbation of boundary conditions for which the operator pencil is well-understood thanks to the work of Orlt-S\"{a}ndig \cite{orlt_sandig}.  Combining these ingredients with existing estimates in \cite{kmr_1,kmr_2,kmr_3} then allows us to prove the theorem.  It is worth noting that the weighted Sobolev estimates imply estimates in the standard Sobolev spaces $W^{k,p}$,  for $1 \le p < 2$, but the weighted estimates are actually sharper.  Our analysis actually depends crucially on the weight terms, and so we cannot simplify our approach by working directly with the derived $W^{k,p}$ estimates.

In order to invoke the weighted elliptic estimates of Theorem \ref{A_stokes_stress_solve} we must have control of the forcing terms.  It is here that the velocity response function plays another essential role.  The control it provides in $\sdbb$ leads to the $\dt^j \eta \in H^{3/2}$ estimates in $\sdb$, which in turn provide exactly the right level of regularity needed to employ the weighted elliptic estimates.  More precisely, in order to get weighted estimates for $(\dt^j u, \dt^j p, \dt^j \eta)$ we must control $\dt^{j+1} \eta \in H^{3/2}$.  This dictates the count $j=0,1,2$ that we use in $\seb$ and $\sdbb$, as we need weighted estimates for $(u,p,\eta)$ and $(\dt u,\dt p,\dt \eta)$ in order to close our a priori estimates.  This extra control then allows us to move from $\sdb$ to $\D$, as defined by \eqref{ed_def_5}.

\textbf{A priori estimates:}  We combine the above ingredients into a scheme of a priori estimates for solutions to \eqref{geometric}.  Our method is a nonlinear energy method based on the full dissipation $\D$ given by \eqref{ed_def_5}, and the full energy $\E$ given by \eqref{ed_def_4}.  In the above analysis we have not mentioned any improvements of $\seb$ from enhanced or elliptic estimates.  These appear to be unavailable, and so we are only able to enhance the control of the energy functional by integrating the dissipation functional in time.  This leads to the extra terms controlled in $\E$.  Thus for \eqref{geometric} we see that the dissipation functional plays the principal role in our nonlinear energy method.  This is in stark contrast with the nonlinear energy methods we have employed in \cite{gt_hor,gt_inf,jtw}, for which the energy and dissipation each play an essential role.  

In Section \ref{sec_nlin_en} we develop the estimates of the nonlinearities that appear in the energy-dissipation equality, i.e. the term $\mathscr{N}$ in \eqref{sum_1}.  The goal is to prove a variant of \eqref{sum_3}, namely an estimate of the form $\abs{\mathscr{N}} \le C \E^\theta \D$.  Here we make very frequent use of the weighted estimates, which is why we choose to work with them directly, and we employ a number of product estimates from Appendix \ref{app_prods}.  The choice of operators and coordinates is also important here, as it keeps the derivative count of the nonlinear terms low enough.  

In Section \ref{sec_nlin_ell} we develop the estimates for the nonlinear terms appearing in the weighted elliptic estimates.  We prove that (roughly speaking)
\begin{equation}
 \D \ls \sdb + \E^\theta \D.
\end{equation}
This structure is again essential in order to work with an absorbing argument.

Finally, in Section \ref{sec_apriori} we complete the a priori estimates by combining the energy-dissipation equality and the nonlinear estimates.  This results in an estimate of the form 
\begin{equation}
 \frac{d}{dt} \seb + \D \ls \E^\theta \D,
\end{equation}
which then shows that if we know a priori that the solution obeys the estimate $\E \le \delta$ for $\delta$ some universal constant, then 
\begin{equation}\label{sum_5}
 \frac{d}{dt} \seb + \hal \D \le 0. 
\end{equation}
From \eqref{sum_5} we then close our a priori estimates by integrating in time to see that 
\begin{equation}
 \sup_{0 \le t\le T} \E(t) + \int_0^T \D(t) dt \ls \E(0).
\end{equation}
The coercivity of the dissipation over the energy, $\seb \ls \D$, combines with  \eqref{sum_5} to allow us to prove decay: (again, roughly)
\begin{equation}
 \sup_{0 \le t \le T} e^{\beta t} \seb(t) \ls \E(0)
\end{equation}
for some universal $\beta >0$

\subsection{Plan of paper}

The paper is organized as follows.  In Section \ref{sec_weakform} we discuss the weak formulation of \eqref{geometric}.  In Section \ref{sec_basic_ests} we develop the basics on which our nonlinear energy method is based.  This includes the energy-dissipation equation as well as the ingredients needed for the enhanced dissipation estimates.  In Section \ref{sec_elliptics} we develop the weighted Sobolev elliptic regularity theory for \eqref{geometric}.  Section \ref{sec_nlin_en} provides estimates for the nonlinear terms that arise in the energy-dissipation equality.  Section \ref{sec_nlin_ell} develops the estimates for the nonlinearities in the elliptic problems.  In Section \ref{sec_apriori} we complete our a priori estimates and record the proofs of our main results.  Appendix \ref{app_nonlin_form} records the precise form of various nonlinearities that appear in our analysis.  Appendix \ref{app_r} records some estimates for the perturbation function $\R$.  Appendices \ref{app_weight} and \ref{app_prods} provide some key details about weighted Sobolev spaces.  Appendix \ref{app_coeff} records some coefficient estimates. Finally, Appendix \ref{app_surf} records useful properties about equilibrium capillary surfaces.

\subsection{Notation and terminology} 

We now mention some of the notational conventions that we will use throughout the paper.

\textbf{Einstein summation and constants:}  We will employ the Einstein convention of summing over  repeated indices for vector and tensor operations.  Throughout the paper $C>0$ will denote a generic constant that can depend $\Omega$, or any of the parameters of the problem.  We refer to such constants as ``universal.''  They are allowed to change from one inequality to the next.   We will employ the notation $a \ls b$ to mean that $a \le C b$ for a universal constant $C>0$.

\textbf{Norms:} We write $H^r(\Omega)$, $H^r(\Sigma)$, and $H^r(\Sigma_s)$ with $r \in \Rn{}$ for the usual Sobolev spaces.   We will typically write $H^0 = L^2$.  To avoid notational clutter, we will often avoid writing $H^r(\Omega)$, $H^r(\Sigma)$, or $H^r(\Sigma_s)$ in our norms and typically write only $\norm{\cdot}_{r}$.  When we need to refer to norms on the space $L^r$ we will explicitly write $\norm{\cdot}_{L^r}$.  Since we will do this for functions defined on $\Omega$,$\Sigma$, and $\Sigma_s$ this presents some ambiguity.  We avoid this by adopting two conventions.  First, we assume that functions have natural spaces on which they ``live.''  For example, the functions $u$,  $p$, and $\bar{\eta}$ live on $\Omega$, while $\eta$ lives on $\Sigma$. Second, whenever the norm of a function is computed on a space different from the one in which it lives, we will explicitly write the space.  This typically arises when computing norms of traces onto $\Sigma$ of functions that live on $\Omega$.

\section{Weak formulation }\label{sec_weakform}

The purpose of this section is to define a number of useful function spaces and to give a weak formulation of the problem \eqref{geometric}.

\subsection{Some spaces and bilinear forms }

Suppose that $\eta$ is given and that $\A$, $J$, $\N$, etc are determined in terms of it.   Let us define 
\begin{equation}
 \pp{u,v} := \int_\Omega \frac{\mu}{2} \sg_{\A} u : \sg_{\A} v J + \int_{\Sigma_s} \beta (u\cdot \tau) (v\cdot \tau) J.
\end{equation}
We also define 
\begin{equation}
 (\phi,\psi)_{1,\Sigma} := \int_{-\ell}^\ell g \phi \psi + \sigma \frac{\p_1 \phi \p_1 \psi }{(1+\abs{\p_1 \zeta_0}^2)^{3/2}}
\end{equation}
and
\begin{equation}
 [a,b]_\ell := \kappa\left( a(\ell)b(\ell) + a(-\ell)b(-\ell)\right).
\end{equation}

We define the time-dependent spaces
\begin{equation}
 \SH0 := \{ u: \Omega \to \Rn{2} \;\vert\; \sqrt{J} u \in H^0(\Omega)  \},
\end{equation}
\begin{equation}
 \SHz := \{ u: \Omega \to \Rn{2} \;\vert\; \pp{u,u}<\infty, u\cdot \nu=0 \text{ on }\Sigma_s \},
\end{equation}
and
\begin{equation}
 \SHzz := \{ u \in \SHz \;\vert\;  u\cdot \N=0 \text{ on }\Sigma  \}.
\end{equation}
Here we suppress the time-dependence in the notation, though each space depends on $t$ through the dependence of $J,\A,\N$ on $t$.

We also define the time-independent spaces
\begin{equation}
 \Lz = \{ q \in H^0(\Omega) \st \int_{\Omega} q =0\},
\end{equation}
\begin{equation}
 \lz(-\ell,\ell) = \{ g \in H^0((-\ell,\ell)) \st \int_{-\ell}^\ell g =0\},
\end{equation}
\begin{equation}
 W = \{ v \in \Hz \;\vert\; u \cdot \N_0 \in H^1(-\ell,\ell) \cap \lz(-\ell,\ell)  \},
\end{equation}
where $\N_0$ is given by \eqref{n_0_def}, and 
\begin{equation}
 V = \{ v \in W \;\vert\; \diverge{v}=0\}.
\end{equation}
We endow both with the natural inner-product on $W$:
\begin{equation}
 (u,v)_W = \int_\Omega \frac{\mu}{2} \sg u: \sg v + \int_{\Sigma_s} \beta (u\cdot \tau)(v\cdot \tau) + (u\cdot \N_0, v\cdot \N_0)_{1,\Sigma}.
\end{equation}
We then define the space
\begin{equation}
  \W(t):= \{ w \in \Hz \;\vert\; v \cdot \N \in H^1(-\ell,\ell) \cap \lz(-\ell,\ell)   \},
\end{equation}
which we endow with the inner-product
\begin{equation}
 (v,w)_{\W} = \pp{v,w} + (v\cdot \N,w\cdot \N)_{1,\Sigma}.
\end{equation}
We also define the subspace 
\begin{equation}
 \V(t) := \{v \in \W(t) \;\vert\; \diva v = 0\},
\end{equation}

\subsection{Formulation }

We now aim to justify a weak formulation of \eqref{geometric}.  Suppose that $\zeta=\zeta_0 + \eta$ and  $\A$ and $\N$ are determined in terms of $\zeta$.  Then suppose that $(v,q,\xi)$ satisfy
\begin{equation}\label{linear_geometric}
 \begin{cases}
   \diva\Sa(q,v)  =F^1 & \text{in } \Omega \\
 \diva v = F^2 & \text{in }\Omega \\
 \Sa(q,v) \N = g\xi \N -\sigma \p_1\left( \frac{\p_1 \xi}{(1+\abs{\p_1 \zeta_0}^2)^{3/2}} + F^3\right) \N  + F^4& \text{on } \Sigma \\
 (\Sa(q,v)\nu - \beta v)\cdot \tau =F^5 &\text{on }\Sigma_s \\
 v\cdot \nu =0 &\text{on }\Sigma_s \\
 \dt \xi = v \cdot \N  + F^6& \text{on } \Sigma \\
 \kappa \dt \xi(\pm \ell,t) = \mp \sigma \left(\frac{\p_1 \xi}{(1+\abs{\p_1 \zeta_0}^2)^{3/2}} + F^3 \right)(\pm \ell,t) - \kappa F^7(\pm\ell,t).
 \end{cases}
\end{equation}

We have the following integral identity  $(v,q,\xi)$.

\begin{lem}\label{geometric_evolution}
Suppose that $u$ and $\zeta$ are given as above and that $(v,q,\xi)$ are sufficiently regular and satisfy \eqref{linear_geometric}.    Suppose that $w \in \W(t)$.  Then
\begin{multline}\label{ge_0}
  \pp{v,w} - (p,\diva w)_0 + (\xi,w\cdot \N)_{1,\Sigma} + [v\cdot \N,w\cdot \N]_\ell  
 = \int_\Omega F^1 \cdot w J  \\- \int_{\Sigma_s} J (w\cdot \tau)F^5 
- \int_{-\ell}^\ell \sigma  F^3   \p_1 (w \cdot \N) + F^4 \cdot w  
  - [w\cdot \N, F^6 + F^7]_\ell.
\end{multline}
\end{lem}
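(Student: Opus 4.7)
The plan is to test the momentum equation $\diva \Sa(q,v) = F^1$ against the vector field $wJ$ and integrate over $\Omega$, after which the remaining equations of \eqref{linear_geometric} are used to identify each piece of the stated identity. Three geometric ingredients drive the argument: the Piola identity $\p_k(J\A_{jk})=0$ (which follows from $\A = (\nab \Phi)^{-T}$ and $J = \det \nab\Phi$), the surface identity $J\A\hat\nu\, d\sigma = \N\,dx_1$ on $\Sigma$ that pulls back the pushed-forward normal to the reference configuration, and the symmetry of $\sg_\A v$, which allows symmetrization of the contraction against $\naba w$. Applying these, a standard integration by parts transforms the bulk integral into
\begin{equation*}
  \int_\Omega F^1 \cdot w\, J = -\int_\Omega \Sa(q,v) : \naba w\, J + \int_{\p\Omega} (\Sa(q,v) J\A\hat\nu)\cdot w\, d\sigma,
\end{equation*}
and splitting $\Sa(q,v) = qI - \mu \sg_\A v$ in the first term produces $-(q,\diva w)_0 + \frac{\mu}{2}\int_\Omega \sg_\A v : \sg_\A w\, J$ (in the natural $J$-weighted pairing).

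On $\Sigma_s$, the Navier slip boundary condition together with $w\cdot\nu = 0$ (built into membership in $\W(t)\subset\Hz$) collapses the surface integrand $(\Sa(q,v)\nu)\cdot w$ to $\beta(v\cdot\tau)(w\cdot\tau) + F^5(w\cdot\tau)$, the first piece of which combines with the bulk viscous integral to reconstitute $\pp{v,w}$. On $\Sigma$, the surface identity converts the boundary integral into $\int_{-\ell}^\ell (\Sa(q,v)\N)\cdot w\, dx_1$, into which I substitute the stress boundary condition and then integrate the $-\sigma \p_1\bigl(\frac{\p_1\xi}{(1+\abs{\p_1\zeta_0}^2)^{3/2}} + F^3\bigr)(\N\cdot w)$ term by parts in $x_1$. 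The resulting interior contribution combines with $\int g\xi(\N\cdot w)$ to assemble the bilinear form $(\xi,w\cdot\N)_{1,\Sigma}$ plus a leftover $\int_{-\ell}^\ell \sigma F^3 \p_1(w\cdot\N)$ forcing, while the endpoint contributions at $\pm\ell$ involve the factor $\sigma\bigl(\frac{\p_1\xi}{(1+\abs{\p_1\zeta_0}^2)^{3/2}} + F^3\bigr)(\pm\ell)$.

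The contact-point equation in \eqref{linear_geometric} rewrites this endpoint factor as $\mp\kappa(\dt\xi(\pm\ell) + F^7(\pm\ell))$, and the kinematic equation $\dt\xi = v\cdot\N + F^6$ then substitutes $(v\cdot\N)(\pm\ell) + F^6(\pm\ell)$ for $\dt\xi(\pm\ell)$. The $\mp$ sign combines with the sign produced by the $|_{-\ell}^\ell$ evaluation so that both endpoints contribute additively, producing precisely $[v\cdot\N, w\cdot\N]_\ell + [F^6+F^7, w\cdot\N]_\ell$. Collecting everything and rearranging puts the identity in the stated form. Notably, neither the incompressibility constraint $\diva v = F^2$ nor the no-penetration condition $v\cdot\nu = 0$ enters the derivation; these are constraints specifying the function space for $v$ rather than ingredients of the identity.

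The main delicacy is the sign bookkeeping on $\Sigma$, and in particular making sure that the $\mp\kappa$ from the contact-point equation lines up with the $|_{-\ell}^\ell$ evaluation so that the two endpoint contributions combine into the symmetric bracket $[\cdot,\cdot]_\ell$ rather than into a difference. Everything else is a routine integration-by-parts manipulation once the Piola identity and the surface identity for $J\A\hat\nu\, d\sigma$ on $\Sigma$ are in place.
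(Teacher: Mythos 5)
Your proposal is correct and follows essentially the same argument as the paper's proof: testing the momentum equation against $Jw$, invoking the Piola identity $\p_k(J\A_{jk})=0$ and the boundary identity for $J\A\nu$, integrating by parts, using the Navier-slip condition on $\Sigma_s$ and the stress condition on $\Sigma$, performing a further integration by parts in $x_1$, and finally trading the endpoint factor for $\kappa(\dt\xi + F^7)$ via the contact-point equation and then for $\kappa(v\cdot\N + F^6 + F^7)$ via the kinematic equation. Your sign bookkeeping at the endpoints, and your remark that neither $\diva v = F^2$ nor $v\cdot\nu = 0$ enters this particular identity, are both accurate.
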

\begin{proof}
 
We take the dot product of the first equation  in \eqref{linear_geometric} with $J w$ and integrate over $\Omega$ to find that 
\begin{equation}\label{ge_1}
\int_\Omega \diva \Sa(q,v) \cdot w J =:  I  = II := \int_\Omega F^1 \cdot w J.
\end{equation}
We will compute $I$  and then plug into \eqref{ge_1} to deduce \eqref{ge_0}.  In computing, we will utilize the geometric identity $\p_k(J \A_{jk}) =0$ for each $j$.  We will also employ the identity
\begin{equation}\label{ge_4}
 J \A \nu = 
\begin{cases}
 J \nu &\text{on }\Sigma_s \\
 \N/\sqrt{1 +\abs{\p_1 \zeta_0}^2} &\text{on }\Sigma,
\end{cases}
\end{equation}
which follows from a straightforward computation.

Now we turn to the computation of $I$.  The geometric identity and an integration by parts allow us to rewrite
\begin{equation}\label{ge_7}
 I = \int_\Omega \p_k (J \A_{jk} \Sa(q,v)_{ij}) w_i = \int_\Omega -J \A_{jk} \p_k w_i \Sa(q,v)_{ij} + \int_{\p \Omega} (J \A \nu) \cdot (\Sa(q,v) w) 
:= I_1 + I_2.
\end{equation}
We may use the definition of $\Sa(q,v)$ to compute
\begin{equation}
 I_1 = \int_\Omega \frac{\mu}{2} \sga v: \sga w J - q \diva{w} J.
\end{equation}
On the other hand, the first equality in \eqref{ge_4} allows us to compute 
\begin{multline}
 \int_{\Sigma_s} (J \A \nu) \cdot (\Sa(q,v) w) = \int_{\Sigma_s} J \nu \cdot (\Sa(q,v) w) = 
\int_{\Sigma_s} J w \cdot (\Sa(q,v) \nu) \\
= \int_{\Sigma_s} J \left(\beta (v \cdot \tau) (w \cdot \tau) + w\cdot \tau F^5\right). 
\end{multline}
Similarly, the second equality in \eqref{ge_4} shows that
\begin{multline}
 \int_{\Sigma} (J \A \nu) \cdot (\Sa(q,v) w) \\ = 
\int_{-\ell}^\ell (\Sa(q,v) \N)\cdot w = \int_{-\ell}^\ell g \xi (w \cdot \N) - \sigma \p_1 \left( \frac{\p_1 \xi }{(1+\abs{\p_1 \zeta_0}^2)^{3/2}} +F^3\right)w\cdot  \N + F^4 \cdot w.
\end{multline}
We then compute the  second of these terms by using the equations in \eqref{linear_geometric}:
\begin{multline}
 \int_{-\ell}^\ell - \sigma \p_1 \left( \frac{\p_1 \xi }{(1+\abs{\p_1 \zeta_0}^2)^{3/2}} +F^3 \right)w\cdot \N  \\
 =
\int_{-\ell}^\ell \left( \frac{\p_1 \xi }{(1+\abs{\p_1 \zeta_0}^2)^{3/2}} +F^3 \right)\p_1 (w\cdot \N)
- \sigma \left. \left( \frac{\p_1 \xi }{(1+\abs{\p_1 \zeta_0}^2)^{3/2}} +F^3 \right) (w\cdot \N) \right\vert_{-\ell}^\ell 
\end{multline}
and 
\begin{multline}\label{ge_8}
- \sigma \left.  \left( \frac{\p_1 \xi }{(1+\abs{\p_1 \zeta_0}^2)^{3/2}} +F^3 \right)  (w\cdot \N) \right\vert_{-\ell}^\ell =
 \sum_{a=\pm 1} \left(\kappa \dt\xi(a\ell) + \kappa F^7(a\ell)  \right)(w\cdot \N(a\ell) ) \\
 = 
 \sum_{a=\pm 1} \kappa (v \cdot \N(a\ell))(w\cdot \N(a\ell) ) + \kappa (w\cdot\N(a\ell)) ( F^6(a\ell) + F^7(a\ell) ).
\end{multline}

Combining \eqref{ge_7}--\eqref{ge_8}, we find that
\begin{multline}\label{ge_9}
 I =   \int_\Omega \frac{\mu}{2} \sga v: \sga w J + \int_{\Sigma_s} J \beta (v \cdot \tau)(w \cdot \tau)  + \int_{\ell}^\ell g \xi (w\cdot \N) + \sigma  \frac{\p_1 \xi }{(1+\abs{\p_1 \zeta_0}^2)^{3/2}} \p_1(w \cdot \N)  \\
+ \sum_{a=\pm 1} \kappa (v \cdot \N(a\ell)) (w \cdot \N(a\ell)) 
- \int_\Omega q \diva{w} J + \int_{\Sigma_s} J (w\cdot \tau)F^5 
+ \int_{-\ell}^\ell \sigma  F^3  \p_1( w \cdot \N) + F^4 \cdot w  \\
  + [w\cdot \N, F^6 + F^7]_\ell.
\end{multline}
Plugging this into \eqref{ge_1} yields \eqref{ge_0}.

\end{proof}

The lemma motivates our definition of a weak solution.

\begin{dfn}
A weak solution to \eqref{linear_geometric} is a triple $(v,q,\xi)$
that satisfies
\begin{multline}\label{weak_form}
  \pp{v,w} - (q,\diva w)_0 + (\xi,w\cdot \N)_{1,\Sigma} + [v\cdot \N,w\cdot \N]_\ell  
 = \int_\Omega F^1 \cdot w J  \\- \int_{\Sigma_s} J (w\cdot \tau)F^5 
- \int_{-\ell}^\ell \sigma  F^3   \p_1(w \cdot \N) + F^4 \cdot w  
  - [w\cdot \N,F^6 +F^7 ]_\ell.
\end{multline}
 for a.e. $t \in [0,T]$ and for every $w \in \W(t)$.
\end{dfn}

\section{Basic estimates }\label{sec_basic_ests}
 
\subsection{The energy estimate }

We have the following equation for the evolution of the energy of $(v,q,\xi)$.

\begin{thm}\label{linear_energy}
Suppose that $\zeta=\zeta_0 + \eta$ is given and  $\A$ and $\N$ are determined in terms of $\zeta$.  Suppose that $(v,q,\xi)$ satisfy \eqref{linear_geometric}.    Then
\begin{multline} \label{linear_energy_0}
 \dt \left(   \int_{-\ell}^\ell \frac{g}{2} \abs{\xi}^2 + \frac{\sigma}{2} \frac{\abs{\p_1 \xi}^2}{(1+\abs{\p_1 \zeta_0}^2)^{3/2}} \right) + \frac{\mu}{2} \int_\Omega \abs{\sga v}^2 J 
+\int_{\Sigma_s} \beta J \abs{v \cdot \tau}^2 + [v\cdot \N,v\cdot \N]_\ell  \\
= \int_\Omega F^1 \cdot v J + q F^2 J - \int_{\Sigma_s}  J (v \cdot \tau)F^5 \\
-  \int_{-\ell}^\ell \sigma  F^3  \p_1(v \cdot \N) + F^4 \cdot v  -  g \xi F^6 -  \sigma \frac{\p_1 \xi \p_1 F^6}{(1+\abs{\p_1 \zeta_0}^2)^{3/2}}
  - [v\cdot \N, F^6 + F^7]_\ell.
\end{multline}
\end{thm}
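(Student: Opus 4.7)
The plan is a direct test-function argument based on Lemma~\ref{geometric_evolution}. I would apply the integral identity \eqref{ge_0} with the test function $w = v$ itself; this is admissible because the boundary conditions $v \cdot \nu = 0$ on $\Sigma_s$ together with the sufficient regularity hypothesis place $v$ in $\W(t)$.

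With $w = v$, three substitutions then reduce \eqref{ge_0} to \eqref{linear_energy_0}. First, $\pp{v,v} = \frac{\mu}{2}\int_\Omega |\sga v|^2 J + \int_{\Sigma_s} \beta J |v\cdot \tau|^2$, reproducing the positive-definite viscous and slip dissipation on the left side of \eqref{linear_energy_0}, while the contact-point term $[v\cdot \N,v\cdot \N]_\ell$ is already of the desired form. Second, the bulk divergence equation $\diva v = F^2$ converts the pressure pairing into $-(q, \diva v)_0 = -\int_\Omega q F^2 J$, which is then transposed to the right-hand side with the appropriate sign. Third, and most crucially, the kinematic relation $\dt \xi = v\cdot \N + F^6$ on $\Sigma$ lets me write
\begin{equation*}
(\xi, v\cdot \N)_{1,\Sigma} = (\xi, \dt \xi)_{1,\Sigma} - (\xi, F^6)_{1,\Sigma},
\end{equation*}
and the symmetric structure of $(\cdot,\cdot)_{1,\Sigma}$ together with the product rule identifies the first term as the perfect time derivative
\begin{equation*}
(\xi, \dt \xi)_{1,\Sigma} = \frac{d}{dt}\int_{-\ell}^\ell \frac{g}{2}|\xi|^2 + \frac{\sigma}{2}\frac{|\p_1\xi|^2}{(1+|\p_1 \zeta_0|^2)^{3/2}},
\end{equation*}
which is exactly the time-derivative term on the left of \eqref{linear_energy_0}. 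Transposing the second piece $-(\xi, F^6)_{1,\Sigma}$ to the right-hand side produces precisely the $-g\xi F^6$ and $-\sigma \p_1 \xi \p_1 F^6/(1 + |\p_1\zeta_0|^2)^{3/2}$ contributions inside the line integral of \eqref{linear_energy_0}, once one accounts for the leading minus sign there distributing through all four integrands.

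There is no substantive obstacle: the result is essentially a bookkeeping identity that expresses the energy balance for the linear forced problem, and all the nontrivial geometric work — notably the identity $\p_k(J\A_{jk}) = 0$ and the boundary identification $J\A\nu|_\Sigma = \N/\sqrt{1+|\p_1\zeta_0|^2}$ — has already been absorbed into Lemma~\ref{geometric_evolution}. The only care needed is to track signs carefully and to verify that the substitution $w = v$ is legitimate; both are routine under the stated regularity hypotheses.
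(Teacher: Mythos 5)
Your proof is correct and follows essentially the same route as the paper: substitute $w = v$ into Lemma~\ref{geometric_evolution}, use $\diva v = F^2$ for the pressure pairing, and rewrite $(\xi, v\cdot\N)_{1,\Sigma}$ via the kinematic relation $\dt\xi = v\cdot\N + F^6$ and the symmetry of $(\cdot,\cdot)_{1,\Sigma}$ to extract a perfect time derivative. The only point handled implicitly in both your argument and the paper's is that $v$ actually belongs to $\W(t)$ (in particular that $v\cdot\N$ has zero mean), which is subsumed under the stated regularity hypotheses.
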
 
\begin{proof}
We use $v$ as a test function in Lemma \ref{geometric_evolution} to see that 
\begin{multline}\label{linear_energy_1}
  \pp{v,v} - (p,\diva v)_0 + (\xi,v\cdot \N)_{1,\Sigma} + [v\cdot \N,v\cdot \N]_\ell  
 = \int_\Omega F^1 \cdot v J  \\
 - \int_{\Sigma_s} J (v\cdot \tau)F^5 
- \int_{-\ell}^\ell \sigma F^3  \p_1(v \cdot \N) + F^4 \cdot v  
  - [v\cdot \N, F^6 + F^7]_\ell.
\end{multline}
We then compute
\begin{multline}
(\xi,v\cdot \N)_{1,\Sigma} =  (\xi,\dt \xi - F^6)_{1,\Sigma} \\
 =  \dt \left(   \int_{-\ell}^\ell \frac{g}{2} \abs{\xi}^2 + \frac{\sigma}{2} \frac{\abs{\p_1 \xi}^2}{(1+\abs{\p_1 \zeta_0}^2)^{3/2}} \right) - \int_{-\ell}^\ell  g \xi F^6 +  \sigma \frac{\p_1 \xi \p_1 F^6}{(1+\abs{\p_1 \zeta_0}^2)^{3/2}}
\end{multline}
and plug into \eqref{linear_energy_1} to deduce \eqref{linear_energy_0}

\end{proof}

We now apply this to solutions to \eqref{geometric}.  First we define
\begin{equation}\label{Q_def}
 \Q(y,z) := \int_0^z  \R(y,r) dr \Rightarrow \frac{\p \Q}{\p z}(y,z) =  \R(y,z),
\end{equation}
where $\R$ is defined by \eqref{R_def}.

\begin{cor}\label{basic_energy}
 Suppose that $(u,p,\eta)$ solve \eqref{geometric}.  Let $\Q$ be given by \eqref{Q_def}.  Then
\begin{multline}\label{be_0}
 \dt \left( \int_{-\ell}^\ell \frac{g}{2} \abs{\eta}^2 + \frac{\sigma}{2} \frac{\abs{\p_1 \eta}^2}{(1+\abs{\p_1 \zeta_0}^2)^{3/2}} \right) + \frac{\mu}{2} \int_\Omega \abs{\sga u}^2 J 
+\int_{\Sigma_s} \beta J \abs{u \cdot \tau}^2 + \sum_{a=\pm 1} \kappa \abs{u\cdot \N(a\ell)}^2
\\
 =- \dt \left( \int_{-\ell}^\ell \sigma \Q(\p_1 \zeta_0,\p_1 \eta) \right) - [u\cdot \N,\swh(\dt \eta)]_\ell .
\end{multline}
\end{cor}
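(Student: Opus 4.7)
The plan is to directly specialize Theorem \ref{linear_energy} to the solution triple $(u,p,\eta)$ of \eqref{geometric}, and then convert the remaining forcing contribution into a total time derivative using the antiderivative $\Q$ of $\R$.

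First I would match \eqref{geometric} with the general linear system \eqref{linear_geometric} in order to identify the appropriate choices of $F^1,\dots,F^7$. Reading off the equations line by line, we have $F^1=0$, $F^2=0$, $F^5=0$, $F^4=0$ (since the capillary nonlinearity is entirely captured by $F^3$), and $F^6=0$ (since the kinematic condition in \eqref{geometric} is homogeneous). For the remaining two we have
\begin{equation*}
F^3 = \R(\p_1 \zeta_0,\p_1 \eta) \quad \text{and} \quad F^7 = \swh(\dt \eta),
\end{equation*}
where the expression for $F^7$ comes from isolating $\kappa \dt \eta$ in the last equation of \eqref{geometric} and matching with the corresponding equation in \eqref{linear_geometric}.

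Next I would plug these choices into \eqref{linear_energy_0}. Every $F^j$ vanishes except $F^3$ and $F^7$, so the right-hand side reduces to
\begin{equation*}
-\int_{-\ell}^\ell \sigma \R(\p_1 \zeta_0,\p_1 \eta)\, \p_1(u\cdot \N) - [u\cdot \N,\swh(\dt \eta)]_\ell.
\end{equation*}
The second boundary term already matches the last term of \eqref{be_0}. The final step is to rewrite the bulk integral as a pure time derivative. This is the central observation: by the kinematic equation $\dt \eta = u \cdot \N$ on $\Sigma$, we have $\p_1(u\cdot \N) = \dt \p_1 \eta$. Combined with $\p_z \Q(y,z) = \R(y,z)$ from the definition \eqref{Q_def} and the fact that $\p_1 \zeta_0$ is time-independent, the chain rule yields
\begin{equation*}
\R(\p_1 \zeta_0,\p_1 \eta)\, \dt \p_1 \eta = \dt \Q(\p_1 \zeta_0,\p_1 \eta),
\end{equation*}
and integrating in $x_1$ gives the first term on the right-hand side of \eqref{be_0}. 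Finally, since $[u\cdot \N, u\cdot \N]_\ell = \sum_{a=\pm 1}\kappa \abs{u \cdot \N(a\ell)}^2$, the boundary dissipation in \eqref{linear_energy_0} takes the form displayed in \eqref{be_0}, completing the identity.

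The proof is essentially bookkeeping, so there is no serious obstacle. The only step requiring slight care is the identification of $F^7$ with $\swh(\dt \eta)$ rather than with $-\swh(\dt \eta)$ (which depends on the sign convention in the last equation of \eqref{linear_geometric}), and the observation that the kinematic boundary condition is exactly what allows the $F^3$ term to become a total derivative via $\Q$. No regularity issue arises because the corollary assumes a genuine smooth solution.
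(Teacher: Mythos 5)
Your proof is correct and is exactly the paper's argument: the paper likewise identifies $F^3=\R(\p_1\zeta_0,\p_1\eta)$, $F^7=\swh(\dt\eta)$, and $F^i=0$ otherwise, and then reduces the bulk term to a total time derivative via $\p_z\Q=\R$ and the kinematic relation $\p_1(u\cdot\N)=\dt\p_1\eta$. You have simply written out the ``simple computation with $\Q$'' that the paper leaves implicit.
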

\begin{proof}
According to \eqref{geometric}, $v =u$, $q =p$, and $\xi = \eta$ solve \eqref{linear_geometric} with $F^i=0$ for $i\neq 3,7$ and $F^3 =   \R(\p_1 \zeta_0,\p_1 \eta)$, $F^7 = \swh(\dt \eta)$.  The  equality  \eqref{be_0} then follows directly from Theorem \ref{linear_energy} and a simple computation with $\Q$.
\end{proof}

\subsection{The pressure estimate }

The basic energy estimate does not control the pressure.  We get this control from another estimate.  Recall that 
\begin{equation}
 \Hzz =\{ u \in H^1(\Omega) \st u\cdot \nu =0 \text{ on } \Sigma_s \text{ and } u \cdot \N_0 =0 \text{  on } \Sigma\}.
\end{equation}

\begin{prop}\label{pressure_v}
Let $p \in \Lz$.  Then there exists $v \in \Hzz$ such that $\diverge v = p$ and 
\begin{equation}
 \ns{v}_1 \ls \ns{p}_0.
\end{equation}
\end{prop}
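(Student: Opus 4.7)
The plan is to invoke the classical Bogovskii construction of a bounded right inverse for the divergence. First I would observe that $\Omega$ is a bounded Lipschitz domain: by our standing hypotheses, $\p\V$ is $C^2$ away from $(\pm \ell, L)$ and $\zeta_0 \in C^\infty([-\ell,\ell])$, so $\p\Omega$ is piecewise $C^2$ and meets itself at interior angles $\omega \in (0,\pi)$ at the two corners $(\pm \ell, \zeta_0(\pm \ell))$. Near these corners $\p\Omega$ is locally the graph of a Lipschitz function, so $\Omega$ is indeed bounded and Lipschitz.

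On any bounded Lipschitz domain, the Bogovskii theorem (see, for example, Galdi's \emph{An Introduction to the Mathematical Theory of the Navier-Stokes Equations}, Theorem III.3.1) produces a bounded linear right inverse of the divergence from $\Lz$ into $H^1_0(\Omega;\Rn{2})$. Applying this to the given $p$, whose mean-zero condition is precisely the compatibility required for solvability (since $\int_\Omega \diverge w = \int_{\p\Omega} w\cdot \nu = 0$ for any $w \in H^1_0$), I would obtain $v \in H^1_0(\Omega;\Rn{2})$ satisfying $\diverge v = p$ in $\Omega$ together with $\norm{v}_{H^1} \ls \norm{p}_{L^2}$, which upon squaring is the asserted bound $\ns{v}_1 \ls \ns{p}_0$.

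It then remains to verify that $v \in \Hzz$. Since $v \in H^1_0(\Omega;\Rn{2})$, its trace vanishes on all of $\p\Omega$, so $v\cdot \nu = 0$ on $\Sigma_s$ is immediate. On $\Sigma$ the non-unit normal $\N_0 = -\p_1\zeta_0 e_1 + e_2$ is a positive scalar multiple of the unit outward normal $\nu$ (both are orthogonal to $\Sigma$), so $v\cdot \N_0 = 0$ on $\Sigma$ as well. Hence $v \in \Hzz$ and the proof is complete.

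The one point that might give pause is whether Bogovskii's construction is genuinely compatible with the corner geometry at $(\pm\ell,\zeta_0(\pm\ell))$. Fortunately the standard proof requires only the Lipschitz property of $\p\Omega$: one covers $\Omega$ by finitely many subdomains each star-shaped with respect to an interior ball, applies the explicit Bogovskii integral kernel on each piece, and patches together while redistributing mass so that the mean-zero constraint is preserved on each local component. No modification is needed for the present setting, so the result follows directly and the ``proof'' really amounts to a citation plus the observation that the stronger trace-zero condition implies membership in $\Hzz$.
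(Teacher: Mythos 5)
Your proof is correct, but it takes a genuinely different route from the paper.

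The paper's proof solves the Neumann problem $-\Delta\varphi = p$ in $\Omega$ with $\nabla\varphi\cdot\nu = 0$ on all of $\p\Omega$ (the condition $\nabla\varphi\cdot\N_0 = 0$ on $\Sigma$ being equivalent to $\nabla\varphi\cdot\nu = 0$ there, since $\N_0$ is a positive multiple of $\nu$), invokes $H^2$ regularity for the Neumann problem in planar domains with convex corners (citing Tice's separate preprint on this), and sets $v = \nabla\varphi$. Membership of $v$ in $\Hzz$ then follows because the Neumann condition gives $v\cdot\nu = 0$ on $\Sigma_s$ and $v\cdot\N_0 = 0$ on $\Sigma$. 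You instead reach for the Bogovskii operator on a bounded Lipschitz domain to produce $v \in H^1_0(\Omega;\Rn{2})$ with $\diverge v = p$ and the $H^1$ bound, and then note that $H^1_0(\Omega) \subset \Hzz$ trivially. Your approach is more ``off the shelf'' (Bogovskii is in standard references), yields the strictly stronger conclusion $v \in H^1_0$, and needs only the Lipschitz property of $\p\Omega$, whereas the paper's approach additionally exploits convexity of the two corners (i.e.\ $\omega < \pi$, which is guaranteed here by the Young relation assumption) to get the $H^2$ estimate for $\varphi$. The paper's construction produces a gradient field, but a check of the downstream uses of the proposition (Theorems \ref{pressure_lagrange} and \ref{pressure_est}) shows this extra structure is never used, so your route is a drop-in replacement.
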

\begin{proof}
Consider the elliptic problem 
\begin{equation}
\begin{cases}
-\Delta \varphi =p &\text{in }\Omega \\
\nab \varphi \cdot \nu =0 & \text{on } \Sigma_s \\
\nab \varphi \cdot \N_0 =0 & \text{on } \Sigma. 
\end{cases}
\end{equation}
By the Neumann problem analysis in planar domains with convex corners (see for instance \cite{tice_neumann}), we know that there exists a unique solution $\varphi \in H^2(\Omega) \cap \Lz$ and that $\ns{\varphi}_2 \ls \ns{p}_0$.  Set $v=\nab \varphi$.
\end{proof}

We can parlay this into a solution of $\diva v =p$.  We need a preliminary result first.  Consider the matrix
\begin{equation}\label{M_def}
 M = K \nab \Phi = (J \A^T)^{-1}. 
\end{equation}
We view multiplication by $M$ as a linear operator.  The following lemma summarizes some of the properties of this operator.

\begin{prop}\label{M_properties}
Assume that $\eta \in H^r$ for $r >3/2$.  Let $M$ be defined by \eqref{M_def}.  The following hold.
\begin{enumerate}
 \item $M: H^0(\Omega) \to \SH0$ is a bounded linear isomorphism, and 
\begin{equation}
 \norm{M u}_{\SH0} \ls (1+ \norm{\eta}_{r-1/2}) \norm{u}_0.
\end{equation}

 \item $M : \Hz \to \SHz$ is a bounded linear isomorphism, and 
\begin{equation}
 \norm{M u}_{\SHz} \ls (1+ \norm{\eta}_{r}) \norm{u}_1.
\end{equation}

 \item $M : \Hzz \to \SHzz$ is a bounded linear isomorphism.

 \item Let $u \in H^1(\Omega)$.  Then $\diverge u =p$ if and only if $\diva(Mu)=K p$.

 \item $M : W \to \W(t)$ and $M: V \to \V(t)$ are bounded linear isomorphisms.
\end{enumerate}
\end{prop}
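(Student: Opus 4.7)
The overall plan is to unpack the explicit matrix
\[M = K\nabla\Phi = \begin{pmatrix} K & 0 \\ KA & 1 \end{pmatrix}, \qquad M^{-1} = J\A^T = \begin{pmatrix} J & 0 \\ -A & 1 \end{pmatrix},\]
which is invertible pointwise since under the smallness of $\eta$ one has $J \ge 1/2$ uniformly, so that each assertion reduces to a multiplier bound in an appropriate Sobolev norm together with a trace identity on $\Sigma_s$ or $\Sigma$.

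For (1) and (2), I would apply the coefficient estimates of Appendix \ref{app_coeff}: under the hypothesis $\eta \in H^r(-\ell,\ell)$ with $r > 3/2$, the extension $\bar\eta$ is controlled in a high-enough Sobolev space (through the Poisson extension) that $A$, $J$, and $K$ together with their first derivatives sit in $L^\infty(\Omega)$, with norms bounded by factors of the form $1 + \|\eta\|_{r-1/2}$ or $1 + \|\eta\|_r$ as appropriate. Pointwise multiplication by such coefficients is then bounded on $H^0$ and $H^1$, which gives both halves of the isomorphism. For (1), the $\SH0$ weight is handled by the identity $\sqrt{J}M = (\nabla\Phi)/\sqrt{J}$, whose entries remain bounded.

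The boundary assertions in (2), (3), and (5) hinge on two pointwise trace identities. On $\Sigma_s$ the map $\Phi$ is the identity (on $\V_{btm}$ where $\phi\equiv 0$, and on the side walls $x_1 = \pm\ell$ where the outward normal is $\pm e_1$), and a direct computation from the matrix form yields $(Mu)\cdot\nu = K(u\cdot\nu)$ on $\Sigma_s$, so the no-penetration condition transfers. The more delicate identity is on $\Sigma$: using $W|_\Sigma = \phi'|_\Sigma = 1$ and $\bar\eta|_\Sigma = \eta$, together with the chain-rule relation $\p_1\bar\eta|_\Sigma = \eta' - \zeta_0'\p_2\bar\eta|_\Sigma$, one simplifies $A - \p_1\zeta$ on $\Sigma$ to $-\zeta_0'\cdot J|_\Sigma$, and combining with $KJ = 1$ produces the clean identity $(Mu)\cdot\N = u\cdot\N_0$ on $\Sigma$. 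This is exactly the identity that makes $\Hzz \leftrightarrow \SHzz$ work in (3), and in (5) it transfers the $H^1 \cap \lz(-\ell,\ell)$ regularity of the normal trace between $u\cdot\N_0$ and $(Mu)\cdot\N$.

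For (4), direct computation in coordinates gives
\[\diva(Mu) = K\diverge u + u_1\bigl(\p_1 K + K^2\p_2 A\bigr),\]
and since $\p_1 K = -K^2 \p_1 J$ the extra term vanishes exactly when $\p_1 J = \p_2 A$. This is the $j=1$ component of the Piola identity $\p_k(J\A_{jk}) = 0$ already invoked in Lemma \ref{geometric_evolution} (the first row of $J\A$ is $(J,-A)$), and I would verify it directly using $\p_1 W = -W\zeta_0'/\zeta_0$ and $\p_2 W = \phi'/\zeta_0$ from the definition $W = \phi/\zeta_0$. Part (5) then assembles (2), (4), and the trace identity, with the mean-zero condition preserved because $\int_{-\ell}^\ell (Mu)\cdot\N = \int_{-\ell}^\ell u\cdot\N_0$. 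The main technical pinch point is the trace computation on $\Sigma$: the identity $(Mu)\cdot\N = u\cdot\N_0$ is precisely what makes the spaces match, and a sign error or dropped term would derail parts (3) and (5); everything else is either routine matrix algebra or a direct appeal to the coefficient estimates.
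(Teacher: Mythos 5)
Your proposal is correct and takes essentially the same route as the paper: both reduce the boundary assertions to the pointwise identities $Mu\cdot\nu = K\,(u\cdot\nu)$ on $\Sigma_s$ and $Mu\cdot\N = u\cdot\N_0$ on $\Sigma$ (you derive the latter from the chain rule on $\bar\eta|_\Sigma = \eta$, whereas the paper inverts the already-established relation $J\A\N_0 = \N$), and both reduce item (4) to the $j=1$ component of the Piola identity $\p_k(J\A_{jk}) = 0$, i.e.\ $\p_1 J = \p_2 A$, which you verify in coordinates while the paper instead computes $\diverge(M^{-1}v) = J\,\diva v$. One small overstatement to be wary of: with $\eta \in H^r$ for $r > 3/2$ the entries $A,J,K$ lie in $L^\infty$, but their gradients involve $\nabla^2\bar\eta$ and are generally not in $L^\infty$ unless $r > 5/2$, so the $H^1$-multiplier bound in item (2) should rest on Sobolev product estimates (as the paper indicates) rather than on pointwise bounds for $\nabla M$.
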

\begin{proof}
The boundedness of $M$ on $\SH0$ and on $\SHz$ follows directly from standard product estimates in Sobolev spaces.  This proves the first item.  To complete the proof of the second we note that 
a straightforward computation reveals that
\begin{equation}
 K\nab \Phi^T \nu = K\nu \text{ on } \{ x \in \p \Omega \;\vert \;  x_1 = \pm \ell, x_2 \ge 0  \} \text{ and } K\nab \Phi^T \nu = \nu \text{ on } \{x \in \p \Omega \; \vert \; x_2 < 0 \}.
\end{equation}
Hence, on $\Sigma_s$ we have that
\begin{equation}
 M u \cdot \nu =0 \Leftrightarrow  u \cdot (K \nab \Phi^T \nu) =0 \Leftrightarrow u \cdot \nu =0.
\end{equation}
This then proves the second item.

We also have that on $\Sigma$
\begin{equation}
 J \A \N_0 = \N \Rightarrow \N_0 = K (\A)^{-1} \N = K \nab \Phi^T \N,
\end{equation}
which implies that
\begin{equation}
 u \cdot \N_0 = u \cdot K \nab \Phi^T \N = K \nab \Phi u \cdot \N = M u \cdot \N.
\end{equation}
This then proves the third item.

To prove the fourth item we compute
\begin{equation}
 \diverge(M^{-1} v) = \p_j( J\A_{ij}v_i) = J \A_{ij} \p_j v_i = J \diva{v}.
\end{equation}
Thus if $Mu = v$ we have that
\begin{equation}
 \diverge{u} = p \text{ if and only if } \diva(Mu) = Kp.
\end{equation}

The fifth item follows easily from the previous four.

\end{proof}

We now combine Propositions \ref{pressure_v} and \ref{M_properties} to deduce an orthogonal decomposition of $\W(t)$.  Using the Riesz representation theorem we may define the operator $\Q^1_t: \Lz \to \W(t)$ via
\begin{equation}
\ip{\Q^1_t p }{w}_{\W(t)} = \int_{\Omega} p \diva{w} J. 
\end{equation}
We may estimate
\begin{equation}
 \ns{\Q^1_t p}_{\W(t)} = \int_{\Omega} p \diva{\Q^1_t p} J \ls \norm{p}_{0}\norm{\Q^1_t p}_{\W(t)} \Rightarrow \norm{\Q^1_t p}_{\W(t)} \ls \norm{p}_0.
\end{equation}
On the other hand, for $p \in \Lz$ we use Proposition \ref{pressure_v} to find $u \in \Hzz \subset W$ such that $\diverge{u} =p$ and $\ns{u}_1 \ls \ns{p}_0$.  Proposition \ref{M_properties} then implies that if we set $v = Mu \in \W(t)$ we have that $J \diva{v} = p$ and $\ns{v}_{\W(t)} \ls \ns{p}_0$.   Then 
\begin{equation}
 \int_\Omega \abs{p}^2 = \int_\Omega p \diva{v}J = \ip{\Q^1_t p }{v}_{\W(t)} \ls \norm{\Q^1_t p}_{\W(t)} \norm{v}_{\W(t)} \ls \norm{\Q^1_t p}_{\W(t)} \norm{p}_0 \Rightarrow \norm{p}_0 \ls \norm{\Q^1_t p}_{\W(t)}.
\end{equation}
Hence
\begin{equation}\label{Qt_closed}
 \norm{p}_0 \ls \norm{\Q^1_t p}_{\W(t)} \ls \norm{p}_0.
\end{equation}

We deduce from \eqref{Qt_closed} that the range of $\Q^1_t$ is closed in $\W(t)$ and hence that $\Q^1_t$ is an isomorphism from $\Lz$ to $\ran(\Q^1_t) \subseteq \W(t)$.  Now we argue as per usual (see for instance \cite{GT_lwp}) to deduce that 
\begin{equation}\label{Qt_decomp}
 (\ran(\Q^1_t))^\bot = \V(t) \text{ and hence } \W(t) = \V(t) \oplus_{\W(t)} \ran(\Q^1_t).
\end{equation}
Indeed, the inclusion $\V(t) \subseteq  (\ran(\Q^1_t))^\bot$ is trivial.  To prove the opposite inclusion we suppose $w \in (\ran(\Q^1_t))^\bot$.  Then 
\begin{equation}
 0 = \int_{\Omega} p \diva{w} J \text{ for every } p \in \Lz \Rightarrow \diva{w} J = C
\end{equation}
for some constant $C \in \Rn{}$.  However, 
\begin{equation}
 C \abs{\Omega} = \int_\Omega \diva{w} J = \int_{\Sigma_s} J w \cdot \nu + \int_\Sigma \frac{\N}{\sqrt{1+\abs{\p_1 \zeta_0}^2}}\cdot w = 0 + \int_{-\ell}^\ell \N \cdot w =0
\end{equation}
so $C=0$ and $w \in \V(t)$.

Now we use \eqref{Qt_decomp} to deduce the following.

\begin{thm}\label{pressure_lagrange}
 Suppose that $\Lambda \in (\W(t))^\ast$ and that $\Lambda(v) =0$ for all $v \in \V(t)$.  Then there exists a unique $p \in \Lz$ such that 
\begin{equation}
 \Lambda(w) = \ip{\Q^1_t p}{w}_{\W(t)} = \int_\Omega p \diva{w} J \text{ for all }w \in \W(t).
\end{equation}
Moreover, 
\begin{equation}
 \norm{p}_0 \ls \norm{\Lambda}_{(\W(t))^\ast}.
\end{equation}
\end{thm}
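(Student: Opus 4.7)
The plan is to obtain this Lagrange-multiplier statement as a direct consequence of the Riesz representation theorem applied to $\W(t)$, combined with the orthogonal decomposition $\W(t) = \V(t) \oplus_{\W(t)} \ran(\Q^1_t)$ from \eqref{Qt_decomp} and the isomorphism property of $\Q^1_t : \Lz \to \ran(\Q^1_t)$ recorded in \eqref{Qt_closed}. All the heavy lifting has already been done in the paragraph that precedes the statement; what remains is a short sequence of Hilbert-space manipulations.

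First, since $\Lambda \in (\W(t))^\ast$ and $\W(t)$ is a Hilbert space, the Riesz representation theorem produces a unique $F \in \W(t)$ with
\begin{equation*}
\Lambda(w) = \ip{F}{w}_{\W(t)} \text{ for every } w \in \W(t), \quad \text{and } \norm{F}_{\W(t)} = \norm{\Lambda}_{(\W(t))^\ast}.
\end{equation*}
The hypothesis $\Lambda(v) = 0$ for all $v \in \V(t)$ is then exactly the statement that $F \in \V(t)^\perp$ (with respect to $\ip{\cdot}{\cdot}_{\W(t)}$). By \eqref{Qt_decomp}, $\V(t)^\perp = \ran(\Q^1_t)$, so $F \in \ran(\Q^1_t)$.

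Second, the bound \eqref{Qt_closed} shows that $\Q^1_t : \Lz \to \ran(\Q^1_t) \subseteq \W(t)$ is a bounded linear isomorphism onto its range. Hence there exists a unique $p \in \Lz$ such that $F = \Q^1_t p$. Substituting this into the Riesz identity and invoking the defining formula for $\Q^1_t$ gives
\begin{equation*}
\Lambda(w) = \ip{\Q^1_t p}{w}_{\W(t)} = \int_\Omega p \diva w \, J \text{ for every } w \in \W(t),
\end{equation*}
which is exactly the claimed representation. Uniqueness of $p$ follows from the injectivity of $\Q^1_t$.

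Finally, the norm bound is immediate: using \eqref{Qt_closed} and the Riesz identity,
\begin{equation*}
\norm{p}_0 \ls \norm{\Q^1_t p}_{\W(t)} = \norm{F}_{\W(t)} = \norm{\Lambda}_{(\W(t))^\ast}.
\end{equation*}
There is no real obstacle here — once \eqref{Qt_closed} and \eqref{Qt_decomp} are in hand, the proof is essentially a bookkeeping exercise with the Riesz map. The only place requiring mild care is to verify that the orthogonal complement relation is interpreted in the inner product $\ip{\cdot}{\cdot}_{\W(t)}$ that varies with $t$, but this is consistent with the construction of $\Q^1_t$ via the same inner product, so the argument goes through uniformly in $t$.
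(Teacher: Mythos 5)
Your proof is correct and is exactly the argument the paper leaves implicit when it says ``Now we use \eqref{Qt_decomp} to deduce the following'' — namely, Riesz representation on $\W(t)$, the closed-range fact from \eqref{Qt_closed} to upgrade $(\ran(\Q^1_t))^\perp = \V(t)$ to $\V(t)^\perp = \ran(\Q^1_t)$, and then the isomorphism property of $\Q^1_t$ onto its range to recover $p$ and the norm bound. Nothing is missing.
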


We can also use this to deduce the following theorem.

\begin{thm}\label{pressure_est}
If $(v,\xi)$ satisfy 
\begin{multline}
  \pp{v,w} + (\xi,w\cdot \N)_{1,\Sigma} + [v\cdot \N,w\cdot \N]_\ell  
 = \int_\Omega F^1 \cdot w J   - \int_{\Sigma_s} J (w\cdot \tau)F^5 \\
- \int_{-\ell}^\ell \sigma  F^3 \p_1  (w \cdot \N) + F^4 \cdot w  
  -  [w\cdot \N, F^6 + F^7]_\ell
\end{multline}
for all $w \in \V(t)$, then there exists a unique $q \in \Lz$ such that \eqref{weak_form} is satisfied.  Moreover, 
\begin{equation}\label{pressure_est_0}
 \norm{q}_0 \ls \norm{v}_{1} + \norm{\mathcal{F}}_{(H^1)^\ast},
\end{equation}
where $\mathcal{F} \in (H^1)^\ast$ is given by 
\begin{equation}
 \br{\mathcal{F},w} = \int_\Omega F^1 \cdot w J - \int_{\Sigma_s} J (w\cdot \tau)F^5.
\end{equation}
\end{thm}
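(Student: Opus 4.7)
The plan is to realize $q$ as a Lagrange multiplier via Theorem \ref{pressure_lagrange}, and then to get the sharper estimate \eqref{pressure_est_0} by testing with a pressure-corrector that is strong enough to annihilate every boundary forcing not appearing on the right-hand side.

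\textbf{Step 1 (build the functional and invoke Theorem \ref{pressure_lagrange}).} I would first define $\Lambda : \W(t) \to \Rn{}$ by
\begin{equation*}
\Lambda(w) := \pp{v,w} + (\xi, w\cdot \N)_{1,\Sigma} + [v\cdot \N, w\cdot \N]_\ell
- \int_\Omega F^1\cdot w\, J + \int_{\Sigma_s} J(w\cdot\tau)F^5 + \int_{-\ell}^\ell \!\! \bigl(\sigma F^3 \p_1(w\cdot\N) + F^4\cdot w\bigr) + [w\cdot\N,\, F^6+F^7]_\ell.
\end{equation*}
Standard product and trace estimates show $\Lambda \in (\W(t))^\ast$, and the assumption in the theorem is precisely that $\Lambda$ vanishes on $\V(t)$. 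Theorem \ref{pressure_lagrange} then produces a unique $q \in \Lz$ with $\Lambda(w) = \int_\Omega q\, \diva w\, J$ for every $w \in \W(t)$, which rearranged is \eqref{weak_form}. This gives existence and uniqueness of the pressure.

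\textbf{Step 2 (construction of a compactly-vanishing test function).} The crude bound $\|q\|_0 \ls \|\Lambda\|_{(\W(t))^\ast}$ from Theorem \ref{pressure_lagrange} would introduce $\xi, F^3, F^4, F^6, F^7$ into the estimate, so instead I would test against a carefully chosen $w$. Using the Bogovski\u{\i} construction for the bounded Lipschitz domain $\Omega$, find $u \in H^1_0(\Omega)$ with $\diverge u = q$ and $\|u\|_1 \ls \|q\|_0$; this is available because $q \in \Lz$. Then set $w := Mu$ with $M$ as in \eqref{M_def}. Pointwise invertibility of $M$ gives $w = 0$ on $\p\Omega$, so $w \in \W(t)$ (and in fact $w \in \SHzz$). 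By the identity $\diverge(M^{-1}w) = J\diva w$ used in the proof of Proposition \ref{M_properties}, one has $\diva w = K \diverge u = Kq$, and multiplicativity gives $\|w\|_1 \ls \|q\|_0$.

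\textbf{Step 3 (extract the estimate).} Inserting this $w$ into the identity $\int_\Omega q\diva w\, J = \Lambda(w)$: every boundary integral in $\Lambda(w)$ vanishes, because $w \equiv 0$ on $\p\Omega$ kills all of the $\xi$, $F^3$, $F^4$, $F^5$, $F^6$, $F^7$ contributions as well as $[v\cdot \N, w\cdot \N]_\ell$. On the left, $KJ \equiv 1$ gives $\int_\Omega q\diva w\, J = \int_\Omega q(Kq)J = \|q\|_0^2$. Thus
\begin{equation*}
\|q\|_0^2 = \pp{v,w} - \int_\Omega F^1\cdot w\, J \ls \bigl(\|v\|_1 + \|\mathcal{F}\|_{(H^1)^\ast}\bigr)\|w\|_1 \ls \bigl(\|v\|_1 + \|\mathcal{F}\|_{(H^1)^\ast}\bigr)\|q\|_0,
\end{equation*}
where the bound on $\int_\Omega F^1 \cdot w\, J$ follows because that integral equals $\br{\mathcal{F},w}$ for $w$ vanishing on $\Sigma_s$. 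Dividing by $\|q\|_0$ yields \eqref{pressure_est_0}.

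\textbf{Main obstacle.} The nontrivial step is Step 2: the standard divergence-corrector supplied by Proposition \ref{pressure_v} lives in $\Hzz$, which annihilates only the \emph{normal} traces and therefore would leave $F^4$ and $F^5$ uncontrolled. The sharper estimate forces us to use an $H^1_0$-valued Bogovski\u{\i} corrector, and the essential structural point that makes this transport cleanly into $\W(t)$ is that $M$ is pointwise invertible so that $w = Mu$ inherits the full boundary vanishing of $u$.
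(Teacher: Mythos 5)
Your proof is correct, but it takes a genuinely different route from the paper's and in fact handles one point more carefully.  Step~1 (define $\Lambda$ and invoke Theorem~\ref{pressure_lagrange}) matches the paper, aside from a sign: your $\Lambda$ is the negative of the paper's, which is the right sign to match $(q,\diva w)_0$ in \eqref{weak_form} (the paper's $\Lambda$ as written would produce $-q$, a trivial slip).  For the estimate, the paper instead constructs $w\in\SHzz$ from Proposition~\ref{pressure_v} (Neumann solve, giving $w=\nab\varphi$ with zero normal traces only) composed with $M$; this $w$ kills all the terms containing $w\cdot\N$ but leaves $\int_{-\ell}^\ell F^4\cdot w$ alive, since $w$ is merely tangent to $\Sigma$, not zero there.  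The paper's written derivation silently drops that term, and its $\mathcal{F}$ omits $F^4$ — an apparent oversight (it is repairable, because Proposition~\ref{ee_f4} controls $F^4$, but the paper does not say this).  Your Bogovski\u{\i} corrector $u\in H^1_0$ with $\diverge u=q$, $\|u\|_1\ls\|q\|_0$, pushed through $M$, gives $w=Mu$ that vanishes on all of $\partial\Omega$; since $H^1_0\subset\Hzz\subset W$ and $M$ preserves this full boundary vanishing, $w\in\W(t)$, $\diva w=Kq$, and $\int_\Omega q\,\diva w\,J=\|q\|_0^2$.  This annihilates every boundary term (including $F^4$ and even $F^5$), so your estimate is cleaner and arguably sharper than the paper's.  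The extra cost is that you invoke the Bogovski\u{\i} operator for bounded Lipschitz domains, a standard tool but one the paper does not set up (it relies on its own Proposition~\ref{pressure_v}); one small imprecision in your commentary is that with the paper's $\SHzz$ corrector the $F^5$ term is not actually ``uncontrolled'' — it appears in $\mathcal{F}$ and is estimated — it is only $F^4$ that genuinely falls outside $\mathcal{F}$.
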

\begin{proof}
Define $\Lambda \in (\W(t))^\ast$ via 
\begin{multline}
 \Lambda(w) = 
  -\pp{v,w} - (\xi,w\cdot \N)_{1,\Sigma} - [v\cdot \N,w\cdot \N]_\ell  
 + \int_\Omega F^1 \cdot w J  \\- \int_{\Sigma_s} J (w\cdot \tau)F^5 
- \int_{-\ell}^\ell \sigma  F^3 \p_1  (w \cdot \N) + F^4 \cdot w  
  - [w\cdot \N, F^6 + F^7]_\ell.
\end{multline}
Then $\Lambda(v) =0$ for all $v\in \V(t)$, so Theorem \ref{pressure_lagrange} yields $q$ and shows that \eqref{weak_form} is satisfied.

Using Propositions \ref{pressure_v} and \ref{M_properties} we can find $w \in \SHzz \subseteq \W(t)$ such that $J \diva w = q$.  Then
\begin{multline}
\ns{q}_0 =   -\pp{v,w}   + \int_\Omega F^1 \cdot w J  - \int_{\Sigma_s} J (w\cdot \tau)F^5  \\
\ls \norm{w}_{\W(t)} \left( \norm{v}_{1} + \norm{\mathcal{F}}_{(H^1)^\ast} \right) 
\ls \norm{q}_{0} \left( \norm{v}_{1} + \norm{\mathcal{F}}_{(H^1)^\ast} \right),
\end{multline}
which yields the estimate \eqref{pressure_est_0}.

\end{proof}

\subsection{The $\eta$ dissipation estimate  }

In what follows we write
\begin{equation}
 \oH^s(-\ell,\ell) = H^s(-\ell,\ell) \cap \oH^0(-\ell,\ell)
\end{equation}
for $s \ge 0$. Two important facts about these spaces are recorded below.

\begin{prop}\label{ohs_interp}
Let $(X,Y)_{s,q}$ denote the (real) interpolant of Banach spaces $X,Y$ with parameters $s\in (0,1)$, $q \in [1,\infty]$ (see for instance Triebel's book \cite{triebel} for definitions).  We have that 
\begin{equation}
 (\oH^0,\oH^1)_{s,2} = \oH^s
\end{equation}
and 
\begin{equation}
 ((\oH^0)^\ast,(\oH^1)^\ast)_{s,2}  = (\oH^s)^\ast.
\end{equation}
\end{prop}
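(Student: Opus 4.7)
The plan is to reduce both identities to standard tools from real interpolation theory by exploiting the fact that the mean-zero condition is preserved by a uniformly bounded projection. Concretely, I would introduce the averaging operator
\begin{equation}
 Pf := f - \frac{1}{2\ell} \int_{-\ell}^\ell f,
\end{equation}
which defines a bounded projection $P: H^s(-\ell,\ell) \to H^s(-\ell,\ell)$ for each $s \in [0,1]$, is given by the same formula on every $H^s$, satisfies $P^2 = P$, and has range exactly $\oH^s$ with $P|_{\oH^s} = \mathrm{id}$. Writing $i: \oH^s \hookrightarrow H^s$ for the inclusion, the pair $(i, P)$ is a coretraction/retraction of $\oH^s$ inside $H^s$, compatibly in $s$.

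For the first identity, I would invoke the retraction/coretraction theorem of real interpolation (see e.g.\ Triebel, Section 1.2.4): if $P$ is a bounded projection on both endpoints of a compatible couple $(X_0,X_1)$, then $(PX_0, PX_1)_{s,2} = P((X_0,X_1)_{s,2})$. Applying this to $(X_0,X_1) = (H^0, H^1)$ together with the classical identity $(H^0, H^1)_{s,2} = H^s$ yields
\begin{equation}
 (\oH^0, \oH^1)_{s,2} = P(H^0, H^1)_{s,2} = P(H^s) = \oH^s,
\end{equation}
which is the first assertion.

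For the second identity, I would appeal to the duality theorem for real interpolation (see e.g.\ Triebel, Section 1.11.2): for any regular interpolation couple $(X_0, X_1)$ and $1 < q < \infty$,
\begin{equation}
 ((X_0,X_1)_{s,q})^\ast = (X_0^\ast, X_1^\ast)_{s,q'}, \qquad \tfrac{1}{q}+\tfrac{1}{q'}=1.
\end{equation}
With $q=2$, I apply this to the couple $(\oH^0,\oH^1)$. Regularity amounts to showing that $\oH^1$ is dense in $\oH^0$. This is obtained by approximating any $f \in \oH^0$ in $H^0$ by smooth functions $g_n \in C^\infty([-\ell,\ell])$, then noting that $Pg_n \in \oH^1$ and $Pg_n \to Pf = f$ in $\oH^0$ by continuity of $P$. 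Combined with the first identity, this gives
\begin{equation}
 ((\oH^0)^\ast, (\oH^1)^\ast)_{s,2} = ((\oH^0,\oH^1)_{s,2})^\ast = (\oH^s)^\ast.
\end{equation}

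The only non-mechanical step is verifying the regularity hypothesis for the duality theorem, i.e.\ the density of $\oH^1$ in $\oH^0$; the rest is a direct application of well-known abstract interpolation machinery to the concrete projection $P$.
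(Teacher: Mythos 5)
Your proof is correct and follows essentially the same route as the paper's, which simply cites Theorem 1.17.1/1 of Triebel (interpolation of complemented subspaces) for the first identity and Theorem 1.11.2/1 (duality) for the second. You have unpacked the first citation by exhibiting the mean-zero projection $P$ explicitly and invoking the retraction/coretraction machinery on which Triebel's subspace theorem rests, and you have supplied the density check needed for the regularity hypothesis in the duality theorem, which the paper leaves implicit; both additions are accurate and the argument closes cleanly.
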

\begin{proof}
 The former follows from Theorem 1.17.1/1 of \cite{triebel} and the latter follows from Theorem 1.11.2/1 of \cite{triebel}.
\end{proof}

\begin{remark}
 Here we have used real interpolation, but analogous results hold for complex interpolation.
\end{remark}

We can now use these and usual elliptic estimates to get an interpolated elliptic estimate.


\begin{thm}\label{eta_elliptic}
Suppose that $\xi \in \oH^1(-\ell,\ell)$ is the unique solution to  
\begin{equation}\label{eta_elliptic_01}
 (\xi,\theta)_{1,\Sigma} + [h,\theta]_\ell = \br{F,\theta}
\end{equation}
for all $\theta \in \oH^{1}(-\ell,\ell)$.  If $F \in  (\oH^s(-\ell,\ell))^\ast$ for $s \in [0,1]$ then $\xi \in \oH^{2-s}(-\ell,\ell)$ and 
\begin{equation}\label{eta_elliptic_0}
 \ns{\xi}_{\oH^{2-s}} \ls [h]_\ell^2 + \ns{F}_{(\oH^s)^\ast}.
\end{equation}
\end{thm}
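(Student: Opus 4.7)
The plan is to prove the estimate by isolating the endpoint cases $s=0$ and $s=1$ and then interpolating, splitting the contributions of $F$ and $h$ along the way. By linearity, write $\xi = \xi_F + \xi_h$ where $\xi_F \in \oH^1$ solves $(\xi_F,\theta)_{1,\Sigma} = \br{F,\theta}$ and $\xi_h \in \oH^1$ solves $(\xi_h,\theta)_{1,\Sigma} + [h,\theta]_\ell = 0$, each for all $\theta \in \oH^1$. Existence and uniqueness at the level of $\oH^1$ follow from Lax--Milgram applied to the bilinear form $(\cdot,\cdot)_{1,\Sigma}$, which is coercive on $\oH^1$ thanks to the $g|\xi|^2$ term and Poincar\'e's inequality.

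The $s=1$ endpoint (which is also the basic energy estimate) is obtained by testing against $\theta = \xi$: the coercivity of $(\cdot,\cdot)_{1,\Sigma}$, the 1D trace bound $|\xi(\pm\ell)| \ls \|\xi\|_{H^1}$, and Cauchy--Schwarz applied to both $\br{F,\xi}$ and $[h,\xi]_\ell$ give
\begin{equation*}
\ns{\xi}_{\oH^1} \ls \ns{F}_{(\oH^1)^\ast} + \bs{h}.
\end{equation*}
For the $s=0$ endpoint I identify $F \in (\oH^0)^\ast$ with its Riesz representative $\tilde F \in \oH^0 \subset L^2$ and convert the weak form into a classical Neumann problem on $(-\ell,\ell)$. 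Decomposing an arbitrary $\theta \in H^1$ as $\theta_0 + c$ with $\theta_0 \in \oH^1$ and $c$ constant, and using $\xi \in \oH^1$, a direct computation yields the extended identity
\begin{equation*}
(\xi,\theta)_{1,\Sigma} + [h,\theta]_\ell = \int_{-\ell}^\ell (\tilde F + \lambda)\theta, \quad \forall \theta \in H^1,
\end{equation*}
with Lagrange multiplier $\lambda = \kappa(h(\ell) + h(-\ell))/(2\ell)$ arising from the zero-mean constraint on the test functions. This is the weak form of the scalar Neumann problem
\begin{equation*}
g\xi - \sigma \p_1\!\left(\frac{\p_1\xi}{(1+\abs{\p_1 \zeta_0}^2)^{3/2}}\right) = \tilde F + \lambda, \quad \pm\sigma \frac{\p_1\xi(\pm\ell)}{(1+\abs{\p_1\zeta_0(\pm\ell)}^2)^{3/2}} = -\kappa h(\pm\ell),
\end{equation*}
with smooth and uniformly positive coefficient $\sigma/(1+\abs{\p_1\zeta_0}^2)^{3/2}$. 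Standard 1D elliptic regularity (e.g.\ via the difference-quotient method, which is particularly simple in one dimension) gives $\xi \in H^2$ with $\ns{\xi}_{\oH^2} \ls \|\tilde F + \lambda\|_{L^2}^2 + \bs{h} \ls \ns{F}_{(\oH^0)^\ast} + \bs{h}$.

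To conclude, I interpolate. The solution operator $T : F \mapsto \xi_F$ is linear and bounded $(\oH^1)^\ast \to \oH^1$ and $(\oH^0)^\ast \to \oH^2$. By Proposition~\ref{ohs_interp} together with its evident analogue $(\oH^1, \oH^2)_{s,2} = \oH^{2-s}$ (obtained by restricting the standard interpolation identity $(H^1,H^2)_{s,2} = H^{2-s}$ to the closed zero-mean subspace), real interpolation yields $T : (\oH^s)^\ast \to \oH^{2-s}$ boundedly with the quantitative bound $\ns{\xi_F}_{\oH^{2-s}} \ls \ns{F}_{(\oH^s)^\ast}$. For $\xi_h$ the stronger endpoint estimate $\ns{\xi_h}_{\oH^2} \ls \bs{h}$ from Step~3, combined with the continuous embedding $\oH^2 \hookrightarrow \oH^{2-s}$ for $s \in [0,1]$, already supplies the claim. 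Adding the two contributions gives \eqref{eta_elliptic_0}.

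The main technical point is in Step~3: one must correctly extract the Lagrange multiplier $\lambda$ so that the abstract weak form can be recognized as a classical Neumann BVP with $L^2$ data; once this identification is made, the 1D nature of the problem renders the $H^2$ estimate routine, and the interpolation step proceeds by soft functional-analytic arguments.
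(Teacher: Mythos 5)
Your proposal is correct and follows essentially the same route as the paper: split $\xi = \xi_F + \xi_h$, establish endpoint estimates ($\oH^1$ from testing with $\xi$, $\oH^2$ from reading the weak form as a Neumann BVP with $L^2$ right-hand side), interpolate via Proposition~\ref{ohs_interp} for the $F$-contribution, and handle the $h$-contribution by its full regularity plus embedding. The only substantive difference is that you spell out the $s=0$ endpoint in detail — extracting the Lagrange multiplier $\lambda = \kappa(h(\ell)+h(-\ell))/(2\ell)$ so that the extended identity holds against all of $H^1$ and the problem becomes a genuine Neumann BVP — whereas the paper leaves this implicit; your computation of $\lambda$ and the resulting boundary conditions are correct, and the rest (1D elliptic regularity, interpolation of the solution operator between the two endpoints) is the standard functional-analytic bookkeeping the paper also invokes.
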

\begin{proof}
We sketch the proof in several steps.

\emph{Step 1 - Elliptics at the endpoints with $h=0$}

Suppose for now that $h=0$.  We get the following estimates.  If $F \in (\oH^0)^\ast = \oH^0$, then $\xi \in \oH^2$ and 
\begin{equation}
 \ns{\xi}_{\oH^2} \ls  \ns{F}_{(\oH^0)^\ast}.
\end{equation}
On the other hand, if  $F \in (\oH^1)^\ast$ then we get no improvement of regularity for $\xi$, but we have the estimate 
\begin{equation}
 \ns{\xi}_{\oH^1} \ls  \ns{F}_{(\oH^1)^\ast}. 
\end{equation}

\emph{Step 2 - Elliptics  with $h=0$}

Again assume $h=0$.  We may then apply Proposition \ref{ohs_interp} and the usual the usual theory of operator interpolation to deduce that 
if $F \in (\oH^s)^\ast$, then $\xi \in \oH^{2-s}$ and 
\begin{equation}
 \ns{\xi}_{\oH^{2-s}} \ls  \ns{F}_{(\oH^s)^\ast}.
\end{equation}

\emph{Step 3 - Elliptics  with $F =0$}

Now we consider the case $F=0$.  In this case a standard argument reveals that  $\xi \in C^\infty$ and
\begin{equation}
 \ns{\xi}_{\oH^k} \le C_k [h]_\ell^2
\end{equation}
for any $k \ge 0$, where $C_k >0$ does not depend on the solution or $h$.

\emph{Step 4 - synthesis}

We now combine Steps 3 and 4 to deduce that \eqref{eta_elliptic_0} holds.

\end{proof}

The derivative operator $\p_1$ is a bounded operator from $H^1(-\ell,\ell)$ to $L^2(-\ell,\ell)$ and from $L^2(-\ell,\ell)$ to  $(H_0^1(-\ell,\ell))^\ast$, and so the usual theory of interpolation guarantees that 
\begin{equation}
\p_1 : H^{1/2}(-\ell,\ell) \to [L^2(-\ell,\ell),H_0^1(-\ell,\ell)]_{1/2}^\ast = (H_{00}^{1/2}(-\ell,\ell))^\ast.
\end{equation}
is a bounded linear operator.  The trouble is that $H_{00}^{1/2}(-\ell,\ell) \subset H^{1/2}(-\ell,\ell)$ (for a proof of this and precise definitions we refer to \cite{lions_magenes_1}), and so the previous theorem is not ideal for dealing with $F$ of the from 
\begin{equation}
 \br{F,\theta} = \int_{-\ell}^\ell f \p_1 \theta.
\end{equation}
As a result, we need the following variant.

\begin{thm}\label{eta_elliptic_var}
Suppose that $\xi \in \oH^1(-\ell,\ell)$ is the unique solution to  
\begin{equation}\label{eta_elliptic_var_01}
 (\xi,\theta)_{1,\Sigma}   =  \int_{-\ell}^\ell f \p_1 \theta
\end{equation}
for all $\theta \in \oH^{1}(-\ell,\ell)$, where $f \in H^{1/2}(-\ell,\ell)$.  Then $\xi \in H^{3/2}(-\ell,\ell)$ and 
\begin{equation}\label{eta_elliptic_var_02}
 \ns{\xi}_{3/2} \ls \ns{f}_{1/2}.
\end{equation}
\end{thm}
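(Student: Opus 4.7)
The plan is to use the weak equation \eqref{eta_elliptic_var_01} to extract a strong-form ODE for $\xi$ on $(-\ell,\ell)$, and then bootstrap regularity using the fact that differentiation trades one derivative of regularity on the interval. Throughout write $a(x) = (1+|\p_1\zeta_0(x)|^2)^{-3/2}$, a smooth function on $[-\ell,\ell]$ bounded above and below by positive constants, so that $(\xi,\theta)_{1,\Sigma} = \int_{-\ell}^\ell g\xi\theta + \sigma a \p_1\xi\p_1\theta$.

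\emph{Step 1: $H^1$ estimate.} Take $\theta = \xi$ as a test function in \eqref{eta_elliptic_var_01}. Since $(\xi,\xi)_{1,\Sigma} \gtrsim \ns{\xi}_1$ (by Poincaré on $\oH^1$), and $|\int f\p_1\xi| \le \|f\|_{L^2}\|\p_1\xi\|_{L^2}$, we obtain $\|\xi\|_1 \lesssim \|f\|_{L^2} \lesssim \|f\|_{1/2}$.

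\emph{Step 2: Upgrade to all test functions in $H^1$.} For any $\theta \in H^1(-\ell,\ell)$ write $\theta = \tilde\theta + c$ with $c = \frac{1}{2\ell}\int\theta$ and $\tilde\theta \in \oH^1$. Because $\int\xi = 0$, we have $(\xi,c)_{1,\Sigma} = gc\int\xi = 0$, and $\p_1 c = 0$, so \eqref{eta_elliptic_var_01} holds with $\theta$ in place of $\tilde\theta$. Testing against $\theta \in C_c^\infty(-\ell,\ell)$ yields the distributional identity
\begin{equation}\label{eta_elliptic_var_ode}
g\xi - \sigma\p_1(a\p_1\xi) = -\p_1 f \quad \text{in } \mathcal{D}'(-\ell,\ell).
\end{equation}

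\emph{Step 3: Antiderivative trick.} Define $w := \sigma a\p_1\xi - f \in L^2(-\ell,\ell)$. Then \eqref{eta_elliptic_var_ode} reads $\p_1 w = g\xi$ in distributions. Since $\xi \in H^1$ from Step~1, we have $\p_1 w = g\xi \in H^1(-\ell,\ell)$, and combined with the $L^2$ bound $\|w\|_{L^2} \lesssim \|\p_1\xi\|_{L^2} + \|f\|_{L^2} \lesssim \|f\|_{1/2}$, standard antiderivative reasoning on a bounded interval gives $w \in H^2(-\ell,\ell)$ with
\begin{equation*}
\|w\|_{H^{1/2}} \le \|w\|_{H^1} \lesssim \|w\|_{L^2} + \|\p_1 w\|_{L^2} \lesssim \|\xi\|_{H^1} + \|f\|_{L^2} \lesssim \|f\|_{1/2}.
\end{equation*}

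\emph{Step 4: Conclusion.} Rearranging the definition of $w$ yields $\sigma a\p_1\xi = w + f$. Since $w\in H^{1/2}$ and $f\in H^{1/2}$, and $\sigma a$ is a smooth, uniformly positive multiplier on $[-\ell,\ell]$ (so division by $\sigma a$ preserves $H^{1/2}$), we conclude $\p_1\xi \in H^{1/2}(-\ell,\ell)$ with $\|\p_1\xi\|_{1/2} \lesssim \|w\|_{1/2} + \|f\|_{1/2} \lesssim \|f\|_{1/2}$. Combined with the $H^1$ bound of Step~1, this gives $\xi \in H^{3/2}(-\ell,\ell)$ together with the desired estimate \eqref{eta_elliptic_var_02}.

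The main subtlety is that the functional $\theta\mapsto\int f\p_1\theta$ with $f\in H^{1/2}$ is \emph{not} naturally an element of $(\oH^{1/2})^\ast$ (because $H^{1/2}$ does not support boundary traces), so Theorem \ref{eta_elliptic} does not apply directly. The reason the argument above succeeds is that we never try to integrate by parts on the boundary: instead we extract the strong-form ODE on the open interval and use $w = \sigma a\p_1\xi - f$ as a pure bookkeeping device, exploiting that $\p_1$ is an isomorphism $H^s/\mathbb{R} \to H^{s-1}_{\mathrm{mean\ zero}}$ on a bounded interval.
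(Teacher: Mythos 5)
Your proof is correct and follows essentially the same route as the paper's: derive the $H^1$ bound by testing with $\xi$, pass to the distributional ODE on the open interval by subtracting means from compactly supported test functions, and then observe that $w = \sigma a\,\p_1\xi - f$ (the paper calls it $\chi$) satisfies $\p_1 w = g\xi \in H^1$ and hence lies in $H^2$, which upgrades $\p_1\xi = (\sigma a)^{-1}(w+f)$ to $H^{1/2}$. One small point in your favor: you correctly use the weight $a = (1+|\p_1\zeta_0|^2)^{-3/2}$ from the definition of $(\cdot,\cdot)_{1,\Sigma}$, whereas the paper's proof has a typographical slip writing the exponent as $-1/2$; this does not affect the structure of either argument.
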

\begin{proof}
Using $\theta = \xi$ as a test function in \eqref{eta_elliptic_var_01} provides us with the estimate 
\begin{equation}\label{eta_elliptic_var_1}
 \norm{\xi}_1 \ls \norm{f}_0.
\end{equation}

Now let $\varphi \in C_c^\infty(-\ell,\ell)$ and let $\bar{\varphi} = \int_{-\ell}^\ell \varphi$.  Then $\varphi - \bar{\varphi} \in \oH^1(-\ell,\ell)$ and hence 
\begin{equation}
 \int_{-\ell}^\ell \sigma \frac{\p_1 \xi}{\sqrt{1+\abs{\p_1 \zeta_0}^2}} \p_1 (\varphi -\bar{\varphi})  + g \xi (\varphi -\bar{\varphi}) = \int_{-\ell}^\ell f \p_1 (\varphi -\bar{\varphi}).
\end{equation}
Since 
\begin{equation}
 \p_1 (\varphi -\bar{\varphi}) = \p_1 \varphi \text{ and } \int_{-\ell}^\ell g \xi  \bar{\varphi}  = g \bar{\varphi} \int_{-\ell}^\ell \xi =0 
\end{equation}
we then have that 
\begin{equation}
 \int_{-\ell}^\ell \sigma \frac{\p_1 \xi}{\sqrt{1+\abs{\p_1 \zeta_0}^2}} \p_1 \varphi   + g \xi \varphi  = \int_{-\ell}^\ell f \p_1 \varphi 
\end{equation}
for every $\varphi \in C_c^\infty(-\ell,\ell)$.  From this we immediately deduce that $\chi:= \sigma \p_1 \xi (1+\abs{\p_1 \zeta_0}^2)^{-1/2} -f$ is weakly differentiable with 
\begin{equation}
\p_1 \chi = g \xi \in H^1(-\ell,\ell).
\end{equation}
Thus $\chi  \in H^2(-\ell,\ell)$ and
\begin{equation}\label{eta_elliptic_var_2}
 \norm{\chi}_{2} \le \norm{\chi}_0 + \norm{\p_1 \chi}_{1}   \ls \norm{\xi}_1 + \norm{f}_0 + \norm{g \xi}_1 \ls \norm{f}_0,
\end{equation}
where in the last inequality we have used \eqref{eta_elliptic_var_1}.

Now we may estimate 
\begin{equation}
 \norm{\p_1 \xi}_{1/2} = \norm{\sqrt{1+\abs{\p_1 \zeta_0}^2} (f + \chi ) }_{1/2} \ls \norm{f+\chi}_{1/2} \ls \norm{f}_{1/2} + \norm{\chi}_{1/2} \ls \norm{f}_{1/2}.
\end{equation}
From this estimate we immediately deduce \eqref{eta_elliptic_var_02}.

\end{proof}

Now we use Theorems \ref{eta_elliptic} and \ref{eta_elliptic_var} to get the $\eta$ dissipation estimate.

\begin{thm}\label{xi_est}
Suppose that $(v,\xi)$ satisfy 
\begin{multline} 
  \pp{v,w}  + (\xi,w\cdot \N)_{1,\Sigma} + [v\cdot \N,w\cdot \N]_\ell  
 = \int_\Omega F^1 \cdot w J  \\- \int_{\Sigma_s} J (w\cdot \tau)F^5 
- \int_{-\ell}^\ell \sigma F^3 \p_1  (w \cdot \N) + F^4 \cdot w  
  - [w\cdot \N, F^6 + F^7]_\ell 
\end{multline}
for all $w \in \V(t)$.  Then for each  $\theta \in \oH^1(-\ell,\ell)$ then there exists $w[\theta] \in \V(t)$ such that the following hold:
\begin{enumerate}
 \item $w[\theta]$ depends linearly on $\theta$,
 \item $w[\theta]\cdot \N =\theta$ on $\Sigma$,
 \item we have the estimates
\begin{equation}\label{xi_est_02}
 \ns{w[\theta]}_{1} \ls \ns{\theta}_{\oH^{1/2}} \text{ and } \ns{w[\theta]}_{\W(t)} \ls \ns{\theta}_{\oH^{1}}, 
\end{equation}
 \item we have the identity
\begin{equation}\label{xi_est_00} 
 (\xi,\theta)_{1,\Sigma} + [h,\theta]_\ell = \br{\mathcal{G},\theta} - \int_{-\ell}^\ell \sigma  F^3 \p_1 \theta,
\end{equation}
where $\mathcal{G}$ and $h$ are defined as follows.  First,  $h$ is given by 
\begin{equation}
 [h,\theta]_\ell = [v\cdot \N,\theta]_\ell -  [ F^6 +F^7 ,\theta]_\ell.
\end{equation}
Second,  $\mathcal{G} \in (\oH^{1/2})^\ast$ is defined via
\begin{equation}
 \br{\mathcal{G},w[\theta]} =  - \pp{v,w[\theta]} + \br{\mathcal{F},w[\theta]},
\end{equation}
with  $\mathcal{F} \in (H^1)^\ast$ given by 
\begin{equation}
 \br{\mathcal{F},w} =    \int_\Omega F^1 \cdot w J - \int_{\Sigma_s} J (w\cdot \tau)F^5  - \int_{-\ell}^\ell F^4 \cdot w. 
\end{equation}
\end{enumerate}
Consequently,  $\xi$ satisfies 
\begin{equation}\label{xi_est_01}
  \ns{\xi}_{\oH^{3/2}} \ls  \ns{v}_1 +[v\cdot \N]_\ell^2 + \ns{\mathcal{F}}_{(H^1)^\ast} +  \ns{F_3}_{1/2} + [ F^6 +F^7 ]_\ell^2.
\end{equation}
\end{thm}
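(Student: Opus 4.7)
\medskip

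\noindent\textbf{Proof proposal.} The plan has three stages: construct the test function $w[\theta]$, derive the weak elliptic identity \eqref{xi_est_00} by substitution, and then invoke the scalar boundary-elliptic estimates from Theorems \ref{eta_elliptic} and \ref{eta_elliptic_var}.

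The construction of $w[\theta]$ will be done on the fixed domain and then transported by $M$. Given $\theta \in \oH^1(-\ell,\ell)$, I would first produce $u[\theta] \in V$ (the flat-domain space) such that $\diverge u[\theta] = 0$ in $\Omega$, $u[\theta] \cdot \nu = 0$ on $\Sigma_s$, and $u[\theta] \cdot \N_0 = \theta$ on $\Sigma$. The zero-mean condition $\theta \in \oH^0$ is exactly the compatibility condition $\int_\Sigma u[\theta] \cdot \nu\, d\mathcal{H}^1 = \int_{-\ell}^\ell \theta = 0$, so such a lifting exists by a standard extension/Bogovskii construction and can be made linear in $\theta$. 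Standard regularity of the divergence problem with boundary data gives the two scales of estimate
\begin{equation*}
\ns{u[\theta]}_1 \ls \ns{\theta}_{\oH^{1/2}}, \qquad \ns{u[\theta]}_W \ls \ns{\theta}_{\oH^1},
\end{equation*}
the second following by working one half-derivative higher on the boundary. Then I set $w[\theta] := M u[\theta]$, and Proposition \ref{M_properties} guarantees that $w[\theta] \in \V(t)$, that $w[\theta] \cdot \N = u[\theta] \cdot \N_0 = \theta$, and that the estimates \eqref{xi_est_02} transfer to $w[\theta]$ up to a constant depending on $\norm{\eta}_r$ (absorbed into $\ls$).

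For the identity \eqref{xi_est_00}, since $w[\theta] \in \V(t)$ I would substitute it into the weak equation satisfied by $(v,\xi)$, using $w[\theta] \cdot \N = \theta$ to simplify both the $(\xi, w[\theta]\cdot\N)_{1,\Sigma}$ term into $(\xi,\theta)_{1,\Sigma}$ and the $[v\cdot\N, w[\theta]\cdot\N]_\ell$ term into $[v\cdot\N, \theta]_\ell$; the $F^3$ integrand becomes $\sigma F^3 \p_1\theta$ as required. Rearranging so that the remaining interior and $\Sigma_s$ contributions collect on the right and the boundary-point terms combine into $[h,\theta]_\ell$ yields precisely \eqref{xi_est_00} with the stated $\mathcal{G}$, $h$, $\mathcal{F}$. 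The functional $\mathcal{G}$ is well-defined on $\oH^{1/2}$ because the estimate $\ns{w[\theta]}_1 \ls \ns{\theta}_{\oH^{1/2}}$ together with the continuity of $\pp{v,\cdot}$ and $\mathcal{F}$ on $H^1$ gives $|\br{\mathcal{G},\theta}| \ls (\norm{v}_1 + \norm{\mathcal{F}}_{(H^1)^\ast})\norm{\theta}_{\oH^{1/2}}$; linearity in $\theta$ follows from linearity of the lift.

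Finally, to obtain \eqref{xi_est_01}, I would split $\xi = \xi_a + \xi_b$ where $\xi_a$ solves $(\xi_a,\theta)_{1,\Sigma} + [h,\theta]_\ell = \br{\mathcal{G},\theta}$ and $\xi_b$ solves $(\xi_b,\theta)_{1,\Sigma} = -\int_{-\ell}^\ell \sigma F^3 \p_1\theta$. Theorem \ref{eta_elliptic} with $s=1/2$ applied to $\xi_a$ yields $\ns{\xi_a}_{\oH^{3/2}} \ls [h]_\ell^2 + \ns{\mathcal{G}}_{(\oH^{1/2})^\ast}$, while Theorem \ref{eta_elliptic_var} applied to $\xi_b$ yields $\ns{\xi_b}_{3/2} \ls \ns{F^3}_{1/2}$. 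Summing, bounding $[h]_\ell^2 \ls [v\cdot\N]_\ell^2 + [F^6+F^7]_\ell^2$, and using the bound on $\mathcal{G}$ derived above gives \eqref{xi_est_01}. The main obstacle is the first stage: ensuring that a single, linear construction of $w[\theta]$ delivers both the $H^1$ estimate against $\oH^{1/2}$ data and the $\W(t)$ estimate against $\oH^1$ data, which requires care in choosing the extension of the boundary data before solving the divergence equation so that the two regularity scales are preserved simultaneously.
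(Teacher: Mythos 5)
Your proposal matches the paper's proof in overall structure: construct a linear lift $w[\theta]\in\V(t)$ with $w[\theta]\cdot\N=\theta$, substitute it into the weak formulation, decompose $\xi=\xi_1+\xi_2$ and apply Theorem \ref{eta_elliptic} (with $s=1/2$) to $\xi_1$ and Theorem \ref{eta_elliptic_var} to $\xi_2$. The one place you diverge is the construction of the lift. You propose a Bogovski\u{\i}-type divergence lift in the flat geometry followed by pushforward by $M$, and you correctly flag the delicate point: a single linear construction must simultaneously give $\ns{w[\theta]}_1\ls\ns{\theta}_{\oH^{1/2}}$ and $\ns{w[\theta]}_{\W(t)}\ls\ns{\theta}_{\oH^1}$. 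The paper resolves this more cleanly by taking $u[\theta]=\nabla\varphi$, where $\varphi$ solves a Neumann problem with boundary data tied to $\theta$, and then setting $w[\theta]=M\nabla\varphi$. The gradient-of-Neumann construction is linear, automatically divergence-free, and the Neumann regularity theory cited (which covers planar domains with convex corners) delivers both scales of estimate without the separate ``extend, then correct the divergence'' step of a Bogovski\u{\i}-type argument; this matters here because $\Omega$ has corners. Your argument would work with the extra care you already identified, but the Neumann route is what makes the two-scale estimate in \eqref{xi_est_02} essentially automatic.
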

\begin{proof}
Let $\theta \in \oH^1$.  We may again employ the Neumann problem analysis (see for example \cite{tice_neumann}) to find $\varphi \in H^2(\Omega)$ such that $w = M \nab \varphi \in \V(t)$ (with $M$ as in \eqref{M_def}) such that 
 \begin{equation}
\begin{cases}
\diva w = 0 &\text{in }\Omega \\
w \cdot \N = \theta &\text{on } \Sigma \\
w\cdot \nu =0 &\text{on }\Sigma_s
\end{cases}
\end{equation}
and \eqref{xi_est_02} holds.  Let us write $w[\theta]$ to denote this function.  We then have that 
\begin{equation}
 (\xi,\theta)_{1,\Sigma} + [h,\theta]_\ell = \br{\mathcal{G},\theta}  - \int_{-\ell}^\ell \sigma  F^3 \p_1 \theta
\end{equation}
for all $\theta \in \oH^1$, which is \eqref{xi_est_00}.  We may decompose $\xi = \xi_1 + \xi_2$, where 
\begin{equation}
 (\xi_1,\theta)_{1,\Sigma} + [h,\theta]_\ell = \br{F,\theta}  
\end{equation}
and
\begin{equation}
 (\xi_2,\theta)_{1,\Sigma} =  - \int_{-\ell}^\ell \sigma  F^3 \p_1 \theta.
\end{equation}
We then apply  Theorem \ref{eta_elliptic} with $s=1/2$ to $\xi_1$ and Theorem \ref{eta_elliptic_var} to $\xi_2$ in order to arrive at  \eqref{xi_est_01}.

\end{proof}

\section{Elliptic theory for the Stokes problem }\label{sec_elliptics}

\subsection{Analysis in cones }

Consider the cone of opening angle $\omega \in (0,\pi)$ given by 
\begin{equation}\label{cone_def}
 K_\omega = \{x \in \Rn{2} \st  r>0 \text{ and } \theta \in (-\pi/2,-\pi/2 + \omega)   \},
\end{equation}
where $(r,\theta)$ are standard polar coordinates in $\Rn{2}$ (i.e. $\theta=0$ corresponds to the positive $x_1$ axis). We write
\begin{equation}
 \Gamma_- = \{ x \in \Rn{2} \st r>0 \text{ and } \theta =-\pi/2  \} \text{ and } \Gamma_+ = \{x \in \Rn{2} \st r>0 \text{ and } \theta =-\pi/2 + \omega  \}
\end{equation}
for the lower and upper boundaries of $K_\omega$.  For a given $\omega \in (0,\pi)$ we will often need to refer to the critical weight 
\begin{equation}\label{crit_wt}
 \delta_\omega := \max\{0,2-\pi/\omega\} \in [0,1). 
\end{equation}

Next we introduce a special matrix-valued function.  Suppose that $\af: K_\omega \to \Rn{2\times 2}$ is a map satisfying the following four properties.  First, $\af$ is smooth on $K_\omega$ and  $\af$ extends to a smooth function on $\bar{K}_\omega \backslash \{0\}$ and a continuous function on $\bar{K}_\omega$.  Second, $\af$ satisfies the following for all $a,b \in \mathbb{N}$:
\begin{equation}\label{frak_A_assump}
\begin{split}
& \lim_{r\to 0} \sup_{\theta \in [-\pi/2,-\pi/2 + \omega]} \abs{  (r \p_r)^a \p_\theta^b [ \af(r,\theta) \af^T(r,\theta) - I  ]    } =0  \\
& \lim_{r\to 0} \sup_{\theta \in [-\pi/2,-\pi/2 + \omega]} \abs{  (r \p_r)^a \p_\theta^b [ \af_{ij}(r,\theta)\p_j \af_{ik}(r,\theta)  ]    } =0 \text{ for  }k \in \{1,2\} \\
& \lim_{r\to 0} \sup_{\theta \in [-\pi/2,-\pi/2 + \omega]} \abs{  (r \p_r)^a \p_\theta^b [ \af(r,\theta)  - I  ]    } =0  \\
& \lim_{r\to 0}   (r \p_r)^a  [ \af(r,\theta_0)\nu   - \nu  ]     =0    \text{ for } \theta_0 =-\pi/2,-\pi/2 + \omega \\
& \lim_{r\to 0}  (r \p_r)^a  \left[ \left(\af\nu \otimes \af^T (\af \nu)^\bot + (\af\nu)^\bot \otimes \af^T (\af\nu)\right)(r,\theta_0)   - I  \right]     =0    \text{ for } \theta_0 =-\pi/2,-\pi/2 + \omega \\
\end{split}
\end{equation}
where $(r,\theta)$ denote the standard polar coordinates and $(z_1,z_2)^\bot = (z_2,-z_1)$. Third, the matrix $\af \af^T$ is uniformly elliptic on $K_\omega$.  Fourth,  $\det \af =1$ and 
\begin{equation}
\p_j( \af_{ij}) = 0 \text{ for }i=1,2.
\end{equation}

We now concern ourselves with solving the $\af-$Stokes problem in the cone $K_\omega$:
\begin{equation}\label{af_stokes_cone}
\begin{cases}
 \diverge_\af S_\af(q,v) = G^1 &\text{in } K_\omega \\
 \diverge_\af v = G^2 &\text{in } K_\omega \\
 v \cdot \af \nu = G^3_\pm &\text{on } \Gamma_\pm \\
 \mu \sg_\af v \af \nu \cdot (\af \nu)^\bot = G^4_\pm &\text{on } \Gamma_\pm,
\end{cases}
\end{equation}
where here the operators $\diverge_\af$ and $S_\af$ are defined in the same way as $\diva$ and $S_\A$.  Note that in the case that $\af = I_{2\times 2}$, the system  \eqref{af_stokes_cone} is the standard Stokes problem 
\begin{equation}\label{stokes_cone}
\begin{cases}
 \diverge S(q,v) = G^1 &\text{in } K_\omega \\
 \diverge v = G^2 &\text{in } K_\omega \\
 v \cdot  \nu = G^3_\pm &\text{on } \Gamma_\pm \\
 \mu \sg v  \nu \cdot  \tau = G^4_\pm &\text{on } \Gamma_\pm.
\end{cases}
\end{equation}
We note that the assumptions in \eqref{frak_A_assump} are needed to show that the operators appearing in \eqref{af_stokes_cone} behave like the operators in \eqref{stokes_cone} near $0 \in \bar{K}_\omega$.

Following \cite{kmr_1}, for $k \in \mathbb{N}$ and $\delta >0$ we  define the weighted Sobolev spaces
\begin{equation}
 W^k_\delta(K_\omega) = \{ u \st \norm{u}_{W^k_\delta} < \infty\},
\end{equation}
where 
\begin{equation}
 \ns{u}_{W^k_\delta} = \sum_{\abs{\alpha} \le k} \int_{K_\omega} \abs{x}^{2\delta} \abs{\p^\alpha u(x)}^2 dx.
\end{equation}
We then define the trace spaces $W^{k-1/2}_\delta(\Gamma_\pm)$ as in \cite{kmr_1}.

\begin{thm}\label{cone_solve}
Let $\omega \in (0,\pi)$, $\delta_\omega \in [0,1)$ be given by \eqref{crit_wt}, and $\delta \in (\delta_\omega,1)$.  Suppose that $\af$ satisfies the four properties stated above.  Assume that the data $G^1,G^2, G^3_\pm,G^4_\pm$ for the problem \eqref{stokes_cone} satisfy 
\begin{equation}\label{cone_solve_01}
G^1 \in W^0_{\delta}(K_\omega), G^2 \in W^1_{\delta}(K_\omega), G^3_\pm \in W^{3/2}_{\delta}(\Gamma_\pm), G^4_\pm \in W^{1/2}_{\delta}(\Gamma_\pm)
\end{equation}
as well as the compatibility condition
\begin{equation}\label{cone_solve_cc}
 \int_{K_\omega} G^2  = \int_{\Gamma_+} G^3_+  + \int_{\Gamma_-} G^3_- .
\end{equation}
Suppose that $(v,q) \in H^1(K_\omega) \times H^0(K_\omega)$ satisfy  $\diverge_\af v = G^2$, $v\cdot \af \nu = G^3_\pm$ on $\Gamma_\pm$, and  
\begin{equation}
 \int_{K_\omega} \frac{\mu}{2} \sg_\af v : \sg_\af w  - q \diverge_\af w   = \int_{K_\omega} G^1  \cdot w      + \int_{\Gamma_+} \G^4_+ w\cdot \frac{(\af \nu)^\bot}{\abs{\af \nu}} + \int_{\Gamma_-} \G^4_-  w\cdot \frac{(\af \nu)^\bot}{\abs{\af \nu}} 
\end{equation}
for all $w \in \{ w \in H^1(K_\omega) \st w\cdot (\af \nu) =0 \text{ on } \Gamma_\pm\}$.   Finally, suppose that $v,q$ and all of the data $G^i$ are supported in $\bar{K}_\omega \cap B[0,1]$.  Then $D^2 v, \nab q \in W^0_{\delta}(K_\omega)$ and 
\begin{multline}\label{cone_solve_02}
 \ns{D^2 v}_{W^0_{\delta}} + \ns{\nab q}_{W^0_{\delta}} \ls  \ns{G^1}_{W^0_{\delta} } +  \ns{G^2}_{W^1_{\delta}} +  \ns{G^3_-}_{W^{3/2}_{\delta} } +  \ns{G^3_+}_{W^{3/2}_{\delta}} 
 +  \ns{G^4_-}_{W^{1/2}_{\delta}} +  \ns{G^4_+}_{W^{1/2}_{\delta}}.
\end{multline}
\end{thm}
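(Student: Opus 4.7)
The strategy is to reduce to the constant-coefficient Stokes problem \eqref{stokes_cone} via a perturbative argument, leveraging the fact that the assumptions in \eqref{frak_A_assump} force $\af$ to be an asymptotically flat perturbation of $I$ as $r \to 0$. Thus near the vertex the $\af$-Stokes operator and its boundary conditions differ from the standard Stokes operator and the no-penetration/tangential-stress boundary conditions by remainders whose coefficients become uniformly small. Away from the vertex (on any region where $|x| \ge r_0 > 0$) the weight $|x|^{2\delta}$ is bounded above and below and the domain is a bounded Lipschitz set with corners only on the smooth part of $\partial K_\omega$, so standard Stokes regularity theory applies. Combining these two regimes via a partition of unity delivers the estimate.

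\emph{Step 1: the constant-coefficient case.} When $\af = I$, \eqref{af_stokes_cone} reduces to the standard Stokes system \eqref{stokes_cone} with mixed Navier-type boundary conditions (no-penetration plus vanishing tangential component of the viscous stress) on the two edges of the cone. In the framework of Kondrat'ev-type theory developed in \cite{kmr_1,kmr_2,kmr_3}, the weighted a priori estimate \eqref{cone_solve_02} holds if and only if the line $\Re \lambda = 1 - \delta$ is free of eigenvalues of the operator pencil obtained by Mellin-transforming \eqref{stokes_cone}. For exactly this pencil, Orlt--S\"andig \cite{orlt_sandig} showed that the smallest positive eigenvalue is $\pi/\omega$ (for $\omega \le \pi/2$) or $\pi/\omega - 1$ variants giving rise to the threshold $\delta_\omega = \max\{0, 2 - \pi/\omega\}$. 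Hence for $\delta \in (\delta_\omega, 1)$ the estimate
\begin{equation*}
\|D^2 v\|_{W^0_\delta}^2 + \|\nabla q\|_{W^0_\delta}^2 \lesssim \|G^1\|_{W^0_\delta}^2 + \|G^2\|_{W^1_\delta}^2 + \|G^3_\pm\|_{W^{3/2}_\delta}^2 + \|G^4_\pm\|_{W^{1/2}_\delta}^2
\end{equation*}
holds for the standard Stokes problem.

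\emph{Step 2: perturbation and absorption.} I would write \eqref{af_stokes_cone} in the form \eqref{stokes_cone} with modified data, using the identities $\diverge_\af X = \diverge X + (\af_{ij} - \delta_{ij}) \partial_j X_i$ (where the $\p_j(J\af_{ij})=0$ identity and $\det\af=1$ simplify the form of the remainder), and similarly for $\sg_\af v$ and the boundary traces. This produces a system
\begin{equation*}
\diverge S(q,v) = G^1 + R_1(v,q), \quad \diverge v = G^2 + R_2(v), \quad v\cdot \nu = G^3_\pm + R_3^\pm(v), \quad \mu \sg v\,\nu \cdot \tau = G^4_\pm + R_4^\pm(v),
\end{equation*}
where each remainder $R_i$ is a sum of terms of the form $(\af - I)$ or its derivatives multiplied by derivatives of $(v,q)$. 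Using a cutoff $\chi_{r_0}$ supported in $B[0,2r_0]\cap\bar K_\omega$ and equal to $1$ on $B[0,r_0]$, I would apply Step 1 to $\chi_{r_0}(v,q)$, for which the remainder terms obey (from the first three lines of \eqref{frak_A_assump})
\begin{equation*}
\|R_i\|_{W^{\ast}_\delta} \le \epsilon(r_0)\bigl(\|D^2 v\|_{W^0_\delta} + \|\nabla q\|_{W^0_\delta} + \text{lower order}\bigr), \qquad \epsilon(r_0) \to 0,
\end{equation*}
and for $(1-\chi_{r_0})(v,q)$, supported in an annular region bounded away from $0$, standard unweighted Stokes regularity (which applies because the only singular point of $\partial K_\omega$, namely the vertex, is excluded from the support) yields control in terms of the unweighted norms, which in turn are bounded by the weighted ones on this annulus. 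Choosing $r_0$ small enough to absorb the perturbation and combining the two pieces yields \eqref{cone_solve_02}.

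\emph{Main obstacle.} The most delicate step is verifying that the reduction of the boundary conditions is compatible with the weighted trace spaces and, in particular, preserves the compatibility condition \eqref{cone_solve_cc}. The conditions $v \cdot \af\nu = G^3_\pm$ and $\mu \sg_\af v \af\nu\cdot(\af\nu)^\bot = G^4_\pm$ must be rewritten as standard no-penetration and tangential-stress conditions plus remainders; for these to land in $W^{3/2}_\delta(\Gamma_\pm)$ and $W^{1/2}_\delta(\Gamma_\pm)$ with small norm one needs precisely the fourth and fifth properties in \eqref{frak_A_assump}, which say that $\af\nu - \nu$ and the ``twisted identity'' boundary tensor vanish on $\Gamma_\pm$ to all orders in $(r\p_r)$ as $r \to 0$. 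Checking that these cancellations give the correct trace bounds, together with verifying that the Orlt--S\"andig pencil analysis applies verbatim to the weak formulation used here (rather than the classical strong formulation), constitutes the bulk of the technical work.
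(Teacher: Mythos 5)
Your proposal is essentially correct in spirit and reaches the same conclusion, but it takes a materially different route from the paper. The paper does \emph{not} perform an explicit perturbation-and-absorption argument. Instead, it treats the $\af$-Stokes system \eqref{af_stokes_cone} directly in the Kozlov--Maz'ya--Rossmann framework: the hypotheses \eqref{frak_A_assump} are precisely what is needed to guarantee that the ``leading operators'' (in the terminology of \cite{kmr_1}, i.e.\ the Mellin-homogenized principal part at the vertex) for \eqref{af_stokes_cone} coincide with those of the constant-coefficient Stokes problem \eqref{stokes_cone}. Hence the associated operator pencils are identical, the Orlt--S\"andig eigenvalue computation (eigenvalues $\pm 1 + n\pi/\omega$, $n \in \mathbb{Z}$) gives the strip $\{0 \le \RE z \le 1-\delta\}$ eigenvalue-free for $\delta \in (\delta_\omega,1)$, and Theorem 8.2.1 of \cite{kmr_1} applies to the $\af$-problem without further manipulation; the estimate then follows by arguing as in Theorem 9.4.5 of \cite{kmr_3}. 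In other words, the paper offloads the entire ``absorption'' step to the KMR theory of elliptic systems with variable coefficients whose leading part stabilizes at the vertex.

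Your explicit rewriting $\af$-Stokes $=$ Stokes $+$ remainder, with a cutoff near the vertex and smallness $\epsilon(r_0)$, is a legitimate alternative strategy, but as written it has two gaps that the paper's route avoids. First, the remainders $R_i$ are not only small-coefficient multiples of the top-order derivatives of $(v,q)$: they also contain first-order couplings $(\nabla\af)\nabla v$ and zeroth-order couplings $(\nabla^2\af) v$, and under \eqref{frak_A_assump} one only has $\nabla\af = o(r^{-1})$ and $\nabla^2\af = o(r^{-2})$. Controlling, say, $\|r^\delta(\nabla^2\af) v\|_{L^2}$ by the left side of \eqref{cone_solve_02} requires a chain of Hardy inequalities whose validity ranges are borderline for $\delta < 1$ (the inequality used in the paper's Appendix~\ref{app_weight} requires the weight exponent to exceed $1-n/2 = 0$, and $\delta - 1 < 0$). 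This is exactly the kind of bookkeeping that the abstract ``leading operator'' machinery handles automatically. Second, your absorbed inequality still leaves ``lower order'' terms ($\|v\|_{H^1} + \|q\|_{H^0}$-type) on the right, whereas \eqref{cone_solve_02} has no such terms; one must separately bound these via the weak formulation (or via the uniqueness afforded by the pencil estimate), and you do not address how. So the proposal identifies the right ingredients (Orlt--S\"andig for the constant-coefficient pencil, a reduction to that case via the structure hypotheses on $\af$), but its Step~2 needs substantial further work before it closes, whereas the paper sidesteps this by invoking KMR's framework for $\af$-systems directly.
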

\begin{proof}
In the case $\af = I$ the result is essentially proved in Theorem 9.4.5 in \cite{kmr_3} except that there the results are stated in a three-dimensional dihedral angle.  However, the analysis begins with the problem in two dimensions and is easily adaptable to the $\af$-Stokes problem \eqref{af_stokes_cone}.  The key to the proof is an application of Theorem 8.2.1 of \cite{kmr_1}, which characterizes the solvability of elliptic systems in terms of the eigenvalues of an associated operator pencil.  The assumptions on $\af$, in particular \eqref{frak_A_assump}, guarantee that the ``leading operators'' (in the terminology of \cite{kmr_1}) associated to \eqref{af_stokes_cone} are exactly the operators appearing in \eqref{stokes_cone}, and hence the problems \eqref{af_stokes_cone} and \eqref{stokes_cone} give rise to the same associated operator pencil.  The eigenvalues of the pencil associated to \eqref{stokes_cone} may be found in the ``G-G eigenvalue computations'' of \cite{orlt_sandig} (with $\chi_1 = \chi_2 = \pi/2$).  Indeed, the latter guarantees that the strip 
\begin{equation}
 \{ z \in \mathbb{C} \st 0 \le \Re(z) \le 1- \delta\}
\end{equation}
contains no eigenvalues of the operator pencil associated to the Stokes problem \eqref{stokes_cone} in the cone $K_\omega$, which are $\pm 1 + n \pi/\omega$ for $n \in \mathbb{Z}$.  Thus we may use Theorem 8.2.1 of \cite{kmr_1} on \eqref{af_stokes_cone} and then argue as in Theorem 9.4.5 in \cite{kmr_3}.  

\end{proof}

\subsection{The Stokes problem in $\Omega$}

We now turn to the study of the Stokes problem in $\Omega$:
\begin{equation}\label{stokes_omega}
\begin{cases}
\diverge S(q,v)  = G^1 &\text{in } \Omega \\
 \diverge v = G^2 &\text{in } \Omega \\
 v \cdot \nu = G^3_+ &\text{on } \Sigma \\
 \mu \sg v \nu \cdot \tau = G^4_+ &\text{on } \Sigma\\
 v \cdot \nu = G^3_- &\text{on } \Sigma_s \\
 \mu \sg v \nu \cdot \tau = G^4_- &\text{on } \Sigma_s.
\end{cases}
\end{equation}
In what follows we will work with the spaces $W^k_\delta(\Omega)$, $W^{k-1/2}_\delta(\p \Omega)$, and $\oW^k_\delta(\Omega)$  as defined in Appendix \ref{app_weight}.

Next we  define  $\mathfrak{X}_\delta$ for $0 < \delta < 1$ to be the space of $6-$tuples
\begin{equation}
 (G^1,G^2,G^3_+,G^3_-,G^4_+,G^4_-) \in W^{0}_\delta(\Omega) \times W^1_\delta(\Omega) \times W^{3/2}_\delta(\Sigma  )\times W^{3/2}_\delta(\Sigma_s  ) \times  W^{1/2}_\delta(\Sigma) \times W^{1/2}_\delta(\Sigma_s)             
\end{equation}
such that 
\begin{equation}
 \int_{\Omega} G^2 = \int_{\Sigma} G^3_+ + \int_{\Sigma_s} G^3_-.
\end{equation}

We will now formulate a definition of weak solution to \eqref{stokes_omega} for data in this space.

\begin{dfn}
Assume that $(G^1,G^2,G^3_+,G^3_-,G^4_+,G^4_-)  \in \mathfrak{X}_\delta$ for some $0 < \delta < 1$.  We say that a pair $(v,q) \in H^1(\Omega) \times \oH^0(\Omega)$ such that  $\diverge v = G^2$, $v\cdot \nu = G^3$ on $\p \Omega$, and  
\begin{equation}\label{stokes_om_weak_form}
 \int_{\Omega} \frac{\mu}{2} \sg v : \sg w - q \diverge w = \int_{\Omega} G^1  \cdot w   + \int_{\Sigma} G^4_+ (w\cdot \tau) + \int_{\Sigma_s} G^4_- (w \cdot \tau) 
\end{equation}
for all $w \in \{ w \in H^1(\Omega) \st w\cdot \nu =0 \text{ on } \p \Omega\}$ is a weak solution to \eqref{stokes_omega}.  Note that the integrals on the right side of \eqref{stokes_om_weak_form} are well-defined by virtue of \eqref{hardy_embed} and \eqref{hardy_embed_2}.  Also $G^2 \in H^0(\Omega)$ and $G^3 \in H^{1/2}(\p \Omega)$ for the same reason.
\end{dfn}

We have the following weak existence result.  

\begin{thm}\label{stokes_om_weak}
Let $(G^1,G^2,G^3_+,G^3_-,G^4_+,G^4_-)   \in \mathfrak{X}_\delta$ for some $0 < \delta < 1$.  Then there exist a unique pair $(v,q) \in H^1(\Omega) \times \oH^0(\Omega)$ that is a weak solution to \eqref{stokes_omega}.   Moreover, 
\begin{equation}\label{stokes_weak_0}
 \ns{v}_1 + \ns{q}_0 \ls \ns{G^1}_{W^0_\delta} + \ns{G^2}_{W^1_\delta} + \ns{G^3}_{W^{3/2}_\delta} + \ns{G^4}_{W^{1/2}_\delta}.
\end{equation}

\end{thm}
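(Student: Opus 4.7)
The plan is to prove this by the standard procedure for the Stokes problem: reduce to homogeneous kinematic data via a lifting, solve the resulting variational problem on the divergence-free, normal-zero subspace of $H^1$ using Korn's inequality and Riesz representation, and then recover the pressure via a de Rham / Lagrange multiplier argument. The weighted structure of the data will only enter when estimating the linear functional on the right-hand side, through Hardy-type embeddings $W^0_\delta(\Omega) \hookrightarrow (H^1(\Omega))^\ast$ and $W^{1/2}_\delta(\p\Omega) \hookrightarrow (H^{1/2}(\p\Omega))^\ast$ of the type referenced by \eqref{hardy_embed} and \eqref{hardy_embed_2}.

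First, I would construct a lifting $\bar v \in H^1(\Omega)$ with $\bar v \cdot \nu = G^3_\pm$ on $\Sigma$ and $\Sigma_s$ and $\diverge \bar v = G^2$, together with the estimate $\ns{\bar v}_1 \ls \ns{G^2}_{W^1_\delta} + \ns{G^3_+}_{W^{3/2}_\delta} + \ns{G^3_-}_{W^{3/2}_\delta}$. The compatibility condition in the definition of $\mathfrak{X}_\delta$ is exactly what is needed for solvability. One standard approach is to first prescribe the normal trace by solving a Neumann problem $-\Delta \psi = G^2$ in $\Omega$ with $\nab \psi \cdot \nu = G^3$ on $\p\Omega$ (possible by the convex-corner Neumann analysis already invoked in Proposition \ref{pressure_v}), set $\bar v_0 = \nab \psi$, and then correct the divergence with a Bogovskii-type operator supported away from the corners. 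Writing $v = \bar v + \tilde v$, the problem reduces to finding $\tilde v$ with $\diverge \tilde v = 0$, $\tilde v \cdot \nu = 0$ on $\p\Omega$, and
\begin{equation*}
\int_\Omega \frac{\mu}{2} \sg \tilde v : \sg w = \br{\mathcal{L}, w} := \int_\Omega G^1 \cdot w + \int_\Sigma G^4_+ (w\cdot \tau) + \int_{\Sigma_s} G^4_-(w\cdot\tau) - \int_\Omega \frac{\mu}{2} \sg \bar v : \sg w
\end{equation*}
for all $w$ in the space $X := \{w \in H^1(\Omega) : w\cdot \nu = 0 \text{ on } \p\Omega, \ \diverge w = 0\}$.

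Next, I would equip $X$ with the inner product $(u,v)_X := \int_\Omega \tfrac{\mu}{2} \sg u : \sg v$. Korn's inequality on the Lipschitz domain $\Omega$, combined with the observation that the only rigid motion tangent to $\p\Omega$ at every point is the zero motion (since $\Omega$ has flat vertical side walls in $\V_{top}$), yields $\ns{u}_1 \ls (u,u)_X$, so $X$ is a Hilbert space equivalent to the $H^1$-topology. Using the weighted Hardy embeddings, one bounds
\begin{equation*}
\abs{\int_\Omega G^1 \cdot w} \ls \norm{G^1}_{W^0_\delta} \norm{w}_1, \qquad \abs{\int_{\Sigma} G^4_+ (w\cdot \tau)} \ls \norm{G^4_+}_{W^{1/2}_\delta} \norm{w}_1,
\end{equation*}
and similarly for the other boundary term; combined with the energy bound on $\bar v$ this shows $\mathcal{L} \in X^\ast$ with norm controlled by the right-hand side of \eqref{stokes_weak_0}. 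Riesz representation then produces a unique $\tilde v \in X$, and the coercivity estimate on $\tilde v$ combines with that on $\bar v$ to give the velocity part of \eqref{stokes_weak_0}.

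To recover the pressure, I would apply a Lagrange-multiplier argument analogous to Theorem \ref{pressure_lagrange} but for the simpler space $X_0 := \{w \in H^1(\Omega) : w \cdot \nu = 0 \text{ on } \p \Omega\}$: the bounded functional $w \mapsto \br{\mathcal{L},w} - \int_\Omega \tfrac{\mu}{2}\sg v : \sg w$ on $X_0$ vanishes on $X$, hence is of the form $w \mapsto \int_\Omega q \diverge w$ for a unique $q \in \oH^0(\Omega)$. The existence of such a $q$ rests on a de Rham / Bogovskii argument: given any $q \in \oH^0(\Omega)$, one produces $w \in X_0$ with $\diverge w = q$ and $\norm{w}_1 \ls \norm{q}_0$, which (as in the argument leading to Theorem \ref{pressure_lagrange}) shows that the range of the associated map is closed and yields the estimate $\norm{q}_0 \ls \norm{\mathcal{L}}_{X_0^\ast} + \norm{v}_1$. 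Uniqueness of both $v$ and $q$ follows from the same Riesz/de Rham machinery applied to the homogeneous problem. The main obstacle is verifying the de Rham/Bogovskii statement on this particular Lipschitz domain with a convex corner at $(\pm\ell, L)$: the relevant fact is that the divergence maps $X_0$ onto $\oH^0(\Omega)$ with a bounded right inverse, which for a Lipschitz domain follows by a standard patching argument, but care is required near the corners so that the right inverse respects the normal-zero boundary condition. Once this is in hand, all remaining pieces are routine.
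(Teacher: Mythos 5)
Your proposal is correct and follows essentially the same route as the paper: lift the divergence and normal-trace data via a Neumann problem (solvable by the convex-corner analysis of \cite{tice_neumann}), solve the reduced variational problem on the solenoidal, normal-zero subspace by Riesz representation, and recover $q$ via a Lagrange-multiplier / de Rham argument, with all weighted data entering only through the Hardy-type embeddings \eqref{hardy_embed} and \eqref{hardy_embed_2}.  The one place you and the paper part ways is that you treat the surjectivity of $\diverge:X_0\to\oH^0(\Omega)$ with a bounded, corner-compatible right inverse as the remaining ``main obstacle,'' proposing a Bogovskii-plus-patching construction; but this is precisely what Proposition \ref{pressure_v} already furnishes (by solving the same kind of Neumann problem, where the normal-zero condition drops out automatically), so both this step and the subsequent Lagrange-multiplier argument are already available via Theorem \ref{pressure_lagrange} with $\eta=0$.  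Also note a small geometric slip: the corners of $\Omega$ sit at $(\pm\ell,\zeta_0(\pm\ell))$, not at $(\pm\ell,L)$ (that point lies outside $\bar\Omega$ by the standing assumption $\zeta_0(\pm\ell)<L$); they remain convex since $\omega\in(0,\pi)$, so the conclusion is unaffected.
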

\begin{proof}
We first use \eqref{hardy_embed} to see that $G^2 \in H^0(\Omega)$ and $G^3 \in H^{1/2}(\p \Omega)$.  Choose $\bar{v} \in W^2_\delta(\Omega)$ such that $\bar{v}\vert_{\Sigma} = G^3_+$ and $\bar{v}\vert_{\Sigma_s} = G^3_-$ with $\norm{\bar{v}}_{W^2_\delta} \ls \norm{G_3}_{W^{3/2}_\delta}$.  Using, for instance, the analysis in \cite{tice_neumann}, we may find $\varphi \in H^2(\Omega)$ solving
\begin{equation}
\begin{cases}
 -\Delta \varphi = G^2 - \diverge \bar{v} &\text{in }\Omega \\
 \nab \varphi \cdot \nu = G^3 &\text{on } \p \Omega
\end{cases}
\end{equation}
with 
\begin{equation}
\ns{\varphi}_{2} \ls \ns{G^2}_0 + \ns{G^3}_{1/2}   \ls \ns{G^2}_{W^1_\delta} + \ns{G^3}_{W^{3/2}_\delta}.
\end{equation}

Next we find $u \in \Hzz$ with $\diverge u =0$ such that 
\begin{equation}
 \int_{\Omega} \frac{\mu}{2} \sg u : \sg w  = \int_{\Omega} G^1  \cdot w  - \frac{\mu}{2} \sg (\nab \varphi + \bar{v}) : \sg w  + \int_{\p \Omega} G^4 (w\cdot \tau)  
\end{equation}
for all $w \in \Hzz$ such that $\diverge w =0$.  This is readily done with the Riesz representation theorem, and we find that 
\begin{equation}
 \ns{u}_1 \ls \ns{G^1}_{W^0_\delta} + \ns{G^2}_{W^1_\delta} + \ns{G^3}_{W^{3/2}_\delta} + \ns{G^4}_{W^{1/2}_\delta}.
\end{equation}

Finally, we use Theorem \ref{pressure_lagrange} (with $\eta =0$ so that $\A = I$, etc) to find $q \in \oH^0(\Omega)$ such that \eqref{stokes_om_weak_form} holds with $v = u + \bar{v}+ \nab \varphi$.  We then easily deduce the estimate \eqref{stokes_weak_0}, which in turn implies the uniqueness claim.
\end{proof}

Next we turn to the issue of second-order regularity.  To develop this theory we will first need the following technical result, which constructs a special diffeomorphism.

\begin{prop}\label{wedge_diffeo}
Let $K_\omega \subset \Rn{2}$ be the cone of opening $\omega \in (0,\pi)$ defined by \eqref{cone_def}, where $\omega$ is the angle of $\Omega$ near the corners, and let $0 < r < \min\{\ell,\zeta_0(-\ell)/2\}$.  Then there exists a smooth diffeomorphism $\Psi : K_\omega \to \Psi(K_\omega) \subset \Rn{2}$ satisfying the following properties.  
\begin{enumerate}
 \item $\Psi$ is smooth up to $\bar{K}_\omega$.     
 \item $\Gamma_- = \Psi^{-1}(\{x \in \Rn{2} \st x_1=-\ell, x_2 < \zeta_0(-\ell)\})$.
 \item We have that  $\Psi^{-1}( \Sigma \cap B((-\ell,\zeta_0(\ell)),r)) \subseteq \Gamma_+ \cap B(0,R)$ and  $\Psi^{-1}( \Omega \cap B((-\ell,\zeta_0(\ell)),r) ) \subseteq K_\omega \cap B(0,R)$ for $R = \sqrt{2r^2 + 2r^4\ns{\zeta_0}_{C^2}}$.

 \item The matrix function $\af(x) = (D\Psi(x))^{-T}$ is smooth on $\bar{K}_\omega$, and all its derivatives are bounded.  Moreover, $\af$ satisfies the four properties listed near \eqref{frak_A_assump}.
\end{enumerate}
\end{prop}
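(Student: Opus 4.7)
The plan is to construct $\Psi$ explicitly in two parts: a natural area-preserving shear that straightens $\Sigma$ to the ray $\Gamma_+$, followed by higher-order polynomial corrections supported near the vertex that adjust the Taylor jet of $\af = (D\Psi)^{-T}$ to ensure all the conditions in \eqref{frak_A_assump}. The identity $\zeta_0'(-\ell) = -\cot\omega$, equivalent to the definition of $\omega$ as the interior corner angle of $\Omega$ at $(-\ell,\zeta_0(-\ell))$, is what makes $D\Psi(0) = I$ possible.

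First I would take the base shear
\begin{equation*}
\Psi_0(x_1,x_2) := \bigl(-\ell + x_1,\ \tilde\zeta_0(-\ell + x_1) + x_2 + x_1\cot\omega\bigr),
\end{equation*}
where $\tilde\zeta_0 \in C^\infty(\Rn{})$ is a smooth extension of $\zeta_0$ with globally bounded derivatives that agrees with $\zeta_0$ near $-\ell$. This map has $\det D\Psi_0 \equiv 1$ (so that $\det\af_0 = 1$ and $\p_j(\af_0)_{ij}=0$ hold automatically by the cofactor structure, each row of a cofactor matrix being a skew-gradient), $D\Psi_0(0) = I$, and the required boundary-mapping properties: $\Psi_0(0,x_2) = (-\ell,\zeta_0(-\ell)+x_2)$ gives item (2), and $\Psi_0(x_1,-x_1\cot\omega) = (-\ell+x_1,\zeta_0(-\ell+x_1)) \in \Sigma$. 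The ball containment in item (3) follows from the Taylor identity $\Psi_0^{-1}(-\ell+u_1,\zeta_0(-\ell)+u_2) = (u_1,u_2 - \tfrac12 u_1^2\tilde\zeta_0''(\xi))$ combined with $(a+b)^2\le 2a^2+2b^2$.

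Next, the associated matrix
\begin{equation*}
\af_0(x_1,x_2) = \begin{pmatrix} 1 & -c(x_1) \\ 0 & 1 \end{pmatrix}, \qquad c(x_1):=\tilde\zeta_0'(-\ell+x_1)+\cot\omega,
\end{equation*}
has $c(0)=0$, so each of $\af_0\af_0^T-I$, $\af_0-I$, $\af_0\nu-\nu$ on $\Gamma_\pm$, and the boundary tensor combination in \eqref{frak_A_assump} is a smooth function of $x_1$ alone vanishing identically on $\{x_1=0\}$; every polar derivative $(r\p_r)^a\p_\theta^b$ of such a function tends to zero uniformly in $\theta$ as $r\to 0$ (since iterated $r\p_r = x_1\p_1+x_2\p_2$ and $\p_\theta = -x_2\p_1+x_1\p_2$ preserve smoothness and vanishing on $\{x_1=0\}$), verifying four of the five limits. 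The one condition that fails at leading order is the ``column-divergence'' condition: direct computation gives $(\af_0)_{ij}\p_j(\af_0)_{i2} = -c'(x_1) = -\tilde\zeta_0''(-\ell+x_1)$, which has the nonzero limit $-\zeta_0''(-\ell)$ at the origin. Uniform ellipticity of $\af_0\af_0^T$ on $K_\omega$ follows from $\det(\af_0\af_0^T)=1$ together with the uniform bound on $c$ from the extension step.

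To repair this single remaining condition, I would augment $\Psi_0$ with polynomial-in-$x$ perturbations that vanish on both $\Gamma_-$ and $\Gamma_+$ to sufficient order and that adjust the Taylor coefficients of $\Psi$ so as to cancel the jet of $\p_i\af_{ik}$ at the origin to arbitrary order, while maintaining $\det D\Psi = 1$ (enforced either algebraically order-by-order or globally via a Dacorogna--Moser correction at the end that fixes the prescribed jet). At each polynomial degree the correction reduces to a solvable finite-dimensional linear system encoding the area-preservation constraint, the two boundary constraints, and the jet-matching constraint, with nondegeneracy except possibly at a discrete set of resonant $\omega$ where the same approach with a higher-degree correction succeeds. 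The main obstacle of the proof is precisely this final algebraic correction: recognizing that the simple shear does not suffice for the column-divergence condition, and carrying out the jet-matching while simultaneously preserving area and the boundary mapping.
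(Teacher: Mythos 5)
Your base shear $\Psi_0$ is, up to how $\zeta_0$ is extended beyond $[-\ell,\ell]$, exactly the paper's diffeomorphism $\Psi(x) = (x_1-\ell, x_2 - \alpha x_1 + \zeta(x_1))$, and your verifications of items (1)--(3), of $\det \af_0 = 1$, of $\p_j(\af_0)_{ij}=0$, and of uniform ellipticity coincide with the paper's. You are also right that a direct computation gives $\af_{ij}\p_j\af_{i2} = -c'(x_1)$, which tends to $-\zeta_0''(-\ell) \neq 0$ rather than to zero, so the $a=b=0$ case of the second limit in \eqref{frak_A_assump} is not met by the shear. Note, however, that this objection applies verbatim to the paper's own $\af$: with $\zeta(s) = \zeta_0(-\ell+s)$ near $s=0$ one gets $\af_{ij}\p_j\af_{i2} = -\zeta''(x_1) \to -\zeta_0''(-\ell)$, and the paper's assertion that \eqref{frak_A_assump} holds simply isn't accompanied by a check of this bullet. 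What the KMR theory actually needs for Theorem \ref{cone_solve} is that the principal (second-order) coefficients converge to those of the constant-coefficient Stokes operator, together with boundedness of the first-order terms; the first-order coefficients $\af_{ij}\p_j\af_{ik}$ drop out after the dilation $x \mapsto \epsilon x$ because they acquire a factor $\epsilon$. So the shear is in fact adequate, and no correction to $\Psi$ is required; the blemish is in the statement of the second bullet in \eqref{frak_A_assump}, not in the diffeomorphism.

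That said, your proposed correction as formulated would not work even if it were needed. If the polynomial perturbation $\phi = \Psi - \Psi_0$ vanishes (both components) on both $\Gamma_-$ and $\Gamma_+$, then near the vertex $\phi_i = x_1(x_2 + x_1\cot\omega)\psi_i$, forcing $\p_2^2\phi_i(0) = 0$ for $i=1,2$. Writing the $a=b=0$, $k=2$ condition at the vertex (using $\af(0)=I$) as $\p_1^2\Psi_2(0) = \p_1\p_2\Psi_1(0)$ and the first-order consequence of $\det D\Psi = 1$ as $\p_1\p_2\Psi_1(0) + \p_2^2\Psi_2(0) = 0$, one finds $\p_2^2\phi_2(0) = -\zeta_0''(-\ell) \neq 0$, contradicting the factorization. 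A Dacorogna--Moser step afterwards cannot rescue this, since the prescribed 2-jet already violates the linearization of the volume constraint. The correct freedom is to let $\phi$ slide points tangentially along the boundary — only $\phi_1$ must vanish on $\Gamma_-$, and on $\Gamma_+$ only the normal component of $\phi$ relative to $\Sigma$ must vanish — but the paper avoids this delicacy altogether by (implicitly) relying on the weaker boundedness condition described above.
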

\begin{proof}

Let $\chi \in C^\infty(\Rn{})$ be such that $\chi(s) =1$ for $s \le r$ and $\chi(s) =0$ for $s \ge 2r$.  Let $\alpha = \zeta_0'(-\ell)$, which is related to $\omega$ via $-\text{cotan}(\omega) = \alpha$.  Define  $\zeta : [0,\infty) \to \Rn{}$  by
\begin{equation}
 \zeta(s) = \chi(s)  \zeta_0(-\ell +s)  + (1-\chi(s)) \alpha s, 
\end{equation}
which is  well-defined for all $s \in (0,\infty)$ since $2r < 2\ell$ and hence $\zeta_0(-\ell+s)$ is defined on the support of $\chi$.   It's easy to see that $\zeta$ is smooth, $\zeta(0) = \zeta_0(-\ell)$, and  $\zeta'(0) = \alpha$.  Also, $\zeta(s) - \alpha s$ is compactly supported in $[0,\infty)$.  We also define the open set 
\begin{equation}
 G_\zeta = \{x \in \Rn{2} \st x_1 >-\ell \text{ and } x_2 < \zeta(x_1)\}
\end{equation}
and note that 
\begin{equation}\label{wedge_diffeo_1}
 G_\zeta \cap B((-\ell,\zeta_0(-\ell)),r) =  \Omega \cap B((-\ell,\zeta_0(-\ell)),r)
\end{equation}
since $\zeta(s) = \zeta_0(-\ell+s)$ for $s \in [0, r]$.

Next we define the map $\Psi : K_\omega \to \Rn{2}$ via 
\begin{equation}
 \Psi(x) = (x_1 - \ell,   x_2 -\alpha x_1  + \zeta(x_1)).
\end{equation}
It is a trivial matter to see that $\Psi$ is smooth on $\bar{K}_\omega$ and that $\Psi$ is a smooth diffeomorphism from $K_\omega$ to $\G_\zeta$ with inverse given by 
\begin{equation}\label{wedge_diffeo_2}
 \Psi^{-1}(y_1,y_2) = (y_1+\ell, y_2 - \zeta(y_1+\ell) + \alpha (y_1+\ell)).
\end{equation}
This proves the first item, and the second item follows trivially.

To prove the third item we first note that  $\Psi(\Gamma_+)  = \{x \in \Rn{2} \st x_1 >0, x_2 = \zeta(x_1))\}$.  From \eqref{wedge_diffeo_1} and \eqref{wedge_diffeo_2} we find that if $y \in \Omega \cap B((-\ell,\zeta_0(-\ell)),r)$ then 
\begin{equation}
 \abs{\Psi^{-1}(y)}^2\le (y_1+\ell)^2 + 2(y_2 - \zeta_0(-\ell))^2 + 2[\zeta_0(-\ell) - \zeta_0(y_1) + \alpha(y+\ell)  ]^2 
 \le 2 r^2 + 2 r^4 \ns{\zeta_0}_{C^2}.
\end{equation}
A similar calculation works for $y \in \Sigma \cap B((-\ell,\zeta_0(-\ell)),r)$, completing the proof of the third item.

We now turn to the proof of the fourth item.  The matrix $\af(x) = (D\Psi(x))^{-T}$ is given by 
\begin{equation}\label{wedge_diffeo_3}
 \af(x) = 
\begin{pmatrix}
 1 & \alpha - \zeta'(x_1) \\
 0 & 1 
\end{pmatrix}.
\end{equation}  
From this we easily deduce that $\af$ is smooth with derivatives of all order bounded in $\bar{K}_\omega$.  The equality \eqref{wedge_diffeo_3} implies that $\af$ satisfies \eqref{frak_A_assump}.  The fact that $\alpha - \zeta'(x_1)$ is compactly supported in $(0,\infty)$ then implies that $\af \af^T$ is uniformly elliptic; indeed, it is easily verified that 
\begin{equation}
 (\af(x) \af^T(x))_{ij} \xi_i \xi_j \ge \gamma \abs{\xi}^2 
\end{equation}
for all $x \in \bar{K}_\omega$, where
\begin{equation}
\gamma = 1 + \frac{\norm{\alpha - \zeta'}_{L^\infty}^2 -\sqrt{4 \norm{\alpha - \zeta'}_{L^\infty}^2 + \norm{\alpha - \zeta'}_{L^\infty}^4} }{2} > 0.
\end{equation}
Finally, we note that $\p_j(\af_{ij})=0$ for $i=1,2$, which follows by direct computation.  This completes the proof of the fourth item.

\end{proof}

We may now proceed to the proof of second-order regularity.

\begin{thm}\label{stokes_om_reg}
Let $\omega \in (0,\pi)$  be the angle formed by $\zeta_0$ at the corners of $\Omega$,  $\delta_\omega \in [0,1)$ be given by \eqref{crit_wt}, and $\delta \in (\delta_\omega,1)$.  Let $(G^1,G^2,G^3_+,G^3_-,G^4_+,G^4_-)   \in \mathfrak{X}_{\delta}$, and let $(v,q) \in H^1(\Omega) \times \oH^0(\Omega)$ be the weak solution to \eqref{stokes_omega} constructed in Theorem \ref{stokes_om_weak}.   Then $v \in W^2_{\delta}(\Omega)$, $q \in W^1_{\delta}(\Omega)$, and 
\begin{equation}\label{stokes_om_reg_0}
 \ns{v}_{W^2_{\delta}} + \ns{q}_{\oW^1_{\delta}} \ls  \ns{G^1}_{W^0_{\delta} } +  \ns{G^2}_{W^1_{\delta}} +  \ns{G^3}_{W^{3/2}_{\delta} }  
 +  \ns{G^4}_{W^{1/2}_{\delta}} .
\end{equation}
\end{thm}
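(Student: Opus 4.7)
The plan is to prove Theorem \ref{stokes_om_reg} by a standard localize-and-straighten argument: away from the two corners of $\Omega$, use classical Stokes regularity in domains with $C^2$ boundary, and near each corner transfer the problem to the model cone $K_\omega$ via the diffeomorphism supplied by Proposition \ref{wedge_diffeo} and invoke Theorem \ref{cone_solve}. Since $\Omega$ has exactly two corners (at $(\pm \ell, \zeta_0(\pm \ell))$), a partition of unity with three types of members will suffice: (a) one supported in a neighborhood of the interior away from $\p\Omega$ and the smooth-boundary part of $\Sigma \cup \Sigma_s$, and (b) two cutoffs each supported in a small ball $B((-\ell,\zeta_0(-\ell)),r)$ and $B((\ell,\zeta_0(\ell)),r)$ covering the corners, with $r$ chosen as in Proposition \ref{wedge_diffeo}.

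First I would handle the interior and smooth-boundary regions. Let $\chi_0 \in C_c^\infty$ be a cutoff supported away from the corners. Multiplying $(v,q)$ by $\chi_0$ produces a new pair solving a Stokes problem on a smooth (piecewise-$C^2$, but corner-free) subdomain, with data that differ from the original by lower-order commutator terms involving $\nab\chi_0$ acting on $v$ and $q$. On this region the standard Agmon-Douglis-Nirenberg theory (or the $H^2$-regularity theory for Stokes with mixed stress/slip and Dirichlet-normal conditions) gives $\chi_0 v \in H^2$ and $\chi_0 q \in H^1$ with the appropriate estimate. Since the weight $|x-x_\pm|^{2\delta}$ is bounded above and below by positive constants away from the corners $x_\pm = (\pm\ell,\zeta_0(\pm\ell))$, the $H^2 \times H^1$ bound implies the desired weighted $W^2_\delta \times W^1_\delta$ bound on $\supp \chi_0$, with right-hand side controlled by $\|(G^1,\ldots,G^4_-)\|_{\mathfrak{X}_\delta}$ and $\|v\|_1 + \|q\|_0$. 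The latter is already estimated by the data via Theorem \ref{stokes_om_weak}.

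Next, near each corner, say at $x_- = (-\ell, \zeta_0(-\ell))$, I would fix the cutoff $\chi_-$ supported in $B(x_-,r)$ and pull back via the diffeomorphism $\Psi : K_\omega \to \Psi(K_\omega)$ of Proposition \ref{wedge_diffeo}. Writing $\tilde v = (\chi_- v) \circ \Psi$, $\tilde q = (\chi_- q) \circ \Psi$, and $\af = (D\Psi)^{-T}$, the standard change-of-variable rules convert $\diverge S(q,v)$ and the boundary operators on $\Sigma, \Sigma_s$ into exactly the operators appearing in \eqref{af_stokes_cone} on $K_\omega$, modulo commutator terms supported where $\nab \chi_-$ lives. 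The pulled-back data $\tilde G^i$ lie in the corresponding weighted cone spaces $W^0_\delta(K_\omega)$, $W^1_\delta(K_\omega)$, $W^{3/2}_\delta(\Gamma_\pm)$, $W^{1/2}_\delta(\Gamma_\pm)$ with norms bounded by the original ones, by items (1) and (3) of Proposition \ref{wedge_diffeo} (which give bi-Lipschitz control and smoothness of $\Psi$ up to the cone vertex). The compatibility condition \eqref{cone_solve_cc} holds because $\tilde v$ inherits the divergence condition from $v$ and the normal-component condition from the boundary traces; any shortfall is absorbed as part of $\tilde G^2$ and lower-order terms. The hypothesis that $\af$ satisfies \eqref{frak_A_assump} is precisely item (4) of Proposition \ref{wedge_diffeo}, and the compact-support assumption in Theorem \ref{cone_solve} is obtained by choosing $r$ small enough so that $R$ in item (3) of that proposition is less than $1$ (rescaling by a fixed dilation if needed). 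Applying Theorem \ref{cone_solve} yields $D^2 \tilde v, \nab \tilde q \in W^0_\delta(K_\omega)$ with the desired bound.

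Finally, I would assemble the estimates: push the corner estimates back to $\Omega$ via $\Psi$, combine with the interior/smooth-boundary piece through the partition of unity, and sum. The lower-order commutator terms produced by $\nab \chi_0$ and $\nab \chi_\pm$ contribute $\|v\|_1 + \|q\|_0$ to the right side, which is estimated by the data using Theorem \ref{stokes_om_weak}. This yields \eqref{stokes_om_reg_0}. The principal obstacle is the careful verification that (i) the pulled-back stress and divergence operators in the corner charts are exactly of the $\af$-Stokes form required by Theorem \ref{cone_solve}, with the correct boundary conditions (normal velocity and tangential stress) on $\Gamma_\pm$, and (ii) the weighted norms transform correctly with constants depending only on fixed geometry. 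Once the chart-by-chart hypotheses are verified the conclusion follows by black-box application of Theorems \ref{stokes_om_weak} and \ref{cone_solve}.
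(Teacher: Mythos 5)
Your proposal follows the same localize-and-straighten strategy as the paper's proof: classical ADN Stokes regularity away from the corners, cutoffs plus the diffeomorphism of Proposition \ref{wedge_diffeo} and Theorem \ref{cone_solve} near each corner, and synthesis via the weak-solution bound of Theorem \ref{stokes_om_weak}. The only cosmetic difference is that the paper arranges the support condition of Theorem \ref{cone_solve} by choosing $r$ small enough that $2r^2 + 2r^4\ns{\zeta_0}_{C^2} \le 1$, rather than rescaling by a dilation as you suggest.
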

\begin{proof}
For the sake of brevity we will only sketch the proof.  The omitted details may be filled in readily using standard argument.

\emph{Step 1 -- Estimates away from the corners }

Away from the corners we know that $\p \Omega$ is $C^2$, so we may apply the standard elliptic regularity theory (see for example \cite{adn_2}) for the Stokes problem with boundary conditions as in \eqref{stokes_omega} to deduce that if $V \subset \Omega$ is an open set with a $C^2$ boundary whose boundary agrees with $\p \Omega$ except near the corners, then $(v,q) \in H^2(V) \times H^1(V)$ and
\begin{equation}\label{stokes_om_reg_1}
\ns{v}_{H^2(V)} + \ns{q}_{H^1(V)} \le C(V) \left(  \ns{G^1}_{W^0_\delta} + \ns{G^2}_{W^1_\delta} + \ns{G^3}_{W^{3/2}_\delta} + \ns{G^4}_{W^{1/2}_\delta} \right).
\end{equation}
Here we have used the fact that $V$ avoids the corners to trivially estimate 
\begin{multline}
 \ns{G^1}_{H^0(W) } + \ns{G^2}_{H^1(W)} + \ns{G^3}_{H^{3/2}(\bar{W} \cap \p \Omega) } + \ns{G^4}_{H^{1/2}(\bar{W} \cap \p \Omega) } 
\\
\ls   \ns{G^1}_{W^0_\delta} + \ns{G^2}_{W^1_\delta} + \ns{G^3}_{W^{3/2}_\delta} + \ns{G^4}_{W^{1/2}_\delta},
\end{multline}
where $V \subset W \subset \Omega$ is another open set that avoids the corners of $\Omega$.

\emph{Step 2 -- Estimates near the corners }

The key step is to get weighted estimates for the solution near the corners of the domain.  To this end we introduce a small parameter 
\begin{equation}\label{stokes_om_reg_2}
 0 < r < \min\left\{ \ell, \frac{\zeta_0(-\ell)}{2}, \frac{-1 + \sqrt{1 + 2\ns{\zeta_0}_{C^2}} }{2 \ns{\zeta_0}_{C^2} }   \right\}
\end{equation}
and consider $U_r = \Omega \cap B((-\ell,\zeta_0(- \ell)),r)$.   We choose a cutoff function $\psi \in C^\infty_c(B((-\ell,\zeta_0(- \ell)),r))$ such that $\psi\ge 0$ and $\psi  =1$ on $B((-\ell,\zeta_0(- \ell)),r/2)$.  By using $\psi v$ as a test function in the weak formulation and integrating by parts (which is justified by Step 1 since $\nab \psi$ is supported away from the corner) we find that $(\tilde{v} ,\tilde{q}) =   (v \psi, q \psi)$ is a weak solution to \eqref{stokes_omega} with $G^i$ replaced by $\tilde{G}^i$, for 
\begin{equation}
\begin{split}
\tilde{G}^1 & = \psi G^1 -  \mu \sg v \nab \psi - \mu \diverge(v \otimes \nab \psi + \nab \psi \otimes v)  + q \nab \psi \\
\tilde{G}^2 & = \psi G^2 + v\cdot \nab \psi \\
\tilde{G}^3 & = \psi G^3 \\
\tilde{G}^4 & = \psi G^4 + \mu (v \otimes \nab \psi + \nab \psi \otimes v )\nu \cdot \tau.
\end{split}
\end{equation}
It's clear that $(\tilde{G}^1,\tilde{G}^2,\tilde{G}^3,\tilde{G}^4)  \in \mathfrak{X}_{\delta}$ and that $(\tilde{v} ,\tilde{q}) \in H^1(\Omega) \times H^0(\Omega)$.

Next we note that because of the assumption \eqref{stokes_om_reg_2} we know that  $\bar{U}_r \cap \p \Omega$ is actually smooth away from the corner point $(-\ell,\zeta_0(-\ell))$; indeed, the upper boundary is the graph of $\zeta_0$, which is smooth, and the side boundary is a straight line.   We then employ the diffeomorphism $\Psi^{-1}$ constructed in Proposition \ref{wedge_diffeo} to map $U_r$ to $\Psi^{-1}(U_r) \subseteq K_\omega$, where $K_\omega$ is a cone of opening angle $\omega$.  


Let $(w,\theta)$ and $\G^i$  denote the composition of $(\tilde{v} ,\tilde{q})$ and $\tilde{G}^i$, respectively, with $\Psi^{-1}$.  It is then a simple matter to verify that $(w,\theta) \in H^1(K_\omega) \times H^0(K_\omega)$ and that 
\begin{equation}
\G^1 \in W^0_{\delta}(K_\omega), \G^2 \in W^1_{\delta}(K_\omega), \G^3_\pm \in W^{3/2}_{\delta}(\Gamma_\pm), \G^4_\pm \in W^{1/2}_{\delta}(\Gamma_\pm)
\end{equation}
where $\Gamma_\pm$ denote the top and bottom sides of the cone $K_\omega$.  Moreover, $(w,\theta)$ and the $\G^i$ are all  supported in $\bar{K}_\omega \cap B[0,1]$  due to the third item of Proposition \ref{wedge_diffeo} since \eqref{stokes_om_reg_2} guarantees that $2r^2 + 2r^4 \ns{\zeta_0}_{C^2} \le 1$.

Next we use the diffeomorphism to change variables in the  weak formulation to derive a new identities for $(w,\theta)$: $\diverge_\af w = \G^2$ in $K_\omega$, $w \cdot \af \nu = \G^3 \abs{\af \nu}$ on $\Gamma_\pm$, and
\begin{equation}
 \int_{K_\omega} \frac{\mu}{2} \sg_\af w : \sg_\af \Upsilon - \theta \diverge_\af \Upsilon = \int_{K_\omega} \G^1  \cdot \Upsilon   + \int_{\Gamma_+} \G^4_+ \Upsilon \cdot \frac{(\af \nu)^\bot}{\abs{\af \nu}} + \int_{\Gamma_-} \G^4_-  \Upsilon \cdot \frac{(\af \nu)^\bot}{\abs{\af \nu}} 
\end{equation}
for all $\Upsilon \in H^1(K_\omega)$ such that $\Upsilon\cdot (\af \nu) =0$ on $\Gamma_\pm$.  This means that $(w,\theta)$ is a weak solution to the problem \eqref{af_stokes_cone} with $G^1,G^2$ replaced by $\G^1,\G^2$ and $G^3_\pm,G^4_\pm$ replaced by $\G^3_\pm\abs{\af \nu}  ,\G^4_\pm\abs{\af\nu}$.  The properties of $\Psi$ given in Proposition \ref{wedge_diffeo} guarantee that Theorem \ref{cone_solve} is applicable and we then arrive at the inclusion $(w,\theta) \in W^2_{\delta} \times W^1_{\delta}$ and the estimate
\begin{equation}\label{stokes_om_reg_3}
\ns{w}_{W^2_{\delta}} + \ns{\theta}_{W^1_{\delta}}  \ls   \ns{\G^1}_{W^0_{\delta} } +  \ns{\G^2}_{W^1_{\delta}} +  \ns{\G^3_-}_{W^{3/2}_{\delta} } +  \ns{\G^3_+}_{W^{3/2}_{\delta}} 
 +  \ns{\G^4_-}_{W^{1/2}_{\delta}} +  \ns{\G^4_+}_{W^{1/2}_{\delta}} .
\end{equation}
Upon changing coordinates back to $\Omega$ we then find that 
\begin{multline}\label{stokes_om_reg_4}
\ns{\tilde{v}}_{W^2_{\delta}(U_r) } + \ns{\tilde{q}}_{W^1_{\delta}( U_r) }   \ls   \ns{\tilde{G}^1}_{W^0_{\delta} } +  \ns{\tilde{G}^2}_{W^1_{\delta}} +  \ns{\tilde{G}^3}_{W^{3/2}_{\delta} } +  \ns{\tilde{G}^4}_{W^{1/2}_{\delta}} \\
\ls 
\ns{G^1}_{W^0_{\delta} } +  \ns{G^2}_{W^1_{\delta}} +  \ns{G^3}_{W^{3/2}_{\delta} }  + \ns{G^4}_{W^{1/2}_{\delta}} + \ns{u}_{H^1} + \ns{p}_{H^0}.
\end{multline}

A similar argument provides us with an estimate analogous to \eqref{stokes_om_reg_4} near the right corner of $\Omega$, namely the point $(\ell,\zeta_0(\ell))$.  In this case we must employ a reflection of $\Omega$ across the $x_2$ axis in order to use the diffeomorphism from Proposition \ref{wedge_diffeo}, but this does not change any of the essential properties of the diffeomorphism, and so the analysis proceeds as above.  Writing $U_r^\pm$ for the neighborhoods of the left corner $(-)$ and the right corner $(+)$, noting that our cutoff functions are unity on $U_{r/2}$, and employing the estimate \eqref{stokes_weak_0}, we then find that 
\begin{multline}\label{stokes_om_reg_5}
\ns{v}_{W^2_{\delta}(U^+_{r/2}) } + \ns{q}_{W^1_{\delta}( U^-_{r/2}) }    +
\ns{v}_{W^2_{\delta}(U^+_{r/2}) } + \ns{q}_{W^1_{\delta}( U^-_{r/2}) }    
\\
\le C(r) \left(  
\ns{G^1}_{W^0_{\delta} } +  \ns{G^2}_{W^1_{\delta}} +  \ns{G^3}_{W^{3/2}_{\delta} }  + \ns{G^4}_{W^{1/2}_{\delta}}\right).
\end{multline}

\emph{Step 3 -- Synthesis }

To conclude we simply sum \eqref{stokes_om_reg_1} and \eqref{stokes_om_reg_5} with an appropriate choice of $V$ and $r$  to deduce that \eqref{stokes_om_reg_0} holds.

\end{proof}

In what follows it will be useful to rephrase Theorem \ref{stokes_om_reg} as follows.  For $0 < \delta < 1$ we define the operator 
\begin{equation}\label{stokes_om_iso_def1}
 T_\delta : W^2_\delta(\Omega) \times  \oW^1_\delta(\Omega) \to \mathfrak{X}_\delta
\end{equation}
via
\begin{equation}\label{stokes_om_iso_def2}
 T_\delta(v,q) = (\diverge S(q,v), \diverge v, v\cdot n \vert_{\Sigma},v\cdot n \vert_{\Sigma_s}, \mu \sg v n \cdot \tau  \vert_{\Sigma}, \mu \sg v n \cdot \tau  \vert_{\Sigma_s}).  
\end{equation}
We may then deduce the following from Theorems \ref{stokes_om_weak} and \ref{stokes_om_reg}.

\begin{cor}\label{stokes_om_iso}
Let  $\delta \in (\delta_\omega,1)$ be as in Theorem \ref{stokes_om_reg}.  Then the operator $T_{\delta}$ defined by \eqref{stokes_om_iso_def1} and \eqref{stokes_om_iso_def2} is an isomorphism.
\end{cor}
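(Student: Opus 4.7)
The plan is to deduce the corollary by packaging Theorems \ref{stokes_om_weak} and \ref{stokes_om_reg} into a single bijectivity statement between the two Banach spaces, then getting the continuity of the inverse from the a priori estimate \eqref{stokes_om_reg_0}. I proceed in four short steps.

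First I would verify that $T_\delta$ is well-defined and bounded. Given $(v,q) \in W^2_\delta(\Omega) \times \oW^1_\delta(\Omega)$, the interior components $\diverge S(q,v)$ and $\diverge v$ visibly lie in $W^0_\delta(\Omega)$ and $W^1_\delta(\Omega)$ respectively, with norms controlled by $\norm{v}_{W^2_\delta} + \norm{q}_{\oW^1_\delta}$. For the boundary components I invoke the weighted trace theory from Appendix \ref{app_weight} to place $v \cdot \nu \in W^{3/2}_\delta$ and $\mu \sg v \, \nu \cdot \tau \in W^{1/2}_\delta$ on each of $\Sigma$ and $\Sigma_s$, again with the corresponding continuous dependence. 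The required compatibility condition
\begin{equation}
\int_\Omega \diverge v = \int_\Sigma v \cdot \nu + \int_{\Sigma_s} v \cdot \nu
\end{equation}
is just the divergence theorem applied to $v$, which is justified because $v \in W^2_\delta \hookrightarrow H^1(\Omega)$. Hence $T_\delta$ maps boundedly into $\mathfrak{X}_\delta$.

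Next I would show injectivity. If $T_\delta(v,q) = 0$ with $(v,q) \in W^2_\delta \times \oW^1_\delta$, then in particular $(v,q) \in H^1(\Omega) \times \oH^0(\Omega)$, $\diverge v = 0$, $v \cdot \nu = 0$ on $\p \Omega$, and an integration by parts against test functions $w \in H^1(\Omega)$ with $w \cdot \nu = 0$ on $\p \Omega$ shows that $(v,q)$ satisfies \eqref{stokes_om_weak_form} with all $G^i = 0$. Thus $(v,q)$ is a weak solution to the zero-data Stokes system, and the uniqueness clause of Theorem \ref{stokes_om_weak} forces $(v,q) = (0,0)$.

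For surjectivity I would take an arbitrary $(G^1,G^2,G^3_+,G^3_-,G^4_+,G^4_-) \in \mathfrak{X}_\delta$ and apply Theorem \ref{stokes_om_weak} to obtain a unique weak solution $(v,q) \in H^1(\Omega) \times \oH^0(\Omega)$. Theorem \ref{stokes_om_reg} then upgrades the regularity to $v \in W^2_\delta(\Omega)$, $q \in \oW^1_\delta(\Omega)$, at which point the weak identity, together with the boundary and divergence conditions, can be integrated by parts in the reverse direction to yield $T_\delta(v,q) = (G^1,G^2,G^3_+,G^3_-,G^4_+,G^4_-)$ in the sense of $L^2_{\mathrm{loc}}$ on $\Omega$ and as traces on $\p \Omega$.

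Finally, the continuity of $T_\delta^{-1}$ is immediate from the estimate \eqref{stokes_om_reg_0}: combined with $\int_\Omega q = 0$ it gives
\begin{equation}
\ns{v}_{W^2_\delta} + \ns{q}_{\oW^1_\delta} \ls \ns{G^1}_{W^0_\delta} + \ns{G^2}_{W^1_\delta} + \ns{G^3}_{W^{3/2}_\delta} + \ns{G^4}_{W^{1/2}_\delta},
\end{equation}
which is precisely the boundedness of the inverse map from $\mathfrak{X}_\delta$ into $W^2_\delta(\Omega) \times \oW^1_\delta(\Omega)$. There is no real obstacle here because the heavy lifting has already been done in Theorems \ref{stokes_om_weak} and \ref{stokes_om_reg}; the only points requiring mild care are verifying that the trace components really land in the weighted trace spaces, that the mean-zero condition on $q$ is respected, and that the divergence-theorem compatibility condition is the only obstruction to surjectivity.
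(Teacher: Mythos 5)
Your proof is correct and takes essentially the same approach the paper intends: the paper states the corollary as a direct consequence of Theorems \ref{stokes_om_weak} and \ref{stokes_om_reg}, and your four steps (boundedness of $T_\delta$, injectivity from weak uniqueness, surjectivity from weak existence plus regularity, boundedness of the inverse from \eqref{stokes_om_reg_0}) are exactly the content of that deduction, spelled out in detail.
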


\subsection{The $\A$-Stokes problem in $\Omega$}


We now assume that $\eta \in W^{5/2}_\delta$ is a given function with $\delta \in (0,1)$, which in turn determines $\A, J$, etc, and we consider the problem
\begin{equation}\label{A_stokes}
\begin{cases}
\diva S_\A(q,v) = G^1 & \text{in }\Omega \\
\diva v = G^2 & \text{in } \Omega \\
v\cdot \N = G^3_+ &\text{on } \Sigma \\
\mu \sg_\A v \N \cdot \mathcal{T} = G^4_+ &\text{on } \Sigma \\
v\cdot \nu = G^3_- &\text{on } \Sigma_s \\
\mu \sg_\A v \nu \cdot \tau = G^4_- &\text{on } \Sigma_s.
\end{cases}
\end{equation}
Note here that $\N = N - \p_1 \eta e_1$ for $N = -\p_1\zeta_0 e_1 + e_2$ an outward normal vector on $\Sigma$ and $\mathcal{T} = T + \p_1 \eta e_2$ for $T = e_1 + \p_1 \zeta_0 e_2$ the associated  tangent vector.

We now show that under a smallness assumption on $\eta$, the problem \eqref{A_stokes} is solvable in weighted Sobolev spaces.  We begin by introducing the operator 
\begin{equation}\label{A_stokes_om_iso_def1}
 T_\delta[\eta] : W^2_\delta(\Omega) \times  \oW^1_\delta(\Omega) \to \mathfrak{X}_\delta
\end{equation}
given by 
\begin{equation}\label{A_stokes_om_iso_def2}
 T_\delta[\eta](v,q) = (\diva S_\A(q,v), \diva v, v\cdot \N \vert_{\Sigma}, v\cdot \nu \vert_{\Sigma_s}, \mu \sg v \N \cdot \mathcal{T}  \vert_{\Sigma}, \mu \sg v \nu \cdot \tau  \vert_{\Sigma_s}).  
\end{equation}

\begin{prop}\label{A_stokes_T_well_def}
Suppose that  $\eta \in W^{5/2}_\delta$ is a given function that determines $\A, J$, etc.  Then the map $T_\delta[\eta]$ defined above is well-defined and bounded.
\end{prop}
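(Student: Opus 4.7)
The plan is to decompose $T_\delta[\eta](v,q)$ into its six components and verify that each lies in the prescribed weighted Sobolev space with a bound linear in $(v,q)$, then confirm the divergence-type compatibility condition in the definition of $\mathfrak{X}_\delta$. Linearity of $T_\delta[\eta]$ in $(v,q)$ is immediate since $\eta$ is fixed.

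For the bulk components, I would expand $G^1 = \diva S_\A(q,v) = \naba q - \mu \da v$ and $G^2 = \diva v$ in coordinates as finite sums of terms of the schematic form $\A \cdot \nab q$, $\A \otimes \A \cdot \nab^2 v$, $\nab \A \cdot \nab v$, and $\A \cdot \nab v$. The entries of $\A$ are rational expressions in $\bar\eta$ and $\nab\bar\eta$ determined by \eqref{A_def}--\eqref{AJK_def}, and since $\bar\eta$ is the Poisson extension of $E\eta$ with $\eta \in W^{5/2}_\delta$, the coefficient estimates of Appendix \ref{app_coeff} give appropriate control of $\A$ and $\nab\A$ in the relevant weighted spaces on $\Omega$. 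Combining with the weighted product estimates of Appendix \ref{app_prods} yields $G^1 \in W^0_\delta(\Omega)$ and $G^2 \in W^1_\delta(\Omega)$ with the desired bounds.

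For the boundary components, the trace theory developed in Appendix \ref{app_weight} gives $v\vert_{\p\Omega} \in W^{3/2}_\delta(\p\Omega)$ and $\nab v\vert_{\p\Omega} \in W^{1/2}_\delta(\p\Omega)$. Using the decomposition $\N = \N_0 - \p_1\eta e_1$ and the analogous one for $\mathcal{T}$, together with the fact that $\nu$ and $\tau$ are smooth on $\Sigma_s$ away from the corners, we apply the weighted product estimates on each boundary piece to get $G^3_\pm \in W^{3/2}_\delta$ and $G^4_\pm \in W^{1/2}_\delta$ on their respective components of $\p\Omega$. Finally, the compatibility condition is verified by invoking the geometric identity $\p_k(J\A_{jk})=0$, which lets us write $J G^2 = \p_k(J\A_{jk} v_j)$; integrating by parts over $\Omega$ and applying \eqref{ge_4} to identify $J\A\nu$ with $\N$ on $\Sigma$ and $J\nu$ on $\Sigma_s$ produces the required identity, modulo the convention by which boundary integrals in $\mathfrak{X}_\delta$ are defined on the graph $\Sigma$.

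The main obstacle I anticipate is the interplay between the corner geometry, the limited regularity $\eta \in W^{5/2}_\delta$, and the failure of standard Sobolev multiplication in non-smooth domains; all three are circumvented by working entirely within the weighted scale and using the specialized product and coefficient estimates of Appendices \ref{app_weight}--\ref{app_coeff}. Once those lemmas are invoked, the verification reduces to a routine bookkeeping exercise whose outcome is the bound $\|T_\delta[\eta](v,q)\|_{\mathfrak{X}_\delta} \ls (1+\|\eta\|_{W^{5/2}_\delta})^N \bigl(\|v\|_{W^2_\delta} + \|q\|_{\oW^1_\delta}\bigr)$ for some universal power $N$.
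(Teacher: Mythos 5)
The core of your proof---decomposing $T_\delta[\eta]$ into its six components, invoking the weighted embedding $W^2_\delta \hookrightarrow W^{1,r}$ from Proposition \ref{weighted_embed}, the coefficient bounds of Appendix \ref{app_coeff}, the product estimates of Appendix \ref{app_prods}, and trace theory---is the same route the paper takes, and the paper's own proof is essentially a one-paragraph citation of exactly these ingredients.

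Where you go beyond the paper is the compatibility condition in $\mathfrak{X}_\delta$, which the paper's proof of this proposition does not mention. But here your argument does not close. The geometric identity $\p_k(J\A_{jk})=0$ together with \eqref{ge_4} gives
\[
\int_\Omega J\,\diva v\, dx = \int_{\Sigma_s} J\, (v\cdot\nu)\,ds + \int_{-\ell}^\ell (v\cdot\N)\,dx_1,
\]
which is a $J$-weighted relation, whereas the condition in $\mathfrak{X}_\delta$ is $\int_\Omega G^2 = \int_\Sigma G^3_+ + \int_{\Sigma_s} G^3_-$ with no Jacobian. Your hedge ``modulo the convention by which boundary integrals \dots are defined on the graph $\Sigma$'' addresses only the $\Sigma$-measure, not the factor $J$ multiplying the $\Omega$- and $\Sigma_s$-integrals, and $J\ne 1$ whenever $\eta\ne 0$. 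So the identity you produce is not the one required, and you should not assert that it is. (It is fair to note the paper itself sidesteps this; a careful account would either use an $\eta$-dependent compatibility condition in the target space, or observe that the discrepancy is of the perturbative order used later in the contraction-mapping proof of Theorem \ref{A_stokes_om_iso}. But as written your claim overstates what your computation shows.)
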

\begin{proof}
Proposition \ref{weighted_embed} implies that 
\begin{equation}
  W^2_\delta(\Omega) \hookrightarrow W^{1,r}(\Omega)
\text{ for }
 1 \le r < \frac{2}{\delta}.
\end{equation} 
We similarly find that $\eta \in H^{s+1/2}$ for each $1 < s < \min\{\pi/\omega,2\}$. This, the usual Sobolev embeddings, the weighted Sobolev embeddings of Appendix \ref{app_weight}, the product estimates of Appendix \ref{app_prods}, and trace theory then imply that the map $T_\delta[\eta]$ is  well-defined from  $W^2_\delta(\Omega) \times  W^1_\delta(\Omega)$ to  $\mathfrak{X}_\delta$.  
\end{proof}

In fact, the map $T_\delta[\eta]$ is an isomorphism for some values of $\delta$ under a smallness assumption on $\eta$.

\begin{thm} \label{A_stokes_om_iso}
Let $\delta \in (\delta_\omega,1)$ be as in Theorem \ref{stokes_om_reg}.   There exists a $\gamma >0$ such that if $\ns{\eta}_{W^{5/2}_{\delta} } < \gamma$, then  the operator $T_{\delta}[\eta]$ defined by \eqref{A_stokes_om_iso_def1} and \eqref{A_stokes_om_iso_def2} is an isomorphism.
\end{thm}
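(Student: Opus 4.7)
The plan is to realize $T_\delta[\eta]$ as a small perturbation of $T_\delta$ and then invoke the stability of isomorphisms under small perturbations. Since Corollary \ref{stokes_om_iso} already provides that $T_\delta : W^2_\delta(\Omega) \times \oW^1_\delta(\Omega) \to \mathfrak{X}_\delta$ is an isomorphism, it suffices to produce a universal constant $C>0$ such that
\begin{equation}\label{perturb_bound}
\norm{T_\delta[\eta](v,q) - T_\delta(v,q)}_{\mathfrak{X}_\delta} \le C \norm{\eta}_{W^{5/2}_\delta} \bigl( \norm{v}_{W^2_\delta} + \norm{q}_{\oW^1_\delta}\bigr).
\end{equation}
Once \eqref{perturb_bound} is in hand, for $\gamma$ sufficiently small the operator norm of $R[\eta] := T_\delta[\eta]-T_\delta$ satisfies $\norm{R[\eta]} < \norm{T_\delta^{-1}}^{-1}$, and the usual Neumann series argument shows that $T_\delta[\eta] = T_\delta(I + T_\delta^{-1}R[\eta])$ is an isomorphism.

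To prove \eqref{perturb_bound} I would expand each of the six components of $T_\delta[\eta]-T_\delta$ and write the difference in the schematic form $\text{(coefficient depending on }\eta\text{)} \cdot \text{(derivative of }v\text{ or }q\text{)}$. For the bulk terms this uses
\begin{equation}
\diva S_\A(q,v) - \diverge S(q,v) = (\A_{ij}-\delta_{ij})\p_j q - \mu\bigl[(\A \A^T - I)_{k\ell}\p_k\p_\ell v + \tfrac{1}{2}(\p_k \A_{ij})(\sg_\A v + \sg v)\text{ terms}\bigr],
\end{equation}
together with the coefficient estimate that $\A - I$, $J-1$, $K-1$, and their derivatives are controlled by $\norm{\eta}_{W^{5/2}_\delta}$ (Appendix \ref{app_coeff}) and the weighted product estimates (Appendix \ref{app_prods}). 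The divergence difference $\diva v - \diverge v$ is handled analogously in $W^1_\delta$. For the boundary components on $\Sigma$ I would use $\N - \nu = -\p_1 \eta\, e_1 - (\text{lower order in }\eta)$ and the analogous expression for $\mathcal{T}-\tau$, then invoke the trace theory for weighted spaces together with the product estimates to get $W^{3/2}_\delta$ and $W^{1/2}_\delta$ control. The $\Sigma_s$ piece is similar but simpler since the geometry there is $\eta$-independent and only the $\A$-dependence of $\sg_\A$ contributes.

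The compatibility condition $\int_\Omega G^2 = \int_\Sigma G^3_+ + \int_{\Sigma_s} G^3_-$ needed to land in $\mathfrak{X}_\delta$ is automatic: if $v\in W^2_\delta$ then on the one hand
\begin{equation}
\int_\Omega \diva v\, J = \int_{\p\Omega} J\A\nu \cdot v,
\end{equation}
while the identity \eqref{ge_4} shows that $J\A\nu = \nu$ on $\Sigma_s$ and $J\A\nu = \N/\sqrt{1+\abs{\p_1\zeta_0}^2}$ on $\Sigma$, so the integrated form of $T_\delta[\eta](v,q)$ satisfies the compatibility automatically; a brief computation then shows the same holds for the image under $T_\delta[\eta]$ itself after dividing by $J$, which requires only that $J$ be bounded away from $0$, a consequence of the smallness of $\eta$ in $W^{5/2}_\delta$ through Sobolev embedding.

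The main obstacle is not conceptual but technical: verifying \eqref{perturb_bound} requires checking that every nonlinear term genuinely gains a factor of $\norm{\eta}_{W^{5/2}_\delta}$ after applying the weighted product estimates in Appendix \ref{app_prods}. The delicate cases are the second-order terms $(\A\A^T - I)\p^2 v$ and the boundary trace of $\sg_\A v\, \N\cdot \mathcal{T}$, where one must multiply a $W^0_\delta$ or $W^{1/2}_\delta$ factor (coming from $v$ or $q$) by a factor involving $\eta$ and verify that the result lies in the correct weighted space. The embedding $W^{5/2}_\delta \hookrightarrow C^0$ together with the multiplier properties of $W^{3/2}_\delta \cap L^\infty$ on $W^{1/2}_\delta$ makes these go through, completing the argument.
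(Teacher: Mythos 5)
Your approach is essentially the same as the paper's: both view $T_\delta[\eta]$ as a small perturbation of the constant-coefficient isomorphism $T_\delta$ and invert by a standard smallness argument, the only cosmetic difference being that you frame it as a Neumann series while the paper recasts the equation as the fixed-point problem $(v,q)=T_\delta^{-1}\G(v,q)$ and applies the contraction mapping theorem; the key estimate (your \eqref{perturb_bound}, the paper's Lipschitz bound for $\G$) is the same object. One small caveat: your verification of the compatibility condition via the $\A$-divergence theorem produces $\int_\Omega J\diva v = \int_{\p\Omega}(J\A\nu)\cdot v$, which does not directly yield $\int_\Omega \diva v = \int_\Sigma v\cdot\N + \int_{\Sigma_s} v\cdot\nu$ as required by the definition of $\mathfrak{X}_\delta$ — there is a discrepancy coming from the $J$ weight and the difference between $\sqrt{1+(\p_1\zeta_0)^2}\,dx_1$ and $dx_1$ — but the paper also treats this point only implicitly, so it does not distinguish your argument from theirs.
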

\begin{proof}
Assume initially that $\gamma < 1$ is as small as in Lemma \ref{eta_small}.  We can rewrite \eqref{A_stokes} as 
\begin{equation}\label{A_stokes_om_iso_1}
 T_\delta(v,q) = (\G^1(v,q), \G^2(v), \G^3_+(v), \G^4_+(v), \G^3_-, \G^4_-(v)) =: \G(v,q),
\end{equation}
where $T_\delta$ is defined by \eqref{stokes_om_iso} and
\begin{equation}
\begin{split}
\G^1(v,q) &= G^1 + \diverge_{I-A} S_{\A}(q,v) - \diverge \mu \sg_{I-\A}(v)  \\
\G^2(v) &= G^2 + \diverge_{I-\A} v  \\
\G^3_+(v)  &= (1+(\p_1\zeta_0)^2)^{-1/2}[G^3_- + \p_1 \eta v_1  ]    \\
\G^4_+(v) &= (1+(\p_1\zeta_0)^2)^{-1} [G^4_+ + \mu \sg_{I-\A} v N\cdot T -\mu \p_1 \eta (\sg_\A v N \cdot e_2 - \sg_\A v e_1 \cdot T  ) -\mu (\p_1 \eta)^2 \sg_\A v e_1 \cdot e_2    ] \\
\G^3_- & = G^3_- \\
\G^4_- & = G^4_- + \mu \sg_{I-\A} v \nu \cdot \tau.
\end{split}
\end{equation} 
A variant of the argument used in Proposition \ref{A_stokes_T_well_def} shows that $\G:  W^2_\delta(\Omega) \times  \oW^1_\delta(\Omega) \to \mathfrak{X}_\delta$ and that we have the estimates
\begin{equation}\label{A_stokes_om_iso_2}
\begin{split}
&\norm{\G(v,q) }_{\mathfrak{X}_\delta} \le P(\norm{\eta}_{W^{5/2}_\delta} ) \left( \norm{(G^1,G^2,G^3_+,G^3_-,G^4_+,G^4_-)}_{\mathfrak{X}_\delta } + \norm{v}_{W^2_\delta} + \norm{q}_{\oW^1_\delta}  \right) 
\\
&\norm{\G(v_1,q_1)  - \G(v_2,q_2)}_{\mathfrak{X}_\delta} \le P(\norm{\eta}_{W^{5/2}_\delta} ) \left( \norm{v_1 - v_2}_{W^2_\delta} + \norm{q_1 - q_2 }_{\oW^1_\delta}   \right),
\end{split}
\end{equation}
where $P$ is a polynomial with non-negative coefficients such that $P(0) = 0$.  The coefficients depend on $\Omega$ and the parameters of the problem but not on $v,q, \eta$ or the data.
 
Since $\delta \in (\delta_\omega,1)$ as in Theorem \ref{stokes_om_reg}, we know that $T_{\delta}$ is an isomorphism.  Consequently, \eqref{A_stokes_om_iso_1} is equivalent to the fixed point problem
\begin{equation}\label{A_stokes_om_iso_3}
 (v,q) = T_{\delta}^{-1} \G(v,q)
\end{equation}
on $W^2_{\delta}(\Omega) \times  \oW^1_{\delta}(\Omega)$.  The fact that $T_{\delta}$ is an isomorphism and the estimate \eqref{A_stokes_om_iso_2} then imply that if $\gamma$ is sufficiently small, then 
\begin{equation}\label{A_stokes_om_iso_4}
P(\norm{\eta}_{W^{5/2}_\delta} )  \norm{T^{-1}_{\delta}}_{\mathfrak{X}_{\delta} \to W^2_{\delta} \times  \oW^1_{\delta} } \le 1/2
\end{equation}
and so the map $(v,q) \mapsto T_{\delta}^{-1} \G(v,q)$ is a contraction.  Hence \eqref{A_stokes_om_iso_3} admits a unique solution $(v,q) \in W^2_{\delta}(\Omega) \times  \oW^1_{\delta}(\Omega)$, which in turn implies that \eqref{A_stokes} is uniquely solvable for every $6-$tuple  $(G^1,\dotsc,G^4_-) \in \mathfrak{X}_{\delta}$.  This, the first estimate in \eqref{A_stokes_om_iso_2}, and \eqref{A_stokes_om_iso_4} then imply that  $T_{\delta}[\eta]$ is an isomorphism with this choice of $\gamma$. 
\end{proof}

\subsection{The $\A$-Stokes problem in $\Omega$ with $\beta \neq 0$}

Previously we considered the $\A-$stokes problem \eqref{A_stokes} with the boundary condition $\sg_\A v \nu \cdot \tau = G^4_-$ on $\Sigma_s$.  Now we consider the problem with the boundary condition $\sg_\A v \nu \cdot \tau + \beta v\cdot \tau = G^4_-$ on $\Sigma_s$:
\begin{equation}\label{A_stokes_beta}
\begin{cases}
\diva S_\A(q,v) = G^1 & \text{in }\Omega \\
\diva v = G^2 & \text{in } \Omega \\
v\cdot \N = G^3_+ &\text{on } \Sigma \\
\mu \sg_\A v \N \cdot \mathcal{T} = G^4_+ &\text{on } \Sigma \\
v\cdot \nu = G^3_- &\text{on } \Sigma_s \\
\mu \sg_\A v \nu \cdot \tau + \beta v\cdot \tau = G^4_- &\text{on } \Sigma_s,
\end{cases}
\end{equation}
where $\beta>0$ is the Navier slip friction coefficient on the vessel walls.

\begin{thm}\label{A_stokes_beta_solve}
Let  $\delta \in (\delta_\omega,1)$ be as in Theorem \ref{stokes_om_reg}.  Suppose that  $\ns{\eta}_{W^{5/2}_{\delta} } < \gamma$, where $\gamma$ is as in Theorem \ref{A_stokes_om_iso}.  If  $(G^1,G^2,G^3_+,G^3_-,G^4_+,G^4_-)   \in \mathfrak{X}_{\delta}$ then there exists a unique $(v,q) \in W^2_{\delta}(\Omega) \times  \oW^1_{\delta}(\Omega)$ solving \eqref{A_stokes_beta}.  Moreover, the solution obeys the estimate
\begin{equation}\label{A_stokes_beta_solve_0}
 \ns{v}_{W^2_{\delta}} + \ns{q}_{\oW^1_{\delta}} \ls  \ns{G^1}_{W^0_{\delta} } +  \ns{G^2}_{W^1_{\delta}} + \ns{G^3_+}_{W^{3/2}_{\delta} } +\ns{G^3_-}_{W^{3/2}_{\delta} }  
 +  \ns{G^4_+}_{W^{1/2}_{\delta}} +  \ns{G^4}_{W^{1/2}_{\delta}}. 
\end{equation}
\end{thm}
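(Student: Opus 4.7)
The plan is to regard \eqref{A_stokes_beta} as a compact perturbation of \eqref{A_stokes} and then invoke the Fredholm alternative to reduce solvability to uniqueness. Define the operator
\begin{equation*}
T_\delta^\beta[\eta] : W^2_\delta(\Omega) \times \oW^1_\delta(\Omega) \to \mathfrak{X}_\delta
\end{equation*}
by $T_\delta^\beta[\eta](v,q) := T_\delta[\eta](v,q) + \beta K(v,q)$, where $K(v,q) := (0,0,0,0,0,v\cdot\tau|_{\Sigma_s})$.  Then \eqref{A_stokes_beta} is equivalent to the operator equation $T_\delta^\beta[\eta](v,q) = (G^1,\dotsc,G^4_-)$, and \eqref{A_stokes_beta_solve_0} will follow from the open mapping theorem once we show $T_\delta^\beta[\eta]$ is an isomorphism.

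First, I would verify that $K$ is compact.  The trace map $W^2_\delta(\Omega) \to W^{3/2}_\delta(\Sigma_s)$ is bounded by the weighted trace theory (cf.~Appendix \ref{app_weight}), and the embedding $W^{3/2}_\delta(\Sigma_s) \hookrightarrow W^{1/2}_\delta(\Sigma_s)$ on the bounded one-dimensional manifold $\Sigma_s$ (whose only singular points for the weight are the two corner traces) is compact by a weighted Rellich--Kondrachov argument.  Hence $K: W^2_\delta \times \oW^1_\delta \to \mathfrak{X}_\delta$ is compact.  Since $T_\delta[\eta]$ is an isomorphism by Theorem \ref{A_stokes_om_iso} (using the smallness hypothesis $\|\eta\|_{W^{5/2}_\delta}^2 < \gamma$), the operator $T_\delta^\beta[\eta] = T_\delta[\eta] + \beta K$ is Fredholm of index zero, so it is an isomorphism if and only if it is injective.

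To prove injectivity, suppose $(v,q) \in W^2_\delta(\Omega) \times \oW^1_\delta(\Omega)$ satisfies $T_\delta^\beta[\eta](v,q) = 0$.  Then $\diva v = 0$, $v\cdot \N = 0$ on $\Sigma$, and $v\cdot \nu = 0$ on $\Sigma_s$, so in particular $v \in \V(t)$.  I would then repeat the integration by parts from the proof of Lemma \ref{geometric_evolution} (all data terms vanish, and $v$ has enough regularity for the boundary integrals) using $v$ itself as a test function, which yields
\begin{equation*}
\int_\Omega \tfrac{\mu}{2}\,|\sg_\A v|^2\,J + \int_{\Sigma_s} \beta J\,(v\cdot\tau)^2 = 0.
\end{equation*}
The smallness of $\eta$ guarantees $J > 0$ pointwise, so $\sg_\A v \equiv 0$ in $\Omega$ and $v\cdot\tau = 0$ on $\Sigma_s$.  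Combined with $v\cdot\nu=0$ on $\Sigma_s$, the trace of $v$ vanishes on the non-empty piece $\Sigma_s$, and a perturbed Korn inequality for $\sg_\A$ (valid since $\A$ is close to $I$ in $L^\infty$) forces $v \equiv 0$.  Feeding $v=0$ back into the momentum equation yields $\naba q = 0$, hence $q$ is constant on $\Omega$, and the mean-zero normalization built into $\oW^1_\delta$ gives $q \equiv 0$.

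The main obstacle is the weighted compact embedding $W^{3/2}_\delta(\Sigma_s) \hookrightarrow W^{1/2}_\delta(\Sigma_s)$, which must be extracted carefully near the corner points; this is standard in the weighted-Sobolev literature (see e.g.\ \cite{kmr_1}) but requires care because the weight is singular exactly at the two endpoints of the relevant boundary arcs.  The perturbed Korn inequality for $\sg_\A$ is a routine consequence of the standard Korn inequality on $\Omega$ together with the smallness of $\nab \Phi - I$ guaranteed by $\|\eta\|_{W^{5/2}_\delta} \ll 1$.
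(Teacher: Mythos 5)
Your proposal is correct and takes essentially the same route as the paper: treat the $\beta v\cdot\tau$ term as a compact perturbation of the isomorphism $T_\delta[\eta]$ from Theorem~\ref{A_stokes_om_iso}, invoke the Fredholm alternative, and establish injectivity via the energy identity obtained by multiplying by $Jv$ and integrating by parts. Your more explicit discussion of the Korn-type step (from $\sg_\A v\equiv 0$ and $v|_{\Sigma_s}=0$ to $v\equiv 0$) fills in a step the paper leaves terse, and is a welcome clarification rather than a deviation.
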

\begin{proof}

For $0 < \delta < 1$ define the operator $R: W^2_\delta(\Omega) \times  \oW^1_\delta(\Omega) \to \mathfrak{X}_\delta$ via 
\begin{equation}
 R(v,q) = (0,0,0,0,0,\beta v\cdot \nu\vert_{\Sigma_s}),
\end{equation}
which is bounded and well-defined since $v\cdot \nu \in W^{3/2}_\delta(\Sigma_s)$.  In fact, the embedding $W^{3/2}_\delta(\Sigma_s) \hookrightarrow W^{1/2}_\delta(\Sigma_s)$ is compact, so $R$ is a compact operator.  

Theorem \ref{A_stokes_om_iso} tells us that the operator $T_{\delta}[\eta]$ is an isomorphism from $W^2_\delta(\Omega) \times  \oW^1_\delta(\Omega)$ to  $\mathfrak{X}_\delta$.  Since $R$ is compact we have that $T_{\delta}[\eta] + R$ is a Fredholm operator.  

We claim that $T_{\delta}[\eta] + R$ is injective.  To see this we assume that $T_{\delta}[\eta](v,q) + R(v,q) =0$, which is equivalent to \eqref{A_stokes_beta} with vanishing $G^i$ data.  Multiplying the first equation in \eqref{A_stokes_beta} by $J v$ and integrating by parts as in Lemma \ref{geometric_evolution}, we find that 
\begin{equation}
\int_\Omega \frac{\mu}{2} \abs{\sg_\A v}^2 J + \int_{\Sigma_s} \beta\abs{v\cdot \tau}^2 J =0
\end{equation}
and thus that $v=0$.  Then $0 = \nab_\A q = \A \nab q=0$, which implies, since $\A$ is invertible (via Lemma \ref{eta_small}), that $q$ is constant.  Since $q \in \oW^1_{\delta}$ we then have that $q =0$.  This proves the claim.

We now know that $T_{\delta}[\eta] + R$ is injective, so the Fredholm alternative guarantees that it is also surjective and hence is an isomorphism.  From this we deduce that \eqref{A_stokes_beta} is uniquely solvable for any choice of data in $\mathfrak{X}_{\delta}$ and that the estimate \eqref{A_stokes_beta_solve_0} holds.

\end{proof}

\subsection{The $\A$-Stokes problem in $\Omega$ with a boundary equations for $\xi$}
 
We now consider another version of the $\A-$Stokes system in $\Omega$ with boundary conditions on $\Sigma$ involving a new unknown $\xi$: 
\begin{equation}\label{A_stokes_stress}
\begin{cases}
\diva S_\A(q,v) = G^1 & \text{in }\Omega \\
\diva v = G^2 & \text{in } \Omega \\
v\cdot \N = G^3_+ &\text{on } \Sigma \\
S_\A(q,v) \N =  \left[ g\xi  -\sigma \p_1\left( \frac{\p_1 \xi}{(1+\abs{\p_1 \zeta_0}^2)^{3/2}} + G^6 \right)  \right] \N   + G^4_+\frac{\mathcal{T}}{\abs{\mathcal{T}}^2} + G^5\frac{\N}{\abs{\N}^2}  &\text{on } \Sigma \\
v\cdot \nu = G^3_- &\text{on } \Sigma_s \\
 (\Sa(q,v)\nu - \beta v)\cdot \tau = G^4_- &\text{on } \Sigma_s \\
 \mp \sigma \frac{\p_1 \xi}{(1+\abs{\p_1 \zeta_0}^2)^{3/2}} (\pm \ell) = G^7_\pm. 
\end{cases}
\end{equation}

We now construct solutions to \eqref{A_stokes_stress}.

\begin{thm}\label{A_stokes_stress_solve}
Let $\delta \in (\delta_\omega,1)$ be as in Theorem \ref{stokes_om_reg}.  Suppose that  $\ns{\eta}_{W^{5/2}_{\delta} } < \gamma$, where $\gamma$ is as in Theorem \ref{A_stokes_om_iso}.  If  
\begin{equation}
(G^1,G^2,G^3_+,G^3_-,G^4_+,G^4_-)   \in \mathfrak{X}_{\delta}, 
\end{equation}
$G^5,\p_1 G^6 \in W^{1/2}_{\delta}$, and $G^7_\pm \in \R$, then there exists a unique triple $(v,q,\xi) \in W^2_{\delta}(\Omega) \times  \oW^1_{\delta}(\Omega) \times W^{5/2}_{\delta}$ solving \eqref{A_stokes_stress}.  Moreover, the solution obeys the estimate
\begin{multline}\label{A_stokes_stress_0}
 \ns{v}_{W^2_{\delta}} + \ns{q}_{\oW^1_{\delta}} +  \ns{\xi}_{W^{5/2}_{\delta}}\ls  \ns{G^1}_{W^0_{\delta} } +  \ns{G^2}_{W^1_{\delta}} + \ns{G^3_+}_{W^{3/2}_{\delta} } +\ns{G^3_-}_{W^{3/2}_{\delta} }  
 \\
 +  \ns{G^4_+}_{W^{1/2}_{\delta}} +  \ns{G^4_-}_{W^{1/2}_{\delta}}
 + \ns{G^5}_{W^{1/2}_{\delta}} + \ns{\p_1 G^6}_{W^{1/2}_{\delta}} + [G^7]_\ell^2.
\end{multline}
\end{thm}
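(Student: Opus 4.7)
The plan is to decouple the system \eqref{A_stokes_stress} into an $\A$-Stokes problem for $(v,q)$ handled by Theorem \ref{A_stokes_beta_solve}, followed by a one-dimensional elliptic boundary value problem for $\xi$ on $(-\ell,\ell)$. This decoupling is available because $\N\cdot\mathcal{T}=0$ on $\Sigma$, so the tangential component of the stress condition involves only $(v,q)$ while the normal component produces an elliptic equation for $\xi$ alone.

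For the first step, I would dot the $\Sigma$-stress equation with $\mathcal{T}$. The pressure contribution $q\N\cdot\mathcal{T}$ and the $\xi$-dependent normal term both vanish, leaving $-\mu\sga v\,\N\cdot\mathcal{T} = G^4_+$ (up to a sign convention). Together with the bulk equations, the kinematic condition $v\cdot\N = G^3_+$ on $\Sigma$, and the $\Sigma_s$ conditions, this is exactly the system handled by Theorem \ref{A_stokes_beta_solve}. That theorem, applicable since $\ns{\eta}_{W^{5/2}_{\delta}} < \gamma$, produces a unique $(v,q) \in W^2_{\delta}(\Omega) \times \oW^1_{\delta}(\Omega)$ with
\begin{equation*}
\ns{v}_{W^2_{\delta}} + \ns{q}_{\oW^1_{\delta}} \ls \ns{G^1}_{W^0_{\delta}} + \ns{G^2}_{W^1_{\delta}} + \ns{G^3_+}_{W^{3/2}_{\delta}} + \ns{G^3_-}_{W^{3/2}_{\delta}} + \ns{G^4_+}_{W^{1/2}_{\delta}} + \ns{G^4_-}_{W^{1/2}_{\delta}}.
\end{equation*}

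For the second step, with $(v,q)$ in hand, I would take the normal component of the stress condition (dotting with $\N$ and dividing by $\abs{\N}^2$) to obtain the scalar ODE
\begin{equation*}
-\sigma\p_1\!\left(\frac{\p_1\xi}{(1+\abs{\p_1\zeta_0}^2)^{3/2}}\right) + g\xi = H \text{ on } (-\ell,\ell),
\end{equation*}
with $H := \abs{\N}^{-2}\bigl(S_\A(q,v)\N\cdot\N - G^5\bigr) + \sigma\p_1 G^6$, paired with the endpoint conditions $\mp\sigma\p_1\xi(1+\abs{\p_1\zeta_0}^2)^{-3/2}(\pm\ell) = G^7_\pm$ already listed in \eqref{A_stokes_stress}. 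Weighted trace theory from Appendix \ref{app_weight} applied to $q \in \oW^1_{\delta}$ and $\sga v \in W^1_{\delta}$, combined with the coefficient estimates of Appendix \ref{app_coeff} that control $\A$ and $\N$ in terms of $\eta$, yield $S_\A(q,v)\N\cdot\N \in W^{1/2}_{\delta}((-\ell,\ell))$; adding the assumed contributions from $G^5$ and $\p_1 G^6$ gives $H \in W^{1/2}_{\delta}$ with norm bounded by the right side of \eqref{A_stokes_stress_0}.

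For the third step, since $g>0$ the associated bilinear form is coercive on $H^1((-\ell,\ell))$, so the Lax-Milgram theorem yields a unique weak solution $\xi$. Standard one-dimensional elliptic regularity on $(-\ell,\ell)$ with smooth positive-definite coefficients then upgrades this to $\xi \in W^{5/2}_{\delta}$ with $\ns{\xi}_{W^{5/2}_{\delta}} \ls \ns{H}_{W^{1/2}_{\delta}} + [G^7]_\ell^2$, and chaining with the estimate above produces \eqref{A_stokes_stress_0}. Uniqueness of the triple $(v,q,\xi)$ is immediate: the difference of two candidate solutions solves the homogeneous system, which admits only the zero solution by Theorem \ref{A_stokes_beta_solve} and by uniqueness for the 1D ODE. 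The main technical obstacle is verifying the weighted trace bound on $S_\A(q,v)\N\cdot\N$ near the corners, since the geometric factors $\A$, $\N$, and $\abs{\N}^{-2}$ depend nonlinearly on $\eta$; this is controlled by the product estimates of Appendix \ref{app_prods} combined with the smallness of $\ns{\eta}_{W^{5/2}_{\delta}}$, but the bookkeeping demands care to ensure all $\eta$-dependent error terms are absorbed into universal constants.
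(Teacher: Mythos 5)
Your proposal is correct and follows essentially the same route as the paper's proof: decouple by first solving the $\A$-Stokes system with the Navier-slip boundary condition via Theorem \ref{A_stokes_beta_solve} to get $(v,q)$, then recover $\xi$ from the one-dimensional weighted elliptic ODE furnished by the normal component of the stress condition. The paper is terser about the ODE step, invoking "the usual weighted elliptic theory," whereas you spell out the Lax--Milgram existence and the weighted regularity upgrade, but the substance is the same.
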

\begin{proof}
We employ Theorem \ref{A_stokes_beta_solve} to find $(v,q) \in W^2_{\delta}(\Omega) \times  \oW^1_{\delta}(\Omega)$ solving \eqref{A_stokes_beta} and obeying the estimates \eqref{A_stokes_beta_solve_0}.  With this $(v,q)$ in hand we then have a solution to \eqref{A_stokes_stress} as soon as we find $\xi$ solving 
\begin{equation}\label{A_stokes_stress_solve_1}
g\xi  -\sigma \p_1\left( \frac{\p_1 \xi}{(1+\abs{\p_1 \zeta_0}^2)^{3/2}} \right) = S_\A(q,v) \N\cdot \frac{\N}{\abs{\N}^2}     +\sigma \p_1 G^6   -  \frac{G^5}{\abs{\N}^2}
\end{equation}
on $\Sigma$ subject to the boundary conditions
\begin{equation}\label{A_stokes_stress_solve_2}
  \mp \sigma \left(\frac{\p_1 \xi}{(1+\abs{\p_1 \zeta_0}^2)^{3/2}} + F^3 \right)(\pm \ell) = G^7_\pm.
\end{equation}
The estimate \eqref{A_stokes_beta_solve_0} guarantees that $S_\A(q,v) \N\cdot \frac{\N}{\abs{\N}^2} \in W^{1/2}_{\delta}(\Sigma)$, so the usual weighted elliptic theory implies that there exists a unique $\xi \in W^{5/2}_{\delta}(\Sigma)$ satisfying \eqref{A_stokes_stress_solve_1} and \eqref{A_stokes_stress_solve_2} and obeying the estimate 
\begin{multline}\label{A_stokes_stress_solve_3}
\ns{\xi}_{W^{5/2}_{\delta}} \ls \ns{S_\A(q,v) \N\cdot \frac{\N}{\abs{\N}^2}}_{W^{1/2}_{\delta}} + \ns{\p_1 G^6}_{W^{1/2}_{\delta}} + \ns{\frac{G^5}{\abs{\N}^2}}_{W^{1/2}_{\delta}} + [G^7]_\ell^2 \\
\ls \ns{v}_{W^2_{\delta}} + \ns{q}_{\oW^1_{\delta}}  + \ns{\p_1 G^6}_{W^{1/2}_{\delta}} + \ns{G^5}_{W^{1/2}_{\delta}} + [G^7]_\ell^2.
\end{multline}
Then \eqref{A_stokes_stress_0} follow by combining \eqref{A_stokes_beta_solve_0} and \eqref{A_stokes_stress_solve_3}.
\end{proof}

\section{Energy estimate terms }\label{sec_nlin_en}

We will employ the basic energy estimate of Theorem \ref{linear_energy} as the starting point for our a priori estimates.  In order for this to be effective we must be able to estimate the interaction terms appearing on the right side of \eqref{linear_energy_0} when the $F^i$ terms are given as in Appendices \ref{fi_dt1} and \ref{fi_dt2}.  For the sake of brevity we will only present these estimates when the $F^i$ terms are given for the twice temporally differentiated problem, i.e. when $F^i$ are given by \eqref{dt2_f1}--\eqref{dt2_f6}.  The corresponding estimates for the once temporally differentiated problem follow from similar, though often simpler, arguments.  When possible we will present our estimates in the most general form, as estimates for general functionals generated by the $F^i$ terms.  It is only for a few essential terms that we must resort to employing the special structure of the interaction terms in order to close our estimates

In all of the subsequent estimates we abbreviate $d = \dist(\cdot,M)$, where $M =\{(-\ell,\zeta_0(-\ell)), (\ell, \zeta_0(\ell))\}$ is the set of corner points of $\p \Omega$.  Throughout this section we will repeatedly make use of the following simple lemma, whose trivial proof we omit.

\begin{lem}
 Suppose that $d = \dist(\cdot, M)$.  Let $0 < \delta < 1$.  Then $d^{-\delta} \in L^r(\Omega)$ for $1 \le r < \delta/2$.  
\end{lem}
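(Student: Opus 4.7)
The plan is to reduce the estimate to a simple polar-coordinate computation near each of the two corner points in $M$. Since $M$ consists of only finitely many points and the function $d^{-\delta}$ is singular only where $d$ vanishes, the integrability issue is purely local near the corners.

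First I would fix a radius $r_0 > 0$ small enough that the balls $B_\pm := B(\pm \ell, \zeta_0(\pm\ell)), r_0)$ are disjoint and that $d(x) = |x - (\pm\ell,\zeta_0(\pm\ell))|$ for $x \in B_\pm \cap \Omega$ (this is possible because $M$ consists of two isolated points). I would then decompose
\begin{equation*}
\Omega = (\Omega \cap B_+) \cup (\Omega \cap B_-) \cup \Omega_0,
\end{equation*}
where $\Omega_0 = \Omega \setminus (B_+ \cup B_-)$ is at positive distance from $M$. On $\Omega_0$ the function $d$ is bounded below by a positive constant, so $d^{-\delta}$ is bounded and therefore belongs to $L^r(\Omega_0)$ for every $1 \le r \le \infty$ (using that $\Omega_0$ has finite measure).

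The main step is the estimate on $\Omega \cap B_\pm$. Introducing polar coordinates $(\rho,\theta)$ centered at the corner point $(\pm\ell,\zeta_0(\pm\ell))$, we have $d(x) = \rho$, and $\Omega \cap B_\pm$ is contained in a sector of opening angle at most $2\pi$. The area element is $\rho\, d\rho\, d\theta$, so
\begin{equation*}
\int_{\Omega \cap B_\pm} d^{-\delta r}\, dx \le C \int_0^{r_0} \rho^{1 - \delta r}\, d\rho,
\end{equation*}
which is finite precisely when $1 - \delta r > -1$, i.e., when $r < 2/\delta$. (I suspect the inequality in the lemma statement should read $1 \le r < 2/\delta$; the exponent arrangement as stated, $r < \delta/2$, cannot be right since $\delta/2 < 1$ makes the conclusion vacuous.) Combining with the trivial bound on $\Omega_0$ and summing the two corner contributions then gives the claim.

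The only potential subtlety is verifying that $d$ really coincides with the Euclidean distance to the nearest corner inside each $B_\pm$, which requires choosing $r_0$ smaller than half the distance between the two corners; this is straightforward from the geometry of $\Omega$ since $M$ is a finite set. No further machinery is needed.
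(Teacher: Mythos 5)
Your argument is correct and is exactly the computation the authors have in mind; the paper omits the proof, labeling it trivial. You are also right that the stated exponent range $1\le r<\delta/2$ is a typo (vacuous when $\delta<1$) and should read $1\le r<2/\delta$, which is confirmed by the polar-coordinate condition $1-\delta r>-1$ and by the paper's own repeated choice of $2<r<2/\delta$ in the subsequent estimates.
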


Note also that we will assume throughout the entirety of Section \ref{sec_nlin_ell} that $\eta$ is given and satisfies 
\begin{equation}\label{eta_assume}
 \sup_{0 \le t \le T} \left(  \seb(t) +  \ns{\eta(t)}_{W^{5/2}_\delta(\Omega)} + \ns{\dt \eta(t)}_{H^{3/2}((-\ell,\ell))}  \right) \le \gamma < 1,
\end{equation}
where $\gamma \in (0,1)$ is as in Lemma \ref{eta_small}. For the sake of brevity we will not explicitly state this in each result's hypotheses.

\subsection{Generic functional estimates: velocity term}

On the right side of \eqref{linear_energy_0} we find an interaction term of the form 
\begin{equation}
\br{\mathcal{F},w} = \int_\Omega F^1 \cdot w J 
-  \int_{-\ell}^\ell  F^4 \cdot w 
- \int_{\Sigma_s}  J (w \cdot \tau)F^5 
\end{equation}
for $w \in H^1(\Omega)$.  Our goal now is to prove estimates for this functional.  We will estimate each term separately and then synthesize the estimates.   We begin with an analysis of the $F^1$ term.

\begin{prop}\label{ee_f1}
Let $F^1$ be given by \eqref{dt1_f1} or \eqref{dt2_f1}.  Then we have the estimate  
\begin{equation}\label{ee_f1_0}
 \abs{\int_\Omega J w\cdot F^1  } \ls \norm{w}_1 (\sqrt{\E} + \E) \sqrt{\D}
\end{equation}
for each $w \in H^1(\Omega)$.
\end{prop}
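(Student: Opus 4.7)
The plan is a direct duality/H\"older argument. Since $J$ is bounded above and below by Lemma \ref{eta_small} under the smallness assumption \eqref{eta_assume}, the estimate reduces to H\"older-type bounds of the form $\norm{Jw}_{L^p}\norm{F^1}_{L^{p'}}$, or to a dual-pairing bound $\norm{w}_1\norm{F^1}_{(H^1)^*}$ when one factor naturally sits in a negative space. Inspecting the explicit form of $F^1$ recorded in Appendix \ref{fi_dt2} (see \eqref{dt2_f1}), each summand is a finite product of factors depending on $\bar\eta$ and its temporal and spatial derivatives (controlled by $\E$ through $\ns{\eta}_{W^{5/2}_\delta}$ and $\ns{\dt\eta}_{H^{3/2}}$, and by $\D$ through $\ns{\dt^2\eta}_{H^{3/2}}$) multiplied by exactly one factor involving $u, p$ or their temporal derivatives.

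I would split the sum into two groups: \emph{linear} perturbations, consisting of one $\bar\eta$-dependent factor multiplying one $(u,p)$-factor, and \emph{higher-order} perturbations, with at least two $\bar\eta$-factors. For the linear group I apply Cauchy-Schwarz after noting that $H^1(\Omega)\hookrightarrow L^r(\Omega)$ for every finite $r$ in two dimensions, while the weighted embeddings of Appendix \ref{app_weight} give $W^2_\delta\hookrightarrow W^{1,r}$ and $W^1_\delta\hookrightarrow L^r$ for $r<2/\delta$. Combining these with the product estimates of Appendix \ref{app_prods} and placing the weaker factor against $w$, each linear summand is bounded by $\norm{w}_1\sqrt{\E}\sqrt{\D}$, by routing the $\eta$-factor into an $\E$-controlled space and the companion factor into a $\D$-controlled one. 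For the higher-order group one places all but one of the $\eta$-factors into $L^\infty$ via $W^2_\delta\hookrightarrow C^0$ (or $H^{3/2}(-\ell,\ell)\hookrightarrow C^0$ for surface factors) to gain an extra $\sqrt{\E}$ per additional factor, producing the desired $\norm{w}_1\E\sqrt{\D}$ bound.

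The principal technical obstacle is the terms that involve $\dt^2\bar\eta$, since the only control of this quantity is via the enhanced dissipation $\ns{\dt^2\eta}_{H^{3/2}}\le \sdb\le \D$ rather than through $\E$. These terms must be paired with $u$- or $p$-factors whose regularity is high enough that no derivative is wasted and that the final $\D$-factor appears with exactly a single square root; this requires a term-by-term inspection of \eqref{dt2_f1}, making sure each nonlinear summand has exactly one ``bad'' factor. A secondary bookkeeping point is that $\bar\eta$ is the Poisson extension defined in \eqref{extension_def}--\eqref{poisson_def}, so boundary norms of $\eta$ must be translated into $\Omega$-norms of $\bar\eta$ in weighted spaces via the boundedness of $\mathcal{P}$ and the coefficient estimates of Appendix \ref{app_coeff}. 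Once these structural observations are in place, the bound \eqref{ee_f1_0} follows by summing the contributions of each summand in \eqref{dt2_f1} (and the analogous estimates for \eqref{dt1_f1}, which are strictly simpler).
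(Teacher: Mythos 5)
Your overall strategy---a term-by-term H\"older argument using the weighted embeddings of Appendix \ref{app_weight} for the $(u,p)$-factor, the two-dimensional Sobolev embedding $H^1(\Omega)\hookrightarrow L^p(\Omega)$ for every finite $p$ to handle $w$ and the remaining factors, and the coefficient lemma of Appendix \ref{app_coeff}---is exactly the route the paper takes, and your bookkeeping for the ``linear'' group (one $\bar\eta$-factor, one $(u,p)$-factor) is sound. However, your prescription for the higher-order group has a genuine gap: you propose to absorb all but one of the $\bar\eta$-dependent factors into $L^\infty$ via $W^2_\delta\hookrightarrow C^0$ or $H^{3/2}\hookrightarrow C^0$. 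Several of the $\bar\eta$-factors appearing in \eqref{dt2_f1} are second-order spatial derivatives of the extension, namely $\nabla^2\bar\eta$, $\nabla\dt\bar\eta$, and $\nabla\dt^2\bar\eta$, and none of these is bounded under the available regularity. Indeed $\eta\in W^{5/2}_\delta\hookrightarrow H^{s+1/2}((-\ell,\ell))$ for $1<s<\min\{\pi/\omega,2\}$ only gives $\bar\eta\in H^{s+1}(\Omega)$, so $\nabla^2\bar\eta\in H^{s-1}(\Omega)$ with $s-1<1$, and $H^{s-1}(\Omega)\not\hookrightarrow L^\infty(\Omega)$ in two dimensions; similarly $\dt\eta\in H^{3/2}((-\ell,\ell))$ only yields $\nabla\dt\bar\eta\in H^1(\Omega)\not\hookrightarrow L^\infty$.

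A concrete failure is the $\nabla\A$-contribution from $-2\diverge_{\dt\A}\Sa(\dt p,\dt u)$, which leads to an integral of the form $\int_\Omega |w|\,|\nabla\dt\bar\eta|\,|\nabla^2\bar\eta|\,(|\dt p|+|\nabla\dt u|)$; here both $\bar\eta$-factors fail to be bounded, so neither can be placed in $L^\infty$. The correct treatment, which the paper carries out, is a four-factor H\"older with every exponent finite and tuned so the reciprocals sum to one: $\nabla^2\bar\eta$ goes to $L^{2/(2-s)}$ via $H^{s-1}\hookrightarrow L^{2/(2-s)}$; the weight $d^{-\delta}$ (with $d=\dist(\cdot,M)$) goes to $L^r$ for some $r\in(2,2/\delta)$; and $w$, $\nabla\dt\bar\eta$, and the weighted factor $d^\delta(|\dt p|+|\nabla\dt u|)$ are all placed in $L^q$ for a large finite $q$ satisfying $3/q + (2-s)/2 + 1/r = 1$, with the last factor controlled by Theorem \ref{weighted_sobolev_thm}. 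This yields the required $\E\sqrt{\D}$ contribution, but the blanket $L^\infty$ absorption you propose does not close; each higher-order summand must be checked by balancing finite exponents in this way.
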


\begin{proof}
We will prove the result only when $F^1$ is given by \eqref{dt2_f2}, i.e. 
\begin{multline}\label{ee_f1_1}
 F^1 = - 2\diverge_{\dt \A} S_\A(\dt p,\dt u) + 2\mu \diva \sg_{\dt \A} \dt u  \\
- \diverge_{\dt^2 \A} S_\A(p,u) + 2 \mu \diverge_{\dt \A} \sg_{\dt \A} u + \mu \diva \sg_{\dt^2 \A} u.
\end{multline}
The result when $F^1$ is of the form \eqref{dt1_f1} follows from a similar but simpler argument.  We will examine each of the terms in \eqref{ee_f1_1} separately.  The estimate \eqref{ee_f1_0}  follow by combining the subsequent estimates of each term.

\textbf{TERM:  $- 2\diverge_{\dt \A} S_\A(\dt p,\dt u)$.}  We begin by estimating 
\begin{multline}
 \abs{\int_\Omega J w \cdot (- 2\diverge_{\dt \A} S_\A(\dt p,\dt u)) } \ls \int_\Omega \abs{w} \abs{\dt \nab \bar{\eta}} (\abs{\nab \dt p} + \abs{\nab^2 \dt u} ) \\
 + \int_\Omega \abs{w} \abs{\dt \nab \bar{\eta}} \abs{\nab^2 \bar{\eta}} (\abs{\dt p} + \abs{\nab \dt u}) =: I + II.
\end{multline}
For $I$ we choose $q\in [1,\infty)$ and $2 < r < 2/\delta$ such that $2/q + 1/r = 1/2$ and estimate 
\begin{multline}
 I \le \norm{w}_{L^q} \norm{\dt \nab \bar{\eta}}_{L^q}  \norm{ d^{-\delta} }_{L^r}  \left(  \norm{ d^\delta \nab \dt p  }_{L^2} + \norm{ d^\delta \nab^2 \dt u  }_{L^2} \right) 
\\
\ls \norm{w}_{1} \norm{\dt \nab \bar{\eta}}_{1}  \left( \norm{\dt p}_{\oW^1_\delta} + \norm{\dt u}_{W^2_\delta}  \right) 
\ls \norm{w}_{1} \norm{\dt \eta }_{3/2}  \left( \norm{\dt p}_{\oW^1_\delta} + \norm{\dt u}_{W^2_\delta}  \right) \ls \norm{w}_1 \sqrt{\E} \sqrt{\D}.
\end{multline}
For $II$ we choose $m = 2/(2-s)$, $2 < r < \delta/2$ such that $1/m + 1/r < 1$, which is possible since $\delta < 1 < s$.  We then choose $q \in [1,\infty)$ such that $3/q + 1/m + 1/r =1$.  This allows us to estimate 
\begin{multline}
 II \le \norm{w}_{L^q} \norm{\dt \nab \bar{\eta}}_{L^q} \norm{\nab^2 \bar{\eta}}_{L^m} \norm{d^{-\delta}}_{L^r}  \left(\norm{ d^\delta \dt p}_{L^q} + \norm{d^\delta \nab \dt u}_{L^q}  \right) 
\\
\ls \norm{w}_1 \norm{\dt \eta}_{3/2} \norm{\nab^2 \bar{\eta}}_{s-1} \left(\norm{  \dt p}_{\W^1_\delta} + \norm{ \nab \dt u}_{W^1_\delta}  \right) \ls \norm{w}_1 \norm{\dt \eta}_{3/2} \norm{\eta}_{s+1/2  } \left(\norm{  \dt p}_{\oW^1_\delta} + \norm{\dt u}_{W^2_\delta}  \right)
\\
\ls \norm{w}_1 \sqrt{\E} \sqrt{\E} \sqrt{\D} = \norm{w}_1 \E \sqrt{\D}.
\end{multline}

\textbf{TERM: $2\mu \diva \sg_{\dt \A} \dt u$.} We begin by estimating 
\begin{equation}
 \abs{\int_\Omega J w \cdot (2\mu \diva \sg_{\dt \A} \dt u )} \ls \int_\Omega \abs{w} \abs{\nab^2 \dt \bar{\eta}} \abs{\nab \dt u} + \int_\Omega \abs{w}\abs{\dt \nab \bar{\eta}} \abs{\nab^2 \dt u} =: I + II.
\end{equation}
For $I$ we let $2 < r < 2/\delta$ and choose $q \in [1,\infty)$ such that $2/q + 1/r =1/2$.  We then estimate 
\begin{multline}
 I \le \norm{w}_{L^q} \norm{\nab^2 \dt \bar{\eta}}_{L^2} \norm{d^{-\delta}}_{L^r} \norm{d^\delta \nab \dt u}_{L^q} \ls 
  \norm{w}_{1} \norm{\dt \bar{\eta}}_{2}  \norm{\nab \dt u}_{W^1_\delta} \\
  \ls  \norm{w}_{1} \norm{\dt \eta}_{3/2}  \norm{\dt u}_{W^2_\delta} \ls \norm{w}_1 \sqrt{\E} \sqrt{\D}. 
\end{multline}
For $II$ we choose the same $r,q$ as for $I$ to estimate 
\begin{equation}
 II \le \norm{w}_{L^q} \norm{\dt \bar{\nab}\eta}_{L^q}  \norm{d^{-\delta} }_{L^r} \norm{d^\delta \nab^2 \dt u}_{L^2} \ls \norm{w}_{1} \norm{\dt \eta}_{3/2} \norm{\dt u}_{W^2_\delta} \ls \norm{w}_{1} \sqrt{\E} \sqrt{\D}.
\end{equation}

\textbf{TERM: $- \diverge_{\dt^2 \A} S_\A(p,u)$.} We start by bounding 
\begin{equation}
 \abs{\int_\Omega J w \cdot (- \diverge_{\dt^2 \A} S_\A(p,u))  } \ls \int_\Omega \abs{w} \abs{\nab \dt^2 \bar{\eta}} (\abs{\nab p} + \abs{\nab^2 u}) + \int_\Omega \abs{w} \abs{\nab \dt^2 \bar{\eta}} \abs{\nab^2 \bar{\eta}} \abs{\nab u} =: I + II.
\end{equation}
For $I$ we choose $2 < r < 2/\delta$ and $q \in [1,\infty)$ such that $2/q + 1/r =1/2$.  We then bound
\begin{multline}
 I \le \norm{w}_{L^q} \norm{\nab \dt^2 \bar{\eta}}_{L^q}  \norm{d^{-\delta}}_{L^r} \left( \norm{d^\delta \nab p }_{L^2}  +   \norm{d^\delta \nab^2 u}_{L^2} \right) \ls \norm{w}_1 \norm{\nab \dt^2 \bar{\eta}}_{1} \left( \norm{p}_{\oW^1_\delta} + \norm{u}_{W^2_\delta}\right) \\
\ls \norm{w}_1 \norm{\dt^2\eta}_{3/2} \left( \norm{p}_{\oW^1_\delta} + \norm{u}_{W^2_\delta}\right)  \ls \norm{w}_1 \sqrt{\D} \sqrt{\E}.
\end{multline}
For $II$ we choose  $m = 2/(2-s)$, $2 < r < \delta/2$ such that $1/m + 1/r < 1$, which is possible since $\delta < 1 < s$.  We then choose $q \in [1,\infty)$ such that $3/q + 1/m + 1/r =1$.  Then 
\begin{multline}
 II \ls \norm{w}_{L^q} \norm{\nab \dt^2 \bar{\eta}}_{L^q} \norm{\nab^2 \bar{\eta}}_{L^m}  \norm{d^{-\delta}}_{L^r} \norm{d^\delta \nab u}_{L^q} \ls \norm{w}_1 \norm{\nab \dt^2 \bar{\eta}}_{1} \norm{\nab^2 \bar{\eta}}_{s-1} \norm{\nab u}_{W^1_\delta} \\
\ls \norm{w}_1 \norm{\dt^2 \eta}_{3/2} \norm{\eta}_{s+1/2} \norm{u}_{W^2_\delta} \ls \norm{w}_1 \sqrt{\D} \sqrt{\E} \sqrt{\E} = \norm{w}_1 \sqrt{\D}\E.
\end{multline}

\textbf{TERM: $2 \mu \diverge_{\dt \A} \sg_{\dt \A} u$.}  We first bound 
\begin{equation}
 \abs{\int_\Omega J w \cdot (2 \mu \diverge_{\dt \A} \sg_{\dt \A} u) } \ls \int_\Omega \abs{w} \abs{\nab \dt \bar{\eta}}^2 \abs{\nab^2 u} + \int_\Omega \abs{w} \abs{\nab \dt \bar{\eta}} \abs{\nab^2 \dt \bar{\eta}} \abs{\nab u} =: I + II.
\end{equation}
For $I$ we let $2 < r < 2/\delta$ and choose $q \in [1,\infty)$ such that $3/q + 1/r = 1/2$.  Then 
\begin{multline}
 I \le \norm{w}_{L^q} \ns{\nab \dt \bar{\eta}}_{L^q} \norm{d^{-\delta}}_{L^r} \norm{d^\delta \nab^2 u}_{L^2} \ls \norm{w}_1 \ns{\nab \dt \bar{\eta}}_1 \norm{u}_{W^2_\delta } \\
 \ls \norm{w}_1 \ns{\dt \eta}_{3/2}  \norm{u}_{W^2_\delta} \ls \norm{w}_1 \E \sqrt{\D}.
\end{multline}
For $II$ we let $m = 2/(2-s)$ and choose $2 < r < 2/\delta$ such that $1/m + 1/r < 1$, which is possible since $\delta < 1 < s$.  We then choose $q \in [1,\infty)$ such that $3/q + 1/m + 1/r =1$.  Then 
\begin{multline}
 II \ls \norm{w}_{L^q} \norm{\nab \dt \bar{\eta}}_{L^q} \norm{\nab^2 \dt \bar{\eta}}_{L^m} \norm{d^{-\delta}}_{L^r} \norm{ d^\delta \nab u}_{L^q} \ls \norm{w}_1 \norm{\nab \dt \bar{\eta}}_1 \norm{\nab^2 \dt \bar{\eta}}_{s-1} \norm{\nab u}_{W^1_\delta} \\
 \ls \norm{w}_1 \norm{\dt \eta}_{3/2} \norm{\dt \eta}_{s+1/2} \norm{u}_{W^2_\delta} \ls \norm{w}_1 \sqrt{\E} \sqrt{\D} \sqrt{\E} = \norm{w}_1 \E \sqrt{\D}.
\end{multline}

\textbf{TERM: $\mu \diva \sg_{\dt^2 \A} u$.}  We estimate 
\begin{equation}
 \abs{\int_\Omega J w \cdot (\mu \diva \sg_{\dt^2 \A} u)} \ls \int_\Omega \abs{w} \abs{\nab \dt^2 \bar{\eta}} \abs{\nab^2 u} + \int_\Omega \abs{w} \abs{\nab^2 \dt^2 \bar{\eta}} \abs{\nab u}  =: I + II.
\end{equation}
For $I$ we choose $2 < r < 2/\delta$ and $q \in [1,\infty)$ such that $2/q + 1/r =1/2$.  Then 
\begin{multline}
 I \le \norm{w}_{L^q} \norm{\nab \dt^2 \bar{\eta}}_{L^q} \norm{d^{-\delta}}_{L^r} \norm{d^\delta \nab^2 u}_{L^2} \ls \norm{w}_1 \norm{\nab \dt^2 \bar{\eta}}_1 \norm{u }_{W^2_\delta} \\
 \ls \norm{w}_1 \norm{\dt^2 \eta}_{3/2} \norm{u}_{W^2_\delta} \ls \norm{w}_1 \sqrt{\D} \sqrt{\E} .
\end{multline}
For $II$ we choose $2 < r < 2/\delta$ and $q \in [1,\infty)$ such that $2/q + 1/r = 1/2$.  Then 
\begin{multline}
 II \le \norm{w}_{L^q} \norm{\nab^2 \dt^2 \bar{\eta}}_{L^2} \norm{d^{-\delta}}_{L^r} \norm{d^\delta \nab u}_{L^q} \ls \norm{w}_1 \norm{ \dt^2 \bar{\eta}}_2 \norm{\nab u}_{W^1_\delta} \\
 \ls \norm{w}_1 \norm{\dt^2 \eta}_{3/2} \norm{u}_{W^2_\delta} \ls \norm{w}_1 \sqrt{\D} \sqrt{\E}.
\end{multline}

\end{proof}

Next we handle the $F^4$ term.

\begin{prop}\label{ee_f4}
Let $F^4$ be given by \eqref{dt1_f4} or \eqref{dt2_f4}.  Then we have the estimate 
\begin{equation}\label{ee_f4_0}
\abs{ \int_{-\ell}^\ell w \cdot  F^4  } \ls \norm{w}_1 (\sqrt{\E} + \E) \sqrt{\D}
\end{equation}
for all $w \in H^1(\Omega)$.
\end{prop}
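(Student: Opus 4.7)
The plan is to mirror the approach of Proposition \ref{ee_f1}: restrict attention to the more demanding case where $F^4$ is given by \eqref{dt2_f4}, enumerate its constituent terms, and estimate each using H\"older's inequality together with the weighted Sobolev embeddings of Appendix \ref{app_weight} and the product estimates of Appendix \ref{app_prods}. Since $F^4$ lives on $\Sigma \simeq (-\ell,\ell)$, the essential new ingredient relative to Proposition \ref{ee_f1} is the trace inequality $\norm{w}_{H^s((-\ell,\ell))} \ls \norm{w}_{H^{s+1/2}(\Omega)} \ls \norm{w}_1$ for $0\le s \le 1/2$, which allows me to bound factors of $w$ in $L^q((-\ell,\ell))$ for any $q\in[1,\infty)$ by $\norm{w}_1$.

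Recall that $F^4$ collects the commutators arising when $\dt^2$ is applied to the nonlinear right-hand side of the third equation in \eqref{geometric}, i.e.\ $g\eta\N - \sigma\p_1\bigl(\tfrac{\p_1\eta}{(1+|\p_1\zeta_0|^2)^{3/2}}+\R(\p_1\zeta_0,\p_1\eta)\bigr)\N$. Differentiating twice in $t$ and subtracting the linearized operator produces three families of terms: products of $(\dt^j\p_1\eta)$ and $(\dt^{2-j}\p_1\eta)$ multiplied by factors of $\partial_z^a\R$ evaluated at $(\p_1\zeta_0,\p_1\eta)$, which by Appendix \ref{app_r} are uniformly bounded and themselves involve $\p_1\eta$; terms in which one or two of the time derivatives land on $\N=\N_0-\p_1\eta\,e_1$, producing factors of $\p_1\dt^k\eta$ for $k=1,2$ multiplying $\dt^{2-k}$ of the linearized boundary expression plus its $\R$-correction; and lower-order analogues coming from the product rule applied to $g\eta\N$. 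For each such term I will split the integrand into a product of factors and choose H\"older exponents.

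The general strategy for each individual term is the following. Place the top-order factor such as $\dt^2\p_1\eta$, $\p_1\dt\eta$, or $\p_1^2\eta$ in $L^2((-\ell,\ell))$ and absorb it into $\sqrt{\D}$ via the embedding $\norm{\dt^j\eta}_{H^{3/2}((-\ell,\ell))}\subset \sqrt{\sdb}$ for $j=0,1,2$. Place the remaining $\dt^k\eta$ factors in $L^\infty((-\ell,\ell))$ using the one-dimensional embedding $H^1((-\ell,\ell))\hookrightarrow L^\infty((-\ell,\ell))$, so they are controlled by $\sqrt{\seb}\subset\sqrt{\E}$; the smallness assumption \eqref{eta_assume} together with Appendix \ref{app_r} then ensures that the $\R$-dependent coefficients remain bounded. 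Finally, bound the $w$-factor in a suitable $L^q((-\ell,\ell))$ by $\norm{w}_1$ via trace. When the $\R$-nonlinearity contributes an extra factor of $\p_1\eta$, making the term genuinely quadratic in $\eta$, the estimate picks up an additional $\sqrt{\E}$, producing the $\E\sqrt{\D}$ piece of the bound; otherwise it yields the $\sqrt{\E}\sqrt{\D}$ piece. Summing over all the enumerated terms gives \eqref{ee_f4_0}.

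The main obstacle I anticipate is bookkeeping rather than any new analytic idea. One must verify that each $\dt^k\eta$ factor with $k\in\{0,1,2\}$ is accounted for, that no pairing of factors requires simultaneous top regularity in both, and that the Taylor remainder $\R$ and its derivatives are handled so that the genuinely quadratic $\R$-contributions are correctly identified as such (and thus generate $\E$ rather than $\sqrt{\E}$ in the final bound). The restriction to the twice-differentiated case is important here: the control afforded by \eqref{eta_assume} and by $\sdb$ only reaches $H^{3/2}$-regularity for $\dt^j\eta$ up to $j=2$, and the scheme would fail if any term demanded $\dt^3\eta$ in better than the $W^{1/2}_\delta$ norm already provided by $\D$.
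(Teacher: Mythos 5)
There is a genuine gap in your proposal: you enumerate only the $\eta$-dependent pieces of $F^4$, but \eqref{dt2_f4} also contains terms that carry boundary traces of the velocity and pressure, namely $2\mu \sg_{\dt \A} \dt u \,\N$, $\mu \sg_{\dt^2 \A} u\, \N$, $\mu \sg_{\dt \A} u \,\dt \N$, $-2 S_\A(\dt p, \dt u)\,\dt \N$, and $-S_\A(p,u)\, \dt^2 \N$. Your plan of splitting each term as (top-order $\eta$ in $L^2$, via $\sdb$) $\times$ (remaining factors in $L^\infty$) does not address these at all, and neither of the natural extensions of that splitting to them works. For example, $\mu \sg_{\dt^2 \A} u\,\N$ contains the boundary product $(\nab \dt^2 \bar{\eta})\,(\nab u)\,\N$; placing $\nab u$ in $L^\infty(\Sigma)$ would require $H^{2+}$ regularity of $u$, while the weighted control $\norm{u}_{W^2_\delta}$ in $\D$ only gives $\nab u \in W^{1,q}(\Omega)$ for $q < 2/(1+\delta) < 2$ and hence only an $L^p(\Sigma)$ trace with $p<2$; conversely, placing $\nab \dt^2 \bar{\eta}$ in $L^\infty$ would require $\dt^2 \eta \in H^{3/2+\epsilon}$, strictly stronger than the $H^{3/2}$ control in $\sdb$. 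The paper's actual proof resolves this by choosing the H\"older triple $p = (3+\delta)/(2+2\delta)$, $q = (6+2\delta)/(1-\delta)$, $r = (9+3\delta)/(1-\delta)$ satisfying $1/p+2/q=1$ and $1/p+3/r=1$, then running $\nab u$, $\nab \dt u$, $p$, $\dt p$ (and $\p_1^2\eta$, $\p_1^2\dt\eta$) through the weighted trace $W^2_\delta(\Omega) \hookrightarrow W^{1,p}(\Omega) \to L^p(\Sigma)$ (respectively $W^{5/2}_\delta \to W^{1/2}_\delta \hookrightarrow L^p$), while sending $w$ and the remaining $\dt^j\eta$ factors through $H^{1/2}(\Sigma) \hookrightarrow L^q(\Sigma)$ or $L^r(\Sigma)$. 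This is not mere bookkeeping; it is the core of the argument and genuinely requires the full weighted dissipation $\D$, not just $\sdb$.

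A secondary slip: even for pure $\eta$-terms, the factor $\p_1\eta$ that you propose to put in $L^\infty$ requires $\eta \in H^{3/2+\epsilon}((-\ell,\ell))$, which the paper controls by $\norm{\eta}_{s+1/2}$ via $W^{5/2}_\delta \hookrightarrow H^{s+1/2}$; $H^1((-\ell,\ell)) \hookrightarrow L^\infty$ applied to $\eta$ bounds $\eta$ but not $\p_1\eta$ in $L^\infty$. Your heuristic of identifying quadratic-in-$\eta$ terms with $\E\sqrt{\D}$ and linear-in-$\eta$ terms with $\sqrt{\E}\sqrt{\D}$ is sound, but the estimate for each term needs the exponents above, not an $L^2 \times L^\infty$ split.
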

\begin{proof}
Again we will only prove the result in the more complicated case when $F^4$ is given by \eqref{dt2_f4}, i.e.
\begin{multline}\label{ee_f4_1}
 F^4 = 2\mu \sg_{\dt \A} \dt u \N + \mu \sg_{\dt^2 \A} u \N + \mu \sg_{\dt \A} u \dt \N\\
+\left[  2g \dt \eta  - 2\sigma \p_1 \left(\frac{\p_1 \dt \eta}{(1+\abs{\p_1 \zeta_0}^2)^{3/2}} + \dt[ \R(\p_1 \zeta_0,\p_1 \eta)] \right)  -2 S_{\A}(\dt p,\dt u) \right]  \dt \N 
\\
+ \left[ g\eta  - \sigma \p_1 \left(\frac{\p_1 \eta}{(1+\abs{\p_1 \zeta_0}^2)^{3/2}} +  \R(\p_1 \zeta_0,\p_1 \eta) \right)  - S_{\A}(p,u) \right]  \dt^2 \N.
\end{multline}
The case when $F^4$ is given by \eqref{dt1_f4} is handled by a similar and simpler argument.  We will examine each of the terms in \eqref{ee_f4_1} separately.  The estimate \eqref{ee_f4_0}  follow by combining the subsequent estimates of each term.

In what follows we always let $p$, $q$, and $r$ be given by 
\begin{equation}
 p = \frac{3+\delta}{2+2\delta}, q = \frac{6+2\delta}{1-\delta}, \text{ and } r = \frac{9+3\delta}{1-\delta}
\end{equation}
which implies that 
\begin{equation}
 \frac{1}{p} + \frac{3}{r} = 1 \text{ and }  \frac{1}{p} + \frac{2}{q} = 1.
\end{equation}

\textbf{TERM: $ 2\mu \sg_{\dt \A} \dt u \N$.}  We estimate 
\begin{multline}
\abs{\int_{-\ell}^\ell 2\mu w \cdot (\sg_{\dt \A} \dt u)(\N) }
 \ls  \norm{w}_{L^q(\Sigma)}   \norm{\dt \A}_{L^q(\Sigma)} \norm{\nab \dt u}_{L^p(\Sigma)}  \norm{\N}_{L^\infty} \\
\ls \norm{w}_{H^{1/2}(\Sigma)}   \norm{\dt \bar{\eta}}_{H^{3/2}(\Sigma)} \norm{\dt u}_{W^2_\delta} 
\ls \norm{w}_{1}   \norm{\dt \eta}_{3/2} \norm{\dt u}_{W^2_\delta} 
\ls \norm{w}_{1}  \sqrt{\E} \sqrt{\D} . 
\end{multline}

\textbf{TERM: $ \mu \sg_{\dt^2 \A} u \N$.} We estimate 
\begin{multline}
\abs{ \int_{-\ell}^\ell w \cdot \mu \sg_{\dt^2 \A} u \N } 
\ls \norm{w}_{L^{q}(\Sigma)}  \norm{\N}_{L^\infty(\Sigma)} \norm{\dt^2 \A}_{L^q(\Sigma)}  \norm{\nab u}_{L^{p}(\Sigma)} \\
\ls\norm{w}_{H^{1/2}(\Sigma)}  \norm{\dt^2 \eta}_{3/2}  \norm{u}_{W^{2}_\delta} 
\ls \norm{w}_{1}  \norm{\dt^2 \eta}_{3/2} \norm{u}_{W^2_\delta} \ls \norm{w}_{1}  \sqrt{\D}\sqrt{\E} .
\end{multline}


\textbf{TERM: $\mu \sg_{\dt \A} u \dt \N$.}  We estimate 
\begin{multline}
\abs{ \int_{-\ell}^\ell w \cdot \mu \sg_{\dt \A} u \dt \N  }
 \ls  \norm{w}_{L^r(\Sigma)}   \norm{\dt \A}_{L^r(\Sigma)} \norm{\nab u}_{L^p(\Sigma)}  \norm{\dt \N}_{L^r} \\
\ls \norm{w}_{H^{1/2}(\Sigma)}   \ns{\dt \eta}_{3/2} \norm{u}_{W^2_\delta} 
\ls \norm{w}_{1}  \ns{\dt \eta}_{3/2} \norm{u}_{W^2_\delta} 
\ls \norm{w}_{1}  \E \sqrt{\D}. 
\end{multline}

\textbf{TERM: $\left[  2g \dt \eta  - 2\sigma \p_1 \left(\frac{\p_1 \dt \eta}{(1+\abs{\p_1 \zeta_0}^2)^{3/2}} \right)  \right]  \dt \N $.} We estimate
\begin{multline}
\abs{ \int_{-\ell}^\ell w \cdot \left[  2g \dt \eta  - 2\sigma \p_1 \left(\frac{\p_1 \dt \eta}{(1+\abs{\p_1 \zeta_0}^2)^{3/2}}  \right)  \right]  \dt \N  }
\ls \norm{w}_{L^q(\Sigma)}\left( \norm{\dt \eta}_{L^p} +\norm{\p_1^2 \dt \eta}_{L^p} \right) \norm{\p_1 \dt \eta}_{L^q} \\
\ls \norm{w}_{H^{1/2}(\Sigma)} \norm{\dt \eta}_{W^{5/2}_\delta}  \norm{\dt \eta}_{3/2}
\ls \norm{w}_{1} \norm{\dt \eta}_{W^{5/2}_\delta}  \norm{\dt \eta}_{3/2} \ls \norm{w}_1 \sqrt{\D} \sqrt{\E} .
\end{multline}

\textbf{TERM: $\p_1 \dt[ \R(\p_1 \zeta_0,\p_1 \eta)]    \dt \N$.} To begin we expand the term via
\begin{multline}
 \p_1 \dt[ \R(\p_1 \zeta_0,\p_1 \eta)]  = \p_1 [\p_z \R(\p_1 \zeta_0,\p_1 \eta) \p_1 \dt \eta]  =  \frac{\p_z \R(\p_1 \zeta_0,\p_1 \eta)}{\p_1 \eta} \p_1 \eta \p_1^2 \dt \eta  + \p_z^2 \R(\p_1 \zeta_0,\p_1 \eta) \p_1^2 \eta \p_1 \dt \eta \\
 + \frac{ \p_z \p_y \R(\p_1 \zeta_0,\p_1 \eta)}{\p_1 \eta} \p_1 \eta \p_1^2 \zeta_0 \p_1 \dt \eta.
\end{multline}
This allows us to estimate 
\begin{multline}
\abs{ \int_{-\ell}^\ell w \cdot  \p_1 [\dt[ \R(\p_1 \zeta_0,\p_1 \eta)] ] \dt \N }
\\
 \ls \norm{w}_{L^{r}(\Sigma)} \norm{\p_1 \dt \eta}_{L^{r}} \left( \norm{\p_1 \eta}_{L^{r}} \norm{\p_1^2 \dt \eta}_{L^{p}}  + \norm{\p_1 \dt \eta}_{L^{r}}\norm{\p_1^2 \eta}_{L^{p}} + \norm{\p_1 \eta}_{L^{r}}\norm{\p_1 \dt \eta}_{L^{p}}     \right) \\
\ls  \norm{w}_{H^{1/2}(\Sigma)} \norm{\p_1 \dt \eta}_{1/2} \left( \norm{\p_1 \eta}_{1/2} \norm{ \dt \eta}_{W^{5/2}_\delta}  + \norm{\p_1 \dt \eta}_{1/2}\norm{\eta}_{W^{5/2}_\delta} + \norm{\p_1 \eta}_{1/2}\norm{\p_1 \dt \eta}_{1/2}     \right)\\
\ls  \norm{w}_{1} \norm{\dt \eta}_{3/2} \left( \norm{\eta}_{3/2} \norm{\dt \eta}_{W^{5/2}_\delta}  + \norm{\dt \eta}_{3/2}\norm{\eta}_{W^{5/2}_\delta} + \norm{\eta}_{3/2}\norm{\dt \eta}_{3/2} \right) \\
\ls \norm{w}_1 \sqrt{\E} \left(\sqrt{\E} \sqrt{\D} + \sqrt{\E} \sqrt{\D} + \sqrt{\E} \sqrt{\D} \right) \ls \norm{w}_1 \E \sqrt{\D}.
\end{multline}

\textbf{TERM: $ -2 S_{\A}(\dt p,\dt u)  \dt \N$.} We estimate 
\begin{multline}
\abs{ \int_{-\ell}^\ell -2 w \cdot  S_{\A}(\dt p,\dt u)  \dt \N }
 \ls \norm{w}_{L^q(\Sigma)} \norm{\A}_{L^\infty} \left(\norm{\dt p}_{L^p(\Sigma)} + \norm{\nab \dt u}_{L^p(\Sigma)} \right) \norm{\dt \p_1 \eta}_{L^q} \\
\ls \norm{w}_{H^{1/2}(\Sigma)} \left( \norm{\dt p}_{\oW^1_\delta} + \norm{\dt u}_{W^2_\delta} \right) \norm{\dt \p_1 \eta}_{1/2} \ls \norm{w}_1 \left( \norm{\dt p}_{\oW^1_\delta} + \norm{\dt u}_{W^2_\delta} \right) \norm{\dt \eta}_{3/2} \\
\ls \norm{w}_1 \sqrt{\D} \sqrt{\E}.
\end{multline}

\textbf{TERM: $\left[ g\eta  - \sigma \p_1 \left(\frac{\p_1 \eta}{(1+\abs{\p_1 \zeta_0}^2)^{3/2}}   \right)  \right]  \dt^2 \N$.} We estimate 
\begin{multline}
\abs{ \int_{-\ell}^\ell w \cdot \left[ g\eta  - \sigma \p_1 \left(\frac{\p_1 \eta}{(1+\abs{\p_1 \zeta_0}^2)^{3/2}}   \right)  \right]  \dt^2 \N  }
 \ls \norm{w}_{L^{q}(\Sigma)} \norm{\p_1 \dt^2 \eta}_{L^{q}} \left(\norm{\eta}_{L^p} + \norm{\p_1^2 \eta}_{L^p}  \right) \\
\ls \norm{w}_{H^{1/2}(\Sigma)} \norm{\p_1 \dt^2 \eta}_{1/2} \norm{\eta}_{W^{5/2}_\delta}   
\ls \norm{w}_{1} \norm{\dt^2 \eta}_{3/2} \norm{\eta}_{W^{5/2}_\delta}
 \ls \norm{w}_1 \sqrt{\D} \sqrt{\E}.
\end{multline}

\textbf{TERM: $ \p_1[  \R(\p_1 \zeta_0,\p_1 \eta) ]     \dt^2 \N$.}  We expand
\begin{equation}
 \p_1 [ \R(\p_1 \zeta_0,\p_1 \eta)]    =  \frac{\p_z \R(\p_1 \zeta_0,\p_1 \eta)}{\p_1 \eta} \p_1 \eta \p_1^2 \eta  
 + \frac{ \p_y \R(\p_1 \zeta_0,\p_1 \eta)}{(\p_1 \eta)^2}\p_1^2 \zeta_0 (\p_1 \eta)^2 .
\end{equation}
This allows us to  estimate 
\begin{multline}
\abs{ \int_{-\ell}^\ell w \cdot \p_1[  \R(\p_1 \zeta_0,\p_1 \eta) ]     \dt^2 \N  }
 \ls  \norm{w}_{L^{r}(\Sigma)} \norm{\p_1 \dt^2 \eta}_{L^{r}} \left(\norm{\p_1 \eta}_{L^r} \norm{\p_1^2 \eta }_{L^p} + \norm{\p_1 \eta}_{L^r} \norm{\p_1 \eta }_{L^p}  \right) \\
 \ls  \norm{w}_{1} \norm{\dt^2 \eta}_{3/2} \left(\norm{\eta}_{3/2} \norm{\eta }_{W^{5/2}_\delta} + \ns{\eta}_{3/2}  \right) \ls \norm{w}_1 \sqrt{\D}   (\sqrt{\E} \sqrt{\E} + \sqrt{\E} \sqrt{\E}) \ls \norm{w}_1  \E \sqrt{\D}.
\end{multline}

\textbf{TERM: $- S_{\A}(p,u)   \dt^2 \N$.}  We  estimate 
\begin{multline}
\abs{ - \int_{-\ell}^\ell w \cdot S_{\A}(p,u)   \dt^2 \N }
\ls  \norm{w}_{L^{q}(\Sigma)} \norm{\p_1 \dt^2 \eta}_{L^{q}}  \left( \norm{p}_{L^p(\Sigma)} + \norm{\A}_{L^\infty} \norm{\nab u}_{L^p(\Sigma)}   \right) \\
\ls  \norm{w}_{1} \norm{\dt^2 \eta}_{3/2}  \left( \norm{p}_{W^1_\delta} + \norm{u}_{W^2_\delta}   \right) \ls \norm{w}_1 \sqrt{\D}  \sqrt{\E}.
\end{multline}

\end{proof}

Now we handle the $F^5$ term.

\begin{prop}\label{ee_f5}
Let $F^5$ be given by \eqref{dt1_f5} or \eqref{dt2_f5}.  Then we have the estimate
\begin{equation}\label{ee_f5_0}
 \abs{ \int_{\Sigma_s}  J(w \cdot \tau)F^5 } \ls \norm{w}_1 \sqrt{\E}\sqrt{\D} 
\end{equation}
for every $w \in H^1(\Omega)$.
\end{prop}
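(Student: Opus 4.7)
The plan is to mirror closely the strategy used for Propositions \ref{ee_f1} and \ref{ee_f4}: reduce to the harder case where $F^5$ is given by \eqref{dt2_f5}, expand $F^5$ into its constituent commutator terms (which will be of the form $\mu \sg_{\dt \A} u$, $\mu \sg_{\dt^2 \A} u$, and $\mu \sg_{\dt \A} \dt u$ contracted with $\nu$ and $\tau$), and then estimate each piece separately on $\Sigma_s$ via H\"older's inequality with exponents tuned to the weighted Sobolev embeddings available on $\Omega$ and its trace onto $\Sigma_s$.

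For each individual summand in $F^5$ I would write
\begin{equation*}
\abs{\int_{\Sigma_s} J (w\cdot \tau) F^5_{\text{term}}} \ls \norm{w}_{L^q(\Sigma_s)} \norm{\dt^j \bar\eta \text{ or } \nab \dt^j \bar\eta}_{L^q(\Sigma_s)} \norm{\nab \dt^k u}_{L^p(\Sigma_s)},
\end{equation*}
using the same exponent choice $p = (3+\delta)/(2+2\delta)$, $q = (6+2\delta)/(1-\delta)$ that appears in the proof of Proposition \ref{ee_f4} (so that $1/p + 2/q = 1$). The factor in $L^p(\Sigma_s)$ is then dominated by the weighted trace of $\nab \dt^k u \in W^1_\delta(\Omega)$, which is controlled by $\norm{\dt^k u}_{W^2_\delta}$ from $\D$; the geometric factor in $L^q(\Sigma_s)$ is controlled by $\norm{\dt^j \eta}_{H^{3/2}}$ via the embedding $H^{1/2}(\Sigma_s) \hookrightarrow L^q$; and the $w$ factor is controlled by $\norm{w}_{H^{1/2}(\Sigma_s)} \ls \norm{w}_1$ by trace theory. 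In each case one of the two geometric/velocity factors lives in $\sqrt\E$ and the other in $\sqrt\D$, producing the target bound $\norm{w}_1 \sqrt\E \sqrt\D$.

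For the quadratically-geometric terms (those containing a product like $\nab\bar\eta \, \nab\dt\bar\eta$ coming from expanding $\dt^2 \A$), I would introduce an auxiliary exponent $m = 2/(2-s)$ for $1 < s < \min\{\pi/\omega, 2\}$ together with a large $r$ such that $d^{-\delta} \in L^r$, exactly as in the proof of Proposition \ref{ee_f1}. This lets one geometric factor be absorbed into $\norm{\eta}_{W^{5/2}_\delta} \ls \sqrt\E$ without disturbing the distribution of $\sqrt\E$ and $\sqrt\D$ among the remaining factors, at worst yielding an $\E \sqrt\D$ contribution, which is dominated by $\sqrt\E \sqrt\D$ under the standing smallness hypothesis \eqref{eta_assume}.

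The main obstacle, as in the earlier propositions, is bookkeeping the exponents so that the weighted factors $d^{-\delta}$ are integrable (requires $r < 2/\delta$, forcing $\delta < 1$) while simultaneously leaving enough room in the other slots to accommodate $\nab^2 u$-type terms only in $L^2(d^\delta dx)$. Once the exponent arithmetic is arranged — and it has already been arranged successfully in the preceding two propositions — each term bounds cleanly and summing the resulting estimates yields \eqref{ee_f5_0}. The simpler case $F^5$ given by \eqref{dt1_f5} follows from the same strategy with one fewer time derivative to distribute.
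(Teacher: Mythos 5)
Your proposal matches the paper's proof in all essentials: reduce to \eqref{dt2_f5}, split each summand on $\Sigma_s$ by H\"older with $p=(3+\delta)/(2+2\delta)$ and $q=(6+2\delta)/(1-\delta)$, and feed the $L^p$ velocity-gradient factor into the weighted trace $\norm{\dt^k u}_{W^2_\delta}$, the $L^q$ geometric factor into $\norm{\dt^j\eta}_{3/2}$, and $w$ into $\norm{w}_1$, distributing one factor to $\sqrt\E$ and the other to $\sqrt\D$. The one small divergence is that the paper does not need your auxiliary exponent $m=2/(2-s)$: it bounds the whole factor $\norm{\dt^j\A}_{L^q(\Sigma_s)}$ at once, with the lower-order quadratic pieces absorbed by the standing smallness hypothesis, which keeps the estimate shorter than the $\E\sqrt\D$ detour you sketch.
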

\begin{proof}
 
Once more we will only prove the result in the harder case, when 
\begin{equation}
 F^5 = 2 \mu \sg_{\dt \A} \dt u \nu \cdot \tau + \mu \sg_{\dt^2 \A} u \nu \cdot \tau.
\end{equation}
The simpler case of \eqref{dt1_f5} follows from a similar argument.

Let $p$ and $q$  be given by 
\begin{equation}
 p = \frac{3+\delta}{2+2\delta} \text{ and }  q = \frac{6+2\delta}{1-\delta}
\end{equation}
which implies that 
\begin{equation}
  \frac{1}{p} + \frac{2}{q} = 1.
\end{equation}

To handle the first term in $F^5$ we bound
\begin{multline}
\abs{ - \int_{\Sigma_s}  J(w \cdot \tau)(2 \mu \sg_{\dt \A} \dt u \nu \cdot \tau)  }
 \ls  \norm{w}_{L^q(\Sigma_s)}   \norm{\dt \A}_{L^p(\Sigma_s)} \norm{\nab \dt u}_{L^p(\Sigma_s)}  \norm{J }_{L^\infty} \\
\ls \norm{w}_{H^{1/2}(\Sigma_s)}   \norm{\dt \bar{\eta}}_{H^{3/2}(\Sigma_s)} \norm{\dt u}_{W^2_\delta} 
\ls \norm{w}_{1}   \norm{\dt \eta}_{3/2} \norm{\dt u}_{W^2_\delta} 
\ls \norm{w}_{1}   \sqrt{\E} \sqrt{\D}. 
\end{multline}
For the second we estimate 
\begin{multline}
\abs{ - \int_{\Sigma_s}  J(w \cdot \tau)( \mu \sg_{\dt^2 \A} u \nu \cdot \tau) }
\ls  \norm{J }_{L^\infty(\Sigma_s)} \norm{w}_{L^q(\Sigma_s)} \norm{\dt^2 \A}_{L^q(\Sigma_s)}  \norm{\nab u}_{L^p(\Sigma_s)} \\
\ls \norm{w}_{H^{1/2}(\Sigma_s)} \norm{\dt^2 \eta}_{3/2}  \norm{u}_{W^2_\delta} \\
\ls \norm{w}_{1} \norm{\dt^2 \eta}_{3/2}  \norm{u}_{W^2_\delta} \ls \norm{w}_1 \sqrt{\D} \sqrt{\E}.
\end{multline}
The estimate \eqref{ee_f5_0} follows immediately from these two bounds.

\end{proof}

We now combine the previous analysis into a single result.

\begin{thm}\label{ee_v_est}
Define the functional $H^1(\Omega) \ni w\mapsto \br{\mathcal{F},w} \in \mathbb{R}$ via 
\begin{equation}
\br{\mathcal{F},w} = \int_\Omega F^1 \cdot w J 
-  \int_{-\ell}^\ell  F^4 \cdot w 
- \int_{\Sigma_s}  J (w \cdot \tau)F^5, 
\end{equation} 
where $F^1,F^4,F^5$ are defined by either by \eqref{dt1_f1}, \eqref{dt1_f4}, and \eqref{dt1_f5} or else by \eqref{dt2_f1}, \eqref{dt2_f4}, and \eqref{dt2_f5}.  Then 
\begin{equation}
\abs{\br{\mathcal{F},w}} \ls  \norm{w}_1  (\E+ \sqrt{\E})\sqrt{\D}
\end{equation}
for all $w \in H^1(\Omega)$.
\end{thm}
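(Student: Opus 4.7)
The proof is essentially an assembly step: the three propositions \ref{ee_f1}, \ref{ee_f4}, \ref{ee_f5} have already done all the substantive work of bounding each of the three pieces that appear in $\br{\mathcal{F},w}$. My plan is simply to apply the triangle inequality to the three-term definition of $\br{\mathcal{F},w}$ and invoke those propositions in turn.

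More precisely, I would first write
\begin{equation*}
\abs{\br{\mathcal{F},w}} \le \abs{\int_\Omega F^1 \cdot w\, J} + \abs{\int_{-\ell}^\ell F^4 \cdot w} + \abs{\int_{\Sigma_s} J(w\cdot\tau) F^5}.
\end{equation*}
Since the $F^i$ in the statement are assumed to be given by either the once-differentiated expressions \eqref{dt1_f1}, \eqref{dt1_f4}, \eqref{dt1_f5} or the twice-differentiated expressions \eqref{dt2_f1}, \eqref{dt2_f4}, \eqref{dt2_f5}, Propositions \ref{ee_f1} and \ref{ee_f4} apply directly to the first two terms and yield the bound $\norm{w}_1(\sqrt{\E} + \E)\sqrt{\D}$ for each, while Proposition \ref{ee_f5} controls the third term by $\norm{w}_1 \sqrt{\E}\sqrt{\D}$.

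Summing these three bounds and using the trivial inequality $\sqrt{\E}\sqrt{\D} \le (\sqrt{\E} + \E)\sqrt{\D}$ then gives the desired estimate. There is essentially no obstacle here, since all the delicate Hölder juggling, weighted Sobolev embeddings, and use of the assumption \eqref{eta_assume} have been absorbed into the three prior propositions; the only small point to verify is that the choice of $F^i$ is consistent across the three pieces (either all of them are the once-differentiated forms or all of them are the twice-differentiated forms), which is guaranteed by the hypothesis of the theorem.
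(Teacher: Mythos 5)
Your proposal is correct and is identical to the paper's own proof, which simply combines Propositions \ref{ee_f1}, \ref{ee_f4}, and \ref{ee_f5} via the triangle inequality.
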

\begin{proof}
We simply combine Propositions \ref{ee_f1}, \ref{ee_f4}, and \ref{ee_f5}.  
\end{proof}

We will need the following variant to use in conjunction with Theorem \ref{pressure_est}.

\begin{thm}\label{ee_v_est_pressure}
Define the functional $H^1(\Omega) \ni w\mapsto \br{\mathcal{F},w} \in \mathbb{R}$ via 
\begin{equation}
\br{\mathcal{F},w} = \int_\Omega F^1 \cdot w J 
- \int_{\Sigma_s}  J (w \cdot \tau)F^5, 
\end{equation} 
where $F^1,F^4,F^5$ are defined by either by \eqref{dt1_f1}, \eqref{dt1_f4}, and \eqref{dt1_f5} or else by \eqref{dt2_f1}, \eqref{dt2_f4}, and \eqref{dt2_f5}.  Then 
\begin{equation}
\abs{\br{\mathcal{F},w}} \ls  \norm{w}_1  (\E+ \sqrt{\E})\sqrt{\D}
\end{equation}
for all $w \in H^1(\Omega)$.
\end{thm}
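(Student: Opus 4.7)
The plan is to observe that Theorem \ref{ee_v_est_pressure} is a strict subcase of Theorem \ref{ee_v_est}: the functional $\br{\mathcal{F},w}$ here is obtained from the one in Theorem \ref{ee_v_est} by deleting the boundary term $\int_{-\ell}^\ell F^4 \cdot w$. Consequently, no new estimates are needed; I will simply invoke the two relevant componentwise bounds already proved and add them.

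First I would split
\begin{equation}
\br{\mathcal{F},w} = \int_\Omega F^1 \cdot w\, J - \int_{\Sigma_s} J(w\cdot \tau) F^5.
\end{equation}
For the first term, Proposition \ref{ee_f1} yields
\begin{equation}
\left| \int_\Omega F^1 \cdot w\, J \right| \ls \norm{w}_1 (\sqrt{\E} + \E)\sqrt{\D},
\end{equation}
regardless of whether $F^1$ is given by \eqref{dt1_f1} or \eqref{dt2_f1}. For the second term, Proposition \ref{ee_f5} gives
\begin{equation}
\left| \int_{\Sigma_s} J (w\cdot \tau) F^5 \right| \ls \norm{w}_1 \sqrt{\E}\sqrt{\D},
\end{equation}
again in both the once- and twice-differentiated cases.

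Summing these two bounds and absorbing the $\sqrt{\E}\sqrt{\D}$ contribution into $(\sqrt{\E}+\E)\sqrt{\D}$ yields the desired inequality
\begin{equation}
\abs{\br{\mathcal{F},w}} \ls \norm{w}_1 (\sqrt{\E}+\E)\sqrt{\D}
\end{equation}
for every $w \in H^1(\Omega)$. There is no genuine obstacle here: the only reason this statement is recorded as a separate theorem is that, in view of Theorem \ref{pressure_est}, the pressure estimate needs the functional \emph{without} the $F^4$ contribution (since the $F^4$ term corresponds to a boundary datum on $\Sigma$ that is absorbed into the pressure formulation), and it is convenient to state the resulting bound once and for all in the form needed for that application.
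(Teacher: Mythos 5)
Your proof is correct and matches the paper's argument exactly: the paper's proof is the one-line "combine Propositions \ref{ee_f1} and \ref{ee_f5}," and you have spelled out precisely that. Your closing remark about why the $F^4$-free version is needed for the pressure estimate is also an accurate reading of the role this result plays in Theorem \ref{pressure_est}.
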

\begin{proof}
We simply combine Propositions \ref{ee_f1} and \ref{ee_f5}.  
\end{proof}

\subsection{Generic nonlinear estimates: $F^3$ term}

Theorem \ref{ee_v_est} will be useful for applying Theorem \ref{linear_energy}, but it will also play a role in the estimates of Theorem \ref{xi_est}.  We now record a quick estimate involving $F^3$ that will also be useful there.

\begin{thm}\label{ee_f3_half}
Let 
\begin{equation}
 F^3 = \p_z \R(\p_1 \zeta_0,\p_1 \eta)  \p_1 \dt^2 \eta  + \p_z^2 \R(\p_1 \zeta_0,\p_1 \eta) (\p_1 \dt \eta)^2.
\end{equation}
Then
\begin{equation}
 \norm{F^3}_{1/2} \ls \sqrt{\E} \sqrt{\D}.
\end{equation}
\end{thm}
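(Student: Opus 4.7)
The strategy is to estimate the two summands of $F^3$ separately using standard 1D Sobolev product and composition estimates on $\Sigma = (-\ell,\ell)$. Fix $s \in (1,\min\{\pi/\omega,2\})$ and set $r = s-\tfrac12 \in (\tfrac12, 1)$. On the 1D interval $\Sigma$ we have the embedding $H^r \hookrightarrow L^\infty$, the algebra property of $H^r$, and the module estimate $\norm{fg}_{1/2} \ls \norm{f}_{H^r}\norm{g}_{1/2}$ (see Appendix \ref{app_prods}). Moreover, by Appendix \ref{app_weight} the embedding $W^{5/2}_\delta \hookrightarrow H^{s+1/2}$ holds. Under the a priori assumption \eqref{eta_assume}, which keeps $\norm{\p_1 \eta}_{L^\infty}$ small, standard Moser composition then gives $\norm{F\circ(\p_1\zeta_0,\p_1\eta)}_{H^r} \ls 1 + \norm{\p_1\eta}_{H^r}$ for any smooth $F$, with the refinement $\norm{F\circ(\p_1\zeta_0,\p_1\eta)}_{H^r} \ls \norm{\p_1\eta}_{H^r}$ whenever $F(y,0) \equiv 0$.

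The essential observation for the first term is that direct differentiation of \eqref{R_def} yields $\p_z \R(y,0) = 0$, while $\p_z^2 \R$ is merely smooth. Combining this with the composition estimate above gives
\begin{equation*}
  \norm{\p_z \R(\p_1\zeta_0,\p_1\eta)}_{H^r} \ls \norm{\p_1\eta}_{H^r}, \qquad \norm{\p_z^2 \R(\p_1\zeta_0,\p_1\eta)}_{H^r} \ls 1.
\end{equation*}
The relevant control of $\eta$ and its time derivatives from the definitions \eqref{ed_def_3}--\eqref{ed_def_5} of $\sdb$, $\E$, and $\D$ then reads
\begin{equation*}
  \norm{\p_1\eta}_{H^r} \ls \norm{\eta}_{W^{5/2}_\delta} \ls \sqrt{\E}, \quad
  \norm{\p_1\dt\eta}_{H^r} \ls \norm{\dt\eta}_{W^{5/2}_\delta} \ls \sqrt{\D},
\end{equation*}
\begin{equation*}
  \norm{\p_1\dt\eta}_{1/2} \le \norm{\dt\eta}_{3/2} \ls \sqrt{\E}, \quad
  \norm{\p_1\dt^2\eta}_{1/2} \le \norm{\dt^2\eta}_{3/2} \ls \sqrt{\sdb} \ls \sqrt{\D}.
\end{equation*}

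Assembling the pieces via the module estimate, the first summand of $F^3$ is bounded by
\begin{equation*}
  \norm{\p_z \R(\p_1\zeta_0,\p_1\eta)\,\p_1\dt^2\eta}_{1/2} \ls \norm{\p_z \R(\p_1\zeta_0,\p_1\eta)}_{H^r}\norm{\p_1\dt^2\eta}_{1/2} \ls \sqrt{\E}\,\sqrt{\D}.
\end{equation*}
For the second summand, the module estimate first gives
\begin{equation*}
  \norm{(\p_1\dt\eta)^2}_{1/2} \ls \norm{\p_1\dt\eta}_{H^r}\norm{\p_1\dt\eta}_{1/2} \ls \sqrt{\D}\,\sqrt{\E},
\end{equation*}
and then a second application yields
\begin{equation*}
  \norm{\p_z^2 \R(\p_1\zeta_0,\p_1\eta)(\p_1\dt\eta)^2}_{1/2} \ls \norm{\p_z^2 \R(\p_1\zeta_0,\p_1\eta)}_{H^r}\,\norm{(\p_1\dt\eta)^2}_{1/2} \ls \sqrt{\E}\,\sqrt{\D}.
\end{equation*}
Summing the two bounds proves the claim.

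The only delicate point is the zero-value composition estimate, which requires the observation $\p_z \R(y,0) = 0$ from \eqref{R_def}; without it the first term would contribute only $\sqrt{\D}$ rather than $\sqrt{\E}\sqrt{\D}$, destroying the structure $\abs{\N} \ls \E^\theta \D$ needed in the nonlinear energy scheme. Everything else is routine bookkeeping with 1D Sobolev multiplication.
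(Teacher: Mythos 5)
Your proposal is correct and follows essentially the same route as the paper: both arguments hinge on the fact that $\p_z\R(y,0)=0$ (so that the Moser composition bound gains a factor of $\norm{\p_1\eta}_{H^{s-1/2}}$ for the first summand) and on the 1D module estimate $\norm{fg}_{1/2}\ls\norm{f}_{s-1/2}\norm{g}_{1/2}$ combined with the embeddings $W^{5/2}_\delta\hookrightarrow H^{s+1/2}$ and $\norm{\dt^j\eta}_{3/2}^2\ls\sdb\ls\D$. The only cosmetic difference is in the second summand, where the paper embeds into $H^{s-1/2}$ and invokes the algebra property in one line, whereas you apply the module estimate directly in $H^{1/2}$ to split $(\p_1\dt\eta)^2$ asymmetrically, landing one factor in $\sqrt{\E}$ via $\norm{\dt\eta}_{3/2}$ and the other in $\sqrt{\D}$ via $\norm{\dt\eta}_{W^{5/2}_\delta}$ — this makes the bookkeeping slightly more transparent but is not a different argument.
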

\begin{proof}
Since $s-1/2 >1/2$ we may estimate 
\begin{equation}
 \norm{\p_z \R(\p_1 \zeta_0,\p_1 \eta)  \p_1 \dt^2 \eta }_{1/2} \ls \norm{\p_1 \eta}_{s-1/2} \norm{\p_1 \dt^2 \eta}_{1/2} \ls \norm{\eta}_{s+1/2} \norm{\dt^2 \eta}_{3/s} \ls \sqrt{\E} \sqrt{\D}.
\end{equation}
Similarly, 
\begin{equation}
 \norm{\p_z^2 \R(\p_1 \zeta_0,\p_1 \eta) (\p_1 \dt \eta)^2}_{1/2} \ls  \norm{\p_z^2 \R(\p_1 \zeta_0,\p_1 \eta) (\p_1 \dt \eta)^2}_{s-1/2} \ls \ns{\dt \eta}_{3/2} \ls \sqrt{\E} \sqrt{\D}.
\end{equation}
\end{proof}

\subsection{Generic functional estimates: pressure term}

On the right side of \eqref{linear_energy_0} we find an interaction term of the form 
\begin{equation}
 \int_\Omega J \psi F^2
\end{equation}
for $\psi \in L^2(\Omega)$.  We now provide an estimate for this functional.

\begin{thm}\label{ee_p_est}
Let $F^2$ be given by either \eqref{dt1_f2} or \eqref{dt2_f2}.  Then 
\begin{equation}\label{ee_p_est_0}
 \abs{ \int_\Omega J \psi F^2} \ls \norm{\psi}_{L^2} \sqrt{\D} \sqrt{\E}
\end{equation}
for every $\psi \in L^2(\Omega)$.
\end{thm}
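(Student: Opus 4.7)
The plan is to apply Cauchy–Schwarz in $L^2(\Omega)$ to extract $\|\psi\|_{L^2}$ and then bound the $L^2(\Omega)$ norm of the nonlinear factor $J\,F^2$ using Hölder's inequality, the uniform bound $\|J\|_{L^\infty}\lesssim 1$ from Lemma \ref{eta_small}, and the same family of Sobolev / weighted Sobolev embeddings already used in the proofs of Propositions \ref{ee_f1}--\ref{ee_f5}. From Appendix \ref{app_nonlin_form}, the once-differentiated $F^2$ is a binary product of the form $\dt\A_{ij}\p_j u_i$, while the twice-differentiated $F^2$ is a sum of binary terms $\dt\A_{ij}\p_j\dt u_i$ and $\dt^2\A_{ij}\p_j u_i$, plus lower-order pieces generated when the chain rule hits coefficients in $\A$. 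I would treat each piece separately and then sum.

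For the model binary term $\int_\Omega J\psi\,\dt\A_{ij}\p_j u_i$, I would pick $2<r<2/\delta$ and $q\in[1,\infty)$ with $1/q+1/r=1/2$ and write
\[
\Big|\int_\Omega J\psi\,\dt\A_{ij}\p_j u_i\Big|\le \|J\|_{L^\infty}\|\psi\|_{L^2}\|\dt\A\|_{L^q}\|d^{-\delta}\|_{L^r}\|d^{\delta}\nab u\|_{L^2}.
\]
Using $\dt\A\sim\nab\dt\bar\eta\in H^1\hookrightarrow L^q$ (which is fine for any finite $q$ in two dimensions), the extension estimate $\|\dt\bar\eta\|_{H^{3/2+\varepsilon}}\lesssim\|\dt\eta\|_{3/2}$, and $\|d^\delta\nab u\|_{L^2}\lesssim\|u\|_{W^2_\delta}$, this gives the bound $\|\psi\|_{L^2}\|\dt\eta\|_{3/2}\|u\|_{W^2_\delta}\lesssim\|\psi\|_{L^2}\sqrt{\E}\sqrt{\D}$. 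The term with $\dt u$ in place of $u$ is handled identically, producing $\|\psi\|_{L^2}\|\dt\eta\|_{3/2}\|\dt u\|_{W^2_\delta}\lesssim\|\psi\|_{L^2}\sqrt{\E}\sqrt{\D}$.

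For the term containing $\dt^2\A\sim\nab\dt^2\bar\eta$, the same Hölder splitting applies, but now the roles are reversed: $\|\dt^2\A\|_{L^q}\lesssim\|\dt^2\eta\|_{3/2}\lesssim\sqrt{\D}$, while $\|d^{\delta}\nab u\|_{L^2}\lesssim\|u\|_{W^2_\delta}\lesssim\sqrt{\E}$, yielding again $\|\psi\|_{L^2}\sqrt{\E}\sqrt{\D}$. The residual lower-order pieces coming from the chain rule are all of the same (or better) structure --- products of $\nab\bar\eta$, $\nab\dt\bar\eta$ or $\nab\dt^2\bar\eta$ with $\nab u$ or $\nab\dt u$ --- and are handled in the same way, with any extra factor of $\eta$ contributing a further $\sqrt{\E}$ and thus landing in the $\E\sqrt{\D}$ regime absorbed by the right-hand side of \eqref{ee_p_est_0}.

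The only real bookkeeping point is tracking which factor absorbs $d^{-\delta}$ via a high-exponent Lebesgue bound and which carries the weighted $d^\delta$ in $L^2$; no subtle interplay between $\psi$ and the weighted structure is needed, which is why one Cauchy–Schwarz in $L^2$ suffices. I do not expect any genuine obstacle beyond this bookkeeping, because every piece of $JF^2$ has the schematic form (weighted factor in $\E$) $\times$ (weighted factor in $\D$) $\times$ (coefficient of $\eta$ controlled by $\seb$), and assumption \eqref{eta_assume} ensures the resulting constants depend only on $\Omega$ and the parameters. Summing the individual estimates yields \eqref{ee_p_est_0}.
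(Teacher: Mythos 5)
Your overall strategy coincides with the paper's --- split by H\"older, absorb the singular weight via $d^{-\delta}\in L^r$ for $r<2/\delta$, and use Sobolev and weighted Sobolev embeddings on the remaining factors --- but the displayed H\"older inequality contains an exponent miscount that makes it false as written. In
\begin{equation*}
\abs{\int_\Omega J\psi\,\dt\A_{ij}\p_j u_i}\le \norm{J}_{L^\infty}\norm{\psi}_{L^2}\norm{\dt\A}_{L^q}\norm{d^{-\delta}}_{L^r}\norm{d^{\delta}\nab u}_{L^2},
\end{equation*}
the exponents of the five factors are $\infty,2,q,r,2$, whose reciprocals sum to $\tfrac12+\tfrac1q+\tfrac1r+\tfrac12 = 1+(\tfrac1q+\tfrac1r)$, which equals $\tfrac32$ under your condition $\tfrac1q+\tfrac1r=\tfrac12$ rather than the required $1$. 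The root of the problem is that placing both $\psi$ and $d^{\delta}\nab u$ in $L^2$ exhausts the Lebesgue budget and leaves no room for $d^{-\delta}$ (genuinely unbounded at the corners) or $\dt\A$ (only in $L^q$ for finite $q$). So the claim that ``one Cauchy--Schwarz in $L^2$ suffices'' with $d^\delta\nab u$ retained in $L^2$ is not correct, and the same miscount propagates through each of your three model terms.

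The repair is to move $d^\delta\nab u$ into $L^q$ with $q>2$, using $\norm{d^\delta f}_{L^q}\ls\norm{f}_{W^1_\delta}$ for any $q\in[1,\infty)$ (Theorem \ref{weighted_sobolev_thm}); with both $\dt^j\A$ and $d^\delta\nab u$ in $L^q$, the paper's exponent condition reads $2/q+1/r=1/2$, which does balance. The rest of your computation survives unchanged: $\norm{\dt\A}_{L^q}\ls\norm{\nab\dt\bar\eta}_1\ls\norm{\dt\eta}_{3/2}$, $\norm{d^\delta\nab u}_{L^q}\ls\norm{u}_{W^2_\delta}$, and the $\dt^2\A$ term swaps which factor supplies $\sqrt{\D}$. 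Alternatively, if you insist on Cauchy--Schwarz first and a bound on $\norm{JF^2}_{L^2}$, drop the weight trick and split $\norm{\dt\A\,\nab u}_{L^2}\le\norm{\dt\A}_{L^q}\norm{\nab u}_{L^s}$ with $\tfrac1q+\tfrac1s=\tfrac12$ and $2<s<2/\delta$, bounding $\norm{\nab u}_{L^s}$ via Proposition \ref{weighted_embed}; that also works. One incidental inaccuracy: in \eqref{dt2_f2}, $F^2=-\diverge_{\dt^2\A}u-2\diverge_{\dt\A}\dt u$ consists of exactly those two binary products; $\diverge_{\dt^j\A}v=\dt^j\A_{ik}\p_k v_i$ is already fully expanded, so there are no further chain-rule remainders to track.
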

\begin{proof}
We will only prove the result in the harder case \eqref{dt2_f2}, i.e. when  
\begin{equation}\label{ee_p_est_1}
 F^2 = -\diverge_{\dt^2 \A} u - 2\diverge_{\dt \A}\dt u.
\end{equation}
The easier case \eqref{dt1_f2} follows from a simpler argument.

To handle the first term in \eqref{ee_p_est_1} we choose $2 < r < 2/\delta$ and $q\in[1,\infty)$ such that $2/q + 1/r =1/2$.  This allows us to estimate  
\begin{multline}
 \abs{\int_\Omega J \psi (-\diverge_{\dt^2 \A} u )}\ls \int_\Omega \abs{\psi} \abs{\nab \dt^2 \bar{\eta}} \abs{\nab u} \ls \norm{\psi}_{L^2} \norm{\nab \dt^2 \bar{\eta}}_{L^q} \norm{d^{-\delta}}_{L^r} \norm{d^\delta \nab u}_{L^q} 
\\
\ls \norm{\psi}_{L^2} \norm{\nab \dt^2 \bar{\eta}}_{1} \norm{\nab u}_{W^1_\delta} \ls \norm{\psi}_{L^2} \norm{\dt^2 \eta}_{3/2} \norm{u}_{W^2_\delta} \ls \norm{\psi}_{L^2} \sqrt{\D} \sqrt{\E}.
\end{multline}
To handle the second term in \eqref{ee_p_est_1} we choose  $2 < r < 2/\delta$ and $q\in[1,\infty)$ such that $2/q + 1/r =1/2$.  Then 
\begin{multline}
 \abs{\int_\Omega J \psi(- 2\diverge_{\dt \A}\dt u)} \ls \int_\Omega \abs{\psi} \abs{\nab \dt \bar{\eta}} \abs{\nab \dt u} \ls \norm{\psi}_{L^2} \norm{\nab \dt \bar{\eta}}_{L^q} \norm{d^{-\delta}}_{L^r} \norm{d^\delta \nab \dt u}_{L^q} 
 \\
\ls \norm{\psi}_{L^2} \norm{\nab \dt \bar{\eta}}_1 \norm{\nab \dt u}_{W^1_\delta} \ls \norm{\psi}_{L^2} \norm{\dt \eta}_{3/2} \norm{\dt u}_{W^2_\delta} \ls \norm{\psi}_{L^2} \sqrt{\E} \sqrt{\D}. 
\end{multline}
The estimate \eqref{ee_p_est_0} then follows by combining these.

\end{proof}

\subsection{Special functional estimates: velocity term}

On the right side of \eqref{linear_energy_0} we encounter the terms 
\begin{equation}
-  \int_{-\ell}^\ell \sigma  F^3   \p_1 (\dt^2 u \cdot \N)  \text{ and } -  \int_{-\ell}^\ell \sigma  F^3   \p_1 (\dt u \cdot \N) 
\end{equation}
with $F^3$ defined by \eqref{dt2_f3} in the first case and \eqref{dt1_f3} in the second case.  We do not have the luxury of estimating these as generic functionals and instead must exploit the special structure shared between $F^3$ and $\dt^j u \cdot \N$.   

We will only present the analysis for the first term, which is harder to control due to the second-order temporal derivative.  The analysis for the second term follows from similar, easier estimates.  In the second-order case we have that
\begin{equation}
 F^3 = \dt^2 [ \R(\p_1 \zeta_0,\p_1 \eta)] = \p_z \R(\p_1 \zeta_0,\p_1 \eta)  \p_1 \dt^2 \eta  + \p_z^2 \R(\p_1 \zeta_0,\p_1 \eta) (\p_1 \dt \eta)^2.
\end{equation}
For the purposes of these estimates we will write
\begin{equation}
 \dt^2 u \cdot \N = \dt^3 \eta - F^6.  
\end{equation}
We may then decompose 
\begin{multline}\label{ee_f3_decomp}
-  \int_{-\ell}^\ell \sigma  F^3   \p_1 (\dt^2 u \cdot \N)  = 
-  \int_{-\ell}^\ell \sigma \p_z \R(\p_1 \zeta_0,\p_1 \eta)  \p_1 \dt^2 \eta  \p_1 \dt^3 \eta
-  \int_{-\ell}^\ell \sigma \p_z \R(\p_1 \zeta_0,\p_1 \eta)  \p_1 \dt^2 \eta  \p_1 F^6 \\
-  \int_{-\ell}^\ell \sigma \p_z^2 \R(\p_1 \zeta_0,\p_1 \eta) (\p_1 \dt \eta)^2 \p_1 \dt^3 \eta 
-  \int_{-\ell}^\ell \sigma \p_z^2 \R(\p_1 \zeta_0,\p_1 \eta) (\p_1 \dt \eta)^2 \p_1 F^6 \\
= I + II + III + IV.
\end{multline}

We will estimate each of these terms separately.  We begin with $I$.

\begin{prop}\label{ee_f3_I}
Let $I$ be as given in \eqref{ee_f3_decomp}.  Then 
\begin{equation}
 \abs{I +  \frac{d}{dt} \int_{-\ell}^\ell \sigma \p_z \R(\p_1 \zeta_0,\p_1 \eta)  \frac{\abs{\p_1 \dt^2 \eta}^2}{2} } \ls \sqrt{\E} \D.
\end{equation}
Moreover, 
\begin{equation}
  \abs{ \int_{-\ell}^\ell \sigma \p_z \R(\p_1 \zeta_0,\p_1 \eta)  \frac{\abs{\p_1 \dt^2 \eta}^2}{2} } \ls \sqrt{\E} \seb.
\end{equation}
\end{prop}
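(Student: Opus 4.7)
The plan is to recognize $I$ as a perfect time derivative modulo a cubic remainder, then bound each piece using the $1$D Sobolev embeddings and the a priori smallness \eqref{eta_assume}. The key algebraic observation is the chain rule identity
\begin{equation*}
\dt\!\left[\p_z \R(\p_1 \zeta_0,\p_1 \eta)\, \tfrac{1}{2}|\p_1 \dt^2 \eta|^2\right] = \p_z^2 \R(\p_1 \zeta_0,\p_1 \eta)\, \p_1 \dt \eta\, \tfrac{1}{2}|\p_1 \dt^2 \eta|^2 + \p_z \R(\p_1 \zeta_0,\p_1 \eta)\, \p_1 \dt^2 \eta\, \p_1 \dt^3 \eta,
\end{equation*}
in which the second term on the right is precisely the integrand of $-I/\sigma$. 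Integrating over $(-\ell,\ell)$ and rearranging therefore yields
\begin{equation*}
I + \frac{d}{dt}\int_{-\ell}^\ell \sigma \p_z \R(\p_1 \zeta_0,\p_1 \eta)\frac{|\p_1 \dt^2 \eta|^2}{2} = \int_{-\ell}^\ell \sigma \p_z^2 \R(\p_1 \zeta_0,\p_1 \eta)\, \p_1 \dt \eta\, \frac{|\p_1 \dt^2 \eta|^2}{2}.
\end{equation*}

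For the first estimate I will bound this remainder. The assumption \eqref{eta_assume} together with the smoothness properties of $\R$ (Appendix \ref{app_r}) give $|\p_z^2 \R(\p_1 \zeta_0,\p_1 \eta)| \lesssim 1$ pointwise. Then H\"older with exponents $(2,4,4)$ combined with the $1$D embedding $H^{1/2}((-\ell,\ell)) \hookrightarrow L^4((-\ell,\ell))$ yields
\begin{equation*}
\left|\int_{-\ell}^\ell \sigma \p_z^2 \R \, \p_1 \dt \eta\, |\p_1 \dt^2 \eta|^2 \right| \lesssim \|\p_1 \dt \eta\|_{L^2}\|\p_1 \dt^2 \eta\|_{L^4}^2 \lesssim \|\dt \eta\|_{H^1}\|\dt^2 \eta\|_{H^{3/2}}^2 \lesssim \sqrt{\seb}\,\D \lesssim \sqrt{\E}\,\D,
\end{equation*}
where control of $\dt \eta$ in $H^1$ comes from $\seb$ and of $\dt^2 \eta$ in $H^{3/2}$ from the $\sdb$-piece of $\D$.

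For the second estimate I exploit that $\p_z \R(y,0) = 0$, which is immediate from \eqref{R_def}. The integral form of Taylor's theorem and the a priori smallness then furnish the pointwise bound $|\p_z \R(\p_1 \zeta_0,\p_1 \eta)| \lesssim |\p_1 \eta|$, so that
\begin{equation*}
\left|\int_{-\ell}^\ell \sigma \p_z \R \frac{|\p_1 \dt^2 \eta|^2}{2}\right| \lesssim \|\p_1 \eta\|_{L^\infty}\|\p_1 \dt^2 \eta\|_{L^2}^2 \lesssim \|\eta\|_{W^{5/2}_\delta}\|\dt^2 \eta\|_{H^1}^2 \lesssim \sqrt{\E}\,\seb,
\end{equation*}
where the $L^\infty$ control of $\p_1 \eta$ is obtained from the weighted embedding $W^{5/2}_\delta \hookrightarrow H^{s+1/2}$ for some $1 < s < \min\{\pi/\omega,2\}$ and the $1$D embedding $H^{s-1/2}((-\ell,\ell)) \hookrightarrow L^\infty((-\ell,\ell))$.

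The main obstacle is that $\p_1 \dt^3 \eta$ has only very weak regularity in our scheme: via the kinematic equation $\dt \eta = u\cdot \N$ it reduces (modulo lower-order factors) to the trace on $\Sigma$ of $\p_1(\dt^2 u \cdot \N)$, which sits at best below $L^2$. Consequently one cannot estimate $I$ directly by Cauchy--Schwarz. The chain-rule identity above is the essential trick: it transfers the dangerous $\p_1 \dt^3 \eta$ factor into a perfect time derivative (to be absorbed into the evolving energy upon subsequent integration in $t$), leaving only a cubic remainder in which each of the three factors lies in a Sobolev space controlled by $\seb$ or $\D$.
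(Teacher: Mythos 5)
Your proposal is correct and matches the paper's argument in all essential respects: the same chain-rule identity is used to absorb the problematic $\p_1\dt^3\eta$ factor into the time derivative, and the same $\p_z\R(y,0)=0$ observation drives the second estimate. The only cosmetic difference is in the cubic remainder, where you apply H\"older with exponents $(2,4,4)$ (using $\|\p_1\dt\eta\|_{L^2}\ls\|\dt\eta\|_{H^1}\ls\sqrt{\seb}$ and $H^{1/2}\hookrightarrow L^4$) whereas the paper uses $(3,3,3)$ with $H^{1/2}\hookrightarrow L^3$; both land on $\sqrt{\E}\,\D$.
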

\begin{proof}
We compute 
\begin{multline}
 I = -  \int_{-\ell}^\ell \sigma \p_z \R(\p_1 \zeta_0,\p_1 \eta) \dt \frac{\abs{\p_1 \dt^2 \eta}^2}{2} = -  \frac{d}{dt} \int_{-\ell}^\ell \sigma \p_z \R(\p_1 \zeta_0,\p_1 \eta)  \frac{\abs{\p_1 \dt^2 \eta}^2}{2} \\
 + \int_{-\ell}^\ell \sigma \p_z^2 \R(\p_1 \zeta_0,\p_1 \eta) \p_1 \dt \eta  \frac{\abs{\p_1 \dt^2 \eta}^2}{2}.
\end{multline}
We then estimate 
\begin{multline}
 \abs{\int_{-\ell}^\ell \sigma \p_z^2 \R(\p_1 \zeta_0,\p_1 \eta) \p_1 \dt \eta  \frac{\abs{\p_1 \dt^2 \eta}^2}{2}} \ls \norm{\p_1 \dt\eta}_{L^3} \ns{\p_1 \dt^2 \eta}_{L^3} \ls \norm{\p_1 \dt\eta}_{1/2} \ns{\p_1 \dt^2 \eta}_{1/2} \\
 \ls \norm{ \dt\eta}_{3/2} \ns{ \dt^2 \eta}_{3/2} \ls \sqrt{\E} \D.
\end{multline}
We can also estimate the term in the time derivative:
\begin{multline}
 \abs{ \int_{-\ell}^\ell \sigma \p_z \R(\p_1 \zeta_0,\p_1 \eta)  \frac{\abs{\p_1 \dt^2 \eta}^2}{2} } \ls  \int_{-\ell}^\ell \abs{\p_1 \eta}  \frac{\abs{\p_1 \dt^2 \eta}^2}{2} \\
 \ls \norm{\p_1 \eta}_{L^\infty} \ns{\p_1 \dt^2 \eta}_{0} \ls \norm{\eta}_{s+1/2} \ns{ \dt^2 \eta}_{1} \ls \sqrt{\E}\seb. 
\end{multline}
\end{proof}

Next we handle $II$.

\begin{prop}\label{ee_f3_II}
 Let $II$ be as given in \eqref{ee_f3_decomp}.  Then
\begin{equation}\label{ee_f3_II_0}
 \abs{II} \ls \E \D.
\end{equation}
\end{prop}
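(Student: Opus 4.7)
The plan is to exploit the vanishing $\p_z \R(y,0) = 0$ to extract a factor of $\p_1 \eta$ from $\p_z \R(\p_1 \zeta_0, \p_1 \eta)$, which (together with the appropriate estimates from Appendix \ref{app_r}) provides pointwise control
\begin{equation*}
 |\p_z \R(\p_1 \zeta_0, \p_1 \eta)| \ls |\p_1 \eta|.
\end{equation*}
This supplies one factor of $\sqrt{\E}$ via $\|\p_1\eta\|_{L^\infty(-\ell,\ell)} \ls \|\eta\|_{H^{s+1/2}} \ls \sqrt{\E}$.  The remaining factor $\sqrt{\E}\sqrt{\D}\sqrt{\D}$ must be extracted from $\p_1 \p_t^2 \eta \cdot \p_1 F^6$, where $F^6$ is the $F^6$ associated to the twice time-differentiated problem (given by \eqref{dt2_f6}).

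First I would substitute the explicit form of $F^6$, which is a sum of terms schematically of the type $(\p_t u) \cdot (\p_t \N)$ and $u \cdot (\p_t^2 \N)$, i.e.\ products built from $\p_t^j u$ and $\p_1 \p_t^k \eta$ with $j+k = 1,2$.  After applying $\p_1$ along $\Sigma$, each summand in $\p_1 F^6$ has the form
\begin{equation*}
 (\p_t^{j} \nab^m u)(\p_1^{n+1} \p_t^{k} \eta)
\end{equation*}
with $j \le 2$, $m \in \{0,1\}$, $n \in \{0,1\}$, $k \le 2$, and $j+k+m+n = 2$.

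Then I would apply H\"older on $(-\ell,\ell)$, placing $\p_1 \eta$ and one of $\p_1 \p_t^k \eta$, $\p_t^j u$ in $L^\infty(\Sigma)$ using the embeddings $H^{s+1/2} \hookrightarrow L^\infty$ and $H^{3/2} \hookrightarrow L^\infty$ (and trace theory together with $W^2_\delta \hookrightarrow H^s$) and the remaining two factors in $L^2(\Sigma)$.  In each case the $L^\infty$ factors supply $\sqrt{\E}\sqrt{\E}$ (using the full energy control of $\eta$ and $\p_t \eta$, and the $W^2_\delta$ control of $u$), while the two $L^2$ factors are drawn from the dissipation: $\|\p_1 \p_t^2 \eta\|_{L^2} \ls \|\p_t^2 \eta\|_{H^{3/2}} \ls \sqrt{\sdb} \ls \sqrt{\D}$, and the remaining factor from $F^6$ is controlled by $\|\p_t u\|_{W^2_\delta} + \|\p_t^2 \eta\|_{H^{3/2}} \ls \sqrt{\D}$.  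Summing the finitely many resulting terms yields \eqref{ee_f3_II_0}.

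The main technical nuisance is keeping the derivative counts correct so that one factor always lands in $\D$ at the right regularity; in particular, the terms in which two temporal derivatives fall on $\eta$ or $u$ must be paired with an $L^\infty$-bounded factor that carries at most one temporal derivative, since $\p_t^2 u$ is only controlled in $W^2_\delta$ by $\D$ and we cannot afford two simultaneous $\sqrt{\D}$ factors drawn from $H^{3/2}$ embeddings of $\p_t^2 \eta$ inside the wrong slot.  Once this bookkeeping is done, however, each summand is straightforwardly bounded by $\E \D$, and no clever cancellation or structural argument beyond $\p_z \R(y,0)=0$ is required.
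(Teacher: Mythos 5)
The plan is correct for three of the four summands in $\p_1 F^6$, but it fails on the fourth, and your closing remark that ``no clever cancellation or structural argument beyond $\p_z\R(y,0)=0$ is required'' is precisely where the gap lies. Expanding $F^6 = -2\dt u_1 \p_1\dt\eta - u_1 \p_1\dt^2\eta$ and applying $\p_1$ produces, among other things, the term $u_1\, \p_1^2\dt^2\eta$, so that $II$ contains a contribution of the form
\begin{equation*}
\int_{-\ell}^\ell \sigma\, \p_z\R(\p_1\zeta_0,\p_1\eta)\, \p_1\dt^2\eta\; u_1\, \p_1^2\dt^2\eta.
\end{equation*}
Your proposed H\"older scheme needs to place $\p_1^2\dt^2\eta$ in some $L^p(-\ell,\ell)$, but the dissipation $\D$ only controls $\dt^2\eta$ in $H^{3/2}$ (and $\dt^3\eta$ in $W^{1/2}_\delta$); there is no $W^{5/2}_\delta$ or $H^{5/2}$ control of $\dt^2\eta$ available. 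Consequently $\p_1^2\dt^2\eta$ is only a distribution of order roughly $H^{-1/2}$, and it cannot be treated as an $L^2$ (or any $L^p$) factor in the product. This is not a temporal-derivative bookkeeping issue of the sort you flagged; it is a spatial regularity obstruction.

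The paper's proof handles exactly this term by exploiting a structural identity rather than H\"older: since $\p_1\dt^2\eta\,\p_1^2\dt^2\eta = \p_1\bigl(\tfrac12|\p_1\dt^2\eta|^2\bigr)$, one integrates by parts in $x_1$, and the boundary term drops because $u_1$ vanishes at $\pm\ell$ (from the no-penetration condition $u\cdot\nu=0$ on $\Sigma_s$). After this integration by parts, $\p_1$ falls on $\p_z\R(\p_1\zeta_0,\p_1\eta)\,u_1$, producing only controllable factors, and the remaining $|\p_1\dt^2\eta|^2$ factor is estimated via the embedding $H^{1/2}\hookrightarrow L^q$ with weights matched to $u_1\in W^2_\delta$. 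So the ``structural argument'' you say isn't needed — integration by parts together with the cancellation of the boundary term at the corners — is in fact the essential step, and without it the proposal does not close. The other three terms ($\p_1\dt u_1\,\p_1\dt\eta$, $\dt u_1\,\p_1^2\dt\eta$, $\p_1 u_1\,\p_1\dt^2\eta$) are handled by weighted H\"older roughly as you describe, though even there the choices of exponents must be coordinated with the weight $\delta$ and the trace embeddings of $W^2_\delta$ more carefully than your sketch suggests.
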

\begin{proof}

We may write $F^6$ as
\begin{equation}
 F^6 = -2 \dt u_1 \p_1 \dt \eta - u_1 \p_1 \dt^2 \eta.
\end{equation}
Using this, we may rewrite 
\begin{multline}
II =   \int_{-\ell}^\ell \sigma \frac{\p_z \R(\p_1 \zeta_0,\p_1 \eta)}{\p_1 \eta} \p_1 \eta  \p_1 \dt^2 \eta  2 \p_1  \dt u_1 \p_1 \dt \eta +  \int_{-\ell}^\ell \sigma \frac{\p_z \R(\p_1 \zeta_0,\p_1 \eta)}{\p_1 \eta} \p_1 \eta  \p_1 \dt^2 \eta  2   \dt u_1 \p_1^2 \dt \eta  \\
+ \int_{-\ell}^\ell \sigma \frac{\p_z \R(\p_1 \zeta_0,\p_1 \eta)}{\p_1 \eta} \p_1 \eta  \p_1 \dt^2 \eta  \p_1 u_1 \p_1 \dt^2 \eta 
+ \int_{-\ell}^\ell \sigma \p_z \R(\p_1 \zeta_0,\p_1 \eta)   \p_1 \dt^2 \eta  u_1 \p_1^2 \dt^2 \eta 
\\=: II_1 + II_2 + II_3 + II_4.
\end{multline}

Notice that Proposition \ref{weighted_embed} and the usual trace theory in $W^{1,p}(\Omega)$ allows us to estimate 
\begin{equation}
 \norm{\nab \dt u}_{L^p(\p \Omega)} \ls  \norm{\nab \dt u}_{W^{1,p}(\Omega)} \ls \norm{\nab \dt u}_{W^1_\delta} \ls \norm{\dt u}_{W^2_\delta} \ls \sqrt{\D}
\end{equation}
for any $1 \le p < 2/(1+\delta)$.  In particular, we may choose $p = (3+\delta)/(2+2\delta) \in [1,2/(1+\delta))$ and $q = (6+2\delta)/(1-\delta)$, which satisfy $1/p + 2/q =1$, in order to estimate 
\begin{multline}\label{ee_f3_II_1}
 \abs{II_1}  \ls \norm{\p_1 \eta}_{L^\infty} \norm{\p_1 \dt^2 \eta }_{L^{q}} 
 \norm{\p_1 \dt u}_{L^{p}(\Sigma)}  \norm{\p_1 \dt \eta}_{L^{q}} 
\ls \norm{\p_1 \eta}_{s-1/2} \norm{\p_1 \dt^2 \eta }_{1/2}   \norm{\dt u}_{W^2_\delta}  \norm{\p_1 \dt \eta}_{1/2} 
 \\
\ls \norm{\eta}_{s+1/2} \norm{\dt^2 \eta }_{3/2} \norm{\dt u}_{W^2_\delta}  \norm{\dt \eta}_{3/2}  \ls \sqrt{\E} \sqrt{\D} \sqrt{\D} \sqrt{\E}  \ls \E \D.
\end{multline}

For $II_2$ we use Proposition \ref{weighted_trace} to bound
\begin{equation}
 \norm{\p_1^2 \dt \eta}_{L^p} \ls \norm{\p_1^2 \dt \eta}_{W^{1/2}_\delta} \ls \norm{\dt \eta}_{W^{5/2}_\delta} \ls \sqrt{\D}
\end{equation}
for any $1 \le p < 2/(1+\delta)$.  Choosing the same $p$ and $q$ as for $II_1$ above, we then estimate 
\begin{multline}\label{ee_f3_II_2}
 \abs{II_2} \ls  \norm{\p_1 \eta}_{L^\infty} \norm{\p_1 \dt^2 \eta }_{L^{q}}  \norm{\dt u_1}_{L^q(\Sigma)} \norm{\p_1^2 \dt \eta}_{L^p} \ls \norm{\p_1 \eta}_{s-1/2} \norm{\p_1 \dt^2 \eta }_{1/2}  \norm{\dt u}_{H^{1/2}(\Sigma)}  \norm{ \dt \eta}_{W^{5/2}_\delta}
\\
\ls  \norm{\eta}_{s+1/2} \norm{\dt^2 \eta }_{3/2} \norm{\dt u}_1 \norm{ \dt \eta}_{W^{5/2}_\delta} \ls \sqrt{\E} \sqrt{\D} \sqrt{\E} \sqrt{\D} = \E \D.
\end{multline}

Again using Proposition \ref{weighted_embed} and trace theory we may bound 
\begin{equation}
 \norm{\nab  u}_{L^p(\p \Omega)} \ls  \norm{\nab  u}_{W^{1,p}(\Omega)} \ls \norm{\nab  u}_{W^1_\delta} \ls \norm{u}_{W^2_\delta} \ls \sqrt{\D}
\end{equation}
for $1 \le p < 2/(1+\delta)$.  Arguing as with $II_1$ for the same $p,q$ we then may estimate
\begin{equation}\label{ee_f3_II_3}
 \abs{II_3}  \ls \norm{\p_1 \eta}_{L^\infty} \norm{\p_1 u_1}_{L^{p}} \ns{\p_1 \dt^2 \eta}_{L^{q}} 
 \ls \norm{\eta}_{s+1/2} \norm{u}_{W^2_\delta} \ns{ \dt^2 \eta}_{3/2} \ls \E \D.
\end{equation}
Since $u_1 =0$ at the endpoints we may similarly estimate
\begin{multline}\label{ee_f3_II_4}
 \abs{II_4} = \abs{\int_{-\ell}^\ell \sigma \p_z \R(\p_1 \zeta_0,\p_1 \eta)  u_1  \p_1 \frac{\abs{ \p_1 \dt^2 \eta}^2}{2} } = 
 \abs{- \int_{-\ell}^\ell \sigma \p_1 \left[ \p_z \R(\p_1 \zeta_0,\p_1 \eta)  u_1 \right]  \frac{\abs{ \p_1 \dt^2 \eta}^2}{2} }
 \\
= \abs{ \int_{-\ell}^\ell  \sigma \left[  \frac{\p_z \R(\p_1 \zeta_0,\p_1 \eta)}{\p_1 \eta} \p_1 \eta \p_1  u_1 +  \p_z^2 \R(\p_1 \zeta_0,\p_1 \eta) \p_1^2 \eta u_1 + \frac{\p_y \p_z \R(\p_1 \zeta_0,\p_1 \eta)\p_1^2 \zeta_0}{\p_1\eta} \p_1 \eta   u_1\right]  \frac{\abs{ \p_1 \dt^2 \eta}^2}{2} }
\\
\ls \left( \norm{\p_1 \eta}_{L^\infty} \norm{\p_1 u_1}_{L^{p}(\Sigma)}  +  \norm{\p_1^2 \eta}_{L^{p}} \norm{u_1}_{L^\infty} + \norm{\p_1 \eta}_{L^{p}} \norm{u_1}_{L^\infty}  \right)  \ns{\p_1 \dt^2 \eta}_{L^{q}} 
\\
\ls \left( \norm{\eta}_{s+1/2} \norm{u}_{W^2_\delta}  +  \norm{\p_1^2 \eta}_{W^{1/2}_\delta} \norm{u_1}_{s} + \norm{\eta}_{s+1/2} \norm{u_1}_{s}  \right)  \ns{\dt^2 \eta}_{3/2} \\
\ls (\sqrt{\E} \sqrt{\E} + \sqrt{\E} \sqrt{\E} + \sqrt{\E} \sqrt{\E} )\D \ls \E \D.
\end{multline}

The estimate \eqref{ee_f3_II_0} then follows by combining \eqref{ee_f3_II_1}, \eqref{ee_f3_II_2}, \eqref{ee_f3_II_3}, and \eqref{ee_f3_II_4}.

\end{proof}

Next we handle $III$.

\begin{prop}\label{ee_f3_III}
Let $III$ be as given in \eqref{ee_f3_decomp}.  Then 
\begin{equation}
 \abs{III + \frac{d}{dt} \int_{-\ell}^\ell \sigma \p_z^2 \R(\p_1 \zeta_0,\p_1 \eta) (\p_1 \dt \eta)^2 \p_1 \dt^2 \eta}
\ls (\sqrt{\E} + \E)\D
\end{equation}
and 
\begin{equation}
  \abs{\int_{-\ell}^\ell \sigma \p_z^2 \R(\p_1 \zeta_0,\p_1 \eta) (\p_1 \dt \eta)^2 \p_1 \dt^2 \eta } \ls  \sqrt{\E}\seb.
\end{equation}
\end{prop}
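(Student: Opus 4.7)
The plan is to mimic the treatment of term $I$ in Proposition \ref{ee_f3_I}: exploit the identity $\p_1\dt^3\eta = \dt(\p_1\dt^2\eta)$ to pull a time derivative outside the integral, and then estimate the two residual terms using weighted and interpolated Sobolev embeddings in one dimension. Concretely, writing
\begin{equation*}
III = -\int_{-\ell}^\ell \sigma \p_z^2\R(\p_1\zeta_0,\p_1\eta)(\p_1\dt\eta)^2 \,\dt(\p_1\dt^2\eta),
\end{equation*}
an integration by parts in time yields
\begin{equation*}
III = -\frac{d}{dt}\int_{-\ell}^\ell \sigma \p_z^2\R(\p_1\zeta_0,\p_1\eta)(\p_1\dt\eta)^2 \p_1\dt^2\eta + A + B,
\end{equation*}
where, after using $\dt[\p_z^2\R(\p_1\zeta_0,\p_1\eta)] = \p_z^3\R(\p_1\zeta_0,\p_1\eta)\p_1\dt\eta$,
\begin{equation*}
A := \int_{-\ell}^\ell \sigma \p_z^3\R(\p_1\zeta_0,\p_1\eta)(\p_1\dt\eta)^3\p_1\dt^2\eta, \qquad B := 2\int_{-\ell}^\ell \sigma \p_z^2\R(\p_1\zeta_0,\p_1\eta)\p_1\dt\eta (\p_1\dt^2\eta)^2.
\end{equation*}
Since $\eta$ is small, both $\p_z^2\R$ and $\p_z^3\R$ are bounded in $L^\infty$.

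For $A$ the idea is to put two of the three factors of $\p_1\dt\eta$ into $L^\infty$ via the dissipation. Indeed, $W^{5/2}_\delta \hookrightarrow H^{s+1/2}$ gives $\|\p_1\dt\eta\|_{L^\infty}\ls \|\dt\eta\|_{H^{s+1/2}}\ls \sqrt{\D}$ (since $s-1/2>1/2$), while $\|\p_1\dt\eta\|_{L^2}\ls\|\dt\eta\|_{H^1}\ls\sqrt{\E}$ and $\|\p_1\dt^2\eta\|_{L^2}\ls\sqrt{\E}$ through the basic energy. This yields $|A|\ls \D \cdot \sqrt{\E}\cdot \sqrt{\E} = \E\D$. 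For $B$, we instead put $\dt\eta$ in $L^2$ (controlled by $\sqrt{\E}$) and both copies of $\p_1\dt^2\eta$ in $L^4$; using the one-dimensional embedding $H^{1/2}\hookrightarrow L^4$ together with $\|\dt^2\eta\|_{H^{3/2}}^2\ls \D$ from $\sdb$, we get $\|\p_1\dt^2\eta\|_{L^4}^2\ls \D$, so $|B|\ls \sqrt{\E}\,\D$. Combining these yields the first claimed estimate $(\sqrt{\E}+\E)\D$.

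For the term inside the time derivative we need a bound of the form $\sqrt{\E}\seb$, so only $H^1$ norms of $\dt\eta$ and $\dt^2\eta$ are allowed on the ``dissipation-like'' factors. The key will be an interpolation: $\|\p_1\dt\eta\|_{L^4}^2 \ls \|\dt\eta\|_{H^{5/4}}^2\ls \|\dt\eta\|_{H^1}\|\dt\eta\|_{H^{3/2}}\ls \sqrt{\seb}\sqrt{\E}$, using $\|\dt\eta\|_{H^{3/2}}^2\ls\E$ from $\E$ and $\|\dt\eta\|_{H^1}^2\ls\seb$. Then
\begin{equation*}
\abs[\Big]{\int_{-\ell}^\ell \sigma \p_z^2\R(\p_1\zeta_0,\p_1\eta)(\p_1\dt\eta)^2\p_1\dt^2\eta} \ls \|\p_1\dt\eta\|_{L^4}^2\|\p_1\dt^2\eta\|_{L^2}\ls \sqrt{\seb}\sqrt{\E}\sqrt{\seb} = \sqrt{\E}\,\seb,
\end{equation*}
which is the second claim. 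The main obstacle is ensuring that the residuals $A,B$ are bounded by $(\sqrt{\E}+\E)\D$ rather than the weaker $\E\sqrt{\D}$ (which does not absorb into the dissipation); this forces the slightly delicate choice of Hölder exponents above, balancing the $L^\infty$ control coming from the weighted dissipation against the $L^2$ control from the basic energy.
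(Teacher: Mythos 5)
Your proposal is correct and follows essentially the same route as the paper: pull a time derivative out of $III$, estimate the two residual integrals ($A$ with $\p_z^3\R$ and $B$ with $|\p_1\dt^2\eta|^2$) to get $\E\D$ and $\sqrt{\E}\D$ respectively, and control the term inside the time derivative via the interpolation $\|\p_1\dt\eta\|_{L^4}^2\ls \|\dt\eta\|_{H^1}\|\dt\eta\|_{H^{3/2}}$. The only differences are cosmetic choices of H\"older exponents: for $A$ you use $L^\infty\cdot L^\infty\cdot L^2\cdot L^2$ (drawing $L^\infty$ control from $\|\dt\eta\|_{W^{5/2}_\delta}\ls\sqrt{\D}$) whereas the paper uses four copies of $L^4$ controlled by $\|\dt\eta\|_{3/2}^3\|\dt^2\eta\|_{3/2}$; and for $B$ you use $L^2\cdot L^4\cdot L^4$ instead of the paper's $L^3\cdot L^3\cdot L^3$. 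Both bookkeepings land on the same final bounds, so this is the same proof in substance.
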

\begin{proof}
To handle $III$ we cannot get away with integrating by parts spatially (the resulting term needs too many dissipation terms at the endpoints since $\p_1 \dt \eta$ is only in $H^{1/2}$ in the energy).  Instead we pull out a time derivative: 
\begin{multline}
III = -  \int_{-\ell}^\ell \sigma \p_z^2 \R(\p_1 \zeta_0,\p_1 \eta) (\p_1 \dt \eta)^2 \p_1 \dt^3 \eta  =  -  \frac{d}{dt} \int_{-\ell}^\ell \sigma \p_z^2 \R(\p_1 \zeta_0,\p_1 \eta) (\p_1 \dt \eta)^2 \p_1 \dt^2 \eta \\
 + \int_{-\ell}^\ell \sigma \p_z^3 \R(\p_1 \zeta_0,\p_1 \eta) (\p_1 \dt \eta)^3 \p_1 \dt^2 \eta 
 + \int_{-\ell}^\ell \sigma \p_z^2 \R(\p_1 \zeta_0,\p_1 \eta) 2 \p_1 \dt \eta  \abs{\p_1 \dt^2 \eta}^2.
\end{multline}
We then estimate
\begin{multline}
 \abs{ \int_{-\ell}^\ell \sigma \p_z^3 \R(\p_1 \zeta_0,\p_1 \eta) (\p_1 \dt \eta)^3 \p_1 \dt^2 \eta } \ls  \norm{\p_1 \dt \eta}_{L^4}^3 \norm{\p_1 \dt^2 \eta}_{L^4}   \ls  \norm{\p_1 \dt \eta}_{1/2}^3  \norm{\p_1 \dt^2 \eta}_{1/2} 
  \\  \ls  \norm{ \dt \eta}_{3/2}^3  \norm{ \dt^2 \eta}_{3/2} 
  \ls  \E \sqrt{\D} \sqrt{\D} = \E \D 
\end{multline}
and
\begin{equation}
 \abs{\int_{-\ell}^\ell \sigma \p_z^2 \R(\p_1 \zeta_0,\p_1 \eta) 2 \p_1 \dt \eta  \abs{\p_1 \dt^2 \eta}^2} \ls
\norm{\p_1 \dt \eta}_{L^3} \norm{\p_1 \dt^2 \eta}_{L^3}^2 
 \ls  
\norm{\dt \eta}_{3/2} \ns{\dt^2 \eta}_{3/2} \ls \sqrt{\E} \D.
\end{equation}

Finally, we want to show that the term with the time derivative is actually controlled only by the energy.  To this end we first note that the Sobolev embeddings and interpolation imply that if $\psi \in H^{3/2}((-\ell,\ell))$ then 
\begin{equation}
 \norm{\p_1 \psi}_{L^4} \ls \norm{\p_1 \psi}_{H^{1/4}} \ls \norm{\psi}_{H^{5/4}} \ls \norm{\psi}_{H^1}^{1/2} \norm{\psi}_{H^{3/2}}^{1/2}.
\end{equation}
Using this, we may estimate
\begin{equation}
 \abs{\int_{-\ell}^\ell \sigma \p_z^2 \R(\p_1 \zeta_0,\p_1 \eta) (\p_1 \dt \eta)^2 \p_1 \dt^2 \eta } \ls \ns{\p_1 \dt \eta}_{L^4} \norm{\p_1 \dt^2 \eta}_{L^2} \ls \norm{\dt \eta}_{3/2} \norm{\dt \eta}_{1}  \norm{\dt^2 \eta}_{1} \ls \sqrt{\E} \seb. 
\end{equation}

\end{proof}

Finally, we handle the term $IV$.

\begin{prop}\label{ee_f3_IV}
Let $II$ be as given in \eqref{ee_f3_decomp}.  Then
\begin{equation}\label{ee_f3_IV_0}
 \abs{IV} \ls (\E + \E^{3/2})\D.
\end{equation}
\end{prop}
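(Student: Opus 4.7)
The plan is to mirror the treatment of the term $II$ in Proposition \ref{ee_f3_II}, with the only differences being that $\p_z\R$ is replaced by $\p_z^2\R$ (which is merely a bounded smooth coefficient, so no factor of $\p_1\eta$ must be extracted) and $\p_1\dt^2\eta$ is replaced by $(\p_1\dt\eta)^2$. Because $\norm{\dt\eta}_{3/2}^2$ lies in both $\seb\le \E$ and $\sdb\le\D$, this extra quadratic factor can always be split $\norm{\dt\eta}_{3/2}^2=\norm{\dt\eta}_{3/2}\cdot\norm{\dt\eta}_{3/2}\le\sqrt{\E}\sqrt{\D}$ whenever needed to produce the claimed $(\E+\E^{3/2})\D$ bound.

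First I would expand
\begin{equation*}
\p_1 F^6 = -2\p_1\dt u_1\,\p_1\dt\eta - 2\dt u_1\,\p_1^2\dt\eta - \p_1 u_1\,\p_1\dt^2\eta - u_1\,\p_1^2\dt^2\eta,
\end{equation*}
so that $IV$ splits into four terms $IV_1+IV_2+IV_3+IV_4$. For $IV_1$, $IV_2$, $IV_3$ I would use the same Hölder exponents as in the proof of Proposition \ref{ee_f3_II}, namely $p=(3+\delta)/(2+2\delta)$ and $q=(6+2\delta)/(1-\delta)$ for two-factor Hölder ($1/p+2/q=1$), together with $r=(9+3\delta)/(1-\delta)$ in the three-factor version ($1/p+3/q=1$ or $3/r+1/p=1$). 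The velocity factors $\norm{\p_1 u_1}_{L^p(\Sigma)}$, $\norm{\dt u_1}_{L^q(\Sigma)}$, and $\norm{\p_1\dt u_1}_{L^p(\Sigma)}$ are controlled by $\sqrt{\D}$ via Propositions \ref{weighted_embed}--\ref{weighted_trace} and the trace embedding into $W^{1,p}$; the factors $\norm{\p_1\dt\eta}_{L^q}$ and $\norm{\p_1\dt^2\eta}_{L^q}$ are controlled by $\norm{\dt\eta}_{3/2}$ and $\norm{\dt^2\eta}_{3/2}$; and $\norm{\p_1^2\dt\eta}_{L^p}$ is controlled by $\norm{\dt\eta}_{W^{5/2}_\delta}$. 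Distributing the quadratic $\norm{\dt\eta}_{3/2}^2$ as $\sqrt{\E}\cdot\sqrt{\D}$ yields $|IV_j|\ls \E\D$ for $j=1,2,3$.

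For $IV_4$, the factor $\p_1^2\dt^2\eta$ is out of reach of the dissipation (since $\dt^2\eta$ only lies in $H^{3/2}$), so the main step is to integrate by parts in $x_1$; the boundary contributions vanish because $u_1(\pm\ell,t)=0$ (Navier slip). This produces three sub-integrals, corresponding respectively to $\p_1$ landing on the $\R$-coefficient (which after using the chain rule produces terms with $\p_z^3\R\,\p_1^2\eta$ and $\p_y\p_z^2\R\,\p_1^2\zeta_0$), on $(\p_1\dt\eta)^2$ (producing $2\p_1\dt\eta\,\p_1^2\dt\eta$), and on $u_1$. The first sub-term is handled by the three-factor Hölder $(p,q,q,q)$ with $1/p+3/q=1$: placing $\p_1^2\eta$ in $L^p\hookleftarrow W^{1/2}_\delta$, $u_1$ in $L^\infty\hookleftarrow H^s$, and the two $\p_1\dt\eta$'s together with the remaining $\p_1\dt^2\eta$ in $L^q\hookleftarrow H^{1/2}$, then splitting $\norm{\dt\eta}_{3/2}^2$ as $\sqrt{\E}\sqrt{\D}$, yields $\E^{3/2}\D$; this is the single place where the $\E^{3/2}$ contribution in the statement arises. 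The second sub-term uses the $(p,q,q)$ Hölder with $\p_1^2\dt\eta\in L^p\hookleftarrow W^{5/2}_\delta$ and $u_1\in L^\infty$, giving $\E\D$. The third sub-term is formally identical to $IV_3$ and thus bounded by $\E\D$. Summing all contributions delivers \eqref{ee_f3_IV_0}.

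The only real difficulty is the $IV_4$ sub-term involving $\p_z^3\R\,\p_1^2\eta$, because four $L^q$ factors compete with a single $L^p$ slot; the triple-Hölder $1/p+3/q=1$ with exponents coming from Proposition \ref{weighted_embed} is what makes this work, and this is the origin of the $\E^{3/2}$ factor. Every other step is a direct transcription of the argument already used for $II$ in Proposition \ref{ee_f3_II}, so no further ingredients are required.
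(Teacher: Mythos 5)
Your proof follows essentially the same route as the paper: expand $\p_1 F^6$, bound the $\p_1\dt u_1$, $\dt u_1\p_1^2\dt\eta$, and $\p_1 u_1 \p_1\dt^2\eta$ pieces directly with the weighted Hölder/trace exponents borrowed from Proposition~\ref{ee_f3_II}, and integrate by parts in $x_1$ (using $u_1(\pm\ell)=0$) to handle the $u_1\p_1^2\dt^2\eta$ piece, with the $\E^{3/2}$ contribution arising precisely from the $\p_z^3\R\,\p_1^2\eta$ sub-term. One small imprecision: $\norm{\dt\eta}_{3/2}^2$ is not controlled by $\seb$ (which only has $H^1$ of $\eta$); it is controlled by $\E$ via the $\ns{\dt\eta}_{H^{3/2}}$ term in \eqref{ed_def_4} and by $\D$ via $\sdb$, which is what your argument actually uses.
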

\begin{proof}

We now use the expression for $F^6= -2 \dt u_1 \p_1 \dt \eta - u_1 \p_1 \dt^2 \eta.$ to write 
\begin{multline}
IV =   \int_{-\ell}^\ell \sigma \p_z^2 \R(\p_1 \zeta_0,\p_1 \eta) (\p_1 \dt \eta)^2    \p_1 (2 \dt u_1 \p_1 \dt \eta) \\
+  \int_{-\ell}^\ell \sigma \p_z^2 \R(\p_1 \zeta_0,\p_1 \eta) (\p_1 \dt \eta)^2  \p_1(u_1 \p_1 \dt^2 \eta) = IV_1 + IV_2.
\end{multline}
We first argue as with $II_1$ and $II_2$ in Proposition \ref{ee_f3_II} (using the same $p,q$) to estimate 
\begin{multline}\label{ee_f3_IV_1}
 \abs{IV_1} = \abs{ \int_{-\ell}^\ell \sigma \p_z^2 \R(\p_1 \zeta_0,\p_1 \eta) (\p_1 \dt \eta)^2   2 \p_1  \dt u_1 \p_1 \dt \eta +  \int_{-\ell}^\ell \sigma \p_z^2 \R(\p_1 \zeta_0,\p_1 \eta) (\p_1 \dt \eta)^2    2 \dt u_1 \p_1^2 \dt \eta }
 \\
\ls \norm{\p_1 \dt \eta}_{L^\infty} \norm{\p_1 \dt \eta }_{L^{q}} 
\left( \norm{\p_1 \dt u}_{L^{p}(\Sigma)}  \norm{\p_1 \dt \eta}_{L^{q}} 
+   \norm{\dt u}_{L^{q}(\Sigma)}  \norm{\p_1^2 \dt \eta}_{L^{p}} \right) \\
\ls \norm{\p_1 \dt \eta}_{s-1/2} \norm{\p_1 \dt \eta }_{1/2} \left( \norm{\dt u}_{W^2_\delta}  \norm{\p_1 \dt \eta}_{1/2} +   \norm{\dt u}_{H^{1/2}(\Sigma)}  \norm{\dt \eta}_{W^{5/2}_\delta} \right)
 \\
\ls \norm{\dt \eta}_{s+1/2} \norm{ \dt \eta }_{3/2}\left( \norm{\dt u}_{W^2_\delta}  \norm{\dt \eta}_{3/2} +   \norm{\dt u}_{1}  \norm{\dt \eta}_{W^{5/2}_\delta} \right) \\
\ls \sqrt{\D} \sqrt{\E} \left(\sqrt{\D} \sqrt{\E} + \sqrt{\E} \sqrt{\D}   \right) \ls \E \D.
\end{multline}
Next we expand
\begin{multline}
IV_2 = \int_{-\ell}^\ell \sigma \p_z^2 \R(\p_1 \zeta_0,\p_1 \eta) (\p_1 \dt \eta)^2  \p_1 u_1 \p_1 \dt^2 \eta 
+ \int_{-\ell}^\ell \sigma \p_z^2 \R(\p_1 \zeta_0,\p_1 \eta) (\p_1 \dt \eta)^2  u_1 \p_1^2 \dt^2 \eta  =: IV_{3} + IV_4.
\end{multline}

To handle the term $IV_3$ we let $p=(3+\delta)/(2+2\delta)$ as above but now choose $r = (9+3\delta)/(1-\delta)$ so that $1/p + 3/r =1$, which allows us to estimate
\begin{equation}\label{ee_f3_IV_3}
 \abs{IV_3} \ls  \ns{\p_1 \dt \eta }_{L^{r}} 
 \norm{\p_1 u}_{L^{p}(\Sigma)}  \norm{\p_1 \dt^2 \eta}_{L^{r}}  
\ls  \ns{ \dt \eta }_{3/2}  \norm{u}_{W^2_\delta}  \norm{\dt^2 \eta}_{3/2} \ls \E \sqrt{\D} \sqrt{\D} = \E \D. 
\end{equation}

For the term $IV_4$ we first use the fact that  $u_1 =0$ at the endpoints to write
\begin{multline}
  IV_4 = -\int_{-\ell}^\ell \sigma \left[ \p_z^2 \R(\p_1 \zeta_0,\p_1 \eta) (\p_1 \dt \eta)^2  \p_1 u_1 + 2 \p_z^2 \R(\p_1 \zeta_0,\p_1 \eta) \p_1 \dt \eta \p_1^2 \dt \eta u_1 \right] \p_1 \dt^2 \eta  \\
-\int_{-\ell}^\ell \sigma \left[ \p_z^3 \R(\p_1 \zeta_0,\p_1 \eta) \p_1^2 \eta  + \p_y \p_z^2 \R(\p_1 \zeta_0,\p_1 \eta)  \p_1^2 \zeta_0 \right] (\p_1 \dt \eta)^2  u_1   \p_1 \dt^2 \eta =: IV_5 + IV_6.
\end{multline}
Then with $p$ and $r$ as used above for $IV_3$ we bound
\begin{multline}\label{ee_f3_IV_5}
 \abs{IV_5} \ls \left( \ns{\p_1 \dt \eta}_{L^r} \norm{\p_1 u_1}_{L^p(\Sigma)}  + \norm{\p_1 \dt \eta}_{L^r} \norm{\p_1^2 \dt \eta}_{L^p} \norm{u}_{L^r(\Sigma)}  \right) \norm{\p_1 \dt^2 \eta}_{L^r} \\
\ls \left( \ns{\dt \eta}_{3/2} \norm{u}_{W^2_\delta}  + \norm{\dt \eta}_{3/2} \norm{\dt \eta}_{W^{5/2}_\delta} \norm{u}_{s}  \right) \norm{\dt^2 \eta}_{3/2} \ls \left(\E \sqrt{\D} + \sqrt{\E} \sqrt{\D} \sqrt{\E} \right) \sqrt{\D} \ls \E \D,
\end{multline}
and
\begin{multline}\label{ee_f3_IV_6}
 \abs{IV_6} \ls \left(\norm{\p_1^2 \eta}_{L^p} + 1 \right) \ns{\p_1 \dt \eta}_{L^r} \norm{u}_{L^\infty} \norm{\p_1 \dt^2 \eta}_{L^r} \ls \left(\norm{ \eta}_{W^{5/2}_\delta} + 1 \right) \ns{ \dt \eta}_{3/2} \norm{u}_{s} \norm{ \dt^2 \eta}_{3/2} \\
\ls  \sqrt{\D}  \E \sqrt{\E} \sqrt{\D} + \E \sqrt{\D} \sqrt{\D} = (\E + \E^{3/2}) \D . 
\end{multline}

The estimate \eqref{ee_f3_IV_0} then follows by combining \eqref{ee_f3_IV_1}, \eqref{ee_f3_IV_3}, \eqref{ee_f3_IV_5}, and \eqref{ee_f3_IV_6}.

\end{proof}

Now that we have controlled $I$--$IV$ in \eqref{ee_f3_decomp} we can record a unified estimate.

\begin{thm}\label{ee_f3_dt2}
Let $F^3$ be given by \eqref{dt2_f3}.  Then 
\begin{multline}
\abs{ -  \int_{-\ell}^\ell \sigma  F^3   \p_1 (\dt^2 u \cdot \N) +  \frac{d}{dt} \int_{-\ell}^\ell \left[ \sigma \p_z \R(\p_1 \zeta_0,\p_1 \eta)  \frac{\abs{\p_1 \dt^2 \eta}^2}{2} + \sigma \p_z^2 \R(\p_1 \zeta_0,\p_1 \eta) (\p_1 \dt \eta)^2 \p_1 \dt^2 \eta \right] } 
\\
\ls (\sqrt{\E} + \E + \E^{3/2})\D.
\end{multline}
Moreover, 
\begin{equation}
 \abs{\int_{-\ell}^\ell \left[ \sigma \p_z \R(\p_1 \zeta_0,\p_1 \eta)  \frac{\abs{\p_1 \dt^2 \eta}^2}{2} + \sigma \p_z^2 \R(\p_1 \zeta_0,\p_1 \eta) (\p_1 \dt \eta)^2 \p_1 \dt^2 \eta \right]} \ls \sqrt{\E} \seb.
\end{equation}
\end{thm}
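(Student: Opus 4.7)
The plan is to assemble the theorem directly from the four preceding propositions by way of the decomposition \eqref{ee_f3_decomp}. Recall that with $F^3 = \dt^2[\R(\p_1\zeta_0,\p_1\eta)]$ expanded via the chain rule and with $\dt^2 u\cdot \N = \dt^3 \eta - F^6$, we already split
\begin{equation}
 -\int_{-\ell}^\ell \sigma F^3 \p_1(\dt^2 u\cdot \N) = I + II + III + IV,
\end{equation}
where $I$ and $III$ are the $\dt^3 \eta$ pieces (those that must absorb a time derivative), while $II$ and $IV$ carry the $F^6$ factor.

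First I would apply Propositions \ref{ee_f3_I} and \ref{ee_f3_III} to the terms $I$ and $III$ respectively. These two propositions extract the exact two time derivatives appearing inside the stated time derivative on the left-hand side of the theorem, namely
\begin{equation}
 \frac{d}{dt} \int_{-\ell}^\ell \sigma \p_z \R(\p_1 \zeta_0,\p_1 \eta)  \frac{\abs{\p_1 \dt^2 \eta}^2}{2} \quad \text{and} \quad \frac{d}{dt} \int_{-\ell}^\ell \sigma \p_z^2 \R(\p_1 \zeta_0,\p_1 \eta) (\p_1 \dt \eta)^2 \p_1 \dt^2 \eta,
\end{equation}
and leave residual remainders bounded by $\sqrt{\E}\D$ and $(\sqrt{\E}+\E)\D$, respectively. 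Simultaneously, the second halves of these two propositions give the instantaneous bounds of the form $\sqrt{\E}\seb$ on those same boundary integrals, which together yield the second assertion of the theorem.

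Next I would use Propositions \ref{ee_f3_II} and \ref{ee_f3_IV} to bound $II$ and $IV$ purely by $\E \D$ and $(\E + \E^{3/2})\D$, respectively; no time derivative has to be pulled out here, so these contribute directly to the right-hand side remainder. Summing the four contributions and absorbing the constants, the net error is controlled by $(\sqrt{\E}+\E+\E^{3/2})\D$, which is exactly the claimed bound.

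There is essentially no obstacle left: all four pieces have been treated individually in the prior propositions, and the proof reduces to invoking them and collecting terms. The only mild care needed is to verify that the two time-derivative terms generated by Propositions \ref{ee_f3_I} and \ref{ee_f3_III} match, with the correct signs, the single $\frac{d}{dt}$ expression in the statement; this is immediate from \eqref{ee_f3_decomp} since $I$ and $III$ already appeared with an overall minus sign and the propositions were stated so that the extracted time derivatives carry the matching sign convention.
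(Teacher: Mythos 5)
Your proposal is correct and follows exactly the same route as the paper: the proof of Theorem \ref{ee_f3_dt2} there simply invokes the decomposition \eqref{ee_f3_decomp} together with Propositions \ref{ee_f3_I}--\ref{ee_f3_IV} and collects the resulting bounds, precisely as you describe.
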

\begin{proof}
 We simply combine \eqref{ee_f3_decomp} with Propositions \ref{ee_f3_I}, \ref{ee_f3_II}, \ref{ee_f3_III}, and , \ref{ee_f3_IV}.
\end{proof}

A similar, but simpler, result holds for the once time-differentiated problem.  We will record it without proof.

\begin{thm}\label{ee_f3_dt}
Let $F^3$ be given by \eqref{dt1_f3}.  Then 
\begin{equation}
\abs{ -  \int_{-\ell}^\ell \sigma  F^3   \p_1 (\dt^2 u \cdot \N)  } 
\ls (\sqrt{\E} + \E)\D.
\end{equation}
\end{thm}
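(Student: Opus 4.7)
\textbf{Proof proposal for Theorem \ref{ee_f3_dt}.} The plan is to mirror the proof of Theorem \ref{ee_f3_dt2}, but with one fewer temporal derivative throughout, exploiting the fact that when $F^3$ is given by \eqref{dt1_f3} it takes the simpler form
\[
F^3 = \dt[\R(\p_1\zeta_0,\p_1\eta)] = \p_z\R(\p_1\zeta_0,\p_1\eta)\,\p_1\dt\eta,
\]
which is linear (rather than quadratic) in $\p_1\dt\eta$. Consequently the decomposition analogous to \eqref{ee_f3_decomp} yields only two pieces instead of four: after writing $\dt^2 u\cdot \N = \dt^3\eta - F^6$ via the twice-differentiated kinematic identity (with $F^6 = 2\dt u\cdot\dt\N + u\cdot\dt^2\N$ as in \eqref{dt2_f6}), we split
\[
-\!\int_{-\ell}^\ell \sigma F^3 \,\p_1(\dt^2 u\cdot\N) = \underbrace{-\!\int_{-\ell}^\ell \sigma \p_z\R(\p_1\zeta_0,\p_1\eta)\p_1\dt\eta\,\p_1\dt^3\eta}_{=:\,A} \; + \; \underbrace{\int_{-\ell}^\ell \sigma \p_z\R(\p_1\zeta_0,\p_1\eta)\p_1\dt\eta\,\p_1 F^6}_{=:\,B}.
\]

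For $A$, following the strategy used for Term III in Proposition \ref{ee_f3_III}, I would pull out a time derivative (since $\p_1\dt^3\eta$ is not controlled by $\D$ at this regularity):
\[
A = -\frac{d}{dt}\!\int_{-\ell}^\ell \sigma \p_z\R(\p_1\zeta_0,\p_1\eta)\p_1\dt\eta\,\p_1\dt^2\eta \;+\; \int_{-\ell}^\ell \sigma\,\dt\!\left[\p_z\R(\p_1\zeta_0,\p_1\eta)\p_1\dt\eta\right]\p_1\dt^2\eta.
\]
The remainder integral expands via the product rule into two pieces of the form $\int \p_z^2\R\,(\p_1\dt\eta)^2\p_1\dt^2\eta$ and $\int \p_z\R\,\p_1\dt^2\eta\,\p_1\dt^2\eta$; both are bounded by $\sqrt{\E}\D$ via Hölder on $(-\ell,\ell)$ together with the embeddings $\norm{\p_1\dt^j\eta}_{L^r}\ls\norm{\dt^j\eta}_{3/2}$ for any $r<\infty$, exactly as in Proposition \ref{ee_f3_III}. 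The extracted $\frac{d}{dt}$ term is at energy level $\ls\sqrt{\E}\,\seb$ and can be absorbed into the energy in the same spirit as the boundary terms in \ref{ee_f3_dt2}; the form of the statement collects these contributions into the $(\sqrt{\E}+\E)\D$ right-hand side after integration in time.

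For $B$, I would expand $F^6$ and apply the spatial product rule, yielding finitely many boundary integrals of the schematic form
\[
\int_{-\ell}^\ell \p_z\R\cdot \p_1\dt\eta\cdot\bigl(\p_1\dt u\cdot\dt\N + \dt u\cdot\p_1\dt\N\bigr),\qquad \int_{-\ell}^\ell \p_z\R\cdot\p_1\dt\eta\cdot\bigl(\p_1 u\cdot\dt^2\N + u\cdot\p_1\dt^2\N\bigr).
\]
Each of these is estimated in the style of Terms II and IV in Propositions \ref{ee_f3_II} and \ref{ee_f3_IV}: choose the same triple $p=(3+\delta)/(2+2\delta)$, $q=(6+2\delta)/(1-\delta)$, $r=(9+3\delta)/(1-\delta)$, use trace theory together with Proposition \ref{weighted_embed} and Proposition \ref{weighted_trace} to push $\norm{\nab \dt^j u}_{L^p(\Sigma)}\ls\norm{\dt^j u}_{W^2_\delta}$ and $\norm{\p_1^2\dt\eta}_{L^p}\ls\norm{\dt\eta}_{W^{5/2}_\delta}$, and absorb the factors $\p_1\dt\eta$ or $\p_1^k\eta$ at $L^q$ or $L^\infty$ level into $\sqrt{\E}$. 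Each term is manifestly of the form $\sqrt{\E}\sqrt{\D}\cdot\sqrt{\E}\sqrt{\D}=\E\D$ or $\sqrt{\E}\cdot\E\sqrt{\D}\cdot\sqrt{\D}=\E^{3/2}\D$-type, but in this simpler setting, the absence of a second $\p_1\dt\eta$ factor reduces every bound to $\sqrt{\E}\D$ or $\E\D$, matching the stated right-hand side $(\sqrt{\E}+\E)\D$.

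The main obstacle will be the treatment of the highest-derivative piece $A$: $\p_1\dt^3\eta$ is genuinely out of reach of both $\E$ and $\D$ at energy regularity, so the time-derivative extraction is essential, and care must be taken to confirm that all residual bulk terms after pulling out $\frac{d}{dt}$ really do fit under $(\sqrt{\E}+\E)\D$. Once that structural step is in place, every other bound is a routine instance of the Hölder-with-weighted-Sobolev-embeddings scheme already employed in Propositions \ref{ee_f3_I}--\ref{ee_f3_IV}.
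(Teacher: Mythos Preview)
Your approach has a genuine gap, which stems from a typo in the stated theorem. The introductory paragraph of Section~\ref{sec_nlin_en}.4 pairs the once-differentiated $F^3$ of \eqref{dt1_f3} with the integrand $\p_1(\dt u\cdot\N)$, not $\p_1(\dt^2 u\cdot\N)$; the $\dt^2 u$ in the displayed statement is a transcription error. With the intended integrand $\p_1(\dt u\cdot\N)$ one writes $\dt u\cdot\N=\dt^2\eta-F^6$ with the \emph{once}-differentiated $F^6=-u_1\p_1\dt\eta$ from \eqref{dt1_f6}, and the leading piece
\[
-\int_{-\ell}^\ell \sigma\,\p_z\R(\p_1\zeta_0,\p_1\eta)\,\p_1\dt\eta\,\p_1\dt^2\eta
\]
is bounded \emph{directly} by $\norm{\p_1\eta}_{L^\infty}\norm{\p_1\dt\eta}_{L^2}\norm{\p_1\dt^2\eta}_{L^2}\ls\sqrt{\E}\,\D$, since $\p_1\dt^2\eta$ is controlled in $L^2$ via $\ns{\dt^2\eta}_{3/2}$ in $\sdb\subset\D$. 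No time-derivative extraction is required; this is exactly why the paper calls the result ``simpler'' than Theorem~\ref{ee_f3_dt2}. The remaining $F^6$ term is handled by the same H\"older-plus-weighted-trace arguments you outline for $B$, but with one fewer temporal derivative throughout.

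Your proof, by contrast, takes the typo at face value and extracts a $\frac{d}{dt}$ term. That step does not prove the stated inequality: the theorem asserts a pointwise-in-time bound with no $\frac{d}{dt}$ correction, and your claim that the extracted piece ``can be absorbed\dots after integration in time'' is not a valid justification for such a bound. Moreover, the correction functional $\mathfrak{F}$ in \eqref{fed_def_3} and Theorem~\ref{ap_diff_ineq} contains only the contributions from Theorem~\ref{ee_f3_dt2}; had the once-differentiated case also produced a $\frac{d}{dt}$ term, it would have to appear there as well. This confirms that the intended estimate for \eqref{dt1_f3} is direct.
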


\subsection{Special functional estimates: free surface term}

On the right side of \eqref{linear_energy_0} we encounter the terms
\begin{equation}
   \int_{-\ell}^\ell   g \dt^2 \eta F^6 +  \sigma \frac{\p_1 \dt^2 \eta \p_1 F^6}{(1+\abs{\p_1 \zeta_0}^2)^{3/2}}  \text{ and }    \int_{-\ell}^\ell   g \dt \eta F^6 +  \sigma \frac{\p_1 \dt \eta \p_1 F^6}{(1+\abs{\p_1 \zeta_0}^2)^{3/2}} 
\end{equation}
where $F^6$ is given by \eqref{dt2_f6} for the first term and \eqref{dt1_f6} for the second term.  We have the following estimate.

\begin{thm}\label{ee_f6}
We have the estimate
\begin{equation}\label{ee_f6_01}
 \abs{   \int_{-\ell}^\ell   g \dt^2 \eta F^6 +  \sigma \frac{\p_1 \dt^2 \eta \p_1 F^6}{(1+\abs{\p_1 \zeta_0}^2)^{3/2}}} \ls \sqrt{\E} \D
\end{equation}
when $F^6$ is given by \eqref{dt2_f6}, and we have the estimate
\begin{equation}\label{ee_f6_02}
 \abs{   \int_{-\ell}^\ell   g \dt \eta F^6 +  \sigma \frac{\p_1 \dt \eta \p_1 F^6}{(1+\abs{\p_1 \zeta_0}^2)^{3/2}}} \ls \sqrt{\E} \D
\end{equation}
when $F^6$ is given by \eqref{dt1_f6}.
\end{thm}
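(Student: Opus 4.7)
The plan is to expand $F^6$ using the explicit formulas derived from differentiating the kinematic identity $\dt \eta = u \cdot \N$. In the once-differentiated case $F^6 = -u_1 \p_1 \dt \eta$, and in the twice-differentiated case $F^6 = -2 \dt u_1 \p_1 \dt \eta - u_1 \p_1 \dt^2 \eta$, as was already used in the proof of Proposition \ref{ee_f3_II}. Substituting these and distributing $\p_1$ on $F^6$ reduces \eqref{ee_f6_01} and \eqref{ee_f6_02} to a finite sum of multilinear integrals. The overall structure follows the template of Propositions \ref{ee_f3_II} and \ref{ee_f3_IV}, with two useful ingredients that simplify the analysis: first, the no-penetration condition gives $u_1(\pm \ell,t) = 0$ and hence $\dt u_1(\pm \ell,t)=0$, so $F^6$ vanishes at the endpoints and integration by parts produces no boundary term; second, $\dt^j \eta \in H^{3/2}$ for $j = 0,1,2$ is controlled by $\sdb$, while $\eta, \dt \eta \in W^{5/2}_\delta$ are controlled in both $\se$ and $\sd$.

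For the ``generic'' pieces I would fix the same Hölder exponents as in Proposition \ref{ee_f3_II}, namely $p = (3+\delta)/(2+2\delta)$ and $q = (6+2\delta)/(1-\delta)$ with $1/p + 2/q = 1$, plus occasionally $r = (9+3\delta)/(1-\delta)$ with $1/p + 3/r = 1$. Trace theory together with the weighted embedding $W^2_\delta(\Omega) \hookrightarrow W^{1,p}(\Omega)$ from Proposition \ref{weighted_embed} yields $\norm{\nab \dt^i u}_{L^p(\Sigma)} \ls \norm{\dt^i u}_{W^2_\delta}$ for $i=0,1$, while Sobolev embedding into $L^q$ handles the $\dt^j \eta$ factors (viewed in $H^{3/2}$ or $H^1$ on the boundary segment $(-\ell,\ell)$). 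In every such term one factor is absorbed into $\sqrt{\E}$ and two into $\sqrt{\D}$, producing the bound $\sqrt{\E}\,\D$.

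The main obstacle is the term coming from the $\p_1 \dt^2 \eta \cdot \p_1(u_1 \p_1 \dt^2 \eta)$ piece in \eqref{ee_f6_01}, after differentiation this contains $u_1 \,\p_1^2 \dt^2 \eta$, and $\p_1^2 \dt^2 \eta$ is not available in $L^2$. The trick is to recognize the perfect derivative
\[
\int_{-\ell}^\ell u_1 \,\p_1 \dt^2 \eta \,\p_1^2 \dt^2 \eta = \int_{-\ell}^\ell u_1 \,\p_1 \!\left(\frac{|\p_1 \dt^2 \eta|^2}{2}\right),
\]
integrate by parts in $x_1$, and use $u_1(\pm \ell) = 0$ to kill the boundary contributions; what remains is
\[
-\int_{-\ell}^\ell \p_1 u_1 \,\frac{|\p_1 \dt^2 \eta|^2}{2},
\]
which is estimated by $\norm{\p_1 u_1}_{L^p(\Sigma)} \,\norm{\p_1 \dt^2 \eta}_{L^{2q}}^2 \ls \norm{u}_{W^2_\delta} \ns{\dt^2 \eta}_{H^{3/2}} \ls \sqrt{\E}\,\D$. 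The same device handles the undifferentiated interaction $\int \dt^2 \eta \,u_1 \,\p_1 \dt^2 \eta$.

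The bound \eqref{ee_f6_02} for the once-differentiated case follows from the same approach applied to $F^6 = -u_1 \p_1 \dt \eta$, and is strictly simpler since only first-order time derivatives appear; the analogous integration by parts $\int u_1 \,\p_1(|\dt \eta|^2/2) = -\int \p_1 u_1 \,|\dt\eta|^2/2$ disposes of the sole potentially troublesome term, and all remaining pieces are controlled by Hölder in the same exponents. Summing the resulting estimates in each case yields \eqref{ee_f6_01} and \eqref{ee_f6_02}.
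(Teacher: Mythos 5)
Your proposal is correct and takes essentially the same approach as the paper: expand $F^6$, fix Hölder exponents $p,q$ so that weighted traces of $\nabla \dt^i u$ in $L^p(\Sigma)$ pair with the $\eta$ factors in $L^q$, and integrate by parts against $u_1$ (using $u_1(\pm\ell)=0$) to convert the untreatable $u_1\,\p_1^2\dt^2\eta$ term into $\p_1 u_1\,|\p_1\dt^2\eta|^2/2$. The only minor variant is that the paper estimates the $\int g\,\dt^2\eta\, F^6$ piece directly via an $L^3$-$L^3$-$L^3$ Hölder bound without integrating by parts, whereas you apply the same IBP device there; both yield $\sqrt{\E}\,\D$.
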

\begin{proof}
We will again only prove the result in the more difficult case, which corresponds to \eqref{ee_f6_01}.  The estimate \eqref{ee_f6_02} follows from a similar argument.  In the first case we may write
\begin{equation}
 F^6 =  -2 \dt u_1 \p_1 \dt \eta - u_1 \p_1 \dt^2 \eta
\end{equation}
and
\begin{equation}
   \int_{-\ell}^\ell   g \dt^2 \eta F^6 +  \sigma \frac{\p_1 \dt^2 \eta \p_1 F^6}{(1+\abs{\p_1 \zeta_0}^2)^{3/2}} = I + II.
\end{equation}
We will estimate $I$ and $II$ separately; combining these then leads to the bound \eqref{ee_f6_01}.

We estimate the term $I$ via
\begin{multline}
 \abs{I} \ls \norm{\dt^2 \eta}_{L^3} \left( \norm{\dt u_1}_{L^3(\Sigma)} \norm{\p_1 \dt \eta}_{L^3} +  \norm{u_1}_{L^3(\Sigma)} \norm{\p_1 \dt^2 \eta}_{L^3} \right) \\
 \ls \norm{\dt^2 \eta}_{1/2} \left( \norm{\dt u_1}_{H^{1/2}(\Sigma)} \norm{\p_1 \dt \eta}_{1/2} +  \norm{u_1}_{H^{1/2}(\Sigma)} \norm{\p_1 \dt^2 \eta}_{1/2} \right) \\
 \ls \norm{\dt^2 \eta}_{1/2} \left( \norm{\dt u_1}_{1} \norm{\dt \eta}_{3/2} +  \norm{u_1}_{1} \norm{\dt^2 \eta}_{3/2} \right) \ls \sqrt{\E}(\sqrt{\D} \sqrt{\D} + \sqrt{\D} \sqrt{D}) = \sqrt{\E} \D.
\end{multline}

To estimate the term $II$ we let $p$ and $q$  be given by 
\begin{equation}
 p = \frac{3+\delta}{2+2\delta} \text{ and }  q = \frac{6+2\delta}{1-\delta}
\end{equation}
which implies that 
\begin{equation}
  \frac{1}{p} + \frac{2}{q} = 1.
\end{equation}
We then expand
\begin{equation}
 II =    -\int_{-\ell}^\ell   \sigma \frac{\p_1 \dt^2 \eta  }{(1+\abs{\p_1 \zeta_0}^2)^{3/2}} 2\p_1 \dt u_1 \p_1 \dt \eta - \int_{-\ell}^\ell   \sigma \frac{\p_1 \dt^2 \eta  }{(1+\abs{\p_1 \zeta_0}^2)^{3/2}} u_1 \p_1^2 \dt^2 \eta = II_1 + II_2.
\end{equation}
We estimate $II_1$ via 
\begin{multline}
 \abs{II_1} \ls \norm{\p_1 \dt^2 \eta}_{L^q} \norm{\nab \dt u}_{L^p(\Sigma)} \norm{\p_1 \dt \eta}_{L^q} \ls \norm{\dt^2 \eta}_{3/2} \norm{\dt u}_{W^2_\delta} \norm{\dt \eta}_{3/2} \ls \sqrt{\D} \sqrt{\D} \sqrt{\E} = \sqrt{\E} \D. 
\end{multline}

Then since $u_1$ vanishes at the endpoints we have that
\begin{equation}
 II_2 = - \int_{-\ell}^\ell   \sigma \frac{u_1  }{(1+\abs{\p_1 \zeta_0}^2)^{3/2}} \p_1 \frac{\abs{ \p_1 \dt^2 \eta }^2}{2} = \int_{-\ell}^\ell   \sigma \p_1 \left(\frac{u_1  }{(1+\abs{\p_1 \zeta_0}^2)^{3/2}} \right)  \frac{\abs{ \p_1 \dt^2 \eta }^2}{2} 
\end{equation}
and so
\begin{equation}
 \abs{II_2}\ls \left(\norm{u}_{L^{p}(\Sigma)} +  \norm{\nab  u}_{L^{p}(\Sigma)} \right) \ns{\p_1 \dt^2 \eta}_{L^{q}} \ls \norm{u}_{W^2_\delta}  \ns{\p_1 \dt^2 \eta}_{1/2} 
\ls  \norm{u}_{W^2_\delta} \ns{\dt^2 \eta}_{3/2} \ls \sqrt{\E} \D.
\end{equation}

\end{proof}

\subsection{Special functional estimates: $F^7$ term}

On the right side of \eqref{linear_energy_0} we also encounter the term $ [v\cdot \N,F^7]_\ell.$ We estimate this now.

\begin{thm}\label{ee_f7}
We have the estimate
\begin{equation}\label{ee_f7_01}
\abs{ [\dt^2 u\cdot \N,F^7]_\ell} \ls \sqrt{\E} \D
\end{equation}
when $F^7$ is given by \eqref{dt2_f7}, and we have the estimate
\begin{equation}\label{ee_f7_02}
\abs{ [\dt u \cdot \N,F^7]_\ell} \ls \sqrt{\E} \D
\end{equation}
when $F^7$ is given by \eqref{dt1_f7}.
\end{thm}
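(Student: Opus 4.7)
The plan is to reduce both inequalities to pointwise bounds at the two corner points $a\ell$ ($a \in \{\pm 1\}$), exploiting the Sobolev embedding $H^s((-\ell,\ell)) \hookrightarrow C^0([-\ell,\ell])$ for $s > 1/2$ together with the quadratic vanishing of $\swh$ at the origin.  The smallness assumption \eqref{eta_assume}, combined with $\swh \in C^2(\mathbb{R})$ and $\swh(0) = \swh'(0) = 0$, yields the uniform Taylor-type bounds $\abs{\swh'(\dt\eta(a\ell))} \ls \abs{\dt\eta(a\ell)}$ and $\abs{\swh''(\dt\eta(a\ell))} \ls 1$.  From the definitions of $\seb$, $\sdbb$, $\sdb$, $\E$, and $\D$ I will repeatedly invoke three pointwise facts for $j \in \{0,1,2\}$: $\abs{\dt^j\eta(a\ell)} \ls \norm{\dt^j\eta}_{H^1} \ls \sqrt{\seb} \ls \sqrt{\E}$; $\abs{\dt^j\eta(a\ell)} \ls \norm{\dt^j\eta}_{H^{3/2}} \ls \sqrt{\sdb} \ls \sqrt{\D}$; and $\abs{(\dt^j u \cdot \N)(a\ell)}^2 \le \bs{\dt^j u \cdot \N} \le \sdbb \le \D$.

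For \eqref{ee_f7_02}, the form of $F^7$ obtained by differentiating the last equation of \eqref{geometric} once in time is $F^7 = \swh'(\dt\eta)\,\dt^2\eta$, and so
\begin{equation*}
\abs{F^7(a\ell)} \ls \abs{\dt\eta(a\ell)}\,\abs{\dt^2\eta(a\ell)} \ls \sqrt{\E}\,\sqrt{\D},
\end{equation*}
after applying the energy bound to $\dt\eta$ and the dissipation bound to $\dt^2\eta$.  Pairing with $\abs{(\dt u \cdot \N)(a\ell)} \ls \sqrt{\D}$ gives $\abs{[\dt u \cdot \N, F^7]_\ell} \ls \sqrt{\E}\,\D$, proving \eqref{ee_f7_02}.

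For \eqref{ee_f7_01}, a second temporal differentiation of the same boundary equation produces $F^7 = \swh''(\dt\eta)(\dt^2\eta)^2 + \swh'(\dt\eta)\,\dt^3\eta$.  The first summand at $a\ell$ is controlled by $(\dt^2\eta(a\ell))^2$, and I use the interpolated bound $(\dt^2\eta(a\ell))^2 \ls \norm{\dt^2\eta}_{H^1}\norm{\dt^2\eta}_{H^{3/2}} \ls \sqrt{\E}\,\sqrt{\D}$.  The second summand is the main obstacle, since $\dt^3\eta$ does not itself appear in $\E$ or $\D$.  I resolve this by exploiting the kinematic identity $\dt\eta = u\cdot\N$ on $\Sigma$, which upon two time derivatives and evaluation at $a\ell$ yields
\begin{equation*}
\dt^3\eta(a\ell) = (\dt^2 u \cdot \N)(a\ell) + 2(\dt u \cdot \dt\N)(a\ell) + (u \cdot \dt^2 \N)(a\ell).
\end{equation*}
The wall condition $u\cdot\nu = 0$ on $\Sigma_s$, combined with the fact that near the corners $\nu = \pm e_1$, forces $u_1(a\ell) = \dt u_1(a\ell) = 0$; since $\dt\N = -\p_1\dt\eta\, e_1$ and $\dt^2\N = -\p_1\dt^2\eta\, e_1$ are both multiples of $e_1$, the two mixed products vanish at $a\ell$, leaving $\dt^3\eta(a\ell) = (\dt^2 u\cdot\N)(a\ell)$.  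Consequently $\abs{\dt^3\eta(a\ell)} \ls \sqrt{\D}$ by the third pointwise fact above.

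Combining, $\abs{\swh'(\dt\eta(a\ell))\,\dt^3\eta(a\ell)} \ls \abs{\dt\eta(a\ell)}\cdot\sqrt{\D} \ls \sqrt{\E}\,\sqrt{\D}$, and therefore $\abs{F^7(a\ell)} \ls \sqrt{\E}\,\sqrt{\D}$ in the dt2 case as well.  Pairing once more with $\abs{(\dt^2 u\cdot\N)(a\ell)} \ls \sqrt{\D}$ produces $\abs{[\dt^2 u\cdot\N, F^7]_\ell} \ls \sqrt{\E}\,\D$, which is \eqref{ee_f7_01}.  The delicate point throughout is that the quadratic vanishing of $\swh$ at $0$ contributes exactly one factor of $\sqrt{\E}$, which combines with two $\sqrt{\D}$ factors—one from $\bs{\dt^j u\cdot\N} \subseteq \sdbb$ and another from the interpolation/kinematic identity—to produce the required $\sqrt{\E}\,\D$ structure needed for absorption in the nonlinear energy scheme.
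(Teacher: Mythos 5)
Your proof is correct and takes essentially the same route as the paper's: both exploit the Taylor bound $\abs{\swh'(z)} \ls \abs{z}$ and the vanishing of $u_1$ and $\dt u_1$ at the corners (equivalently, the vanishing of $F^6$ at $\pm\ell$) to identify $\dt^3\eta(a\ell)$ with $(\dt^2 u\cdot\N)(a\ell)$, and then distribute one $\sqrt{\E}$ factor and two $\sqrt{\D}$ factors across the bilinear form.  The only cosmetic difference is in the $\swh''(\dt\eta)(\dt^2\eta)^2$ term, where you invoke the geometric-mean bound $\abs{\dt^2\eta(a\ell)}^2 \ls \norm{\dt^2\eta}_{H^1}\norm{\dt^2\eta}_{H^{3/2}}$, while the paper instead substitutes the identity $\dt^2\eta(a\ell) = (\dt u\cdot\N)(a\ell)$ for one of the two factors; both routes produce the same $\sqrt{\E}\sqrt{\D}$.
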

\begin{proof}
We will only prove \eqref{ee_f7_01}.  In this case we bound 
\begin{equation}
 \abs{F^7} \ls \abs{\swh'(\dt \eta)} \abs{\dt^3 \eta} + \abs{\swh''(\dt \eta)} \abs{\dt^2 \eta}^2.
\end{equation}
Since $S =  \norm{\dt \eta}_{C^0} \ls \norm{\dt \eta}_1 \ls \E \ls 1,$ we may estimate 
\begin{equation}
 \abs{\swh'(z)} = \frac{1}{\kappa}\abs{ \int_0^z \sw''(r)dr}\ls \abs{z} \text{ for } z \in [-S,S].
\end{equation}
Then we may use the equations for $\dt^j \eta$ and trace theory to bound
\begin{equation}
 \abs{F^7} \ls \abs{\dt \eta} \abs{\dt^3 \eta} + \abs{ \dt^2 \eta}^2 \ls \norm{\dt \eta}_1 \abs{\dt^2 u \cdot \N} + \norm{\dt^2 \eta}_1 \abs{\dt u \cdot \N}. 
\end{equation}
Consequently, 
\begin{equation}
 \abs{  [\dt^2 u\cdot \N,F^7]_\ell } \ls [\dt^2 u \cdot \N]_\ell \left(\norm{\dt \eta}_1 \abs{\dt^2 u \cdot \N} + \norm{\dt^2 \eta}_1 \abs{\dt u \cdot \N} \right)  \ls \sqrt{\E} \D.
\end{equation}
\end{proof}

\subsection{Special zeroth order terms}

Here we record a couple simple estimates that we will use in conjunction with Corollary \ref{basic_energy}.  We begin with the $\Q$ term.

\begin{thm}\label{ee_Q}
 Let $\Q(y,z)$ be the smooth function defined by \eqref{Q_def}.  Then 
\begin{equation}
 \abs{\int_{-\ell}^\ell \sigma \Q(\p_1 \zeta_0, \p_1 \eta)  } \ls \norm{\eta}_{s+1/2} \ns{\eta}_1 \ls \sqrt{\E} \ns{\eta}_1
\end{equation}
\end{thm}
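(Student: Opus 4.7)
The plan is to exploit the cubic vanishing of $\Q$ at $z=0$ and then estimate the resulting triple product by splitting into an $L^\infty$ factor and an $L^2$ factor, which are controlled by the two norms on the right-hand side.

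The first step is to record the pointwise estimate $\abs{\Q(y,z)} \ls \abs{z}^3$, uniform in $y$. Since $\R(y,z)$ is defined in \eqref{R_def} as the second-order Taylor remainder of $w \mapsto w/\sqrt{1+w^2}$ expanded at $w=y$ and evaluated at $w=y+z$, and since all derivatives of this function are bounded on $\Rn{}$, we obtain $\abs{\R(y,z)} \ls z^2$ uniformly in $y,z$ (this is the sort of estimate presumably established in Appendix \ref{app_r}). Integrating once in $r$ using the definition \eqref{Q_def} then yields $\abs{\Q(y,z)} \ls \abs{z}^3$.

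Applying this pointwise bound with $y = \p_1 \zeta_0$ and $z = \p_1 \eta$ gives
\begin{equation}
\abs{\int_{-\ell}^\ell \sigma \Q(\p_1 \zeta_0, \p_1 \eta)} \ls \int_{-\ell}^\ell \abs{\p_1 \eta}^3 \le \norm{\p_1 \eta}_{L^\infty((-\ell,\ell))} \norm{\p_1 \eta}_{L^2((-\ell,\ell))}^2.
\end{equation}
Since $s > 1$ (as noted around the discussion of the embedding $W^{5/2}_\delta \hookrightarrow H^{s+1/2}$), we have the 1D Sobolev embedding $H^{s-1/2}((-\ell,\ell)) \hookrightarrow L^\infty((-\ell,\ell))$, which yields $\norm{\p_1 \eta}_{L^\infty} \ls \norm{\p_1 \eta}_{s-1/2} \ls \norm{\eta}_{s+1/2}$. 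Combining with $\norm{\p_1 \eta}_{L^2}^2 \le \ns{\eta}_1$ gives the first inequality in the statement.

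For the second inequality, we invoke the definition \eqref{ed_def_4} of $\E$ together with the weighted Sobolev embedding $W^{5/2}_\delta((-\ell,\ell)) \hookrightarrow H^{s+1/2}((-\ell,\ell))$ mentioned in the discussion following the main theorems, which yields $\norm{\eta}_{s+1/2} \ls \norm{\eta}_{W^{5/2}_\delta} \ls \sqrt{\E}$. The only non-routine point in the argument is the uniform bound $\abs{\R(y,z)} \ls z^2$; once this is in hand (and it is essentially automatic from the smoothness and bounded derivatives of $w/\sqrt{1+w^2}$), the remainder of the proof is simply Hölder and Sobolev embedding.
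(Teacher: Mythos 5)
Your proof is correct and follows essentially the same route as the paper: both exploit the cubic vanishing $\abs{\Q(y,z)} \ls \abs{z}^3$ (the paper cites this directly from Proposition \ref{R_prop}, while you rederive it from the quadratic bound on $\R$), then conclude via H\"older's inequality and the 1D Sobolev embedding $H^{s-1/2}((-\ell,\ell)) \hookrightarrow L^\infty$. No meaningful difference in approach.
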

\begin{proof}
According to Proposition \ref{R_prop} we have that 
\begin{equation}
 \abs{\int_{-\ell}^\ell \sigma \Q(\p_1 \zeta_0, \p_1 \eta)  } \ls \int_{-\ell}^\ell \abs{\p_1 \eta}^3 \ls \norm{\p_1 \eta}_{L^\infty} \ns{\eta}_1 \ls \norm{\p_1 \eta}_{s-1/2} \ns{\eta}_1 \ls \norm{\eta}_{s+1/2} \ns{\eta}_1.
\end{equation}
This implies the desired estimate.
\end{proof}

Next we handle the $\sw$ term appearing in Corollary \ref{basic_energy}.

\begin{thm}\label{ee_W}
We have that 
\begin{equation}\label{ee_W_0}
\abs{  [u\cdot \N,\swh(\dt \eta)]_\ell } \ls \norm{\dt \eta}_{1} \bs{u\cdot \N}.
\end{equation}
\end{thm}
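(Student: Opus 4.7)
The plan is to reduce the bracket to pointwise evaluations at $\pm \ell$ and exploit the vanishing of $\swh$ to second order at the origin. By definition,
\[
[u\cdot\N,\swh(\dt\eta)]_\ell = \kappa\sum_{a=\pm 1} (u\cdot\N)(a\ell)\,\swh(\dt\eta(a\ell)),
\]
so it suffices to bound each summand. From $\swh(z)=\frac{1}{\kappa}\sw(z)-z$ and $\kappa=\sw'(0)$ we read off $\swh(0)=0$ and $\swh'(0)=0$; since $\sw\in C^2(\R)$, the function $\swh'$ is $C^1$ with $\swh'(0)=0$, so a first-order Taylor expansion at $0$ yields $|\swh'(r)|\ls |r|$ for $|r|\le 1$, and hence $|\swh(z)|\ls |z|^2$ for $|z|\le 1$.

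Next, I would use the smallness assumption \eqref{eta_assume} together with the one-dimensional Sobolev embedding $H^1((-\ell,\ell))\hookrightarrow C^0([-\ell,\ell])$ to conclude that $\|\dt\eta\|_{C^0}\ls \|\dt\eta\|_1\ls\sqrt{\seb}\le 1$, which in particular puts $\dt\eta(\pm\ell)$ in the range where the quadratic bound $|\swh(z)|\ls |z|^2$ applies. Writing one factor of $|\dt\eta(a\ell)|$ as $\|\dt\eta\|_{C^0}$ then gives
\[
|\swh(\dt\eta(a\ell))|\ls \|\dt\eta\|_1\,|\dt\eta(a\ell)|.
\]

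The final step is to invoke the kinematic boundary condition from \eqref{geometric}, $\dt\eta=u\cdot\N$ on $\Sigma$, which is valid up to and including the endpoints, so $\dt\eta(a\ell)=u\cdot\N(a\ell)$. Substituting back,
\[
\bigl|[u\cdot\N,\swh(\dt\eta)]_\ell\bigr|\ls \|\dt\eta\|_1\sum_{a=\pm 1}|u\cdot\N(a\ell)|^2 \ls \|\dt\eta\|_1\,\bs{u\cdot\N},
\]
which is precisely \eqref{ee_W_0}. There is no serious obstacle here: the only point requiring minor care is confirming that the Taylor expansion of $\swh$ is used in a regime controlled by the a priori smallness of $\dt\eta$, so that the implicit constant in $|\swh(z)|\ls |z|^2$ is universal and independent of the solution.
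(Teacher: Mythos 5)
Your argument is correct and follows essentially the same route as the paper: expand the bracket, bound $\swh$ quadratically near the origin, substitute the kinematic relation $\dt\eta = u\cdot\N$ at the endpoints, and close with the one-dimensional trace estimate $\abs{\dt\eta(\pm\ell)}\ls \norm{\dt\eta}_1$. The one refinement you supply — observing that the bound $\abs{\swh(z)}\ls z^2$ only holds with a universal constant for $z$ in a bounded interval, and that the a priori smallness of $\dt\eta$ in $H^1\hookrightarrow C^0$ guarantees we stay in that regime — is a justified clarification of the paper's terser claim that the definition of $\swh$ ``easily shows'' the quadratic bound, and does not change the substance of the proof.
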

\begin{proof}
The definition of $\swh \in C^2$ in \eqref{V_pert} easily shows that $\abs{\swh(z)} \ls z^2$.  Since $\dt \eta = u\cdot \N$ at $\pm \ell$ we have that  
\begin{equation}
 \abs{  [u\cdot \N,\swh(\dt \eta)]_\ell } \ls \sum_{a=\pm 1} \kappa \abs{u\cdot \N(a\ell,t)}^2 \abs{\dt \eta(a \ell,t)}.  
\end{equation}
The estimate \eqref{ee_W} then follows from this and the $1-D$ trace estimate $\abs{\dt \eta (\pm \ell,t)} \ls \norm{\dt \eta}_1$.
\end{proof}

\section{Terms in the elliptic estimates }\label{sec_nlin_ell}

Our scheme of a priori estimates will employ the elliptic estimates of Theorem \ref{A_stokes_stress_solve}.  In order for this to be useful we must estimate the various terms appearing on the right side of \eqref{A_stokes_stress_0} when the $G^i$ terms are determined by the once temporally-differentiated problem and by the non-differentiated problem,  The former is far more delicate, and so we will focus our efforts on these.  The latter can be handled with similar and simpler arguments and are thus omitted.

Throughout this entire section we will assume that  $\omega \in (0,\pi)$ is the angle formed by $\zeta_0$ at the corners of $\Omega$,  $\delta_\omega \in [0,1)$ is given by \eqref{crit_wt}, and $\delta \in (\delta_\omega,1)$.  This determines explicitly the choice of $\delta$ appearing in the definitions of $\E$ and $\D$ in \eqref{ed_def_1}--\eqref{ed_def_5}.  We will assume throughout the entirety of this section that $\eta$ is given and satisfies 
\begin{equation} 
 \sup_{0 \le t \le T} \left(  \seb(t) +  \ns{\eta(t)}_{W^{5/2}_\delta(\Omega)} + \ns{\dt \eta(t)}_{H^{3/2}((-\ell,\ell))}  \right) \le \gamma < 1,
\end{equation}
where $\gamma \in (0,1)$ is as in Lemma \ref{eta_small}. For the sake of brevity we will not explicitly state this in each result's hypotheses.

\subsection{The time differentiated problem }

We want to apply Theorem \ref{A_stokes_stress_solve} to the time differentiated problem.  In this case we have
\begin{equation}
 G^1 = F^1, G^2 = F^2, G^3_- = 0, G^3_+ = \dt^2 \eta - F^6,
\end{equation}
and
\begin{equation}
 G^4_- = F^5, G^4_+ = F^4 \cdot \mathcal{T} /\abs{\mathcal{T}}^2, G^5 = F^4 \cdot \N /\abs{\N}^2, G^6 = F^3,
 G^7 = \kappa (\dt^2 \eta + F^7) \pm \sigma F^3,
\end{equation}
where the $F^i$ terms are given in Appendix \ref{fi_dt1}.  Theorem \ref{A_stokes_stress_solve} then dictates that we must control
\begin{multline}
\ns{F^1}_{W^0_{\delta} } +  \ns{F^2}_{W^1_{\delta}} + \ns{\dt^2 \eta - F^6}_{W^{3/2}_{\delta} }  
 +  \ns{F^5}_{W^{1/2}_{\delta}} \\
 +  \ns{F^4 \cdot \mathcal{T} /\abs{\mathcal{T}}^2}_{W^{1/2}_{\delta}}
 + \ns{F^4 \cdot \N /\abs{\N}^2}_{W^{1/2}_{\delta}} + \ns{\p_1 F^3}_{W^{1/2}_{\delta}} + [ \kappa \dt^2 \eta + \kappa F^7 \pm \sigma F^3]_\ell^2.
\end{multline}

We begin by estimating the $F^1$ term.

\begin{prop}\label{we_f1}
Let $F^1$ be given by \eqref{dt1_f1}. We have the estimate 
\begin{equation}
 \ns{F^1}_{W^0_{\delta}} \ls \D( \E + \E^2).
\end{equation}
\end{prop}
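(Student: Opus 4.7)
The plan is to expand $F^1$ from \eqref{dt1_f1}, which has the schematic form $F^1 = -\diverge_{\dt\A} S_\A(p,u) + \mu \diva \sg_{\dt \A} u$, into a finite sum of explicit product terms by using the definitions \eqref{A_def}--\eqref{AJK_def} for $\A$ and $J$. Each resulting term is a product of a ``$\dt$-factor'' involving $\dt \bar\eta$ or $\nab \dt \bar\eta$ (coming from differentiation of $\A$ in time), a ``geometric factor'' of the form $1$, $\nab \bar\eta$, or $\nab^2 \bar\eta$ (from $\A$ itself, through $K$ and $A$), and a ``top-order factor'' of the form $p$, $\nab p$, $\nab u$, or $\nab^2 u$. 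I will then estimate the $W^0_\delta(\Omega)$ norm of each term separately by applying H\"older's inequality and the weighted-Sobolev embeddings from Appendices \ref{app_weight} and \ref{app_prods}.

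The central observation driving every estimate is that the dissipation controls $\dt \eta$ at high regularity: the embedding $W^{5/2}_\delta \hookrightarrow H^{s+1/2}$ for some $s>1$ together with the Poisson extension yields
\begin{equation*}
\norm{\nab \dt \bar\eta}_{L^\infty(\Omega)} \ls \norm{\dt \eta}_{H^{s+1/2}} \ls \norm{\dt \eta}_{W^{5/2}_\delta} \ls \sqrt{\D}.
\end{equation*}
Thus in every product term the $\dt$-factor can be placed in $L^\infty$, extracting a factor of $\sqrt{\D}$. The remaining two factors are bounded in $W^0_\delta$ by using $\norm{\nab p}_{W^0_\delta}\ls \norm{p}_{\oW^1_\delta}\ls \sqrt{\E}$, $\norm{\nab^2 u}_{W^0_\delta}\ls \norm{u}_{W^2_\delta}\ls \sqrt{\E}$, and the $L^\infty$ bounds $\norm{\bar\eta}_{L^\infty} + \norm{\nab \bar\eta}_{L^\infty}\ls \norm{\eta}_{W^{5/2}_\delta} \ls \sqrt{\E}$ to absorb the geometric factor when one is present.

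Organized this way, the estimates split into two types: terms linear in the geometric factor produce a bound $\ns{\cdot}_{W^0_\delta} \ls \D \cdot \E$, while terms that carry an additional $\bar\eta$ or $\nab \bar\eta$ factor produce $\ns{\cdot}_{W^0_\delta} \ls \D \cdot \E^2$. Summing over all terms in the expansion of $F^1$ then yields the stated bound $\ns{F^1}_{W^0_\delta} \ls \D(\E + \E^2)$.

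The main technical obstacle will be the terms in which a $\nab^2 \bar\eta$-factor is paired with $p$ or $\nab u$, since none of $\nab^2 \bar\eta$, $p$, or $\nab u$ lies in $L^\infty$: each is only controlled in a weighted $L^2$ or an intermediate $L^q$ class. Here the $L^\infty$ bound on the $\dt$-factor is essential, and the remaining two factors must be handled by a three-exponent H\"older inequality. The weight $d^{2\delta}$ is placed entirely on the $\nab^2\bar\eta$-factor (using $\norm{d^\delta \nab^2\bar\eta}_{L^2}\ls \norm{\eta}_{W^{5/2}_\delta}$), while $p$ and $\nab u$ are estimated in $L^q$ via the embeddings $\oW^1_\delta \hookrightarrow H^{s-1} \hookrightarrow L^{2/(2-s)}$ and $W^2_\delta \hookrightarrow W^{1,r}$ for $r<2/\delta$ quoted in Appendix \ref{app_weight}; the condition $\delta>\delta_\omega$ ensures these exponents are compatible.
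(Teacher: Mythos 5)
Your high-level plan (expand $F^1$ into products of a ``$\dt$-factor,'' a ``geometric factor,'' and a ``top-order factor,'' then apply H\"older with the weighted embeddings) is close in spirit to the paper's argument, and the constant count $\sqrt{\D}\cdot\sqrt{\E}$ or $\sqrt{\D}\cdot\sqrt{\E}\cdot\sqrt{\E}$ squaring to $\D\E$ or $\D\E^2$ is correct. However, there are two concrete gaps that prevent the argument from closing as written.

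First, the rule ``the $\dt$-factor can always be placed in $L^\infty$'' fails for the second piece of $F^1$, namely $\mu\,\diva\sg_{\dt\A}u$. Upon expansion this produces a term with $\nab\dt\A$, which involves $\nab^2\dt\bar\eta$. Under the available control on $\dt\eta$, namely $\dt\eta \in W^{5/2}_\delta \hookrightarrow H^{s+1/2}$ with $1<s<2$, one only gets $\nab^2\dt\bar\eta \in H^{s-1}\hookrightarrow L^{2/(2-s)}$; since $s<2$, there is no $L^\infty$ bound. For this term the $\dt$-factor must be put in the finite exponent $L^{2/(2-s)}$ and the remaining factor $\nab u$ carries the weight in $L^{2/(s-1)}$, exactly balancing the H\"older exponents to $1/2$.

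Second, in the triple product $\dt\A\cdot\nab\A\cdot\nab u$ your proposed weight allocation does not close. Placing the full weight on the $\nab^2\bar\eta$-factor via $\norm{d^\delta\nab^2\bar\eta}_{L^2}$ consumes the entire $L^2$ integrability budget of the product: once the $\dt$-factor is in $L^\infty$, H\"older would then force $\nab u \in L^\infty$, which is not available. (And pushing $\norm{d^\delta\nab^2\bar\eta}$ to a higher $L^q$ would require control of $\nab^3\bar\eta$ in a weighted space, which is not part of $\E$ or $\D$.) The weight must instead be placed on the velocity factor, using Theorem \ref{weighted_sobolev_thm} in the form $\norm{d^\delta\nab u}_{L^q} \ls \norm{\nab u}_{W^1_\delta} \ls \norm{u}_{W^2_\delta}$ for any finite $q$, while $\nab\A\sim\nab^2\bar\eta$ is taken in the unweighted $L^{2/(2-s)}$ via the embedding $\eta\in H^{s+1/2}\Rightarrow\nab^2\bar\eta\in H^{s-1}\hookrightarrow L^{2/(2-s)}$. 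With this redistribution the three exponents $\infty$, $2/(2-s)$, $2/(s-1)$ satisfy $0 + (2-s)/2 + (s-1)/2 = 1/2$ and the H\"older inequality closes; alternatively one may split as the paper does with the $\dt$-factor in $L^{4/(s-1)}$ and both the geometric and velocity factors in their respective $L^q$'s. The rest of your scheme, including the $L^\infty$ bound on $\dt\A$ by $\sqrt{\D}$ for the other terms, is fine.
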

\begin{proof}
We have that 
\begin{equation}
 F^1 = - \diverge_{\dt \A} S_\A(p,u) + \mu \diva \sg_{\dt \A} u,
\end{equation}
and we will estimate each term separately.

For the first we choose $q \in [1,\infty)$ such that $2/q + (2-s)/2 =1/2$ in order to estimate
\begin{multline}
 \ns{- \diverge_{\dt \A} S_\A(p,u)}_{W^0_{\delta}} 
 \ls \ns{\dt \A}_{L^\infty} \ns{\nabla p}_{W^0_{\delta}} + \ns{\dt \A}_{L^\infty} \ns{\A}_{L^\infty} \ns{u}_{W^2_{\delta}} \\
 + \ns{\dt \A}_{L^{q} } \ns{\nab \A}_{L^{2/(2-s)}} \ns{d^\delta \nab u}_{L^{q}} 
 \ls \ns{\dt \eta}_{s+1/2} \ns{p}_{W^1_{\delta}} + \ns{\dt \eta}_{s+1/2}  \ns{u}_{W^2_{\delta}} \\
 +  \ns{\dt \eta}_{3/2} \ns{\eta}_{s+1/2} \ns{u}_{W^2_{\delta}}   
 \ls \D \E + \D \E  + \D \E^2 \ls \D(\E + \E^2).
\end{multline}

For the second estimate
\begin{multline}
 \ns{\mu \diva \sg_{\dt \A} u}_{W^0_{\delta}} \ls \ns{\dt \nab \A}_{L^{2/(2-s)}} \ns{d^\delta \nab u}_{L^{2/(s-1)}} + \ns{\dt \A}_{L^\infty} \ns{u}_{W^2_{\delta}} \\ 
\ls \ns{\dt \eta}_{s+1/2} \ns{u}_{W^2_\delta} + \ns{\dt \eta}_{s+1/2} \ns{u}_{W^2_{\delta}} \ls \D \E + \D \E \ls \D \E.
\end{multline}

\end{proof}

Next we estimate the $F^2$ term.

\begin{prop}\label{we_f2}
Let $F^2$ be given by \eqref{dt1_f2}.  We have that 
\begin{equation}
 \ns{F^2}_{W^1_{\delta}} \ls \E \D.
\end{equation}
\end{prop}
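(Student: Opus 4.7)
The formula $F^2 = -\diverge_{\dt \A} u = -\dt \A_{ij} \p_j u_i$ arises from time-differentiating the incompressibility constraint $\diva u = 0$ once, so $F^2$ is bilinear in the pair $(\dt \A,\nabla u)$. The plan is to bound $\|F^2\|_{W^1_\delta}^2$ by expanding
\begin{equation*}
\|F^2\|_{W^1_\delta}^2 \lesssim \|\dt \A\,\nabla u\|_{W^0_\delta}^2 + \|\nabla \dt \A\,\nabla u\|_{W^0_\delta}^2 + \|\dt \A\,\nabla^2 u\|_{W^0_\delta}^2
\end{equation*}
and estimating each piece by a weighted Hölder inequality, mirroring the template of the second half of Proposition \ref{we_f1}.

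The coefficients of $\dt \A$ are, via \eqref{A_def}--\eqref{AJK_def} and Lemma \ref{eta_small}, smooth functions of $(\bar\eta,\nabla\bar\eta)$ multiplying linear expressions in $(\dt \bar\eta,\nabla\dt\bar\eta)$. Combined with the Poisson-extension estimates for $\eta \mapsto \bar\eta$ and the weighted embedding $W^{5/2}_\delta \hookrightarrow H^{s+1/2}$ from Appendix \ref{app_weight}, this yields $\|\dt \A\|_{L^\infty}^2 + \|\nabla \dt \A\|_{L^{2/(2-s)}}^2 \lesssim \|\dt \eta\|_{H^{s+1/2}}^2 \lesssim \D$. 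The first and third integrals are then controlled by
\begin{equation*}
\|\dt \A\,\nabla u\|_{W^0_\delta}^2 + \|\dt \A\,\nabla^2 u\|_{W^0_\delta}^2 \lesssim \|\dt \A\|_{L^\infty}^2 \,\|u\|_{W^2_\delta}^2 \lesssim \D \cdot \E,
\end{equation*}
where the $\|u\|_{W^2_\delta}^2 \lesssim \E$ factor is drawn directly from the definition \eqref{ed_def_4} of $\E$. The middle integral is handled by choosing $q \in [1,\infty)$ satisfying $(s-1)/2 + 1/q = 1/2$ and applying weighted Hölder:
\begin{equation*}
\|\nabla \dt \A\,\nabla u\|_{W^0_\delta}^2 \lesssim \|\nabla \dt \A\|_{L^{2/(2-s)}}^2 \,\|d^\delta \nabla u\|_{L^{2/(s-1)}}^2 \lesssim \|\dt \eta\|_{H^{s+1/2}}^2\,\|u\|_{W^2_\delta}^2 \lesssim \D \cdot \E,
\end{equation*}
which is essentially the same estimate as the one carried out for $\diva \sg_{\dt \A} u$ at the end of Proposition \ref{we_f1}. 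Summing the three pieces gives $\|F^2\|_{W^1_\delta}^2 \lesssim \E \D$.

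No genuine analytical obstacle arises here, since this argument is a direct streamlining of the $F^1$-analysis of Proposition \ref{we_f1} with the pressure term absent and only one divergence-type expression to handle. The only care required is in the bookkeeping of Hölder exponents: each bilinear contribution must pair one factor bounded by $\E$ against one bounded by $\D$, so that the result carries the absorbable product structure $\E \D$ rather than a symmetric $\D^2$. The flexibility provided by the fact that $\|u\|_{W^2_\delta}^2$ appears in both $\E$ and $\D$ makes this straightforward.
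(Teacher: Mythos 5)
Your proof is correct and follows the same route as the paper: decompose $\ns{F^2}_{W^1_\delta}$ into the three pieces $\dt\A\,\nab u$, $\nab\dt\A\,\nab u$, and $\dt\A\,\nab^2 u$, then apply weighted H\"older with the exponents $L^{2/(2-s)}$ and $L^{2/(s-1)}$ (or $L^\infty$ against $W^2_\delta$), pairing $\|\dt\eta\|_{s+1/2}^2\ls\D$ with $\|u\|_{W^2_\delta}^2\ls\E$. The only cosmetic difference is that the paper bounds the zeroth-order piece via $\ns{\dt\A}_{L^4}\ns{d^\delta\nab u}_{L^4}$ rather than $\ns{\dt\A}_{L^\infty}\ns{u}_{W^2_\delta}$, but both yield the same $\E\D$ factorization.
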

\begin{proof}
Since $ F^2 = -\diverge_{\dt \A} u$ we only have one term to estimate.  We bound
\begin{multline}
 \ns{-\diverge_{\dt \A} u}_{W^1_{\delta}} \ls \ns{\dt \A}_{L^4} \ns{d^\delta \nab u}_{L^4} + \ns{\dt \nabla \A}_{L^{2/(2-s)}} \ns{d^\delta \nab u}_{L^{2/(s-1)}} + \ns{\dt \A}_{L^\infty} \ns{\nab^2 u}_{W^0_{\delta}} \\
\ls \ns{\dt \eta}_{3/2} \ns{u}_{W^2_\delta} + \ns{\dt \eta}_{s+1/2} \ns{u}_{W^2_\delta} + \ns{\dt \eta}_{s+1/2} \ns{u}_{W^2_{\delta}} \ls \D \E + \D \E + \D \E \ls \E \D. 
\end{multline}
\end{proof}

Next we estimate the first $F^3$ term.

\begin{prop}\label{we_f3_top}
Let $F^3$ be given by \eqref{dt1_f3}.  We have that
\begin{equation}
 \ns{\p_1 F^3}_{W^{1/2}_{\delta}} \ls \D \E.
\end{equation}
\end{prop}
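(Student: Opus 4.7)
The plan is to expand $\p_1 F^3$ via the chain rule and then estimate each resulting product in $W^{1/2}_\delta$ using the product and composition estimates of Appendix \ref{app_prods} together with the pointwise bounds on $\R$ and its derivatives from Appendix \ref{app_r}. Starting from $F^3 = \dt[\R(\p_1\zeta_0,\p_1\eta)] = \p_z\R(\p_1\zeta_0,\p_1\eta)\,\p_1\dt\eta$, differentiating in $x_1$ yields the three terms
\[
\p_1 F^3 \;=\; \p_y\p_z\R \cdot \p_1^2 \zeta_0 \cdot \p_1 \dt\eta \;+\; \p_z^2\R \cdot \p_1^2 \eta \cdot \p_1 \dt\eta \;+\; \p_z\R \cdot \p_1^2 \dt\eta,
\]
where $\R$ and its derivatives are evaluated at $(\p_1\zeta_0,\p_1\eta)$.

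The key structural observation, following from the definition \eqref{R_def} of $\R$ as a quadratic Taylor remainder, is that $\R(y,0) = \p_z\R(y,0) = 0$ for all $y$; differentiating the latter in $y$ gives also $\p_y\p_z\R(y,0) = 0$. Hence I would factor
\[
\p_z\R(\p_1\zeta_0,\p_1\eta) = g_1(\p_1\zeta_0,\p_1\eta)\,\p_1\eta, \qquad \p_y\p_z\R(\p_1\zeta_0,\p_1\eta) = g_2(\p_1\zeta_0,\p_1\eta)\,\p_1\eta,
\]
with $g_1,g_2$ smooth, thereby extracting an $\eta$-factor from the first and third summands. The middle term $\p_z^2\R(\p_1\zeta_0,\p_1\eta)$ is simply a bounded smooth function of $\p_1\eta$, and it already carries the $\eta$-factor $\p_1^2\eta$.

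Next, I would apply the weighted $W^{1/2}_\delta$ product estimates from Appendix \ref{app_prods} to each of the (modified) three products. In each case two of the factors lie in $W^{3/2}_\delta$ (namely $\p_1\eta$ and $\p_1\dt\eta$, or a $C^\infty$ multiplier coming from $\zeta_0$) and serve as multipliers, while the lowest-regularity factor ($\p_1^2\eta$ or $\p_1^2\dt\eta$) lies in $W^{1/2}_\delta$. The composition factors $g_1$, $g_2$, and $\p_z^2\R$ evaluated at $(\p_1\zeta_0,\p_1\eta)$ are handled by a standard composition estimate in weighted spaces, valid under the smallness assumption $\ns{\eta}_{W^{5/2}_\delta}\ll 1$. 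Collecting and using $\ns{\eta}_{W^{5/2}_\delta}\ls \E$ and $\ns{\dt\eta}_{W^{5/2}_\delta}\ls \D$ yields
\[
\ns{\p_1 F^3}_{W^{1/2}_\delta} \;\ls\; \ns{\eta}_{W^{5/2}_\delta}\,\ns{\dt\eta}_{W^{5/2}_\delta} \;\ls\; \E\,\D,
\]
as desired.

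There is no genuine obstacle here; the argument is a bookkeeping exercise closely paralleling the proofs of Propositions \ref{we_f1} and \ref{we_f2} immediately above. The only nontrivial point is the vanishing $\p_z\R(y,0)=\p_y\p_z\R(y,0)=0$, which is essential: without extracting an $\eta$-factor from those two summands we could only obtain $\D$-bounds, not $\E\D$-bounds, and would fail to pick up the required $\E$-smallness needed to close the a priori estimates via absorption.
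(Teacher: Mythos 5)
Your proposal is correct and follows essentially the same route as the paper: expand $\p_1 F^3$ into the three terms by the chain rule, then for each term put one factor in $W^{1/2}_\delta$ and the remaining factors into the $H^{1/2+\kappa}$ multiplier slot of Proposition \ref{weight_prod_half}, exploiting the vanishing $\p_z\R(y,0)=\p_y\p_z\R(y,0)=0$ (encoded in Proposition \ref{R_prop}) to produce the $\E$-factor in the first and third terms. One small imprecision: for the first term $\p_y\p_z\R\cdot\p_1^2\zeta_0\cdot\p_1\dt\eta$ there is no $\p_1^2\eta$ or $\p_1^2\dt\eta$ present, so the $W^{1/2}_\delta$ slot is occupied by $\p_1\dt\eta$ (as the paper does), not by a second-derivative factor as your general description suggests; the bookkeeping still closes since $\p_1\dt\eta\in W^{3/2}_\delta\hookrightarrow W^{1/2}_\delta$.
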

\begin{proof}
We have that 
\begin{equation}
 \p_1 F^3 = \p_y \p_z \R(\p_1 \zeta_0,\p_1 \eta) \p_1^2 \zeta_0 \p_1 \dt \eta + \p_z^2 \R(\p_1 \zeta_0,\p_1 \eta) \p_1^2 \eta \p_1 \dt \eta + \p_z  \R(\p_1 \zeta_0,\p_1 \eta) \p_1^2 \dt \eta =: I + II + III.
\end{equation}
To control these terms we will use Proposition \ref{weight_prod_half}, which is possible because $s-1/2 > 1/2$.  We estimate 
\begin{equation}
 \ns{I}_{W^{1/2}_{\delta}} \ls \ns{\p_1 \dt  \eta}_{W^{1/2}_{\delta}}   \ns{\p_y \p_z \R(\p_1 \zeta_0,\p_1 \eta) \p_1^2 \zeta_0}_{s-1/2} \ls \D \E,
\end{equation}
\begin{equation}
 \ns{II}_{W^{1/2}_{\delta}} \ls \ns{\p_1^2 \eta}_{W^{1/2}_{\delta}}   \ns{\p_z^2 \R(\p_1 \zeta_0,\p_1 \eta) \p_1 \dt \eta}_{s-1/2} \ls \E \D,
\end{equation}
and
\begin{equation}
 \ns{III}_{W^{1/2}_{\delta}} \ls  \ns{\p_1^2 \dt \eta }_{W^{1/2}_{\delta}} \ns{\p_z \R(\p_1 \zeta_0,\p_1 \eta)}_{s-3/2} \ls \D \E.
\end{equation}

\end{proof}

Next we estimate the term with $F^3$ and $F^7$ at the endpoints.

\begin{prop}\label{we_f3_end}
Let $F^3$ be given by \eqref{dt1_f3} and $F^7$ be given by \eqref{dt1_f7}.  We have the estimate
\begin{equation}
 [\kappa \dt^2 \eta + \kappa F^7 \pm \sigma F^3]_\ell^2 \ls \sdb+ \D \E.
\end{equation}
\end{prop}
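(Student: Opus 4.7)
The plan is to bound each piece appearing in $G^7_\pm = \kappa \dt^2\eta + \kappa F^7 \pm \sigma F^3$ pointwise at the corners, using one key algebraic reduction to bring $\dt^2\eta(\pm\ell)$ into the dissipation $\sdb$, and two quadratic-smallness observations to push the remaining nonlinear remainders down to order $\E\D$. Throughout, all endpoint evaluations are controlled by $1\!-\!D$ trace/Sobolev inequalities on $(-\ell,\ell)$.

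The essential observation is that at the corner points $(\pm\ell,\zeta_0(\pm\ell))$, the no-penetration condition $u\cdot\nu = 0$ on the vertical portion of $\Sigma_s$ forces $u_1(\pm\ell) = 0$. Differentiating the kinematic identity $u\cdot \N = \dt\eta$ on $\Sigma$ in time and using $\dt\N = -\p_1\dt\eta\,e_1$ gives
\begin{equation*}
 \dt^2\eta = \dt u\cdot\N - u_1 \p_1\dt\eta \quad \text{on } \Sigma,
\end{equation*}
which at $x_1 = \pm\ell$ collapses to $\dt^2\eta(\pm\ell) = \dt u\cdot\N(\pm\ell)$. Consequently $\kappa\abs{\dt^2\eta(\pm\ell)}^2 \le \bs{\dt u\cdot\N}$, and since $\bs{\dt u\cdot\N}$ is one of the summands of $\sdbb \le \sdb$, the first piece contributes $[\kappa\dt^2\eta]_\ell^2 \ls \sdb$.

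For the remaining two pieces, I would exploit quadratic vanishing at the origin. From the definition \eqref{V_pert} together with $\sw(0)=0$ and $\sw'(0) = \kappa$ we have $\swh(0)=0$ and $\swh'(0)=0$, so under the smallness $\E \le \gamma$ one has $\abs{\swh'(z)} \ls \abs{z}$ on the range of $\dt\eta$. The once-differentiated $F^7$ satisfies $F^7 = \swh'(\dt\eta)\,\dt^2\eta$ at the endpoints, giving $\abs{F^7(\pm\ell)}^2 \ls \abs{\dt\eta(\pm\ell)}^2\abs{\dt^2\eta(\pm\ell)}^2$. The $H^1 \hookrightarrow C^0$ trace on $(-\ell,\ell)$ gives $\abs{\dt\eta(\pm\ell)}^2 \ls \seb \ls \E$, and the previous step yields $\abs{\dt^2\eta(\pm\ell)}^2 \ls \sdb \ls \D$, so $[\kappa F^7]_\ell^2 \ls \E\D$. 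Similarly, direct differentiation of \eqref{R_def} gives $\p_z\R(y,0)=0$ (this is recorded in Appendix \ref{app_r}), so that $\abs{\p_z\R(\p_1\zeta_0,\p_1\eta)} \ls \abs{\p_1\eta}$ and hence $\abs{F^3}^2 \ls \abs{\p_1\eta}^2\abs{\p_1\dt\eta}^2$ pointwise. The embedding $H^{s-1/2}((-\ell,\ell)) \hookrightarrow C^0$ (valid since $s>1$) combined with the weighted trace embedding from Appendix \ref{app_weight} then yields $\abs{\p_1\eta(\pm\ell)}^2 \ls \ns{\eta}_{s+1/2} \ls \ns{\eta}_{W^{5/2}_\delta} \ls \E$ and $\abs{\p_1\dt\eta(\pm\ell)}^2 \ls \ns{\dt\eta}_{W^{5/2}_\delta} \ls \D$, so $[\sigma F^3]_\ell^2 \ls \E\D$.

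Combining the three estimates via the triangle inequality delivers
\begin{equation*}
[\kappa\dt^2\eta + \kappa F^7 \pm \sigma F^3]_\ell^2 \ls \sdb + \E\D,
\end{equation*}
which is the claim. The only genuinely non-routine step is the corner identity $\dt^2\eta(\pm\ell) = \dt u\cdot\N(\pm\ell)$: without it, one would need to control $\dt^2\eta$ at the endpoints by a Sobolev trace of $\dt^2\eta$ on $\Sigma$, which is not directly available in $\sdb$ since $\sdb$ only furnishes $H^{3/2}$ regularity of $\dt^2\eta$ (and $H^{3/2} \hookrightarrow C^0$ gives a trace bound, but that route does not naturally pair with the boundary dissipation terms used elsewhere).
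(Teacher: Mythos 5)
Your proof is correct and follows essentially the same route as the paper's: the corner identity $\dt^2\eta(\pm\ell) = \dt u\cdot\N(\pm\ell)$ (arising from $u_1(\pm\ell)=0$, which makes the $F^6$ remainder vanish at the endpoints) brings the leading term into $\bs{\dt u\cdot\N} \le \sdb$, while the quadratic vanishing of $\p_z\R$ at $z=0$ (Proposition \ref{R_prop}) and of $\swh'$ at $0$ reduce the $F^3$ and $F^7$ pieces to products that $1$-D Sobolev traces bound by $\E\D$. The paper states the first step more tersely (``we automatically have $\bs{\dt^2\eta} = \bs{\dt u\cdot\N}$''), but the underlying justification is exactly the kinematic differentiation you spell out; the $F^3$ and $F^7$ estimates match the paper's term by term.
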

\begin{proof}
We automatically have
\begin{equation}
 \bs{\dt^2 \eta} = \bs{\dt u \cdot \N} \le \sdb.
\end{equation}
Next we control
\begin{equation}
 F^3 = \p_z \R(\p_1 \zeta_0,\p_1 \eta) \dt \p_1 \eta 
\end{equation}
at $\pm \ell$.  Proposition \ref{R_prop} implies that $\abs{\p_z \R(\p_1 \zeta_0,\p_1 \eta)} \ls \abs{\p_1 \eta}$, so we reduce to controlling $\abs{\p_1 \eta \dt \p_1 \eta}$ at the endpoints.  We estimate
\begin{equation}
 \abs{\p_1 \eta(\pm \ell,t)}^2 \ls \ns{\p_1 \eta}_{s-1/2} \ls \E
 \text{ and } 
 \abs{\dt \p_1 \eta(\pm \ell,t)}^2 \ls \ns{\dt \p_1 \eta}_{s-1/2} \ls \D.
\end{equation}
Thus 
\begin{equation}
 \bs{\pm \sigma F^3} \ls \E \D.
\end{equation}
Finally, we turn to the $F^7$ term.  In this case we may argue as in Theorem \ref{ee_f7} to bound $\abs{F^7} \ls \abs{\dt \eta} \abs{\dt^2 \eta}$, from which we deduce that 
\begin{equation}
 \bs{F^7} \ls \ns{\dt \eta}_1 \bs{\dt u \cdot \N} \ls \E \D.
\end{equation}

\end{proof}

Next we handle the $F^4$ term.

\begin{prop}\label{we_f4}
Let $F^4$ be given by \eqref{dt1_f4}. We have that 
\begin{equation}
 \ns{F^4}_{W^{1/2}_{\delta}}  \ls \D(\E + \E^2).
\end{equation}
\end{prop}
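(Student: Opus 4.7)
The approach is to decompose $F^4$ into its constituent terms and estimate each one separately in the weighted trace space $W^{1/2}_\delta(\Sigma)$, using three principal tools: the weighted product estimate of Proposition \ref{weight_prod_half}, the weighted trace embedding $W^1_\delta(\Omega) \hookrightarrow W^{1/2}_\delta(\Sigma)$ from Proposition \ref{weighted_trace}, and the pointwise estimates for $\R$ recorded in Proposition \ref{R_prop}. Based on the structure revealed by \eqref{dt2_f4}, the once-differentiated forcing $F^4$ in \eqref{dt1_f4} is a sum of a term $\mu \sg_{\dt\A} u \, \N$ and a term of the form $\bigl[g\eta - \sigma \p_1\bigl(\p_1\eta(1+|\p_1\zeta_0|^2)^{-3/2} + \R(\p_1\zeta_0,\p_1\eta)\bigr) - S_\A(p,u)\bigr]\dt\N$, so it suffices to bound each piece by $\sqrt{\D}(\sqrt{\E} + \E)$.

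For the first piece, I would write $\sg_{\dt\A} u$ as a product of $\dt\A$ (which is a first-order polynomial in $\dt\nab\bar\eta$) with $\nab u$, and apply Proposition \ref{weight_prod_half}: the factor $\dt\A \cdot \N$ is controlled in $H^{s-1/2}(\Sigma)$ by $\|\dt\eta\|_{H^{3/2}}\lesssim\sqrt{\E}$, while $\nab u$ lies in $W^{1/2}_\delta(\Sigma)$ with norm bounded by $\|u\|_{W^2_\delta}\lesssim\sqrt{\D}$. This gives an $\E\D$ contribution. For the bracketed factor of $\dt\N$, I would bound the bracket itself in $W^{1/2}_\delta(\Sigma)$ by summing estimates for $g\eta$ (trivially controlled by $\|\eta\|_{W^{5/2}_\delta}$), the linearized curvature term (again controlled by $\|\eta\|_{W^{5/2}_\delta}$), the $\R$ remainder (using the same splitting as in Proposition \ref{we_f3_top} together with Proposition \ref{R_prop} to extract a factor of $\p_1\eta$), and the stress $S_\A(p,u)$ (controlled by $\|p\|_{\oW^1_\delta} + \|u\|_{W^2_\delta}$ via weighted trace). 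Multiplication by $\dt\N = -\p_1\dt\eta\, e_1$, whose $H^{s-1/2}(\Sigma)$ norm is bounded by $\|\dt\eta\|_{H^{s+1/2}} \lesssim \sqrt{\E}$, then produces the claimed $\sqrt{\E}\sqrt{\D}$-type bound via Proposition \ref{weight_prod_half}.

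The $\E^2$ contribution on the right-hand side arises precisely from the cross term $\p_1[\R(\p_1\zeta_0,\p_1\eta)]\,\dt\N$, where $\p_1\R$ is itself quadratic in $\p_1\eta$ after invoking Proposition \ref{R_prop}; combining the $\sqrt{\E}$ gained from $\R$ with the $\sqrt{\E}\sqrt{\D}$ produced by the product with $\dt\N$ yields an $\E^2\D$ term. Summing all contributions gives $\ns{F^4}_{W^{1/2}_\delta} \lesssim \D(\E + \E^2)$ as required.

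The main obstacle will be bookkeeping the regularity balance: the weighted product estimate of Proposition \ref{weight_prod_half} requires one factor in an unweighted Sobolev space of order strictly greater than $1/2$ (which forces us to use the condition $s > 1$), while the other sits in $W^{1/2}_\delta$. Several of the terms in $F^4$ have both factors at or near this threshold — in particular the $\p_1\R$ piece times $\dt\N$ — and so care is needed to ensure that the smallness constant $\gamma$ in the hypothesis \eqref{eta_assume} is used to place the correct factors in $\E$ (which contains $\|\eta\|_{W^{5/2}_\delta}^2$ and $\|\dt\eta\|_{H^{3/2}}^2$) rather than in $\D$. Once this bookkeeping is done, the estimate follows mechanically.
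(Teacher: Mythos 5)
Your overall plan — decompose $F^4$ into the $\mu\,\sg_{\dt\A}u\,\N$ piece and the bracketed piece times $\dt\N$, and estimate each in $W^{1/2}_\delta(\Sigma)$ — matches the paper's strategy, and the tool set (Propositions \ref{weight_prod_half}, \ref{weighted_trace}, \ref{R_prop}) is the right one. However, the $\E$/$\D$ bookkeeping is backwards in both of the key places you describe, which is precisely the bookkeeping you flag as ``the main obstacle'' at the end.

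Specifically, for the $\mu\,\sg_{\dt\A}u\,\N$ piece you claim that $\dt\A\cdot\N$ lies in $H^{s-1/2}(\Sigma)$ with norm controlled by $\norm{\dt\eta}_{H^{3/2}} \ls \sqrt{\E}$. That cannot work: $\dt\A$ involves $\nab\dt\bar\eta$, and $\dt\eta \in H^{3/2}$ only gives $\dt\bar\eta \in H^2(\Omega)$, hence $\nab\dt\bar\eta\vert_\Sigma \in H^{1/2}(\Sigma)$. Proposition \ref{weight_prod_half} needs the non-weighted factor in $H^{1/2+\kappa}(\Sigma)$ with $\kappa > 0$, i.e.\ in $H^{s-1/2}$ with $s>1$, and to reach that you must control $\norm{\dt\eta}_{H^{s+1/2}}$. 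This norm is in $\D$ (via $\ns{\dt\eta}_{W^{5/2}_\delta}$) but \emph{not} in $\E$ (which only contains $\ns{\dt\eta}_{H^{3/2}}$). The same slip occurs in the bracketed piece, where you write $\norm{\dt\N}_{H^{s-1/2}} \ls \norm{\dt\eta}_{H^{s+1/2}} \ls \sqrt{\E}$: the first inequality is fine but the second is false. The fix is to swap the attributions: the factors involving $\eta$, $u$, $p$ (namely $\ns{\eta}_{W^{5/2}_\delta}$, $\ns{u}_{W^2_\delta}$, $\ns{p}_{\oW^1_\delta}$) sit in both $\E$ and $\D$, so put those in $\E$ and put the $\dt\eta$ factor in $\D$. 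With this correction the final bound $\D(\E+\E^2)$ is recovered, and this is exactly how the paper organizes it.

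Beyond that bookkeeping issue, there is a minor stylistic divergence: for the $\sg_{\dt\A}u\,\N$ and $S_\A(p,u)\,\dt\N$ pieces the paper does not invoke Proposition \ref{weight_prod_half}; it instead passes to the interior via the trace embedding $W^1_\delta(\Omega)\hookrightarrow W^{1/2}_\delta(\Sigma)$ and then applies H\"older's inequality with weighted $L^p$ norms in $\Omega$. Your suggestion to run Proposition \ref{weight_prod_half} uniformly across all pieces is a reasonable alternative route once the $\E/\D$ attributions are corrected, but you should verify the precise $H^{1/2+\kappa}$ regularity of the trace of $\dt\A$ rather than asserting it. Finally, your attribution of the $\E^2$ contribution exclusively to the $\R$ cross term is not quite how the paper accounts for it: the paper bounds the entire bracket (including $\R$) times $\dt\N$ by $\E\D$, absorbing the extra smallness of $\p_1\eta$ in $\R$ via the a priori hypothesis, and the $\E^2$ terms arise instead from the three-factor products $\ns{u}_{W^2_\delta}\ns{\dt\eta}_{s+1/2}\ns{\eta}_{s+1/2}$ in the $\sg_{\dt\A}u\,\N$ and $S_\A(p,u)\,\dt\N$ pieces.
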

\begin{proof}
 
We have that 
\begin{multline}
  \ns{F^4 \cdot \mathcal{T} /\abs{\mathcal{T}}^2}_{W^{1/2}_{\delta}}
 + \ns{F^4 \cdot \N /\abs{\N}^2}_{W^{1/2}_{\delta}} \ls  \ns{F^4}_{W^{1/2}_{\delta}}(1 + \ns{\eta}_{C^1}) 
 \\
  \ls \ns{F^4}_{W^{1/2}_{\delta}}(1 + \ns{\eta}_{s+1/2}) \ls \ns{F^4}_{W^{1/2}_{\delta}}(1+ \E),
\end{multline}
and so it suffices to estimate $\ns{F^4}_{W^{1/2}_{\delta}}$.  It is written
\begin{equation}
  F^4 = \mu \sg_{\dt \A} u \N +\left[ g\eta  - \sigma \p_1 \left(\frac{\p_1 \eta}{(1+\abs{\p_1 \zeta_0}^2)^{3/2}} +  \R(\p_1 \zeta_0,\p_1 \eta) \right)  - S_{\A}(p,u) \right]  \dt \N.
\end{equation}
We will handle each term separately.

For the first we estimate 
\begin{multline}
 \ns{\mu \sg_{\dt \A} u \N}_{W^{1/2}_{\delta}} \ls  \ns{\sg_{\dt \A} u (e_2 - e_1 \p_1 \bar{\eta})}_{W^{1}_{\delta}} \ls \ns{u}_{W^{2}_{\delta}} \ns{\dt \A}_{L^\infty} \ns{e_2 - e_1 \p_1 \bar{\eta}}_{L^\infty} \\
 +  \ns{d^\delta \nab u}_{L^{2/(s-1)}} \left( \ns{\dt \nab \A}_{L^{2/(2-s)}} \ns{e_2 - e_1 \p_1 \bar{\eta}}_{L^\infty} +  \ns{\dt  \A}_{L^\infty} \ns{\nab^2 \bar{\eta}}_{L^{2/(2-s)}}  \right)
 \\
\ls \ns{u}_{W^{2}_{\delta}} \ns{\dt \eta}_{2} + \ns{u}_{W^{2}_{\delta}}  \left( \ns{\dt \eta}_{s+1/2} + \ns{\dt \eta}_{s+1/2} \ns{\eta}_{s+1/2}  \right) \ls \E \D + \E(\D + \D \E) \ls \D(\E + \E^2).
\end{multline}
For the second we  estimate 
\begin{multline}
 \ns{-S_{\A}(p,u) \dt \N}_{W^{1/2}_{\delta}}  \ls  \ns{S_{\A}(p,u) \dt \p_1 \bar{\eta}}_{W^{1}_{\delta}(\Omega)} \ls \left( \ns{p}_{W^1_{\delta}} + \ns{u}_{W^2_{\delta}}   \right) \ns{\dt \p_1 \bar{\eta} }_{L^\infty} \\
 + \left( \ns{d^\delta p}_{L^{2/(s-1)} } + \ns{d^\delta \nab u}_{L^{2/(s-1)} }   \right) \left( \ns{ \dt \nab^2 \bar{\eta}}_{L^{2/(s-2)}} + \ns{\nab \A}_{L^{2/(s-2)}} \ns{\dt \p_1 \bar{\eta}}_{L^\infty}  \right)
 \\
\ls \E \ns{\dt \eta}_{s+1/2} + \E\left( \ns{\dt \eta}_{s+1/2} + \ns{\eta}_{s+1/2} \ns{\dt \eta}_{s+1/2}  \right) \ls \E \D + \E( \D + \E \D) \ls \D(\E + \E^2).
\end{multline}
For the remaining terms we use Proposition \ref{weight_prod_half} to bound
\begin{multline}
 \ns{ \left[ g\eta  - \sigma \p_1 \left(\frac{\p_1 \eta}{(1+\abs{\p_1 \zeta_0}^2)^{3/2}} +  \R(\p_1 \zeta_0,\p_1 \eta) \right)\right] \dt \N}_{W^{1/2}_{\delta}}  \ls  \ns{\eta}_{W^{5/2}_{\delta}}  \ns{\dt \p_1 \eta}_{s-1/2}
 \\
 \ls  \ns{\eta}_{W^{5/2}_{\delta}} \ns{\dt \eta}_{s+1/2} \ls \E \D,
\end{multline}
which is possible because $s-1/2 > 1/2$.  
 
\end{proof}

The $F^5$ terms is handled next.

\begin{prop}\label{we_f5}
Let $F^5$ be given by \eqref{dt1_f5}.  We have that 
\begin{equation}
 \ns{F^5}_{W^{1/2}_{\delta}} \ls \D\E.
\end{equation}
\end{prop}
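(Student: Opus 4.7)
Based on the pattern established in Propositions \ref{we_f1}--\ref{we_f4} and the analogous $F^5$ for the twice-differentiated problem (cf.\ Proposition \ref{ee_f5}), we take
\begin{equation*}
 F^5 = \mu \sg_{\dt \A} u \,\nu \cdot \tau \qquad \text{on } \Sigma_s,
\end{equation*}
which is the commutator generated by applying $\dt$ to the Navier-slip condition $(\Sa(p,u)\nu - \beta u)\cdot \tau = 0$. The plan is to absorb the $\sigma F^3$ and the $\beta \dt u \cdot \tau$ contributions into the linear part of the once-differentiated problem, leaving only the $\dt \A$ piece above as the nonlinearity.

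The first step is to reduce the estimate on $\Sigma_s$ to a bulk estimate on $\Omega$. Since the weighted trace map $W^1_\delta(\Omega) \to W^{1/2}_\delta(\p\Omega)$ is bounded (Appendix \ref{app_weight}), and $\nu,\tau$ are bounded smooth vector fields on $\Sigma_s$, it suffices to prove
\begin{equation*}
 \ns{\mu \sg_{\dt \A} u}_{W^1_\delta(\Omega)} \ls \E \D.
\end{equation*}

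For the second step, I would expand $\sg_{\dt \A} u$ in terms of $\dt \A \cdot \nab u$ and estimate via the weighted product rules of Appendix \ref{app_prods}. Splitting the derivative that falls on the product:
\begin{equation*}
 \ns{\sg_{\dt \A} u}_{W^1_\delta} \ls \ns{\dt \A}_{L^\infty} \ns{u}_{W^2_\delta} + \ns{\dt \nab \A}_{L^{2/(2-s)}} \ns{d^\delta \nab u}_{L^{2/(s-1)}},
\end{equation*}
exactly mirroring the splits used in the proofs of Propositions \ref{we_f1}, \ref{we_f2}, and \ref{we_f4}. For the coefficient factors, the estimates from Appendix \ref{app_coeff} together with the embedding $W^{5/2}_\delta \hookrightarrow H^{s+1/2}$ give $\ns{\dt \A}_{L^\infty} + \ns{\dt \nab \A}_{L^{2/(2-s)}} \ls \ns{\dt \eta}_{s+1/2}$, while Sobolev embedding and Proposition \ref{weighted_embed} give $\ns{d^\delta \nab u}_{L^{2/(s-1)}} \ls \ns{u}_{W^2_\delta}$. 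Combining with $\ns{u}_{W^2_\delta} \ls \E$ and $\ns{\dt \eta}_{s+1/2} \ls \ns{\dt \eta}_{W^{5/2}_\delta} \ls \D$, each piece is bounded by $\E\D$.

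No substantial new obstacle appears: the only thing requiring care is the choice of exponents $2/(2-s)$ and $2/(s-1)$, which are admissible precisely because $s \in (1, \min\{\pi/\omega,2\})$, matching the choice of weight $\delta \in (\delta_\omega,1)$. In particular, the corner geometry of $\Sigma_s$ is harmless since the weighted trace theorem accommodates the weight vanishing at the corners. The proof is therefore a direct and somewhat simpler analogue of Proposition \ref{we_f4}, and no structural cancellation is needed.
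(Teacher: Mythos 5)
Your proof is correct. Where the paper treats $F^5$ with the weighted boundary product estimate, you push the whole thing into the bulk first and then trace, reusing the machinery of Proposition \ref{we_f4}. Concretely: the paper's proof is a single application of Proposition \ref{weight_prod_half} on $\Sigma_s$, writing $\ns{\sg_{\dt\A}u\,\nu\cdot\tau}_{W^{1/2}_\delta} \ls \ns{\nab u}_{W^{1/2}_\delta}\ns{\dt\A}_{s-1/2} \ls \ns{u}_{W^2_\delta}\ns{\dt\eta}_{s+1/2}$, treating $\dt\A$ as the ``nice'' factor in $H^{s-1/2}$; you instead reduce via the trace map to $\ns{\sg_{\dt\A}u}_{W^1_\delta(\Omega)}$ and then run the same H\"older/weighted-Sobolev splitting ($\ns{\dt\A}_{L^\infty}\ns{u}_{W^2_\delta}$ plus the $L^{2/(2-s)}\times L^{2/(s-1)}$ pairing) that the paper uses for the $\sg_{\dt\A}u\N$ contribution in Proposition \ref{we_f4}. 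Since Proposition \ref{weight_prod_half} is itself proven by lifting to $W^1_\delta(\Omega)$ and invoking Proposition \ref{weight_prod_1}, the two arguments are cousins; yours just unpacks the abstract product lemma into an explicit bulk computation. The paper's route is shorter and isolates exactly which regularity of $\dt\A$ (namely $H^{s-1/2}$ on the boundary) is being used, while yours keeps the style uniform with the neighboring propositions and makes the dependence on the exponent window $s \in (1,\min\{\pi/\omega,2\})$ visible at the level of the H\"older exponents. Both yield $\ns{F^5}_{W^{1/2}_\delta} \ls \E\D$.
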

\begin{proof}
The fact that $1 < s$ allows us to  use Proposition \ref{weight_prod_half} to estimate
\begin{equation}
 \ns{ \mu \sg_{\dt \A} u \nu \cdot \tau}_{W^{1/2}_{\delta}} \ls \ns{ \nab u}_{W^{1/2}_{\delta}} \ns{ \dt \A}_{s-1/2} \ls \ns{u}_{W^2_\delta} \ns{\dt \eta}_{s+1/2} \ls \E \D. 
\end{equation}

\end{proof}

Next we consider the $F^6$ term.

\begin{prop}\label{we_f6}
Let $F^6$ be given by \eqref{dt1_f6}. We have that 
\begin{equation}
 \ns{\dt^2 \eta - F^6}_{W^{3/2}_{\delta}} \ls \ns{\dt^2 \eta}_{3/2} + \E\D.
\end{equation}
\end{prop}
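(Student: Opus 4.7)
The plan is to first identify the structure of $F^6$. Differentiating the kinematic equation $\dt \eta = u \cdot \N$ in time yields
\[
\dt^2 \eta = \dt u \cdot \N + u \cdot \dt \N = \dt u \cdot \N - u_1 \p_1 \dt \eta,
\]
since $\dt \N = -\p_1 \dt \eta\, e_1$. Hence $F^6 = u \cdot \dt \N = -u_1 \p_1 \dt \eta$ on $\Sigma$, and we have the clean split
\[
\ns{\dt^2 \eta - F^6}_{W^{3/2}_\delta} \ls \ns{\dt^2 \eta}_{W^{3/2}_\delta} + \ns{u_1 \p_1 \dt \eta}_{W^{3/2}_\delta(\Sigma)}.
\]

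For the first term, I will appeal to the embedding $H^{3/2}((-\ell,\ell)) \hookrightarrow W^{3/2}_\delta((-\ell,\ell))$. This is immediate since $\delta > 0$ ensures that the weight $d^{2\delta}$ is bounded above on the compact interval $[-\ell,\ell]$, so any bounded-norm measurement in the unweighted $H^{3/2}$ sense dominates its weighted counterpart (with the non-integer regularity handled via the Sobolev--Slobodeckij seminorm, as in Appendix \ref{app_weight}). This directly gives $\ns{\dt^2 \eta}_{W^{3/2}_\delta} \ls \ns{\dt^2 \eta}_{3/2}$, which is the first term on the right-hand side of the target estimate.

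For the second (quadratic) term I will invoke a product estimate in $W^{3/2}_\delta$ on the one-dimensional set $\Sigma$ from Appendix \ref{app_prods}. Specifically, since $s > 1$ and $W^{3/2}_\delta$ on a $1$D interval behaves like an algebra under the regimes at play, the factors obey
\[
\ns{u_1 \p_1 \dt \eta}_{W^{3/2}_\delta(\Sigma)} \ls \ns{u_1}_{W^{3/2}_\delta(\Sigma)} \ns{\p_1 \dt \eta}_{W^{3/2}_\delta(\Sigma)}.
\]
The first factor is bounded by $\ns{u}_{W^2_\delta(\Omega)} \ls \E$ via the weighted trace theory (Proposition \ref{weighted_trace}), and the second factor is controlled by $\ns{\dt \eta}_{W^{5/2}_\delta(\Sigma)} \ls \D$, yielding the $\E \D$ bound. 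Summing the two estimates completes the proof.

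The main obstacle is selecting the correct product inequality for $W^{3/2}_\delta$ at the precise Sobolev exponent $3/2$, since naive algebra-type estimates require cofactors in $L^\infty$ or slightly higher regularity; the key will be combining the available high-weighted-regularity on $u$ (two weighted derivatives) with the $W^{5/2}_\delta$ control of $\dt \eta$ to absorb any lower-order losses into the dissipation norm.
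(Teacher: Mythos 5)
Your proposal follows the same route as the paper: identify $F^6 = u\cdot\dt\N = -u_1\p_1\dt\eta$, use the triangle inequality to split off $\ns{\dt^2\eta}_{3/2}$, and then bound the quadratic remainder by a weighted product estimate plus trace theory to obtain $\E\D$. The final chain of bounds is the same.

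The one soft spot is the claim that ``$W^{3/2}_\delta$ on a $1$D interval behaves like an algebra.'' It does not, at least not for all $\delta\in(0,1)$: the weight allows elements of $W^{3/2}_\delta(\Sigma)$ to be unbounded near the corner points when $\delta>1/2$, so multiplying two such functions can land strictly outside $W^{3/2}_\delta$. The mechanism that actually makes the product work is the one you gesture at in your final paragraph, and it is what Proposition~\ref{weight_prod_half} (which the paper invokes here) encodes: one factor carries the weight, the other contributes an unweighted norm $H^{1/2+\kappa}\hookrightarrow L^\infty$. That extra unweighted regularity is supplied for free by the embeddings $W^2_\delta(\Omega)\hookrightarrow H^s(\Omega)$ (so $u_1\vert_\Sigma\in H^{s-1/2}$ with $s-1/2>1/2$) and $W^{5/2}_\delta\hookrightarrow H^{s+1/2}$, after which $\ns{u_1\p_1\dt\eta}_{W^{3/2}_\delta}\ls\ns{u}_{W^2_\delta}\ns{\dt\eta}_{W^{5/2}_\delta}\ls\E\D$ follows by applying the product estimate at the base level and once more after differentiating. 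Also note that Proposition~\ref{weighted_trace}, which you cite, is an $L^q$ boundary embedding, not the $W^2_\delta(\Omega)\to W^{3/2}_\delta(\p\Omega)$ trace statement you actually use; the latter is part of the Kondrat'ev trace space definition referenced in Appendix~\ref{app_weight}. With those two citations corrected, the argument is the paper's argument.
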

\begin{proof}
We have that $F^6 = u_1 \p_1 \dt \eta$.  We then use Proposition \ref{weight_prod_half} to estimate 
\begin{equation}
 \ns{\dt^2 \eta - F^6}_{W^{3/2}_{\delta}} \ls \ns{\dt^2 \eta}_{3/2} + \ns{u_1 \p_1 \dt \eta}_{W^{3/2}_{\delta}} \ls \ns{\dt^2 \eta}_{3/2} + \ns{u}_{W^2_{\delta}} \ns{\dt \eta}_{W^{5/2}_{\delta}} \ls \ns{\dt^2 \eta}_{3/2} + \E \D .
\end{equation}
\end{proof}

Finally, we combine the above propositions into a single estimate.

\begin{thm}\label{elliptic_est_dt}
Let $\omega \in (0,\pi)$ be the angle formed by $\zeta_0$ at the corners of $\Omega$,  $\delta_\omega \in [0,1)$ be given by \eqref{crit_wt}, and $\delta \in (\delta_\omega,1)$.  Suppose that  $\ns{\eta}_{W^{5/2}_{\delta} } < \gamma$, where $\gamma$ is as in Theorem \ref{A_stokes_om_iso}.  Then we have the inclusions $(\dt u,\dt p,\dt \eta) \in W^2_{\delta}(\Omega) \times  \oW^1_{\delta}(\Omega) \times W^{5/2}_{\delta}$ as well as the estimate
\begin{equation} 
 \ns{\dt u}_{W^2_{\delta}} + \ns{\dt p}_{\oW^1_{\delta}} +  \ns{\dt \eta}_{W^{5/2}_{\delta}} \ls  \sdb +  \D(\E + \E^2).
\end{equation}
\end{thm}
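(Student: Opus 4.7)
The plan is to recognize the time-differentiated version of \eqref{geometric} as an instance of the elliptic system \eqref{A_stokes_stress} and then invoke Theorem \ref{A_stokes_stress_solve} combined with the nonlinear estimates already developed in Propositions \ref{we_f1}--\ref{we_f6}. Concretely, I differentiate each equation in \eqref{geometric} in time once and collect all of the commutator terms (which involve $\dt \A$, $\dt J$, $\dt \N$, and linearizations of $\R$ and $\swh$) on the right-hand side. This produces a system for $(\dt u, \dt p, \dt \eta)$ of the form \eqref{A_stokes_stress} with the identification
\[
 G^1 = F^1,\; G^2 = F^2,\; G^3_- = 0,\; G^3_+ = \dt^2 \eta - F^6,\;
 G^4_- = F^5,\; G^4_+ = F^4 \cdot \mathcal{T}/\abs{\mathcal{T}}^2,
\]
\[
 G^5 = F^4 \cdot \N / \abs{\N}^2,\; G^6 = F^3,\; G^7_\pm = \kappa(\dt^2 \eta + F^7) \pm \sigma F^3,
\]
where $F^1,\ldots,F^7$ are the quantities of Appendix \ref{fi_dt1}. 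The boundary identity for $G^7$ comes from differentiating the last equation in \eqref{geometric} in $t$ and dividing through by $\kappa$, being careful to group the $\dt^2 \eta$ term with the linear part and treat the contributions from $\swh$ and $\R$ as nonlinear forcing.

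Next, since the hypothesis $\ns{\eta}_{W^{5/2}_\delta} < \gamma$ is in force, Theorem \ref{A_stokes_stress_solve} applies and yields $(\dt u, \dt p, \dt \eta) \in W^2_\delta(\Omega) \times \oW^1_\delta(\Omega) \times W^{5/2}_\delta$ together with the estimate \eqref{A_stokes_stress_0}. It then remains to bound the right-hand side of \eqref{A_stokes_stress_0} in terms of $\sdb$ and $\E$. This is exactly what the propositions of Section \ref{sec_nlin_ell} accomplish: Proposition \ref{we_f1} controls $\ns{F^1}_{W^0_\delta}$, Proposition \ref{we_f2} controls $\ns{F^2}_{W^1_\delta}$, Proposition \ref{we_f3_top} controls $\ns{\p_1 F^3}_{W^{1/2}_\delta}$, Proposition \ref{we_f3_end} controls the endpoint quantity $[G^7]_\ell^2$ (absorbing $\bs{\dt^2 \eta} = \bs{\dt u \cdot \N}$ into $\sdb$), Proposition \ref{we_f4} controls the two $F^4$-based terms (after noting that $\ns{\mathcal{T}}_{L^\infty} + \ns{\N}_{L^\infty} \ls 1 + \ns{\eta}_{C^1}$ is bounded by the smallness hypothesis), Proposition \ref{we_f5} controls $\ns{F^5}_{W^{1/2}_\delta}$, and Proposition \ref{we_f6} controls $\ns{\dt^2 \eta - F^6}_{W^{3/2}_\delta}$ at the price of the linear term $\ns{\dt^2 \eta}_{3/2}$.

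Summing the resulting estimates, every nonlinear remainder is of the form $\E \D$ or $\E^2 \D$, which are controlled by $\D(\E + \E^2)$. The single linear term $\ns{\dt^2 \eta}_{3/2}$ coming from the $G^3_+$ bound (and the analogous $\bs{\dt u \cdot \N}$ from Proposition \ref{we_f3_end}) is precisely one of the summands defining $\sdb$ in \eqref{ed_def_3}, so it is absorbed cleanly. Combining gives
\[
 \ns{\dt u}_{W^2_\delta} + \ns{\dt p}_{\oW^1_\delta} + \ns{\dt \eta}_{W^{5/2}_\delta} \ls \sdb + \D(\E + \E^2),
\]
as claimed. The only real subtlety in this plan is the identification of $G^7$: one must verify, via the last equation in \eqref{geometric}, that differentiating $\kappa \dt \eta + \kappa \swh(\dt \eta)$ in $t$ and rearranging really puts the problem into the form \eqref{A_stokes_stress} with the $G^7_\pm$ prescribed above, so that Proposition \ref{we_f3_end} applies to the resulting endpoint term; everything else is a direct application of the machinery already in place.
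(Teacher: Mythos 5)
Your proposal is correct and follows the same approach the paper uses: you correctly identify the data terms $G^i$ from the once-differentiated problem (exactly as the paper does just before Proposition \ref{we_f1}), invoke Theorem \ref{A_stokes_stress_solve}, and then apply Propositions \ref{we_f1}--\ref{we_f6} to bound the right side of \eqref{A_stokes_stress_0}. The paper's written proof is even terser than yours — it simply says the propositions guarantee the estimate and cites Theorem \ref{A_stokes_stress_solve} — so your discussion of the $G^7$ identification and absorption of $\bs{\dt u\cdot\N}$ into $\sdb$ is a faithful, slightly expanded version of the same argument.
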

\begin{proof}
Propositions \ref{we_f1}--\ref{we_f6} guarantee that we may estimate all the terms appearing on the right side of \eqref{A_stokes_stress_0} by $\sdb + \D(\E +\E^2)$.  The result then follows from Theorem \ref{A_stokes_stress_solve}.
\end{proof}

\subsection{The problem without time derivatives}

We now want to apply Theorem \ref{A_stokes_stress_solve} to the basic problem without time derivatives.  In this case we have $G^1 =0$, $G^2 =0$, $G^3_-=0$, $G^3_+ = \dt \eta$, $G^4_\pm =0$, $G^5 =0$, $G^6 = \R(\p_1 \zeta_0,\p_1 \eta)$, and $G^7_\pm = \kappa \dt \eta(\pm \ell,t) + \kappa \swh(\dt \eta(\pm \ell,t)) \pm \R(\p_1 \zeta_0,\p_1 \eta)$.  Consequently we must estimate 
\begin{equation} 
 \ns{\dt \eta}_{W^{3/2}_{\delta}} + \ns{\p_1 [\R(\p_1 \zeta_0,\p_1 \eta)] }_{W^{1/2}_{\delta}} + [\kappa \dt \eta + + \kappa \swh(\dt \eta) \pm \R(\p_1 \zeta_0,\p_1 \eta)]_\ell^2 \ls \sdb + \D( \E + \E^2).
\end{equation}
The estimates of these terms follow from argument similar to those given for the time differentiated problem and are thus omitted.

\begin{thm}\label{elliptic_est_no_dt}
Let $\omega \in (0,\pi)$ be the angle formed by $\zeta_0$ at the corners of $\Omega$,  $\delta_\omega \in [0,1)$ be given by \eqref{crit_wt}, and $\delta \in (\delta_\omega,1)$.  Suppose that  $\ns{\eta}_{W^{5/2}_{\delta} } < \gamma$, where $\gamma$ is as in Theorem \ref{A_stokes_om_iso}.  Then we have the inclusions $(u,p,\eta) \in W^2_{\delta}(\Omega) \times  \oW^1_{\delta}(\Omega) \times W^{5/2}_{\delta}$ as well as the estimate
\begin{equation} 
 \ns{u}_{W^2_{\delta}} + \ns{ p}_{\oW^1_{\delta}} +  \ns{\eta}_{W^{5/2}_{\delta}} \ls  
 \sdb + \D(\E + \E^2).
\end{equation}
\end{thm}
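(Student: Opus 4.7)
The plan is to invoke the weighted $\eta$-coupled Stokes regularity result, Theorem \ref{A_stokes_stress_solve}, applied to \eqref{geometric} read as an elliptic system for $(u,p,\eta)$ at each fixed time. With the data identification made just above the theorem statement — $G^1=G^2=G^3_-=G^4_\pm=G^5=0$, $G^3_+=\dt\eta$, $G^6=\R(\p_1\zeta_0,\p_1\eta)$, and $G^7_\pm=\kappa\dt\eta(\pm\ell,t)+\kappa\swh(\dt\eta(\pm\ell,t))\pm\sigma\R(\p_1\zeta_0,\p_1\eta)(\pm\ell,t)$ — the standing smallness assumption forces $\ns{\eta}_{W^{5/2}_\delta}<\gamma$, so Theorem \ref{A_stokes_stress_solve} applies and delivers both the claimed inclusions and the estimate \eqref{A_stokes_stress_0}. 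It then remains to bound each of the three surviving terms on the right of \eqref{A_stokes_stress_0} by $\sdb+\D(\E+\E^2)$; these are $\ns{\dt\eta}_{W^{3/2}_\delta}$, $\ns{\p_1\R(\p_1\zeta_0,\p_1\eta)}_{W^{1/2}_\delta}$, and $[G^7]_\ell^2$.

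The first term is handled by the trivial embedding $H^{3/2}\hookrightarrow W^{3/2}_\delta$, valid because the weight $d^\delta$ is bounded for $\delta>0$: it gives $\ns{\dt\eta}_{W^{3/2}_\delta}\ls\ns{\dt\eta}_{3/2}\le\sdb$ directly from \eqref{ed_def_3}. For the endpoint term $[G^7]_\ell^2$, I would split into three pieces. The linear piece obeys $\bs{\dt\eta}=\bs{u\cdot\N}\le\sdbb\le\sdb$ by \eqref{ed_def_2}. The perturbation piece uses $\abs{\swh(z)}\ls z^2$ (from \eqref{V_pert}) together with the $C^0$-trace bound $\sup\abs{\dt\eta}\ls\norm{\dt\eta}_1\ls\sqrt{\E}$ to give $\bs{\swh(\dt\eta)}\ls\E\cdot\bs{\dt\eta}\ls\E\sdb\ls\E\D$. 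The $\R$ piece is controlled by Proposition \ref{R_prop} through the pointwise bound $\abs{\R(\p_1\zeta_0,\p_1\eta)(\pm\ell)}\ls\abs{\p_1\eta(\pm\ell)}^2$; two uses of trace plus interpolation yield $\abs{\p_1\eta(\pm\ell)}^4\ls\ns{\eta}_{s+1/2}\ns{\eta}_{W^{5/2}_\delta}\le\E\D$.

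The principal estimate is for $\ns{\p_1\R(\p_1\zeta_0,\p_1\eta)}_{W^{1/2}_\delta}$. Expanding via the chain rule,
\begin{equation*}
 \p_1\R(\p_1\zeta_0,\p_1\eta) = \p_y\R(\p_1\zeta_0,\p_1\eta)\,\p_1^2\zeta_0 + \p_z\R(\p_1\zeta_0,\p_1\eta)\,\p_1^2\eta.
\end{equation*}
By Proposition \ref{R_prop}, both $\p_z\R$ and $\p_y\R$ vanish at $\p_1\eta=0$, with $\abs{\p_z\R(y,z)}\ls\abs{z}$ and $\abs{\p_y\R(y,z)}\ls\abs{z}^2$ uniformly on the relevant range. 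Invoking the product estimate of Proposition \ref{weight_prod_half} — valid precisely because $s-1/2>1/2$ — each summand is dominated by $\ns{\p_1\eta}_{s-1/2}\cdot\ns{\p_1^2\eta}_{W^{1/2}_\delta}\ls\ns{\eta}_{s+1/2}\ns{\eta}_{W^{5/2}_\delta}\le\E\D$, where the last inequality crucially uses that $\D$ contains $\ns{\eta}_{W^{5/2}_\delta}$ by \eqref{ed_def_5}. Summing all the estimates and plugging into \eqref{A_stokes_stress_0} yields the claim.

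The only mild obstacle is the product bookkeeping: because no time derivative hits $\eta$ here, every nonlinear term is a pure $\eta$-self-product, and one must take care to place exactly one factor in the $W^{5/2}_\delta$ slot (so it may be absorbed into $\D$) while keeping the other in the $H^{s+1/2}$ slot (absorbed into $\E$). The condition $s-1/2>1/2$ makes this systematic via Proposition \ref{weight_prod_half}, and the argument is a direct analogue of — and strictly simpler than — the proof of Theorem \ref{elliptic_est_dt} carried out in Propositions \ref{we_f1}--\ref{we_f6}.
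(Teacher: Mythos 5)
Your proposal is correct and carries out precisely what the paper indicates: the paper omits the proof of Theorem \ref{elliptic_est_no_dt} with the remark that it follows by arguments similar to the time-differentiated case in Propositions \ref{we_f1}--\ref{we_f6}, and your argument is exactly that simpler analogue. Your identification of the data, the reduction to the three surviving terms $\ns{\dt\eta}_{W^{3/2}_\delta}$, $\ns{\p_1\R(\p_1\zeta_0,\p_1\eta)}_{W^{1/2}_\delta}$, and $[G^7]_\ell^2$, and the use of the trivial embedding $H^{3/2}\hookrightarrow W^{3/2}_\delta$, trace/interpolation for the endpoint term, and Proposition \ref{weight_prod_half} with $s-1/2>1/2$ for the $W^{1/2}_\delta$ product estimate all match the paper's intended route.

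One small bookkeeping imprecision: for the $\p_y\R(\p_1\zeta_0,\p_1\eta)\,\p_1^2\zeta_0$ summand, the bound is not literally $\ns{\p_1\eta}_{s-1/2}\ns{\p_1^2\eta}_{W^{1/2}_\delta}$ (there is no second-derivative factor of $\eta$ there); since $\p_1^2\zeta_0$ is smooth and $\abs{\p_y\R(y,z)}\ls z^2$, Proposition \ref{weight_prod_half} instead gives $\ns{\p_1\eta}_{s-1/2}\ns{\p_1\eta}_{W^{1/2}_\delta}$, and one then uses $\ns{\p_1\eta}_{W^{1/2}_\delta}\ls\ns{\eta}_{W^{5/2}_\delta}$ to reach $\E\D$. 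The final estimate is unaffected. Everything else is sound.
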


\section{Main results}\label{sec_apriori}

Here we record the main results of the paper: the a priori estimates for solutions to \eqref{geometric}, and our small data global well-posedness and decay result.

\subsection{A priori estimates }

In order to deduce our a priori estimates we must first introduce some variants the energy and dissipation.  We define 
\begin{equation}\label{fed_def_1}
 \mathfrak{E} = \sum_{j=0}^2 \int_{-\ell}^\ell \frac{g}{2} \abs{\dt^j \eta}^2 + \frac{\sigma}{2(1+\abs{\p_1 \zeta_0}^2)^{3/2}} \abs{\p_1 \dt^j \eta}^2,
\end{equation}
\begin{equation}\label{fed_def_2}
 \mathfrak{D} = \sum_{j=0}^2 \left( \frac{\mu}{2} \int_\Omega \abs{\sga \dt^j u}^2 J 
+\int_{\Sigma_s} \beta J \abs{\dt^j u \cdot \tau}^2 + \bs{\dt^j u\cdot \N} \right)
\end{equation}
and 
\begin{equation}\label{fed_def_3}
 \mathfrak{F} =   \int_{-\ell}^\ell \left[\sigma \Q(\p_1 \zeta_0,\p_1 \eta) +  \sigma \p_z \R(\p_1 \zeta_0,\p_1 \eta)  \frac{\abs{\p_1 \dt^2 \eta}^2}{2} + \sigma \p_z^2 \R(\p_1 \zeta_0,\p_1 \eta) (\p_1 \dt \eta)^2 \p_1 \dt^2 \eta \right].
\end{equation}


These terms are clearly related to certain energy and dissipation terms that we have previously defined.  We state these relations now.

\begin{prop}\label{ap_fed_ests}
Let $\mathfrak{E},$ $\mathfrak{D},$ and $\mathfrak{F}$ be as defined in \eqref{fed_def_1}--\eqref{fed_def_3}.  There exists a universal constant $\gamma >0$ such that if 
\begin{equation}
 \sup_{0 \le t \le T} \E(t)  \le \gamma,
\end{equation}
then
\begin{equation}\label{ap_fed_ests_01} 
 \mathfrak{E} \ls \seb \ls \mathfrak{E} \text{ and } \mathfrak{D} \ls \sdbb \ls \mathfrak{D},
\end{equation}
where $\seb$ and $\sdbb$ are defined in \eqref{ed_def_1}--\eqref{ed_def_5}, and also
\begin{equation}\label{ap_fed_ests_02}
\abs{ \mathfrak{F} } \le \hal \mathfrak{E}. 
\end{equation}
\end{prop}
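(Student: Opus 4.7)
The plan is to treat the three claims in order, leveraging the smallness assumption on $\E$ through Lemma \ref{eta_small} (to control $\A$, $J$ and hence $\sga$) together with the pointwise and product estimates for $\R$ and $\Q$ from Appendix \ref{app_r}.

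\textbf{Step 1: The energy equivalence $\mathfrak{E} \simeq \seb$.} Since $\zeta_0 \in C^\infty([-\ell,\ell])$ is fixed and bounded, the coefficients $g/2$ and $\sigma/2(1+|\p_1 \zeta_0|^2)^{3/2}$ are bounded above and below away from zero uniformly on $[-\ell,\ell]$. The equivalence is then immediate from the definition of $\seb = \sum_{j=0}^2 \|\dt^j \eta\|^2_{H^1((-\ell,\ell))}$; no smallness of $\E$ is needed here. This step is routine.

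\textbf{Step 2: The dissipation equivalence $\mathfrak{D} \simeq \sdbb$.} Here the smallness of $\E$ enters. Assuming $\E \le \gamma$ with $\gamma$ as in Lemma \ref{eta_small}, we have uniform bounds $\tfrac12 \le J \le 2$ and $|\A - I| + |\sga - \sg| \le C\sqrt{\E}$, so for each $j = 0,1,2$,
\begin{equation}
 \int_\Omega |\sga \dt^j u|^2 J = \int_\Omega |\sg \dt^j u|^2 + O(\sqrt{\E}) \int_\Omega |\nab \dt^j u|^2,
\end{equation}
and similarly the $\beta J$ term on $\Sigma_s$ is comparable to $\|\dt^j u\|^2_{H^0(\Sigma_s)}$. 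The upper bound $\mathfrak{D} \ls \sdbb$ is then immediate. For the lower bound $\sdbb \ls \mathfrak{D}$, the key ingredient is the Korn-type inequality for vector fields $w$ satisfying $w \cdot \nu = 0$ on $\Sigma_s$,
\begin{equation}
 \|w\|^2_{H^1(\Omega)} \ls \int_\Omega |\sg w|^2 + \int_{\Sigma_s} |w \cdot \tau|^2,
\end{equation}
which applies to $\dt^j u$ since the boundary condition $u \cdot \nu = 0$ on $\Sigma_s$ is time-independent and hence preserved under $\dt^j$. Combining this with the above perturbative identity and absorbing the $O(\sqrt{\E})$ term by choosing $\gamma$ small yields \eqref{ap_fed_ests_01}.

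\textbf{Step 3: The bound $|\mathfrak{F}| \le \tfrac12 \mathfrak{E}$.} This is the most delicate claim and the main obstacle, since the constant $1/2$ is sharp and must come from smallness alone. Using Proposition \ref{R_prop} (together with \eqref{Q_def}) we have the pointwise bounds
\begin{equation}
 |\Q(\p_1 \zeta_0,\p_1 \eta)| \ls |\p_1 \eta|^3, \quad |\p_z \R(\p_1 \zeta_0,\p_1 \eta)| \ls |\p_1 \eta|, \quad |\p_z^2 \R(\p_1 \zeta_0,\p_1 \eta)| \ls 1.
\end{equation}
Combining these with the Sobolev embedding $H^1((-\ell,\ell)) \hookrightarrow L^\infty$ (so that $\|\p_1 \eta\|_{L^\infty} \ls \|\eta\|_{H^{s+1/2}} \ls \sqrt{\E}$ and likewise $\|\p_1 \dt \eta\|_{L^\infty} \ls \sqrt{\E}$), we estimate each of the three contributions to $\mathfrak{F}$:
\begin{equation}
\begin{split}
 \left|\int_{-\ell}^\ell \sigma \Q(\p_1 \zeta_0,\p_1 \eta)\right| &\ls \|\p_1 \eta\|_{L^\infty} \|\p_1 \eta\|_{L^2}^2 \ls \sqrt{\E}\, \mathfrak{E}, \\
 \left|\int_{-\ell}^\ell \sigma \p_z \R(\p_1 \zeta_0,\p_1 \eta) \tfrac{|\p_1 \dt^2 \eta|^2}{2}\right| &\ls \|\p_1 \eta\|_{L^\infty} \|\p_1 \dt^2 \eta\|_{L^2}^2 \ls \sqrt{\E}\, \mathfrak{E}, \\
 \left|\int_{-\ell}^\ell \sigma \p_z^2 \R(\p_1 \zeta_0,\p_1 \eta)(\p_1 \dt \eta)^2 \p_1 \dt^2 \eta\right| &\ls \|\p_1 \dt \eta\|_{L^\infty}^2 \|\p_1 \dt^2 \eta\|_{L^2} \ls \E \cdot \sqrt{\mathfrak{E}},
\end{split}
\end{equation}
where the last line is further bounded by $\sqrt{\E}\,\mathfrak{E}$ after using $\mathfrak{E} \le C$. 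Summing gives $|\mathfrak{F}| \le C \sqrt{\E}\, \mathfrak{E}$, and choosing $\gamma$ small enough that $C\sqrt{\gamma} \le 1/2$ yields \eqref{ap_fed_ests_02}.

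The only genuine subtlety is in Step 3, where the sharp constant $1/2$ forces us to be careful that all the bounds really produce a factor of $\sqrt{\E}$ (rather than $\E$-independent constants); this is why we factor out the low-order $L^\infty$ norm of $\p_1 \eta$ or $\p_1 \dt \eta$ rather than estimating the full integrand in, say, $L^1$ directly.
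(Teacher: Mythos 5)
Your Steps 1 and 2 are fine (Step~2 correctly supplies a Korn-type inequality for the lower bound $\sdbb \ls \mathfrak{D}$, a point the paper's proof glosses over with ``follows easily from Lemma \ref{eta_small}''). But Step~3 contains a genuine gap, and it concerns precisely the term where the paper has to work: the cubic contribution $\int_{-\ell}^\ell \sigma \p_z^2\R(\p_1\zeta_0,\p_1\eta)(\p_1\dt\eta)^2\p_1\dt^2\eta$. You bound this by $\|\p_1\dt\eta\|_{L^\infty}^2\|\p_1\dt^2\eta\|_{L^2}$ and then claim $\|\p_1\dt\eta\|_{L^\infty}\ls\sqrt{\E}$. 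This fails: the energy $\E$ controls $\dt\eta$ only in $H^{3/2}((-\ell,\ell))$, and in one dimension $H^{3/2}$ does \emph{not} embed into $W^{1,\infty}$ (the exponent is borderline, $3/2 = 1 + 1/2$). It is only $\eta$, not $\dt\eta$, that enjoys the extra $W^{5/2}_\delta \hookrightarrow H^{s+1/2}$ regularity inside $\E$, so the ``likewise'' after your Sobolev embedding comment does not hold.

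Even setting this aside, your bound $\E\sqrt{\mathfrak{E}}$ has the wrong structure to yield $\tfrac12\mathfrak{E}$. You would need $\E\sqrt{\mathfrak{E}}\le\tfrac12\mathfrak{E}$, i.e.\ $\E \le \tfrac12\sqrt{\mathfrak{E}}$, and smallness of $\E$ alone cannot guarantee this since $\mathfrak{E}$ may be much smaller than $\E$. What makes the absorption work is a bound of the shape $C\sqrt{\E}\,\mathfrak{E}$, so that small $\E$ makes the prefactor $C\sqrt{\E}$ less than $\tfrac12$ uniformly. The paper obtains exactly this in Proposition \ref{ee_f3_III} by an $L^4$--$L^4$--$L^2$ H\"older split combined with the interpolation estimate
\begin{equation}
\norm{\p_1\dt\eta}_{L^4}^2 \ls \norm{\dt\eta}_{H^1}\norm{\dt\eta}_{H^{3/2}} \ls \sqrt{\seb}\sqrt{\E},
\end{equation}
which, together with $\norm{\p_1\dt^2\eta}_{L^2}\ls\sqrt{\seb}$, gives the cubic term the bound $\sqrt{\E}\,\seb\ls\sqrt{\E}\,\mathfrak{E}$. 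Replacing your $L^\infty$ split with this interpolation fixes Step~3. It is also worth noting that the paper's actual proof is a one-line citation of Theorems \ref{ee_Q} and \ref{ee_f3_dt2} (which already establish $\abs{\mathfrak{F}}\ls\sqrt{\E}\,\seb$); you are reproving those results inline, which is a legitimate alternative route but requires the interpolation idea you omitted.
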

\begin{proof}
The estimates in \eqref{ap_fed_ests_01} follows easily from Lemma \ref{eta_small} if we assume that $\gamma$ is as small as stated there.  Theorems \ref{ee_f3_dt2} and \ref{ee_Q} guarantee that 
\begin{equation}
\abs{ \mathfrak{F} } \ls \sqrt{\E} \seb \ls \sqrt{\E} \mathfrak{E}. 
\end{equation}
Consequently, if we further restrict $\gamma$ then we must have that $\abs{\mathfrak{F}} \le \hal \mathfrak{E}$, which is \eqref{ap_fed_ests_02}.
\end{proof}

The reason we have introduced $\mathfrak{E}$, $\mathfrak{D}$, and $\mathfrak{F}$ is that they appear naturally in an  energy-dissipation inequality that we may derive from Theorem \ref{linear_energy}.   This inequality, which we now state, forms the core of our a priori estimates.

\begin{thm}\label{ap_diff_ineq}
Let $\omega \in (0,\pi)$ be the angle formed by $\zeta_0$ at the corners of $\Omega$,  $\delta_\omega \in [0,1)$ be given by \eqref{crit_wt}, and $\delta \in (\delta_\omega,1)$.  There exists a universal constant $\gamma >0$ such that if 
\begin{equation}\label{ap_diff_ineq_00}
 \sup_{0 \le t \le T} \E(t)  + \int_0^T \D(t) dt \le \gamma,
\end{equation} 
then there exists a universal constant $C >0$ such that
\begin{equation}\label{ap_diff_ineq_0}
 \frac{d}{dt} \left( \mathfrak{E} + \mathfrak{F}\right) +  C \D \le 0.
\end{equation} 
\end{thm}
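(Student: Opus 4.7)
The plan is to derive an energy-dissipation equality at each of the three temporal differentiation levels $j=0,1,2$, sum them, move the ``$\mathfrak{F}$-type'' remainders from the right-hand side onto the left as exact time derivatives, and then upgrade the resulting basic dissipation $\mathfrak{D}\sim\sdbb$ into the full dissipation $\D$ via the enhanced and weighted-elliptic estimates. The smallness of $\E$ in the hypothesis \eqref{ap_diff_ineq_00} will then absorb all nonlinear error terms.

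First I would apply Theorem \ref{linear_energy} to the triples $(u,p,\eta)$ (which by Corollary \ref{basic_energy} already contributes the $\sigma\,\Q(\p_1\zeta_0,\p_1\eta)$ piece of $\mathfrak{F}$), $(\dt u,\dt p,\dt\eta)$ with forcing from Appendix \ref{fi_dt1}, and $(\dt^2 u,\dt^2 p,\dt^2\eta)$ with forcing from Appendix \ref{fi_dt2}. Summing, the left-hand side is exactly $\frac{d}{dt}\mathfrak{E}+\mathfrak{D}$. On the right, Theorems \ref{ee_v_est}, \ref{ee_p_est}, \ref{ee_f3_dt}, \ref{ee_f6}, \ref{ee_f7}, and \ref{ee_W} control every generic interaction functional by $(\sqrt{\E}+\E+\E^{3/2})\D$, taking $w=\dt^j u$ (the factor $\norm{w}_1\le\sqrt{\D}$ is what produces the crucial full $\D$). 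The one contribution that cannot be estimated as a pure nonlinearity is the $F^3$ term at the top level, which by Theorem \ref{ee_f3_dt2} equals $-\frac{d}{dt}\bigl[\mathfrak{F}-\int\sigma\Q\bigr] + O\!\bigl((\sqrt{\E}+\E+\E^{3/2})\D\bigr)$; moving this derivative to the left produces $\frac{d}{dt}(\mathfrak{E}+\mathfrak{F})$. The net output of this first stage is
\begin{equation*}
\frac{d}{dt}(\mathfrak{E}+\mathfrak{F}) + \mathfrak{D} \;\le\; C(\sqrt{\E}+\E+\E^{3/2})\D.
\end{equation*}

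Next I would close the gap between $\mathfrak{D}$ and $\D$. Applying Theorem \ref{pressure_est} at each level $j=0,1,2$, combined with the pressure-adapted functional bound in Theorem \ref{ee_v_est_pressure}, yields $\sum_{j=0}^{2}\ns{\dt^j p}_0\ls\mathfrak{D}+(\E+\E^2)\D$. Applying Theorem \ref{xi_est} together with the auxiliary $F^3$ estimate of Theorem \ref{ee_f3_half} yields $\sum_{j=0}^{2}\ns{\dt^j\eta}_{3/2}\ls\mathfrak{D}+(\E+\E^2)\D$. These two bounds together give $\sdb\ls\mathfrak{D}+(\E+\E^2)\D$. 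Then Theorems \ref{elliptic_est_no_dt} and \ref{elliptic_est_dt}, applied to $(u,p,\eta)$ and $(\dt u,\dt p,\dt\eta)$ respectively, upgrade this to the full weighted control and we obtain $\D\ls\mathfrak{D}+(\E+\E^2)\D$. The smallness \eqref{ap_diff_ineq_00} lets us absorb the $(\E+\E^2)\D$ term on the right, giving $\D\ls\mathfrak{D}$.

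Substituting this equivalence back into the first-stage inequality produces $\frac{d}{dt}(\mathfrak{E}+\mathfrak{F}) + c_0\D \le C(\sqrt{\E}+\E+\E^{3/2})\D$, and a further use of the smallness of $\E$ absorbs the right-hand side into $\tfrac{c_0}{2}\D$, yielding \eqref{ap_diff_ineq_0}. The main obstacle is the structural requirement that every nonlinearity be estimable in the form $\E^{\theta}\D$ and never $\D^{\theta}\E$, since only $\E$-smallness is available at this stage; this is enforced by using the test function $w=\dt^j u$ (to produce a clean $\sqrt{\D}$ from $\norm{w}_1$) and by extracting the perfect time derivative $\frac{d}{dt}\mathfrak{F}$ from the offending $F^3$ term via Theorem \ref{ee_f3_dt2}, rather than settling for a loose bound that would couple $\sdbb$ with $\E$. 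A secondary subtlety is the bootstrap coupling between $\sdb$-control and the weighted elliptic theory: the contact-point dissipation provides precisely the $\dt^{j+1}\eta\in H^{3/2}$ regularity required as forcing data in Theorem \ref{A_stokes_stress_solve}, which is why the count $j=0,1,2$ in $\seb,\sdbb$ is exactly right to close the scheme.
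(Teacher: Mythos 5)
Your proposal is correct and follows essentially the same route as the paper: sum the energy-dissipation identity from Theorem \ref{linear_energy} (and Corollary \ref{basic_energy}) over $j=0,1,2$, use Theorems \ref{ee_v_est}, \ref{ee_p_est}, \ref{ee_f3_dt2}, \ref{ee_f3_dt}, \ref{ee_f6}, \ref{ee_f7}, \ref{ee_W} to bound the nonlinear interactions by $O(\sqrt{\E}\,\D)$ while pulling the residual perfect time derivative of $\mathfrak{F}$ to the left, then upgrade $\mathfrak{D}$ to $\D$ via Theorem \ref{pressure_est} with \ref{ee_v_est_pressure}, Theorem \ref{xi_est} with \ref{ee_v_est} and \ref{ee_f3_half}, and Theorems \ref{elliptic_est_no_dt}, \ref{elliptic_est_dt} (also noting $\dt^3\eta=\dt^2(u\cdot\N)$ for the $W^{1/2}_\delta$ term), and finally absorb. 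The only cosmetic difference is that you track the nonlinear factor as $\E+\E^2$ where the paper writes $\sqrt{\E}$, but both are subsumed by the smallness hypothesis, and you omit explicitly naming Theorem \ref{ee_v_est} as the source of the $\mathcal{F}$-bound feeding into Theorem \ref{xi_est} — a minor bookkeeping point that does not affect the argument.
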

\begin{proof}
Let $0<\gamma < 1$ be as small as in Proposition \ref{ap_fed_ests}, and hence as small as in Lemma \ref{eta_small}. 
 
To begin we apply Theorem \ref{linear_energy} twice: to the once and twice time differentiated problems.  This is possible thanks to \eqref{ap_diff_ineq_00}.  We sum the resulting inequalities with the result of Corollary \ref{basic_energy} and then apply the estimates of Theorems \ref{ee_v_est}, \ref{ee_p_est}, \ref{ee_f3_dt2}, \ref{ee_f3_dt},  \ref{ee_f6},  \ref{ee_f7}, and \ref{ee_W} to deduce that  
\begin{equation}\label{ap_diff_ineq_1}
 \frac{d}{dt} \left( \mathfrak{E}  + \mathfrak{F} \right) + \mathfrak{D} \ls \sqrt{\E} \D,
\end{equation}
where $\mathfrak{E}$, $\mathfrak{D}$, and $\mathfrak{F}$ are as defined by \eqref{fed_def_1}--\eqref{fed_def_3}.  We know from Proposition \ref{ap_fed_ests} that  
\begin{equation}\label{ap_diff_ineq_2}
\mathfrak{D} \ls \sdbb \ls \mathfrak{D}.
\end{equation}

Next we combine Theorem \ref{pressure_est} with the estimate of Theorem \ref{ee_v_est_pressure} to see that 
\begin{equation}
 \ns{p}_0 + \ns{\dt p}_0 +\ns{\dt^2 p}_0 \ls \sdbb + \sqrt{\E} \D.
\end{equation}
Similarly, Theorem \ref{xi_est} and the estimates of Theorems \ref{ee_v_est} and \ref{ee_f3_half} show that
\begin{equation}
 \ns{\eta}_{3/2} + \ns{\dt \eta}_{3/2} +\ns{\dt^2 \eta}_{3/2} \ls \sdbb + \sqrt{\E} \D.
\end{equation}
Consequently, we may combine with \eqref{ap_diff_ineq_2} to see that
\begin{equation}\label{ap_diff_ineq_4}
 \sdb \ls \mathfrak{D} + \sqrt{\E} \D.
\end{equation}

Theorems \ref{elliptic_est_dt} and \ref{elliptic_est_no_dt} then imply that if $\gamma \le \hat{\gamma}$ (where here we write $\hat{\gamma}$ for the smallness parameter used in the theorems) then
\begin{equation}
  \ns{u}_{W^2_{\delta}} + \ns{p}_{\oW^1_{\delta}} +  \ns{\eta}_{W^{5/2}_{\delta}} + 
  \ns{\dt u}_{W^2_{\delta}} + \ns{\dt p}_{\oW^1_{\delta}} +  \ns{\dt \eta}_{W^{5/2}_{\delta}} \ls  \sdb +  \D(\E + \E^2).
\end{equation}
Since $\dt^3 \eta = \dt^2 (u \cdot \N)$ we then also have that 
\begin{equation}
 \ns{\dt^3 \eta}_{W^{1/2}_\delta} \ls  \sdb +  \D(\E + \E^2).
\end{equation}
Combining these with \eqref{ap_diff_ineq_4} then shows that 
\begin{equation}\label{ap_diff_ineq_5}
 \D \ls \mathfrak{D} + \sqrt{\E} \D. 
\end{equation}

Plugging \eqref{ap_diff_ineq_5} into \eqref{ap_diff_ineq_1} then shows that 
\begin{equation}
 \frac{d}{dt} \left( \mathfrak{E} + \mathfrak{F}\right) + 2 C \D \ls \sqrt{\E} \D 
\end{equation}
for some universal constant $C>0$.  By further restricting $\gamma$ we may absorb the term on the right onto the left, which yields the estimate
\begin{equation}
 \frac{d}{dt} \left( \mathfrak{E} + \mathfrak{F}\right) +  C \D \le 0.
\end{equation} 
This is \eqref{ap_diff_ineq_0}. 
\end{proof}

With theorem \ref{ap_diff_ineq} we can now complete the proof of our a priori estimates.  These will consist of two estimates: a decay estimate and a higher-order bound.  We begin with the proof of the decay estimate.

\begin{thm}\label{ap_decay}
Let $\omega \in (0,\pi)$ be the angle formed by $\zeta_0$ at the corners of $\Omega$,  $\delta_\omega \in [0,1)$ be given by \eqref{crit_wt}, and $\delta \in (\delta_\omega,1)$.   There exists a universal constant $\gamma >0$ such that if 
\begin{equation}
 \sup_{0 \le t \le T} \E(t) + \int_0^T \D(t) dt \le \gamma,
\end{equation}
then there exists a universal constant $\lambda >0$ such that
\begin{equation}\label{ap_decay_0}
 \sup_{ 0 \le t \le T} e^{\lambda t} \left[ \seb(t) +  \ns{u(t)}_{1} + \ns{u(t)\cdot \tau}_{L^2(\Sigma_s)} + \bs{u\cdot \N(t)} +\ns{p(t)}_0\right]    \ls \seb(0).
\end{equation}
\end{thm}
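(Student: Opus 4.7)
The plan is to combine the coercive lower bound $\mathfrak{E}+\mathfrak{F} \ls \D$ with the differential inequality of Theorem \ref{ap_diff_ineq} to deduce exponential decay of $\seb$, and then to supplement this with a quasi-stationary elliptic bound that slaves $\ns{u}_1$, $\ns{u\cdot\tau}_{L^2(\Sigma_s)}$, $\bs{u\cdot\N}$, and $\ns{p}_0$ to $\seb$ at each instant. These two pieces combine immediately to give the weighted supremum estimate \eqref{ap_decay_0}.

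The decay step starts from the observation that $\seb \le \sdb \le \D$, since $\sdb$ contains $\sum_{j=0}^2 \ns{\dt^j\eta}_{H^{3/2}} \ge \sum_{j=0}^2 \ns{\dt^j\eta}_{H^1} = \seb$ by definition. Combined with Proposition \ref{ap_fed_ests}'s equivalence $\mathfrak{E}+\mathfrak{F} \approx \seb$, this yields a universal $c>0$ with $c(\mathfrak{E}+\mathfrak{F}) \le \D$. Feeding this into Theorem \ref{ap_diff_ineq} gives $\frac{d}{dt}(\mathfrak{E}+\mathfrak{F}) + Cc(\mathfrak{E}+\mathfrak{F}) \le 0$, and Gr\"onwall then produces $(\mathfrak{E}+\mathfrak{F})(t) \le e^{-Cct}(\mathfrak{E}+\mathfrak{F})(0)$; a final appeal to Proposition \ref{ap_fed_ests} converts this to $e^{\lambda t}\seb(t) \ls \seb(0)$ for any $0<\lambda\le Cc$.

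The second step exploits the quasi-stationary character of \eqref{geometric}: at each fixed $t$, $(u,p)$ solves a Stokes-type system whose boundary data are built from $\eta$ and $\dt\eta$, so it can be slaved elliptically to $\seb$. Taking $w=u$ as a test function in the weak formulation of \eqref{geometric} (for which $F^1=F^2=F^4=F^5=F^6=0$, $F^3=\R(\p_1\zeta_0,\p_1\eta)$, $F^7=\swh(\dt\eta)$, and the pressure term vanishes since $\diva u=0$) produces
\begin{equation*}
\pp{u,u} + (\eta,u\cdot\N)_{1,\Sigma} + \bs{u\cdot\N} = -\int_{-\ell}^\ell \sigma \R\,\p_1(u\cdot\N) - [u\cdot\N,\swh(\dt\eta)]_\ell.
\end{equation*}
A Korn-type coercivity $\pp{u,u} \gtrsim \ns{u}_1 + \ns{u\cdot\tau}_{L^2(\Sigma_s)}$ (already implicit in the paper's dissipation structure) bounds the left; on the right, Cauchy-Schwarz and Young's inequality applied to $(\eta,u\cdot\N)_{1,\Sigma}$, together with $|\R|\ls|\p_1\eta|^2$ (Appendix \ref{app_r}), $|\swh(z)|\ls z^2$ (from \eqref{V_pert}), and the trace identity $\dt\eta(\pm\ell)=u\cdot\N(\pm\ell)$, allow all small remainders to be absorbed into the left-hand side via $\E\ll 1$, giving $\ns{u}_1 + \ns{u\cdot\tau}_{L^2(\Sigma_s)} + \bs{u\cdot\N} \ls \ns{\eta}_1 \ls \seb$. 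Theorem \ref{pressure_est} applied with $F^1=F^5=0$ (so the functional $\mathcal{F}$ there vanishes) then yields $\ns{p}_0 \ls \ns{u}_1 \ls \seb$.

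The main subtlety is the absorption argument in the second step: the endpoint term $[u\cdot\N,\swh(\dt\eta)]_\ell$ is effectively cubic in the boundary values of $u\cdot\N$ via the trace identity $\dt\eta(\pm\ell)=u\cdot\N(\pm\ell)$, and can only be absorbed into $\bs{u\cdot\N}$ because $\sqrt{\seb}$ is small; constants must be tracked carefully so that no residual $\ns{u}_1$ or $\bs{u\cdot\N}$ survives on the right-hand side after the Korn and Young steps. The Korn inequality itself must also be verified to hold in the corner domain $\Omega$ with slip boundary conditions, but this is already in use throughout the paper's dissipation estimates.
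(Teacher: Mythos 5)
Your decay step is correct and essentially identical to the paper's: the chain $\mathfrak{E}+\mathfrak{F}\approx\seb\ls\sdb\ls\D$, Theorem \ref{ap_diff_ineq}, and Gr\"onwall yield $\sup_{0\le t\le T} e^{\lambda t}\seb(t)\ls\seb(0)$. Your second step is also the right strategy and is what the paper does: test the weak form of \eqref{geometric} with $w=u$ via Lemma \ref{geometric_evolution}, then invoke Theorem \ref{pressure_est} for $p$.

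However, there is a genuine gap in your treatment of the $\Sigma$-boundary terms $(\eta,u\cdot\N)_{1,\Sigma}$ and $\int_{-\ell}^\ell\sigma\R\,\p_1(u\cdot\N)$. Both involve the tangential derivative $\p_1(u\cdot\N)$, i.e.\ the $H^1(-\ell,\ell)$ norm of the trace $u\cdot\N$, and none of the coercive quantities on the left controls it: Korn controls only $\ns{u}_{H^1(\Omega)}$, whose trace on $\Sigma$ is $H^{1/2}$, while $\bs{u\cdot\N}$ is merely pointwise at the two corners. A Cauchy--Schwarz/Young split of $(\eta,u\cdot\N)_{1,\Sigma}$ therefore leaves an unabsorbable $\ep\,\norm{u\cdot\N}_{H^1(-\ell,\ell)}^2$, and your claimed intermediate bound $\ls\ns{\eta}_1$ cannot be correct (the right-hand side manifestly involves $\dt\eta$). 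The missing idea is to use the kinematic boundary condition $u\cdot\N=\dt\eta$ \emph{on all of }$\Sigma$, not merely its restriction to the endpoints $\pm\ell$: this converts $(\eta,u\cdot\N)_{1,\Sigma}$ into $(\eta,\dt\eta)_{1,\Sigma}\ls\norm{\eta}_1\norm{\dt\eta}_1\ls\seb$ and $\int\sigma\R\,\p_1(u\cdot\N)$ into $\int\sigma\R\,\p_1\dt\eta\ls\sqrt{\E}\,\norm{\eta}_1\norm{\dt\eta}_1\ls\seb$, using $\abs{\R(\p_1\zeta_0,\p_1\eta)}\ls\abs{\p_1\eta}^2$. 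This is exactly what the paper does in its identity \eqref{ap_decay_10} (where $\dt\eta$ already replaces $u\cdot\N$) and in the resulting estimate \eqref{ap_decay_7}. With that substitution, the remainder of your argument goes through.
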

\begin{proof}
 
Let $\gamma$ be as small as in Theorem \ref{ap_diff_ineq}.  The theorem then provides for the existence of a universal constant $C>0$ such that
\begin{equation}\label{ap_decay_1}
 \frac{d}{dt} \left( \mathfrak{E} + \mathfrak{F}\right) +  C \D \le 0.
\end{equation}
We know from Proposition \ref{ap_fed_ests} that  
\begin{equation}\label{ap_decay_2}
 \mathfrak{E} \ls \seb \ls \mathfrak{E} \text{ and } 0 \le  \hal \mathfrak{E} \le \mathfrak{E} + \mathfrak{F} \le \frac{3}{2} \mathfrak{E}.
\end{equation}
On the other hand, it's clear that 
\begin{equation}\label{ap_decay_3}
 \mathfrak{E} \ls \D.
\end{equation}

We may thus combine \eqref{ap_decay_1}, \eqref{ap_decay_2}, and \eqref{ap_decay_3} to deduce that there exists a universal constant $\lambda >0$ such that 
\begin{equation}
 \frac{d}{dt} \left( \mathfrak{E} + \mathfrak{F}\right) + \lambda \left( \mathfrak{E} + \mathfrak{F}\right) \le 0.
\end{equation}
Upon integrating this differential inequality we find that 
\begin{equation}
 \hal \mathfrak{E}(t) \le \mathfrak{E}(t) + \mathfrak{F}(t) \le e^{-\lambda t} \left(\mathfrak{E}(0) + \mathfrak{F}(0)\right) \le \frac{3}{2} e^{-\lambda t} \mathfrak{E}(0)
\end{equation}
for all $t \in [0,T].$  Thus \eqref{ap_decay_2} tells us that 
\begin{equation}\label{ap_decay_6}
 \sup_{ 0 \le t \le T} e^{\lambda t} \seb(t) \ls \seb(0).
\end{equation}

To complete the proof of \eqref{ap_decay_0} we first use Lemma \ref{geometric_evolution} on \eqref{geometric}, which means that $F^i=0$ except for $i=3,7$, in which case $F^3 = \R(\p_1 \zeta_0,\p_1 \eta)$ and $F^7 = \swh(\dt \eta)$.  The lemma tells us that 
\begin{equation}\label{ap_decay_10}
 \frac{\mu}{2} \int_\Omega \abs{\sga  u}^2 J 
+\int_{\Sigma_s} \lambda J \abs{ u \cdot \tau}^2 + \bs{ u\cdot \N} = -\ip{\eta}{\dt \eta}_{1,\sigma} - \int_{-\ell}^\ell \sigma \R(\p_1 \zeta_0,\p_1 \eta) \p_1 \dt \eta - [u\cdot \N, \swh(\dt \eta)]_\ell.
\end{equation}
We may write $\swh(z) = \frac{1}{\kappa} \int_0^z (z-r) \sw''(r) dr$, which allows us to argue as in the proof of Theorem \ref{ee_W} to deduce that $\abs{\swh(\dt \eta)} \ls \abs{\dt \eta}^2$.  From this and \eqref{ap_decay_10} we  immediately deduce, using Proposition \ref{R_prop} and the fact that $\dt \eta = u\cdot \N$ at the endpoints, that
\begin{equation}\label{ap_decay_7}
 \ns{u}_{1} + \ns{u\cdot \tau}_{L^2(\Sigma_s)} + \bs{u\cdot \N} \ls (1 + \sqrt{\E}) \norm{\eta}_{1} \norm{\dt \eta}_1  + \sqrt{\E} \ns{\dt \eta}_1 \ls \seb.
\end{equation}
Theorem \ref{pressure_est} provides the estimate 
\begin{equation}\label{ap_decay_8}
 \ns{p}_0 \ls \ns{u}_1 \ls \seb.
\end{equation}
Then \eqref{ap_decay_0} follows by combining \eqref{ap_decay_6}, \eqref{ap_decay_7}, and \eqref{ap_decay_8}.

\end{proof}

Next we complete our a priori estimate by proving the higher-order bound.

\begin{thm}\label{ap_bound}
Let $\omega \in (0,\pi)$ be the angle formed by $\zeta_0$ at the corners of $\Omega$,  $\delta_\omega \in [0,1)$ be given by \eqref{crit_wt}, and $\delta \in (\delta_\omega,1)$.   There exists a universal constant $\gamma >0$ such that if 
\begin{equation}
 \sup_{0 \le t \le T} \E(t) + \int_0^T \D(t) dt \le \gamma,
\end{equation}
then 
\begin{equation}\label{ap_bound_0}
\sup_{0 \le t \le T} \E(t) + \int_0^T \D(t) dt \ls \E(0).
\end{equation}
\end{thm}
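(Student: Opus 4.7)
The plan is to chain Theorem \ref{ap_diff_ineq} with Proposition \ref{ap_fed_ests} and the pointwise weighted-Sobolev elliptic estimates of Theorems \ref{elliptic_est_dt} and \ref{elliptic_est_no_dt}. First I would fix $\gamma > 0$ smaller than the smallness thresholds appearing in each of these four results, so that they all activate simultaneously under the hypothesis of the present theorem.

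With this choice, Theorem \ref{ap_diff_ineq} yields the pointwise-in-$t$ differential inequality
\[
\frac{d}{dt}\bigl(\mathfrak{E} + \mathfrak{F}\bigr) + C\,\D \le 0 \quad \text{on } [0,T].
\]
Integrating from $0$ to an arbitrary $t \in [0,T]$, and then applying Proposition \ref{ap_fed_ests} (which provides the two-sided equivalence $\mathfrak{E} \simeq \seb$ together with $|\mathfrak{F}| \le \mathfrak{E}/2$, hence $\mathfrak{E}/2 \le \mathfrak{E} + \mathfrak{F} \le 3\mathfrak{E}/2$), I would obtain
\[
\seb(t) + \int_0^t \D(s)\,ds \;\ls\; (\mathfrak{E}+\mathfrak{F})(0) \;\ls\; \seb(0) \;\le\; \E(0).
\]
Taking the supremum over $t \in [0,T]$ then delivers the bound $\sup_{[0,T]}\seb(t) + \int_0^T\D(t)\,dt \ls \E(0)$, which already handles the full integrated dissipation.

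To upgrade from $\seb$ to the full energy $\E$, I would apply Theorems \ref{elliptic_est_dt} and \ref{elliptic_est_no_dt} pointwise at each time $t$: together they control all the weighted-Sobolev ingredients of $\E(t)$ lying beyond $\seb(t)$ (namely the $W^{5/2}_\delta$, $W^2_\delta$, $\oW^1_\delta$, $H^{3/2}$, $H^1$, and $H^0$ components of $\eta$, $u$, $p$, $\dt \eta$, $\dt u$, $\dt p$) by $\sdb(t) + \D(t)\bigl(\E(t) + \E(t)^2\bigr)$. Using the inclusion $\sdb \le \D$ along with the a priori smallness $\E \le \gamma$, the nonlinear factor $\D\,(\E+\E^2)$ can be absorbed on the left, producing $\E(t) \ls \seb(t) + \sdb(t)$ at each $t$. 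Combining this with the already-established control of $\seb$ and the integrated dissipation then yields the conclusion \eqref{ap_bound_0}.

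The main obstacle is precisely the final absorption step: the smallness parameter $\gamma$ must be chosen small enough to activate the differential inequality in Theorem \ref{ap_diff_ineq}, to fall within the range of validity of the $\A$-Stokes isomorphism underlying Theorems \ref{elliptic_est_dt}--\ref{elliptic_est_no_dt}, and to ensure that the nonlinear factor $\E + \E^2$ can be absorbed without degrading the universal constant multiplying $\E(0)$ in \eqref{ap_bound_0}. The careful tuning of the functionals $\E$ and $\D$ carried out in Sections \ref{sec_nlin_en} and \ref{sec_nlin_ell} is designed precisely to make each of these absorption steps consistent.
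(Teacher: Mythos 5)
Your first half is exactly the paper's argument: you integrate the differential inequality of Theorem \ref{ap_diff_ineq}, invoke Proposition \ref{ap_fed_ests} to replace $\mathfrak{E}+\mathfrak{F}$ by $\seb$, and conclude that $\sup_{t}\seb(t) + \int_0^T \D \ls \seb(0) \le \E(0)$. That part is correct.

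The upgrade from $\seb$ to the full $\E$ is where there is a genuine gap. Applying Theorems \ref{elliptic_est_dt} and \ref{elliptic_est_no_dt} pointwise in $t$ and absorbing the small factor gives at best an estimate of the shape $\E(t) \ls \seb(t) + \sdb(t)$. The first term is now controlled pointwise by $\E(0)$, but the second is not: the only information available about $\sdb$ is the time-integral bound $\int_0^T \D(t)\,dt \ls \E(0)$, which says nothing about $\sup_t \sdb(t)$. An $L^1_t$ bound on $\D$ does not prevent $\sdb(t)$ from spiking at isolated times, so the chain $\E(t) \ls \E(0) + \sdb(t)$ does not deliver $\sup_t \E(t) \ls \E(0)$. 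Indeed your argument, if pushed through, only recovers $\sup_t \E(t) \ls \gamma$, which is the hypothesis, not the conclusion. The entire difficulty of Theorem \ref{ap_bound} is precisely converting an integral-in-time bound on the dissipation into a supremum-in-time bound on the weighted-Sobolev pieces of $\E$, and pointwise elliptic regularity cannot do that by itself.

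The paper's mechanism is a different, elementary one. For a real Hilbert space $X$ and $f \in H^1([0,T];X)$ one has $\frac{d}{dt}\ns{f}_X = 2(f,\dt f)_X$, hence after integrating and using Cauchy--Schwarz
\begin{equation*}
 \ns{f(t)}_X \le \ns{f(0)}_X + \int_0^t \left( \ns{f(s)}_X + \ns{\dt f(s)}_X \right) ds .
\end{equation*}
This is applied with $f$ equal to $\eta$, $\dt\eta$, $u$, $\dt u$, $p$, $\dt p$ and $X$ the corresponding norm appearing in $\E$. The crucial structural fact, built into the definition of $\D$ in \eqref{ed_def_5}, is that both $\ns{f}_X$ and $\ns{\dt f}_X$ then lie in $\D$: for instance $\D$ contains both $\ns{\eta}_{W^{5/2}_\delta}$ and $\ns{\dt\eta}_{W^{5/2}_\delta}$, both $\ns{\dt u}_1$ and $\ns{\dt^2 u}_1$ (via $\sdbb$), and both $\ns{\dt p}_0$ and $\ns{\dt^2 p}_0$ (via $\sdb$). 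So each extra piece of $\E(t)$ is bounded by its initial value plus $\int_0^t \D$, both of which are $\ls \E(0)$ by your first step. This time-integration trick, not the pointwise elliptic estimates, is what closes the supremum bound; without it your proof does not reach \eqref{ap_bound_0}.
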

\begin{proof}
Let $\gamma < 1$ be as small as in Theorem \ref{ap_diff_ineq}.  Then there is a universal constant $C>0$ such that
\begin{equation}\label{ap_bound_1}
 \frac{d}{dt} \left( \mathfrak{E} + \mathfrak{F}\right) +  C \D \le 0.
\end{equation}
Again, we know from Proposition \ref{ap_fed_ests} that  
\begin{equation}
 \mathfrak{E} \ls \seb \ls \mathfrak{E} \text{ and } 0 \le  \hal \mathfrak{E} \le \mathfrak{E} + \mathfrak{F} \le \frac{3}{2} \mathfrak{E}.
\end{equation}
This allows us to integrate \eqref{ap_bound_1} to deduce that 
\begin{equation}
 \hal \mathfrak{E}(t) + C \int_0^t \D(s) ds \le \mathfrak{E}(0) + \mathfrak{F}(0) \le \frac{3}{2} \mathfrak{E}(0),
\end{equation}
and hence that 
\begin{equation}\label{ap_bound_2}
 \seb(t) + \int_0^t \D(s) ds \ls \seb(0)
\end{equation}
for all $t\in [0,T]$.

Now, if $X$ is a real Hilbert space and $f \in H^1([0,T];X)$ then 
\begin{equation}
 \frac{d}{dt} \ns{f(t)}_X = 2(f(t),\dt f(t))_X,
\end{equation}
and upon integrating and applying Cauchy-Schwarz we find that
\begin{equation}
 \ns{f(t)}_X \le \ns{f(0)}_X + \int_0^t \left(\ns{f(s)}_X + \ns{\dt f(s)}_X\right) ds.
\end{equation}
We use this estimate to bound
\begin{multline}\label{ap_bound_3}
 \ns{\eta(t)}_{W^{5/2}_\delta} +  \ns{\dt \eta(t)}_{3/2} +  \ns{u(t)}_{W^{2}_\delta} +  \ns{\dt u(t)}_{W^{2}_\delta} +  \ns{p(t)}_{W^{1}_\delta} +  \ns{\dt p(t)}_{W^{1}_\delta} \\
\le  \ns{\eta(0)}_{W^{5/2}_\delta} +  \ns{\dt \eta(0)}_{3/2} +  \ns{u(0)}_{W^{2}_\delta} +  \ns{\dt u(0)}_{W^{2}_\delta} 
\\
+  \ns{p(0)}_{W^{1}_\delta} +  \ns{\dt p(0)}_{W^{1}_\delta} + \int_0^t \D(s) ds.
\end{multline}
Then we combine \eqref{ap_bound_2} and \eqref{ap_bound_3} to deduce that 
\begin{equation} 
 \E(t) + \int_0^t \D(s) ds \ls \E(0)
\end{equation}
for all $t\in [0,T]$, which then easily implies \eqref{ap_bound_0}.
 
\end{proof}

\subsection{Global existence and decay }

We now state our main result on the global existence and decay of solutions.

\begin{thm}\label{gwp}
Let $\omega \in (0,\pi)$ be the angle formed by $\zeta_0$ at the corners of $\Omega$,  $\delta_\omega \in [0,1)$ be given by \eqref{crit_wt}, and $\delta \in (\delta_\omega,1)$.  There exists a universal smallness parameter $\gamma >0$ such that if 
\begin{equation}
 \E(0) \le \gamma,
\end{equation}
then there exists a unique global solution triple $(u,p,\eta)$ such that 
\begin{equation}
 \sup_{t \ge 0} \left( \E(t) + e^{\lambda t} \left[ \seb(t) +  \ns{u(t)}_{1} + \ns{u(t)\cdot \tau}_{L^2(\Sigma_s)} + \bs{u\cdot \N(t)} + \ns{p(t)}_0 \right]    \right) + \int_0^\infty \D(t) dt \le C \E(0),
\end{equation}
where $\lambda,C >0$ are  universal constants.
\end{thm}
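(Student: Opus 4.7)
The plan is to deduce Theorem \ref{gwp} from the a priori estimates of Theorems \ref{ap_bound} and \ref{ap_decay} by a standard continuation argument combined with the local well-posedness theory from the companion paper. Let $C_0 \ge 1$ denote the universal constant appearing in the conclusion of Theorem \ref{ap_bound}, and let $\gamma_0 > 0$ be the smallness parameter required by both Theorems \ref{ap_bound} and \ref{ap_decay} (taking the smaller of the two if they differ). I would choose the universal smallness parameter in the statement to be $\gamma = \gamma_0/(4 C_0)$, so that any initial datum with $\E(0) \le \gamma$ satisfies $C_0 \E(0) \le \gamma_0/4$.

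First, I would invoke the local existence theorem from the companion paper to produce, for any initial data with $\E(0) \le \gamma$, a unique local solution $(u,p,\eta)$ on some maximal time interval $[0,T_{\max})$ satisfying the regularity implicit in the definitions of $\E$ and $\D$, together with a continuation criterion stating that if $T_{\max} < \infty$ then $\sup_{[0,T_{\max})} \E(t) + \int_0^{T_{\max}} \D(s)\,ds = \infty$, and also the continuity of $t \mapsto \E(t) + \int_0^t \D(s)\,ds$. Next I would define
\begin{equation}
T^\ast = \sup\left\{ T \in [0,T_{\max}) \;\Big\vert\; \sup_{0 \le t \le T} \E(t) + \int_0^T \D(s)\,ds \le \gamma_0 \right\}.
\end{equation}
By continuity and the bound $\E(0) \le \gamma < \gamma_0$, we have $T^\ast > 0$.

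The key step is to show that $T^\ast = T_{\max} = \infty$. For any $T < T^\ast$, the hypothesis of Theorem \ref{ap_bound} is satisfied by construction, so we obtain
\begin{equation}
\sup_{0 \le t \le T} \E(t) + \int_0^T \D(s)\,ds \le C_0 \E(0) \le \frac{\gamma_0}{4}.
\end{equation}
Passing to the supremum over $T < T^\ast$ and using continuity, the same bound holds at $T^\ast$. A standard open/closed argument then forces $T^\ast = T_{\max}$: if $T^\ast < T_{\max}$, continuity would allow extension of the defining inequality to some slightly larger time, contradicting maximality of $T^\ast$. Combined with the continuation criterion, the bound $\sup \E \le \gamma_0/4$ also precludes $T_{\max} < \infty$, yielding $T_{\max} = \infty$ and a global solution with $\sup_{t\ge 0} \E(t) + \int_0^\infty \D(s)\,ds \le C_0 \E(0)$.

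Finally, with the global bound in hand, the hypothesis of Theorem \ref{ap_decay} is satisfied on every $[0,T]$ with a uniform constant, so that theorem yields a universal $\lambda>0$ and $C_1 > 0$ with
\begin{equation}
e^{\lambda t}\left[ \seb(t) + \ns{u(t)}_1 + \ns{u(t)\cdot \tau}_{L^2(\Sigma_s)} + \bs{u\cdot \N(t)} + \ns{p(t)}_0 \right] \le C_1 \seb(0) \le C_1 \E(0)
\end{equation}
for all $t \ge 0$. Summing this with the global bound above (adjusting the universal constant $C$) produces the claimed estimate. The main obstacle in this scheme is not the continuation argument itself, which is essentially formal once the a priori estimates are established, but rather the matching between the functional setting of the local theory and the spaces in which $\E$ and $\D$ are controlled; I would ensure this by arranging the companion paper's local theory to produce solutions in precisely the regularity class dictated by \eqref{ed_def_4}--\eqref{ed_def_5}, so that $\E$ and $\D$ are well-defined and continuous in time along the local solution without any additional regularization argument.
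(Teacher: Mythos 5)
Your proposal is correct and takes essentially the same approach as the paper, which simply states that the result follows from coupling Theorems \ref{ap_bound} and \ref{ap_decay} with the local existence theory and a standard continuation argument. You have spelled out the continuation argument explicitly and accurately, including the correct choice of smallness parameter, the bootstrap from $\gamma_0$ down to $\gamma_0/4$, the open/closed reasoning forcing $T^\ast = T_{\max} = \infty$, and the application of the decay estimate afterward.
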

\begin{proof}
The result follows from coupling Theorems \ref{ap_bound} and \ref{ap_decay} with the local existence theory and a standard continuation argument.
\end{proof}

\appendix

\section{Recording the nonlinearities }\label{app_nonlin_form}

The governing equations for $(u,p,\eta)$ are \eqref{geometric}, where $\R$ is defined by \eqref{R_def}.  When we apply $\dt$ and $\dt^2$ to this system we get 
\begin{equation}\label{time_diff} 
 \begin{cases}
  \diva\Sa(q,v)  =F^1 & \text{in } \Omega \\
 \diva v = F^2 & \text{in }\Omega \\
 \Sa(q,v) \N = g\xi \N -\sigma \p_1\left( \frac{\p_1 \xi}{(1+\abs{\p_1 \zeta_0}^2)^{3/2}} + F^3\right) \N  + F^4& \text{on } \Sigma \\
 (\Sa(q,v)\nu - \beta v)\cdot \tau =F^5 &\text{on }\Sigma_s \\
 v\cdot \nu =0 &\text{on }\Sigma_s \\
 \dt \xi = v \cdot \N  + F^6& \text{on } \Sigma \\
 \kappa \dt \xi(\pm \ell,t) = \mp \sigma \left(\frac{\p_1 \xi}{(1+\abs{\p_1 \zeta_0}^2)^{3/2}} + F^3 \right)(\pm \ell,t) - \kappa F^7(\pm\ell,t)
 \end{cases}
\end{equation}
for $v = \dt^j u$, $\xi = \dt^j \eta$, and $q = \dt^j p$.  We now identify the form of the forcing terms that appear in these equations.

\subsection{Terms when $\dt$ is applied }\label{fi_dt1}

First, we record the forcing terms appearing in \eqref{time_diff} when $\dt$ is applied.  
\begin{equation}\label{dt1_f1}
 F^1 = - \diverge_{\dt \A} S_\A(p,u) + \mu \diva \sg_{\dt \A} u
\end{equation}
\begin{equation}\label{dt1_f2}
 F^2 = -\diverge_{\dt \A} u
\end{equation}
\begin{equation}\label{dt1_f3}
 F^3 = \dt [  \R(\p_1 \zeta_0,\p_1 \eta)]
\end{equation}
\begin{equation}\label{dt1_f4}
 F^4 = \mu \sg_{\dt \A} u \N +\left[ g\eta  - \sigma \p_1 \left(\frac{\p_1 \eta}{(1+\abs{\p_1 \zeta_0}^2)^{3/2}} +  \R(\p_1 \zeta_0,\p_1 \eta) \right)  - S_{\A}(p,u) \right]  \dt \N 
\end{equation}
\begin{equation}\label{dt1_f5}
 F^5 = \mu \sg_{\dt \A} u \nu \cdot \tau
\end{equation}
\begin{equation}\label{dt1_f6}
 F^6 = u \cdot \dt \N = -u_1 \p_1 \dt \eta.
\end{equation}
\begin{equation}\label{dt1_f7}
 F^7 = \swh'(\dt \eta) \dt^2 \eta.
\end{equation}
Note that a key feature of $F^6$ is that it vanishes at $\pm \ell$ since $u_1$ vanishes there.

\subsection{Terms when $\dt^2$ is applied }\label{fi_dt2}

Here we record the forcing terms appearing in \eqref{time_diff} $\dt^2$ is applied.  
\begin{multline}\label{dt2_f1}
 F^1 = - 2\diverge_{\dt \A} S_\A(\dt p,\dt u) + 2\mu \diva \sg_{\dt \A} \dt u  \\
- \diverge_{\dt^2 \A} S_\A(p,u) + 2 \mu \diverge_{\dt \A} \sg_{\dt \A} u + \mu \diva \sg_{\dt^2 \A} u
\end{multline}
\begin{equation}\label{dt2_f2}
 F^2 = -\diverge_{\dt^2 \A} u - 2\diverge_{\dt \A}\dt u
\end{equation}
\begin{equation}\label{dt2_f3}
 F^3 = \dt^2 [ \R(\p_1 \zeta_0,\p_1 \eta)]
\end{equation}
\begin{multline}\label{dt2_f4}
 F^4 = 2\mu \sg_{\dt \A} \dt u \N + \mu \sg_{\dt^2 \A} u \N + \mu \sg_{\dt \A} u \dt \N\\
+\left[  2g \dt \eta  - 2\sigma \p_1 \left(\frac{\p_1 \dt \eta}{(1+\abs{\p_1 \zeta_0}^2)^{3/2}} + \dt[ \R(\p_1 \zeta_0,\p_1 \eta)] \right)  -2 S_{\A}(\dt p,\dt u) \right]  \dt \N 
\\
+ \left[ g\eta  - \sigma \p_1 \left(\frac{\p_1 \eta}{(1+\abs{\p_1 \zeta_0}^2)^{3/2}} +  \R(\p_1 \zeta_0,\p_1 \eta) \right)  - S_{\A}(p,u) \right]  \dt^2 \N
\end{multline}
\begin{equation}\label{dt2_f5}
 F^5 = 2 \mu \sg_{\dt \A} \dt u \nu \cdot \tau + \mu \sg_{\dt^2 \A} u \nu \cdot \tau
\end{equation}
\begin{equation}\label{dt2_f6}
 F^6 = 2 \dt u \cdot \dt \N + u \cdot \dt^2 \N = -2 \dt u_1 \p_1 \dt \eta - u_1 \p_1 \dt^2 \eta.
\end{equation}
\begin{equation}\label{dt2_f7}
 F^7 = \swh'(\dt \eta) \dt^3 \eta + \swh''(\dt \eta) (\dt^2 \eta)^2.
\end{equation}
Again a key feature of $F^6$ is that it vanishes at $\pm \ell$ since $u_1$ and $\dt u_1$ vanish there.

\section{Estimates for $\R$ }\label{app_r}

Recall that $\R$ is given by \eqref{R_def}.  The following result records the essential estimates of $\R$.  We omit the proof for the sake of brevity.
\begin{prop}\label{R_prop}
The mapping $\R \in C^\infty(\Rn{2})$ defined by \eqref{R_def} obeys the following estimates.
\begin{multline}
 \sup_{(y,z) \in \Rn{2}} \left[ \abs{\frac{1}{z^3}\int_0^z \R(y,s)ds} + \abs{\frac{\R(y,z)}{z^2}} + \abs{\frac{\p_z \R(y,z)}{z}} + \abs{\frac{\p_y \R(y,z)}{z^2}}  \right.
\\
\left.
 + \abs{ \p_z^2 \R(y,z)} + \abs{\frac{\p_y^2 \R(y,z)}{z^2}} + \abs{\frac{\p_z \p_y \R(y,z)}{z}}
 + \abs{ \p_z^3 \R(y,z)} + \abs{ \p_z^2 \p_y \R(y,z)} \right]   < \infty.
\end{multline}
\end{prop}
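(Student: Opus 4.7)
The plan is to recognize $\R$ as the second-order Taylor remainder of the scalar function $f(x) := x(1+x^2)^{-1/2}$. Since $f''(x) = -3x(1+x^2)^{-5/2}$, direct comparison with \eqref{R_def} yields
\begin{equation*}
\R(y,z) = \int_0^z (z-s)\, f''(y+s)\, ds = f(y+z) - f(y) - z f'(y).
\end{equation*}
Differentiation under the integral together with the fundamental theorem of calculus then produces explicit closed forms for every derivative that appears in the proposition: $\p_z^2 \R(y,z) = f''(y+z)$, $\p_z^3 \R(y,z) = \p_z^2 \p_y \R(y,z) = f'''(y+z)$, $\p_z \R(y,z) = f'(y+z) - f'(y)$, $\p_z \p_y \R(y,z) = f''(y+z) - f''(y)$, $\p_y^j \R(y,z) = \int_0^z (z-s)\, f^{(j+2)}(y+s)\,ds$ for $j=1,2$, and (by swapping the order of integration) $\int_0^z \R(y,s)\,ds = \hal \int_0^z (z-s)^2 f''(y+s)\,ds$.

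The one ingredient beyond these identities is the uniform boundedness of $f^{(k)}$ on $\Rn{}$ for every $k \ge 1$. This is a short induction: writing $f^{(k)}(x) = P_k(x)(1+x^2)^{-(2k+1)/2}$ and using $\p_x[(1+x^2)^{-m/2}] = -m x(1+x^2)^{-(m+2)/2}$, one checks the recursion $P_{k+1}(x) = P_k'(x)(1+x^2) - (2k+1)x P_k(x)$, so $\deg P_k \le k-1$ for all $k\ge 1$; hence $|f^{(k)}(x)| \ls (1+|x|)^{-(k+2)}$ and in particular $f^{(k)} \in L^\infty(\Rn{})$.

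With these tools in hand every term in the stated supremum becomes a one-line estimate. The three derivatives with no $z$ in the denominator, namely $\p_z^2 \R$, $\p_z^3 \R$, and $\p_z^2\p_y \R$, are immediately bounded by $\|f''\|_{L^\infty}$ or $\|f'''\|_{L^\infty}$. The three quantities $|\R/z^2|$, $|\p_y \R/z^2|$, $|\p_y^2 \R/z^2|$ are controlled by $\hal \|f^{(j+2)}\|_{L^\infty}$ for $j=0,1,2$ directly from the integral representation (treating the cases $z>0$ and $z<0$ in the same way). The two quotients with a single power of $z$ in the denominator, $|\p_z \R/z|$ and $|\p_z\p_y \R/z|$, are bounded by $\|f''\|_{L^\infty}$ and $\|f'''\|_{L^\infty}$ respectively, either by the mean value theorem or by rewriting them as $z^{-1}\int_0^z f^{(k)}(y+s)\,ds$. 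Finally, the $(1/z^3)$ term is controlled by $\|f''\|_{L^\infty}/6$ from the double-integral identity. The entire argument therefore rests on a single observation, namely that $\R$ is the Taylor remainder of a function with bounded derivatives of every order; the only substantive step is selecting the correct integral representation for each particular quantity, and the estimates are otherwise automatic.
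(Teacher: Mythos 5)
The paper explicitly omits the proof of this proposition, so there is no argument to compare against, but the route you take is the natural one and in fact the one the paper itself implicitly sets up: immediately before \eqref{R_def} the text introduces $\R$ precisely as the second-order Taylor remainder of $f(x) = x(1+x^2)^{-1/2}$, so your identification $\R(y,z) = f(y+z) - f(y) - z f'(y) = \int_0^z (z-s) f''(y+s)\,ds$ is correct. All of the closed forms you list check out (including the Fubini identity $\int_0^z \R(y,s)\,ds = \tfrac12\int_0^z (z-s)^2 f''(y+s)\,ds$ and the fact that the boundary term vanishes when you differentiate the remainder integral in $z$), the polynomial recursion $P_{k+1} = P_k'(1+x^2) - (2k+1)x P_k$ with $\deg P_k \le k-1$ is right and gives the uniform bounds $\|f^{(k)}\|_{L^\infty} < \infty$ for $k \ge 1$ (for this proposition only $k=2,3,4$ are actually needed), and each term in the supremum then follows by the one-line estimates you indicate. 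The proof is complete and correct.
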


\section{Weighted Sobolev spaces}\label{app_weight}

We begin our discussion of weighted Sobolev spaces by recalling Hardy's inequality.

\begin{lem}[Hardy's inequality]\label{hardy_ineq}
Assume that $\alpha >1$ and $p \in [1,\infty)$. Then
\begin{equation}\label{hardy_ineq_02}
 \left(\int_0^\infty y^{p(\alpha-1) -1} \left(\int_y^\infty \abs{\varphi(z)} dz \right)^{p} dy \right)^{1/p} \le   \frac{1}{ (\alpha-1)} \left(\int_0^\infty \abs{\varphi(z)}^p z^{\alpha p-1} dz  \right)^{1/p}.
\end{equation}
\end{lem}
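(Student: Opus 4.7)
The plan is to reduce the $p>1$ case to a double integral estimate via H\"older's inequality, and to handle $p=1$ directly with Fubini's theorem. Write $F(y) = \int_y^\infty |\varphi(z)|\,dz$. For $p=1$, Fubini gives
\begin{equation*}
\int_0^\infty y^{\alpha - 2} F(y)\,dy = \int_0^\infty |\varphi(z)| \int_0^z y^{\alpha-2}\,dy\,dz = \frac{1}{\alpha - 1}\int_0^\infty |\varphi(z)|\, z^{\alpha-1}\,dz,
\end{equation*}
which is exactly the desired inequality with equality.

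For $p \in (1,\infty)$, the idea is to insert a weight inside the integral defining $F(y)$ and apply H\"older with exponents $p$ and $p' = p/(p-1)$. Specifically, for a parameter $\beta$ to be chosen later, I would split $|\varphi(z)| = |\varphi(z)| z^{\beta} \cdot z^{-\beta}$ and apply H\"older to get
\begin{equation*}
F(y)^p \le \left(\int_y^\infty |\varphi(z)|^p z^{\beta p}\,dz\right)\left(\int_y^\infty z^{-\beta p'}\,dz\right)^{p-1},
\end{equation*}
which requires $\beta > 1/p'$ for the second factor to be finite. The second integral evaluates to $y^{1-\beta p'}/(\beta p' - 1)$. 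Multiplying by $y^{p(\alpha-1)-1}$, integrating in $y$, and applying Fubini to swap the order gives an inner $y$-integral of the form $\int_0^z y^{p\alpha - 2 - \beta p}\,dy$, which is finite provided $\beta < \alpha - 1/p$; this interval of admissible $\beta$ is nonempty precisely because $\alpha > 1$. Evaluating this inner integral produces the correct weight $z^{\alpha p - 1}$ on the right-hand side together with an overall constant
\begin{equation*}
C(\beta) = \frac{1}{(\beta p' - 1)^{p-1}(p\alpha - 1 - \beta p)}.
\end{equation*}

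The only delicate step, and the one most likely to trip one up, is optimizing the constant $C(\beta)$ to recover the sharp factor $1/(\alpha-1)^p$. I would compute $dC/d\beta = 0$, which (after using the identity $(p-1)p' = p$) reduces to the linear equation $p\alpha - 1 - \beta p = \beta p' - 1$, giving the critical value $\beta = \alpha/p' = \alpha(p-1)/p$. Substituting back one verifies that both factors equal $\alpha - 1$, so $C(\beta) = (\alpha-1)^{-p}$. Taking $p$-th roots yields \eqref{hardy_ineq_02} and completes the proof. The $p=1$ and $p>1$ cases match since the H\"older argument formally degenerates to the Fubini argument as $p \to 1^+$.
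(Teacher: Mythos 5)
The paper does not actually give a proof of this lemma; it simply cites the classical reference of Hardy--Littlewood--P\'olya. Your proposal, by contrast, gives a self-contained proof, and it is correct. You reduce the $p=1$ case to a Fubini identity (where equality holds), and for $p>1$ you insert a power weight $z^{\beta}\cdot z^{-\beta}$ inside the defining integral of $F(y)$, apply H\"older, swap the order of integration by Tonelli, and then optimize the free parameter $\beta$. Your bookkeeping is right: the $y$-exponent after the swap is $p\alpha - 2 - \beta p$ (using $p'(p-1)=p$), the admissible window $1/p' < \beta < \alpha - 1/p$ is nonempty precisely because $\alpha>1$, and at the critical value $\beta=\alpha/p'$ both factors in the denominator of $C(\beta)$ equal $\alpha-1$, so $C(\beta)=(\alpha-1)^{-p}$, which is exactly the sharp constant. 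One harmless simplification you glossed over: to pass from $p\alpha = \beta(p+p')$ to $\beta = \alpha/p'$ you need the identity $p+p'=pp'$, which is worth stating explicitly. Also, for complete rigor one should remark that when the right-hand side of \eqref{hardy_ineq_02} is finite, the inner integral $\int_y^\infty |\varphi|^p z^{\beta p}\,dz$ appearing after H\"older is automatically finite for each fixed $y>0$ (since $z^{\beta p} \le y^{1-\alpha} z^{\alpha p - 1}$ for $z\ge y$), so the pointwise H\"older estimate is meaningful; but this is a minor point. This is essentially the classical weighted-H\"older proof of Hardy's inequality, so it recovers the sharp constant and aligns with the spirit of what the cited reference contains, even though the paper itself gives no argument.
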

\begin{proof}
This is one of the well-known Hardy inequalities.  A proof may be found, for instance, in \cite{h_l_p}.
\end{proof}

Next we record a useful application of Hardy's inequality.

\begin{prop}\label{hardy_sobolev}
Let $K \subset \Rn{n}$ be an open cone and suppose that $\delta  > 1 - n/2$ and $r \in [2,\infty)$.  Then there exists a constant $C(n,\delta) >0$ such that 
\begin{equation}\label{hardy_sobolev_01}
 \left(\int_K \abs{x}^{2(\delta-1)} \abs{\psi(x)}^2 dx   \right)^{1/2} \le C(n,\delta) \left( \int_K \abs{x}^{2\delta} \abs{\nab \psi(x)}^2 dx \right)^{1/2}
\end{equation}
for all $\psi \in C^1_c(\bar{K})$.
\end{prop}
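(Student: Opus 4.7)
The plan is to reduce the inequality to a one-dimensional weighted Hardy inequality along each ray from the origin, then integrate out the angular variable. Since $K$ is a cone with vertex at $0$, the parameterization $x = r\theta$ with $r > 0$ and $\theta \in \Sigma := K \cap S^{n-1}$ is adapted to this strategy: for each fixed $\theta \in \Sigma$, the whole ray $\{r\theta : r > 0\}$ lies in $K$, so radial integrations stay within $K$.

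The first step is to pass to polar coordinates and write
\begin{equation*}
 \int_K |x|^{2(\delta-1)} |\psi(x)|^2\,dx = \int_{\Sigma} \int_0^\infty r^{2(\delta-1)+n-1} |\psi(r\theta)|^2\,dr\,d\theta.
\end{equation*}
For each fixed $\theta \in \Sigma$, compact support of $\psi$ gives $\psi(r\theta) = -\int_r^\infty \partial_\rho \psi(\rho\theta)\,d\rho$, whence $|\psi(r\theta)| \le \int_r^\infty |\nabla \psi(\rho\theta)|\,d\rho$ because $|\partial_\rho \psi(\rho\theta)| = |\theta \cdot \nabla \psi(\rho\theta)| \le |\nabla \psi(\rho\theta)|$.

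The next step is to apply Lemma \ref{hardy_ineq} with $p=2$, $\varphi(\rho) = |\nabla \psi(\rho\theta)|$, and the choice $\alpha = \delta + n/2$. Matching powers: we want $p(\alpha-1)-1 = 2(\delta-1)+n-1$ and $\alpha p - 1 = 2\delta + n - 1$, which both give $\alpha = \delta + n/2$. The hypothesis $\delta > 1 - n/2$ is exactly the statement $\alpha > 1$, which is the admissibility condition of Hardy's inequality. This produces, for each $\theta$,
\begin{equation*}
 \int_0^\infty r^{2(\delta-1)+n-1}|\psi(r\theta)|^2\,dr \le \frac{1}{(\alpha-1)^2}\int_0^\infty \rho^{2\delta+n-1}|\nabla \psi(\rho\theta)|^2\,d\rho.
\end{equation*}

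The final step is to integrate this pointwise (in $\theta$) inequality over $\Sigma$ and return to Cartesian coordinates, yielding \eqref{hardy_sobolev_01} with constant $C(n,\delta) = (\delta + n/2 - 1)^{-1}$. There is no real obstacle here; the only delicate point is tracking the admissibility threshold $\delta > 1 - n/2$, which comes out cleanly from the exponent-matching computation. The hypothesis $r \in [2,\infty)$ in the statement is unused for the $L^2$ version displayed; it presumably refers to a more general $L^r$ version whose proof proceeds identically via Lemma \ref{hardy_ineq} with $p = r$, radial representation, and Minkowski's inequality on $\Sigma$ — the threshold $\delta > 1 - n/r$ would then replace $\delta > 1 - n/2$.
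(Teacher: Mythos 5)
Your proof is correct and follows essentially the same route as the paper's: pass to polar coordinates, use compact support to write $\psi$ as the integral of its radial derivative along each ray, apply Lemma \ref{hardy_ineq} with $p=2$ and $\alpha = \delta + n/2$, and integrate over $K \cap \mathbb{S}^{n-1}$. Your observation that the hypothesis $r \in [2,\infty)$ is unused in the $L^2$ statement is accurate; the paper's proof likewise makes no use of it.
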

\begin{proof}
Writing $(s,\theta)$ for $s>0$ and $\theta \in  K \cap \mathbb{S}^{n-1}$, we then have that 
\begin{equation}
 \int_K \abs{x}^{2(\delta-1)} \abs{\psi(x)}^2 dx = \int_{ K \cap \mathbb{S}^{n-1}} \int_0^\infty s^{2(\delta-1) + n -1} \abs{\psi(s,\theta)}^r ds d\theta
\end{equation}
and 
\begin{equation}
 \int_K \abs{x}^{2\delta} \abs{\nab \psi(x)}^2 dx \ge  \int_K \abs{x}^{2\delta} \abs{x \abs{x}^{-1} \cdot \nab \psi(x)}^2 dx = \int_{ K \cap \mathbb{S}^{n-1}} \int_0^\infty s^{2\delta + n -1} \abs{\p_s \psi(s,\theta)}^2 ds d\theta.
\end{equation}

To prove \eqref{hardy_sobolev_01} it thus suffices to show that 
\begin{equation}\label{hardy_sobolev_1}
 \int_0^\infty s^{2(\delta-1) + n -1} \abs{\psi(s,\theta)}^2 ds \le \int_0^\infty s^{2\delta + n -1} \abs{\p_s \psi(s,\theta)}^2 ds  
\end{equation}
for each $\theta \in  K \cap \mathbb{S}^{n-1}$.  To prove this we set $\varphi(s,\theta) = \p_s \psi(s,\theta)$ and note that 
\begin{equation}
 \abs{\psi(s,\theta)} =  \abs{ \int_s^\infty \p_s \psi(z,\theta)dz} \le \int_s^\infty \abs{\varphi(z,\theta)}dz,
\end{equation}
which in turn means that 
\begin{equation}
 \int_0^\infty s^{2(\delta-1) + n -1} \abs{\psi(s,\theta)}^2 ds \le \int_0^\infty y^{2(\alpha-1) -1} \left(\int_y^\infty \abs{\varphi(z,\theta)} dz \right)^{2} dy.
\end{equation}
Applying  Lemma \ref{hardy_ineq} with $\alpha = \delta + n/2 > 1$ and $p=2$ to estimate the right side, we then find that \eqref{hardy_sobolev_1} holds.

\end{proof}

Next we define various weighted Sobolev spaces on the equilibrium domain $\Omega$. We write 
\begin{equation}\label{corner_def}
M = \{(-\ell,\zeta_0(\ell)), (\ell, \zeta_0(\ell))\} 
\end{equation}
for the pair of corner points of $\Omega$.   For $0 < \delta < 1$ and $k \in \N$ we let $W^k_\delta(\Omega)$ denote the space of functions such that $\ns{f}_{W^k_\delta} < \infty$, where
\begin{equation}
\ns{f}_{W^k_\delta} = \sum_{\abs{\alpha}\le k} \int_\Omega \dist(x,M)^{2\delta} \abs{\p^\alpha f(x)}^2 dx.
\end{equation}

A consequence of Hardy's inequality (see for example Lemma 7.1.1 in \cite{kmr_1}) is that we have the continuous embeddings
\begin{equation}\label{hardy_embed}
 W^1_\delta(\Omega) \hookrightarrow H^0(\Omega),  W^2_\delta(\Omega) \hookrightarrow H^1(\Omega), \text{ and } H^1(\Omega) \hookrightarrow W^0_{-\delta}(\Omega)
\end{equation}
when $0 < \delta < 1$.  We define the trace spaces $W^{k-1/2}_\delta(\p \Omega)$ in the obvious way.  It can be shown (see for example Section 9.1.2 of \cite{kmr_3}) that 
\begin{equation}\label{hardy_embed_2}
\abs{\int_{\p \Omega} f (v\cdot \tau)  } \ls \norm{f}_{W^{1/2}_\delta(\p \Omega)} \norm{v}_{H^1(\Omega)}
\end{equation}
for all $f \in W^{1/2}_\delta(\Omega)$ and $v \in H^1(\Omega)$ when $0 < \delta < 1$.  Finally, we will also need the spaces 
\begin{equation}
\oW^k_\delta(\Omega) = \{u \in W^k_\delta(\Omega) \st \int_\Omega u =0\}
\end{equation}
for $k \ge 1$.

Next we record a useful embedding.

\begin{prop}\label{weigted_nest}
Let $k \in \N$ and $\delta_1,\delta_2 \in \R$ with $\delta_1 < \delta_2$.  Then we have that 
\begin{equation}
 W^k_{\delta_1}(\Omega) \hookrightarrow W^k_{\delta_2}(\Omega).
\end{equation}
\end{prop}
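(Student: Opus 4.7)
The plan is to exploit the boundedness of $\Omega$, which forces $\dist(\cdot,M)$ to be uniformly bounded, so that the pointwise inequality $\dist(x,M)^{2\delta_2} \le C\,\dist(x,M)^{2\delta_1}$ holds whenever $\delta_1 < \delta_2$. The desired embedding then follows by integrating termwise over all multi-indices $|\alpha| \le k$.

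More concretely, first I would set $D := \diam(\Omega) < \infty$, so that $\dist(x,M) \le D$ for every $x \in \Omega$. Then for any $\delta_1 < \delta_2$ and any $x \in \Omega$, I would write
\begin{equation}
\dist(x,M)^{2\delta_2} \;=\; \dist(x,M)^{2(\delta_2-\delta_1)} \cdot \dist(x,M)^{2\delta_1} \;\le\; \max\{1,D\}^{2(\delta_2-\delta_1)}\, \dist(x,M)^{2\delta_1},
\end{equation}
where the $\max\{1,D\}$ safely covers both the cases $D \le 1$ and $D > 1$ (in the former case $\dist^{2(\delta_2-\delta_1)} \le 1$ automatically). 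Setting $C := \max\{1,D\}^{2(\delta_2-\delta_1)}$, this is a pointwise inequality of nonnegative weights.

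Next, for $f \in W^k_{\delta_1}(\Omega)$, I would multiply by $|\p^\alpha f(x)|^2$, integrate over $\Omega$, and sum over $|\alpha| \le k$ to obtain
\begin{equation}
\ns{f}_{W^k_{\delta_2}} \;=\; \sum_{|\alpha|\le k} \int_\Omega \dist(x,M)^{2\delta_2} |\p^\alpha f(x)|^2 \,dx \;\le\; C \sum_{|\alpha|\le k} \int_\Omega \dist(x,M)^{2\delta_1} |\p^\alpha f(x)|^2 \,dx \;=\; C\, \ns{f}_{W^k_{\delta_1}}.
\end{equation}
This yields both $f \in W^k_{\delta_2}(\Omega)$ and the continuity of the inclusion, completing the proof.

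There is essentially no obstacle here; the only mild subtlety is handling the case $D > 1$, where the naive bound $\dist^{2(\delta_2-\delta_1)} \le 1$ fails, but this is resolved by the $\max\{1,D\}$ factor above. All other ingredients are purely algebraic.
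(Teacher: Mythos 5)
Your proof is correct and takes the same approach as the paper, which simply observes that $\dist(x,M)^{2\delta_2} \ls \dist(x,M)^{2\delta_1}$ on the bounded domain $\Omega$. You have merely spelled out the constant $C = \max\{1,D\}^{2(\delta_2-\delta_1)}$ explicitly, which is a harmless elaboration.
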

\begin{proof}
 This follows from the trivial estimate $\dist(x,M)^{2\delta_2} \ls \dist(x,M)^{2\delta_1}$ in $\Omega$. 
\end{proof}

We will also need the following embedding result.

\begin{prop}\label{weighted_embed}
 If $k \in \mathbb{N}$ and $0 < \delta < 1$, then
\begin{equation}\label{weighted_embed_01}
 W^k_\delta(\Omega) \hookrightarrow W^{k,q}(\Omega),
\end{equation}
where
\begin{equation}\label{weighted_embed_02}
 1 \le q < \frac{2}{1+\delta}.
\end{equation}
In particular, 
\begin{equation}\label{weighted_embed_03}
W^1_\delta(\Omega) \hookrightarrow L^p(\Omega) 
\end{equation}
when 
\begin{equation}
 1 \le p < \frac{2}{\delta}.
\end{equation}
\end{prop}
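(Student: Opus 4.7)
The plan is to derive both embeddings from a single Hölder computation against the distance weight $d(x) = \dist(x,M)$, combined (for the second) with the standard two-dimensional Sobolev embedding.

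First I would prove \eqref{weighted_embed_01}. Fix $1 \le q < 2/(1+\delta)$ and a multi-index $\alpha$ with $\abs{\alpha} \le k$. Writing $|\p^\alpha f|^q = (d^\delta |\p^\alpha f|)^q \cdot d^{-q\delta}$ and applying Hölder with exponents $2/q$ and $2/(2-q)$ gives
\begin{equation*}
\int_\Omega |\p^\alpha f|^q \, dx \le \left( \int_\Omega d^{2\delta} |\p^\alpha f|^2 \, dx \right)^{q/2} \left( \int_\Omega d^{-2q\delta/(2-q)} \, dx \right)^{(2-q)/2}.
\end{equation*}
The first factor is bounded by $\norm{f}_{W^k_\delta}^q$. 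For the second factor, away from $M$ the weight $d$ is bounded below, so the only question is local integrability of $d^{-\gamma}$ with $\gamma = 2q\delta/(2-q)$ near each of the two corner points in $M$. Near such a point, $d$ is comparable to $|y|$ in local polar coordinates on $\Omega$, and the standard $2$D computation $\int_0^1 r^{-\gamma} r \, dr < \infty$ shows integrability provided $\gamma < 2$. The algebra $2q\delta/(2-q) < 2 \iff q(1+\delta) < 2$ matches exactly the hypothesis $q < 2/(1+\delta)$, so the second factor is finite and we obtain $\norm{\p^\alpha f}_{L^q} \ls \norm{f}_{W^k_\delta}$. Summing over $|\alpha| \le k$ gives \eqref{weighted_embed_01}.

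For \eqref{weighted_embed_03}, given $1 \le p < 2/\delta$, I choose $q = 2p/(2+p)$, which satisfies $1 \le q < 2/(1+\delta)$ precisely when $p < 2/\delta$, and also satisfies the Sobolev relation $p = 2q/(2-q)$ with $q < 2$. Applying the first embedding with $k=1$ gives $W^1_\delta(\Omega) \hookrightarrow W^{1,q}(\Omega)$, and then the standard Sobolev embedding $W^{1,q}(\Omega) \hookrightarrow L^p(\Omega)$ on the bounded Lipschitz domain $\Omega$ (which holds because $\Omega$ has Lipschitz boundary, its corners being convex) yields \eqref{weighted_embed_03}.

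The only nontrivial point is the local integrability of $d^{-\gamma}$ near the corners, which is really just a polar-coordinate computation in $\Rn{2}$; the rest is bookkeeping with Hölder and the classical Sobolev embedding. The sharpness of the bound $q < 2/(1+\delta)$ comes directly from the borderline exponent $\gamma = 2$ for two-dimensional integrability of an inverse power of the distance to a point.
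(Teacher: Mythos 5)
Your proof is correct and follows essentially the same route as the paper's: Hölder against the weight $d^{2\delta}$ with exponents $2/q$ and $2/(2-q)$, reduction to local integrability of $d^{-2q\delta/(2-q)}$ near the two corner points (the paper's condition $\frac{2\delta q}{2-q}-1<1$ is precisely the polar-coordinate threshold $\gamma<2$ you identify), followed by the standard 2D Sobolev embedding $W^{1,q}\hookrightarrow L^p$ with $p=2q/(2-q)$. The only difference is that you spell out the polar-coordinate computation and the choice $q=2p/(2+p)$ that the paper leaves implicit.
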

\begin{proof}
By H\"older's inequality, for $1 \le q < 2$ we have that 
\begin{equation}
 \int_{\Omega} \abs{f}^q \le \left( \int_\Omega \abs{f}^2 \dist(\cdot,M)^{2\delta}  \right)^{q/2} \left(\int_\Omega \frac{1}{\dist(\cdot,M)^{2\delta q/(2-q) } }\right)^{1-q/2} 
\end{equation}
and the latter integral is finite if and only if 
\begin{equation}
 \frac{2 \delta q}{2-q} -1 < 1,
\end{equation}
which is equivalent to \eqref{weighted_embed_02}.  Thus 
\begin{equation}
 \norm{f}_{W^{k,q}} \ls \norm{f}_{W^k_\delta}
\end{equation}
for every $f \in W^k_\delta(\Omega)$, and so we have \eqref{weighted_embed_01}.   The embedding \eqref{weighted_embed_03}  follows from the using the first with $k=1$ and the usual Sobolev embedding $W^{1,q}(\Omega) \hookrightarrow L^p(\Omega)$.
\end{proof}

Next we show a boundary embedding.

\begin{prop}\label{weighted_trace}
Suppose that $0 < \delta < 1$.  Then $W^{1/2}_\delta(\p \Omega) \hookrightarrow L^q(\p\Omega)$, where
\begin{equation}
 1 \le q < \frac{2}{1+\delta}.
\end{equation}
\end{prop}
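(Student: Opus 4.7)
The plan is to reduce the boundary embedding to the interior embedding of Proposition \ref{weighted_embed} via a trace/extension argument, since attempting a direct H\"older estimate on $\p\Omega$ with the weight $d(\cdot,M)^{2\delta}$ gives only the weaker range $q<2/(1+2\delta)$ (the boundary is $1$-dimensional, so $d^{-\delta q \cdot 2/(2-q)}$ is integrable on $\p\Omega$ only under a stricter condition than in $\Omega$). Thus I need to exploit the half-derivative built into $W^{1/2}_\delta(\p\Omega)$.

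Concretely, recall that $W^{1/2}_\delta(\p\Omega)$ is defined as the trace space for $W^1_\delta(\Omega)$, so there is a bounded linear extension operator
\begin{equation*}
 E : W^{1/2}_\delta(\p\Omega) \to W^1_\delta(\Omega), \qquad \norm{E f}_{W^1_\delta(\Omega)} \ls \norm{f}_{W^{1/2}_\delta(\p\Omega)},
\end{equation*}
with $(Ef)|_{\p\Omega} = f$. Apply Proposition \ref{weighted_embed} with $k=1$ to conclude that
\begin{equation*}
 E f \in W^{1,q}(\Omega) \text{ with } \norm{Ef}_{W^{1,q}(\Omega)} \ls \norm{f}_{W^{1/2}_\delta(\p\Omega)}
\end{equation*}
whenever $1 \le q < 2/(1+\delta)$. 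Then invoke the standard trace theorem for the Lipschitz domain $\Omega$, which yields a bounded trace operator $W^{1,q}(\Omega) \to L^q(\p\Omega)$ (and even into $W^{1-1/q,q}(\p\Omega) \hookrightarrow L^q(\p\Omega)$ for $q>1$); this holds for Lipschitz boundaries and so is unaffected by the presence of the corners in $M$. Applying this to $Ef$ and using $(Ef)|_{\p\Omega} = f$ gives
\begin{equation*}
 \norm{f}_{L^q(\p\Omega)} \ls \norm{Ef}_{W^{1,q}(\Omega)} \ls \norm{f}_{W^{1/2}_\delta(\p\Omega)},
\end{equation*}
which is the claimed embedding.

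The only real obstacle is the availability of the extension operator $E$ from the trace space $W^{1/2}_\delta(\p\Omega)$ back into $W^1_\delta(\Omega)$; this is standard in the theory of weighted Sobolev spaces on domains with corners (see e.g.\ \cite{kmr_1}), where $W^{1/2}_\delta(\p\Omega)$ is defined precisely so that the trace operator $W^1_\delta(\Omega) \to W^{1/2}_\delta(\p\Omega)$ is continuous and surjective with a bounded right inverse. Granted this, the rest of the proof consists only of chaining Proposition \ref{weighted_embed} with a routine Lipschitz-domain trace theorem.
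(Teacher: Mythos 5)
Your proof is correct and follows exactly the same route as the paper's: extend $f$ to some $F \in W^1_\delta(\Omega)$ with norm control, apply Proposition \ref{weighted_embed} to obtain $F \in W^{1,q}(\Omega)$, then invoke the standard $W^{1,q}(\Omega) \to L^q(\p\Omega)$ trace theorem. The preliminary remark explaining why a direct H\"older argument on $\p\Omega$ would give a strictly worse range of $q$ is a nice piece of motivation not in the paper, but the substantive steps are identical.
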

\begin{proof}
Suppose that $f \in W^{1/2}_\delta(\p \Omega)$.  Then there exists $F \in W^{1}_\delta(\Omega)$ such that $F = f$ on $\p \Omega$ and $\norm{F}_{W^1_\delta} \le 2 \norm{f}_{W^{1/2}_\delta}.$  According to Proposition \ref{weighted_embed} we have that $F \in W^{1,q}(\Omega)$.  The  usual trace theory then implies that 
\begin{equation}
 \norm{f}_{L^q(\p\Omega)} = \norm{F \vert_{\p \Omega}}_{L^q(\p \Omega)} \ls \norm{F}_{W^{1,q}(\Omega)} \ls \norm{F}_{W^1_\delta(\Omega)} \ls \norm{f}_{W^{1/2}_\delta}.
\end{equation}
This yields the desired embedding.
\end{proof}

Next we show another consequence of Hardy's inequality.

\begin{prop}\label{weighted_sobolev_embed}
If $0 < \delta < 1$, then
\begin{equation}
 W^1_\delta(\Omega) \hookrightarrow W^{0}_{\delta-1}(\Omega).
\end{equation}
\end{prop}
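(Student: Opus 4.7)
The plan is to combine a smooth partition of unity localizing near the corners with the cone version of Hardy's inequality recorded in Proposition \ref{hardy_sobolev}. Let $M = \{p_-, p_+\}$ with $p_\pm = (\pm\ell, \zeta_0(\pm\ell))$, let $d(x) = \dist(x,M)$, and fix cutoffs $\chi_0,\chi_-,\chi_+ \in C^\infty(\bar\Omega)$ forming a partition of unity on $\bar\Omega$ with $\chi_\pm$ supported in small neighborhoods $U_\pm$ of $p_\pm$ and $\chi_0$ supported in a set $U_0$ on which $d \ge c_0 > 0$. The goal is to prove $\ns{\chi_\bullet f}_{W^0_{\delta-1}} \ls \ns{f}_{W^1_\delta}$ for each of the three pieces and then sum.

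The $\chi_0$ piece is immediate: since $d^{2(\delta-1)}$ is bounded on $\supp \chi_0$, and since $\delta \in (0,1)$ places us in the regime of the embedding $W^1_\delta(\Omega) \hookrightarrow H^0(\Omega)$ of \eqref{hardy_embed}, we obtain
\[
\int_\Omega d^{2(\delta-1)} \abs{\chi_0 f}^2 \ls \int_\Omega \abs{f}^2 \ls \ns{f}_{W^1_\delta(\Omega)}.
\]

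For each corner piece I would apply the diffeomorphism $\Psi : K_\omega \to \Psi(K_\omega)$ supplied by Proposition \ref{wedge_diffeo} (and its analogue at the right corner), which flattens a neighborhood of $p_\pm$ onto a neighborhood of the origin in the cone $K_\omega$ in such a way that $d(\Psi(y)) \simeq \abs{y}$ for $y$ near $0$, and $\nabla \Psi$ and $(\nabla \Psi)^{-1}$ are bounded. Shrinking $U_\pm$ if necessary, set $g = (\chi_\pm f)\circ\Psi$, which is supported in a compact subset of $\bar K_\omega$ meeting a neighborhood of $0$. After a standard density argument (mollify $g$, cut off away from the corner and the boundary of $K_\omega$, and pass to the limit using the weighted $H^1$ estimate and the fact that the weighted Sobolev norms are stable under these operations) we may apply Proposition \ref{hardy_sobolev} with $n=2$, whose hypothesis $\delta > 1 - n/2 = 0$ is satisfied by assumption:
\[
\int_{K_\omega} \abs{y}^{2(\delta-1)} \abs{g(y)}^2\, dy \ls \int_{K_\omega} \abs{y}^{2\delta} \abs{\nabla g(y)}^2\, dy.
\]
Changing variables back via $\Psi$ and using $d\circ \Psi \simeq \abs{y}$ together with the product rule on $\chi_\pm f$, this yields
\[
\int_{U_\pm} d^{2(\delta-1)} \abs{\chi_\pm f}^2 \ls \int_{U_\pm} d^{2\delta}\left( \abs{\nabla f}^2 + \abs{\nabla \chi_\pm}^2 \abs{f}^2\right).
\]
The first term on the right is bounded by $\ns{f}_{W^1_\delta}$ directly. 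For the second, I would arrange the cutoffs so that $\nabla\chi_\pm$ is supported where $d \ge c_\pm > 0$, making $d^{2\delta}\abs{\nabla\chi_\pm}^2$ bounded, and then again invoke \eqref{hardy_embed} to control this by $\ns{f}_{W^1_\delta}$.

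Summing the three estimates produces $\ns{f}_{W^0_{\delta-1}} \ls \ns{f}_{W^1_\delta}$, which is the desired continuous embedding. The main technical nuisance, and hence the principal step requiring care, will be the density argument justifying the application of Proposition \ref{hardy_sobolev} to $g$, since $g$ is only compactly supported in $\bar K_\omega$ rather than a priori $C^1_c$; this is handled by mollification and a truncation near the cone vertex, exploiting that $\delta \in (0,1)$ keeps us in a subcritical regime where the weights are tame enough to pass to the limit in both sides of Hardy's inequality.
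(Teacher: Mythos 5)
Your proof is correct, and it supplies exactly the argument the paper chooses not to spell out: the paper's ``proof'' is a one-line citation of Lemma 7.1.1 of Kozlov--Maz'ya--Rossmann, whereas you reconstruct that embedding by the standard route of localizing with a partition of unity, flattening each corner to the model cone $K_\omega$ via Proposition \ref{wedge_diffeo}, and invoking the cone Hardy inequality of Proposition \ref{hardy_sobolev}. The use of $d\circ\Psi \simeq \abs{y}$ is justified since $\Psi(0)=p_\pm$ and both $D\Psi$ and $D\Psi^{-1}$ are bounded, and the commutator term $d^{2\delta}\abs{\nabla\chi_\pm}^2\abs{f}^2$ is handled correctly by supporting $\nabla\chi_\pm$ in an annulus where $d\ge c_\pm>0$. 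Two small remarks. First, your appeals to $W^1_\delta\hookrightarrow H^0$ from \eqref{hardy_embed} are harmless (that embedding is cited independently of the proposition being proved) but also unnecessary: wherever you invoke it the weight $d$ is pinched between $c_0$ and $\diam\Omega$, so $d^{2(\delta-1)}\ls 1\ls d^{2\delta}$ and the bound by $\ns{f}_{W^1_\delta}$ follows directly, keeping the argument self-contained. Second, you are right that the density step is the only delicate point. The cleanest way to close it is not to mollify and pass to the limit on both sides of Hardy's inequality (the left side is not known finite a priori), but rather to note either that the proof of Proposition \ref{hardy_sobolev} already works for compactly supported $g\in W^1_\delta(K_\omega)$ after a Fubini argument on radial slices -- absolute continuity on $(s,\infty)$ for a.e.\ angle $\theta$ and each $s>0$ suffices, since the weight $z^{2\delta+n-1}$ is bounded below away from the vertex -- or, if one insists on approximating, to apply the inequality to smooth approximants $g_\ep$ and use Fatou's lemma on the left together with convergence of $\int\abs{x}^{2\delta}\abs{\nabla g_\ep}^2$ on the right (the latter converges because $\abs{x}^{2\delta}$, $0<\delta<1$, is an $A_2$ weight on $\Rn{2}$, so mollification converges in the weighted $L^2$ norm).
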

\begin{proof}
 This follows from the Hardy inequality.  A proof may be found in Lemma 7.1.1 of \cite{kmr_1}, for instance.
\end{proof}

Next we study some other weighted estimates.

\begin{thm}\label{weighted_sobolev_thm}
Suppose that $0 < \delta < 1$.  Let $M$ be the corner set given by \eqref{corner_def}.  Then for each $q \in [1,\infty)$ we have that
\begin{equation}\label{weighted_sobolev_thm_0}
 \norm{\dist(\cdot,M)^\delta f}_{L^q} \ls \norm{f}_{W^1_\delta}
\end{equation}
for all $f \in W^1_\delta(\Omega)$.
\end{thm}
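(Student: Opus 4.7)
The plan is to set $g := d^\delta f$, where $d(x) = \dist(x,M)$, and show that $g \in H^1(\Omega)$ with $\|g\|_{H^1} \ls \|f\|_{W^1_\delta}$. Once that is established, the conclusion follows immediately from the standard Sobolev embedding $H^1(\Omega) \hookrightarrow L^q(\Omega)$ for every $q \in [1,\infty)$, valid since $\Omega \subset \Rn{2}$ is a bounded Lipschitz domain.

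To control $\|g\|_{H^1}$, first note that $d$ is bounded on $\Omega$, so $\|g\|_{L^2} = \|d^\delta f\|_{L^2} \ls \|f\|_{L^2}$, and the embedding $W^1_\delta(\Omega) \hookrightarrow H^0(\Omega)$ from \eqref{hardy_embed} yields $\|g\|_{L^2} \ls \|f\|_{W^1_\delta}$. Next, since $d$ is Lipschitz with $|\nabla d| \le 1$ almost everywhere, and since $d^\delta$ is locally Lipschitz away from $M$ with $|\nabla d^\delta| \le \delta d^{\delta-1}$ a.e., the product rule gives (first for smooth $f$ supported away from $M$ and then by a standard approximation/truncation argument)
\begin{equation*}
 |\nabla g| \le \delta\, d^{\delta - 1} |f| + d^\delta |\nabla f| \quad \text{a.e. in } \Omega.
\end{equation*}
Squaring and integrating,
\begin{equation*}
 \int_\Omega |\nabla g|^2 \ls \int_\Omega d^{2(\delta-1)} |f|^2 + \int_\Omega d^{2\delta} |\nabla f|^2.
\end{equation*}
The second term is bounded by $\|f\|_{W^1_\delta}^2$ directly. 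The first term is precisely the $W^0_{\delta-1}(\Omega)$ norm squared of $f$, which by Proposition \ref{weighted_sobolev_embed} (i.e.\ Hardy's inequality) is controlled by $\|f\|_{W^1_\delta}^2$. Combining, $\|\nabla g\|_{L^2} \ls \|f\|_{W^1_\delta}$, and therefore $\|g\|_{H^1(\Omega)} \ls \|f\|_{W^1_\delta}$.

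The main (and only nontrivial) obstacle is the rigorous justification of the product rule $\nabla(d^\delta f) = \delta d^{\delta-1}(\nabla d) f + d^\delta \nabla f$ in view of the singular factor $d^{\delta-1}$ at $M$. This is handled by approximation: for $\ep > 0$ replace $d^\delta$ by $(d + \ep)^\delta$, apply the product rule (which is now unambiguous since $(d+\ep)^\delta$ is Lipschitz on $\bar\Omega$), derive the inequality above for $g_\ep := (d+\ep)^\delta f$, and pass to the limit $\ep \to 0$ using monotone/dominated convergence and the Hardy bound to identify the weak gradient of $g$. Finally, invoking $H^1(\Omega) \hookrightarrow L^q(\Omega)$ for all $q \in [1,\infty)$ yields $\|d^\delta f\|_{L^q} = \|g\|_{L^q} \ls \|g\|_{H^1} \ls \|f\|_{W^1_\delta}$, which is \eqref{weighted_sobolev_thm_0}.
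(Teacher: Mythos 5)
Your argument is essentially identical to the paper's: set $g=d^{\delta}f$, differentiate using the product rule, control $\norm{d^{\delta-1}f}_{L^2}$ by the Hardy-type embedding of Proposition \ref{weighted_sobolev_embed}, conclude $g\in H^1(\Omega)$ with the right bound, and finish by the two-dimensional Sobolev embedding $H^1(\Omega)\hookrightarrow L^q(\Omega)$. The only difference is cosmetic: you derive $\norm{g}_{L^2}\ls\norm{f}_{W^1_\delta}$ via boundedness of $d$ plus \eqref{hardy_embed}, whereas the paper observes directly that $\norm{d^\delta f}_{L^2}$ is a summand of $\norm{f}_{W^1_\delta}$; and you spell out the $\varepsilon$-regularization that justifies the product rule near $M$, a point the paper asserts without elaboration. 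Both of those are fine; your write-up is, if anything, slightly more careful.
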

\begin{proof}
Note first that $\dist(\cdot,M)$ is Lipschitz, differentiable a.e., and satisfies $\abs{\nab \dist(\cdot,M)} =1$ a.e.   For $f \in W^1_\delta(\Omega)$ we compute 
\begin{equation}
 \nab(\dist(\cdot,M)^\delta f) = \dist(\cdot,M)^\delta \nab f + \delta \dist(\cdot,M)^{\delta-1} \nab \dist(\cdot, M) f
\end{equation}
and hence Proposition \ref{weighted_sobolev_embed} allows us to estimate
\begin{equation}
 \norm{\nab (\dist(\cdot,M)^\delta f)}_{L^2} \ls \norm{\dist(\cdot,M)^\delta \nab f}_{L^2} +  \norm{ \dist(\cdot,M)^{\delta-1} f}_{L^2} \ls \norm{f}_{W^1_\delta}.
\end{equation}
Since we automatically have $\norm{\dist(\cdot,M)^\delta f}_{L^2} \le \norm{f}_{W^1_\delta}$, we then conclude that 
\begin{equation}
 \norm{\dist(\cdot,M)^\delta f}_{H^1} \ls \norm{f}_{W^1_\delta}.
\end{equation}
Consequently, the usual Sobolev embedding in $\Omega \subset \Rn{2}$ imply \eqref{weighted_sobolev_thm_0}.
\end{proof}

\section{Product estimates}\label{app_prods}

We now prove a product estimate.

\begin{lem}\label{int_prod}
Let $\Omega \subset \Rn{2}$.  Suppose that $f \in H^r(\Omega)$ for $r \in (0,1)$ and $g \in H^1(\Omega)$.  Then $fg \in H^\sigma$ for every $\sigma \in (0,r)$, and 
\begin{equation}
 \norm{fg}_{\sigma} \le C(r,\sigma) \norm{f}_r \norm{g}_1.
\end{equation}
 
\end{lem}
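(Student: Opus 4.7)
The estimate is a classical Sobolev product inequality in two dimensions, and my approach would have three main parts.

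First, I would reduce to $\Rn{2}$. Since $\Omega$ has Lipschitz boundary, there exist bounded Stein extension operators $E_1 : H^r(\Omega) \to H^r(\Rn{2})$ and $E_2 : H^1(\Omega) \to H^1(\Rn{2})$. Because restriction from $\Rn{2}$ to $\Omega$ is bounded on $H^\sigma$, it suffices to prove the bilinear estimate on all of $\Rn{2}$.

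Second, for the $L^2$ part of the norm, I would apply H\"older's inequality: $\norm{fg}_{L^2} \le \norm{f}_{L^{p_1}} \norm{g}_{L^{p_2}}$ with $1/p_1 + 1/p_2 = 1/2$.  In dimension two the embeddings $H^r(\Rn{2}) \hookrightarrow L^{2/(1-r)}$ (valid for $0 < r < 1$) and $H^1(\Rn{2}) \hookrightarrow L^{p_2}$ for every finite $p_2$ allow the choice $p_1 = 2/(1-r)$, $p_2 = 2/r$, yielding $\norm{fg}_{L^2} \ls \norm{f}_{H^r}\norm{g}_{H^1}$. Notice that this step does not yet use $\sigma < r$.

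Third, for the Gagliardo seminorm
\[
[fg]_{H^\sigma}^2 = \int_{\Rn{2}}\int_{\Rn{2}} \frac{\abs{(fg)(x)-(fg)(y)}^2}{\abs{x-y}^{2+2\sigma}}\,dx\,dy,
\]
the most robust route is Bony's paraproduct decomposition $fg = T_f g + T_g f + R(f,g)$. Standard Littlewood--Paley estimates give $\norm{T_g f}_{H^\sigma} \ls \norm{g}_{\mathrm{BMO}}\norm{f}_{H^\sigma} \ls \norm{g}_{H^1}\norm{f}_{H^r}$, invoking the 2D endpoint embedding $H^1 \hookrightarrow \mathrm{BMO}$ and $H^r \hookrightarrow H^\sigma$ (since $\sigma < r$); a symmetric argument handles $T_f g$; and the remainder $R(f,g)$ is controlled by a routine dyadic summation thanks to the strict inequality $r + 1 > \sigma + 1$ (the 2D scaling threshold $n/2=1$ is strictly exceeded by $r+1-\sigma$). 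An equivalent, more elementary alternative is to split $|(fg)(x)-(fg)(y)|^2 \ls |f(x)-f(y)|^2|g(x)|^2 + |f(y)|^2|g(x)-g(y)|^2$, apply Fubini, and combine H\"older with the 2D Sobolev embeddings $H^r \hookrightarrow W^{\sigma,p}$ and $H^1 \hookrightarrow L^q$ for suitable $p,q$ chosen using the gap $r-\sigma > 0$.

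The main obstacle is that neither $f$ nor $g$ lies in $L^\infty$: in 2D, $H^1$ just fails to embed in $L^\infty$, which is precisely why the endpoint $\sigma = r$ must be excluded. This blocks a purely Hardy-style pointwise argument on the Gagliardo integral and is what forces either the paraproduct decomposition or an argument through the BMO multiplier theorem. All usable hypotheses---$\sigma < r < 1$ and $n = 2$---enter exactly at this step.
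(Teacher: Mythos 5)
The paper proves this by operator interpolation rather than by decomposing the product. Fixing $g$, it treats $Tf=fg$ as a linear operator, shows $T:H^0\to L^p$ and $T:H^1\to W^{1,p}$ are bounded with norm $\ls\norm{g}_1$ for every $1\le p<2$ (H\"older plus the 2D embedding $H^1\hookrightarrow L^q$, $q<\infty$), interpolates to obtain $T:H^r\to W^{r,p}$, and concludes via the Sobolev embedding $W^{r,p}\hookrightarrow H^{r+1-2/p}$; the restriction $\sigma<r$ is exactly the restriction $p<2$, equivalently $q<\infty$. Your Littlewood--Paley/paraproduct argument is a genuinely different route. The paper's proof is shorter and stays entirely inside classical interpolation and Sobolev embedding machinery, while yours makes the frequency interaction explicit.

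There is, however, a concrete error in your sketch. The bound $\norm{T_g f}_{H^\sigma}\ls\norm{g}_{\mathrm{BMO}}\norm{f}_{H^\sigma}$ is fine because $H^1(\Rn{2})\hookrightarrow\mathrm{BMO}$; but ``a symmetric argument handles $T_f g$'' is false, since $H^r(\Rn{2})$ with $r<1$ does \emph{not} embed in $\mathrm{BMO}$, so no $\mathrm{BMO}$ control on $f$ is available. The term $T_f g$ must instead be estimated by the low-high paraproduct bound with a subcritical low-frequency factor, $\norm{T_f g}_{H^{s_1+s_2-n/2}}\ls\norm{f}_{H^{s_1}}\norm{g}_{H^{s_2}}$ valid for $s_1<n/2$; with $s_1=r$, $s_2=1$, $n=2$ this gives $T_f g\in H^r\subset H^\sigma$. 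With that replacement the paraproduct route closes, but it is not the symmetric argument you invoke. Your ``elementary alternative'' (pointwise splitting of the Gagliardo difference) is also less routine than advertised: after the split the two double integrals still couple $x$ and $y$ through the singular kernel, and controlling them requires essentially the same H\"older/Sobolev exponent count that the paper performs via $W^{r,p}\hookrightarrow H^{r+1-2/p}$, so it deserves to be written out rather than waved at.
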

\begin{proof}
Define the linear operator $T$ via $Tf = fg$.  Then by H\"{o}lder's inequality and the Sobolev embedding, we have
\begin{equation}
 \norm{fg}_{L^p} \le \norm{f}_{L^2} \norm{g}_{L^q} \ls \norm{f}_{H^0} \norm{g}_{H^1}
\end{equation}
for
\begin{equation}\label{inp_1}
 \frac{1}{p} = \frac{1}{2} + \frac{1}{q} \text{ and } 2 \le q < \infty.
\end{equation}
From this we see that $T: H^0 \to L^p$ is a bounded linear operator for any $1 \le p < 2$ and
\begin{equation}
 \norm{T}_{\L(H^0,L^p)} \ls \norm{g}_1.
\end{equation}

Similarly, 
\begin{multline}
 \norm{Tf}_{W^{1,p}} \ls \norm{fg}_{L^p} + \norm{f \nab g}_{L^p} + \norm{\nab f g}_{L^p} \\
 \ls  \norm{f}_{H^0} \norm{g}_{H^1} + \norm{f}_{L^q} \norm{\nab g}_{L^2} + \norm{\nab f}_{L^2} \norm{g}_{L^q} \\
\ls \norm{f}_{H^0} \norm{g}_{H^1} + \norm{f}_{H^1} \norm{ g}_{H^1} + \norm{ f}_{H^1} \norm{g}_{H^1} 
\ls \norm{f}_{H^1} \norm{g}_{H^1}.
\end{multline}
This means that $T: H^1 \to W^{1,p}$ is a bounded linear operator for any $1 \le p < 2$ and
\begin{equation}
 \norm{T}_{\L(H^1,W^{1,p})} \ls \norm{g}_1.
\end{equation}

Now, the usual interpolation theory implies that
\begin{equation}
 T : [H^0,H^1]_r \to [L^p, W^{1,p}]_r,
\end{equation}
which means that (see Adams and Fournier \cite{adams_fournier})
\begin{equation}
 T : H^r \to W^{r,p}
\end{equation}
is a bounded linear operator with $\norm{T f}_{W^{r,p}} \ls \norm{f}_{H^r} \norm{g}_{H^1}$.

Now we use the embedding
\begin{equation}
 W^{r,p} \hookrightarrow H^{r + 1 - 2/p}
\end{equation}
to deduce that
\begin{equation}
 \norm{fg}_{r+1-2/p} \ls \norm{f}_{H^r} \norm{g}_{H^1}.
\end{equation}
Returning to \eqref{inp_1}, we have that
\begin{equation}
 \sigma := r + 1 - \frac{2}{p} = r + 1 - \left( 1 + \frac{2}{q}\right) = r - \frac{2}{q}
\end{equation}
can take on any value in $(0,r)$ by choosing appropriate $q \in (2/r ,\infty)$.
\end{proof}

\begin{remark}
 If $\Omega \subset \Rn{3}$ then the same argument works with $\sigma = r - (n-2)/n$.  In dimension $n\ge 4$ we fail to get an embedding into $H^\sigma$ for $\sigma \ge 0$.
\end{remark}

Next we state a product estimate in weighted spaces on $\Sigma$

\begin{prop}\label{weight_prod_1}
Suppose that $f \in W^1_\delta(\Omega)$ for $0 < \delta < 1$ and that $g \in H^{1+\kappa}(\Omega)$ for $0 < \kappa < 1$.  Then  $f g \in W^1_\delta(\Omega)$ and 
\begin{equation}
 \norm{fg}_{W^1_\delta} \ls \norm{f}_{W^1_\delta} \norm{g}_{1+\kappa}.
\end{equation}
\end{prop}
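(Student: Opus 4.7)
The plan is to unpack the weighted norm as
\[
 \norm{fg}_{W^1_\delta}^2 \ls \int_\Omega d^{2\delta}\left( \abs{fg}^2 + \abs{f \nabla g}^2 + \abs{g \nabla f}^2 \right) dx
\]
with $d = \dist(\cdot,M)$ as above, and then estimate each of the three pieces by combining Sobolev embeddings for $g$ with weighted embeddings for $f$.

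For the two pieces that contain no derivative of $f$ (namely $d^\delta fg$ and $d^\delta g \nabla f$), I would use that in two spatial dimensions $H^{1+\kappa}(\Omega) \hookrightarrow L^\infty(\Omega)$ for every $\kappa \in (0,1)$, so that $\norm{g}_{L^\infty} \ls \norm{g}_{1+\kappa}$. Then
\[
 \norm{d^\delta fg}_{L^2} + \norm{d^\delta g \nabla f}_{L^2} \ls \norm{g}_{L^\infty}\left( \norm{d^\delta f}_{L^2} + \norm{d^\delta \nabla f}_{L^2}\right) \ls \norm{g}_{1+\kappa}\norm{f}_{W^1_\delta},
\]
where the last bound is immediate from the definition of $W^1_\delta$.

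The remaining piece, $d^\delta f \nabla g$, is the one requiring a Hölder split. I would pick exponents $p,q \in (2,\infty)$ with $1/p + 1/q = 1/2$ and $q = 2/(1-\kappa)$, so that the two-dimensional Sobolev embedding $H^\kappa(\Omega) \hookrightarrow L^q(\Omega)$ yields $\norm{\nabla g}_{L^q} \ls \norm{g}_{1+\kappa}$. This forces $p = 2/\kappa < \infty$, and at this point Theorem \ref{weighted_sobolev_thm} provides exactly what is needed: $\norm{d^\delta f}_{L^p} \ls \norm{f}_{W^1_\delta}$ for every $p \in [1,\infty)$. Hölder then gives
\[
 \norm{d^\delta f \nabla g}_{L^2} \le \norm{d^\delta f}_{L^p}\norm{\nabla g}_{L^q} \ls \norm{f}_{W^1_\delta}\norm{g}_{1+\kappa}.
\]
Summing the three bounds finishes the proof.

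The only nontrivial step is the cross term $d^\delta f \nabla g$, which cannot be handled by putting $\nabla g$ in $L^\infty$ (one would need $H^{1+\kappa}$ with $\kappa > 1$ for that in 2D). The point is that $\kappa > 0$ upgrades $\nabla g$ from $L^2$ into some $L^q$ with $q > 2$, and Theorem \ref{weighted_sobolev_thm} is precisely strong enough to absorb the weight $d^\delta$ into $f$ in an arbitrarily high $L^p$ space; this is what makes the argument go through for every $\delta \in (0,1)$.
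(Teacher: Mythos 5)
Your proposal is correct and follows essentially the same route as the paper: both split $\norm{fg}_{W^1_\delta}$ into the three obvious pieces, dispatch the $fg$ and $g\nabla f$ terms via $H^{1+\kappa}(\Omega)\hookrightarrow L^\infty(\Omega)$, and then handle the cross term $d^\delta f\nabla g$ with the same H\"older split $1/p + 1/q = 1/2$, $p = 2/\kappa$, $q = 2/(1-\kappa)$, combined with the Sobolev embedding $\nabla g \in H^\kappa \hookrightarrow L^{2/(1-\kappa)}$ and Theorem \ref{weighted_sobolev_thm} to absorb the weight into $f$ in $L^{2/\kappa}$. No gaps.
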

\begin{proof}

Since $\Omega \subset \Rn{2}$ we have the Sobolev embedding $H^{1+\kappa}(\Omega) \hookrightarrow L^\infty(\Omega)$, and so we have that 
\begin{equation}
 \int_\Omega \dist(\cdot,M)^{2\delta} \left[ \abs{f}^2  \abs{g}^2 + \abs{\nab f}^2 \abs{g}^2 \right]  \ls  \ns{f}_{W^1_\delta} \ns{g}_{L^\infty} \ls  \ns{f}_{W^1_\delta} \ns{g}_{1+\kappa}.
\end{equation}
Consequently, in order to prove the desired estimate it suffices to prove that 
\begin{equation}\label{weight_prod_1_1}
 \int_\Omega \dist(\cdot,M)^{2\delta} \abs{f}^2 \abs{\nab g}^2 \ls \ns{f}_{W^1_\delta} \ns{g}_{1+\kappa}.
\end{equation}

First note that $\nab g \in H^\kappa(\Omega) \hookrightarrow L^{2/(1-\kappa)}(\Omega)$.  We may then use H\"older's inequality to estimate
\begin{multline}
 \int_\Omega \dist(\cdot,M)^{2\delta} \abs{f}^2 \abs{\nab g}^2  \le \left(\int_\Omega \dist(\cdot,M)^{2\delta/\kappa} \abs{f}^{2/\kappa}  \right)^\kappa \left(\int_\Omega \abs{\nab g}^{2/(1-\kappa)} \right)^{1-\kappa} \\
 \ls \ns{\dist(\cdot,M)^\delta f}_{L^{2/\kappa}}  \ns{g}_{1+\kappa}.
\end{multline}
Now, $2/\kappa \in (2,\infty)$ and $0 < \delta <1$, so Theorem \ref{weighted_sobolev_thm}  implies that $\norm{\dist(\cdot,M)^\delta f}_{L^{2/\kappa}} \ls \norm{f}_{W^1_\delta}$, and thus we deduce that  \eqref{weight_prod_1_1} holds.

\end{proof}

Next we record a boundary result.

\begin{prop}\label{weight_prod_half}
Let $f\in W^{1/2}_\delta(\Sigma)$ for $0 < \delta < 1$, and let $g \in H^{1/2 + \kappa}(\Sigma)$ for $\kappa \in (0,1)$.  Then $fg \in W^{1/2}_\delta(\Sigma)$ and
\begin{equation}
 \norm{fg}_{W^{1/2}_\delta} \ls \norm{f}_{W^{1/2}_\delta} \norm{g}_{1/2 + \kappa}.
\end{equation}
\end{prop}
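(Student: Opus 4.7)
The plan is to reduce this boundary product estimate to the bulk product estimate already established in Proposition \ref{weight_prod_1}. The key observation is that $W^{1/2}_\delta(\Sigma)$ is defined as a trace space, so it comes equipped with a bounded extension operator to $W^1_\delta(\Omega)$, and similarly the standard theory provides an extension from $H^{1/2+\kappa}(\Sigma)$ to $H^{1+\kappa}(\Omega)$. Once both factors are lifted to the bulk, the product estimate in the bulk transfers back to the boundary by taking traces.

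Concretely, I would proceed as follows. First, using the definition of $W^{1/2}_\delta(\Sigma)$ as the trace space of $W^1_\delta(\Omega)$, choose an extension $F \in W^1_\delta(\Omega)$ of $f$ with
\begin{equation*}
 \norm{F}_{W^1_\delta(\Omega)} \le 2 \norm{f}_{W^{1/2}_\delta(\Sigma)}.
\end{equation*}
Second, apply the standard trace theory for the Sobolev scale to obtain $G \in H^{1+\kappa}(\Omega)$ extending $g$ with
\begin{equation*}
 \norm{G}_{H^{1+\kappa}(\Omega)} \ls \norm{g}_{H^{1/2+\kappa}(\Sigma)}.
\end{equation*}
Third, apply Proposition \ref{weight_prod_1} to the pair $(F,G)$ to deduce $FG \in W^1_\delta(\Omega)$ with
\begin{equation*}
 \norm{FG}_{W^1_\delta(\Omega)} \ls \norm{F}_{W^1_\delta(\Omega)} \norm{G}_{H^{1+\kappa}(\Omega)}.
\end{equation*}
Finally, since $FG$ restricts to $fg$ on $\Sigma$, the boundedness of the trace operator $W^1_\delta(\Omega) \to W^{1/2}_\delta(\Sigma)$ gives
\begin{equation*}
 \norm{fg}_{W^{1/2}_\delta(\Sigma)} \le \norm{FG}_{W^1_\delta(\Omega)} \ls \norm{f}_{W^{1/2}_\delta(\Sigma)} \norm{g}_{H^{1/2+\kappa}(\Sigma)},
\end{equation*}
which is the desired inequality.

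The main obstacle, such as there is one, is simply to confirm that the chosen definition of $W^{1/2}_\delta(\Sigma)$ is in fact the trace space of $W^1_\delta(\Omega)$ in the sense that both the trace and a bounded right inverse exist; this is standard for the weighted Sobolev spaces on domains with corners as developed in \cite{kmr_1,kmr_2,kmr_3}, and the requisite extension result is recorded there. Once this is in hand the argument is essentially mechanical: lift to the bulk, multiply, restrict. No new Hardy-type or interpolation machinery beyond what is already used in Proposition \ref{weight_prod_1} is required.
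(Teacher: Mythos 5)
Your proof is correct and takes essentially the same approach as the paper: extend $f$ and $g$ to the bulk, apply Proposition \ref{weight_prod_1}, and conclude by taking traces. The only cosmetic difference is that the paper observes $\norm{fg}_{W^{1/2}_\delta(\Sigma)} \le \norm{FG}_{W^1_\delta(\Omega)}$ directly from the definition of the trace norm as an infimum over extensions, whereas you invoke the boundedness of the trace operator; these are equivalent.
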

\begin{proof}
Using trace theory, we may find $F \in W^1_\delta(\Omega)$ and $G \in H^{1+\kappa}(\Omega)$ such that $F = f$ and $G = g$ on $\Sigma$ and 
\begin{equation}
 \norm{F}_{W^1_\delta} \ls \norm{f}_{W^{1/2}_\delta} \text{ and } \norm{G}_{1+\kappa} \ls \norm{g}_{1/2+\kappa}.
\end{equation}
Applying Proposition \ref{weight_prod_1} to $F$ and $G$ shows that $FG \in W^1_\delta(\Omega)$ and that $\norm{FG}_{W^1_\delta} \ls \norm{F}_{W^1_\delta} \norm{G}_{1+\kappa}$.  Thus
\begin{equation}
 \norm{fg}_{W^{1/2}_\delta} \le \norm{FG}_{W^1_\delta} \ls \norm{F}_{W^1_\delta} \norm{G}_{1+\kappa} \ls \norm{f}_{W^{1/2}_\delta} \norm{g}_{1/2+s}.
\end{equation}

\end{proof}

\subsection{Coefficient estimates}\label{app_coeff}

Here we are concerned with how the size of $\eta$ can control the ``geometric'' terms that appear in the equations.
\begin{lem}\label{eta_small}
Let $0 < \delta < 1$.  There exists a universal $0 < \gamma < 1$ so that if $\ns{\eta}_{W^{5/2}_\delta}(\Sigma) \le \gamma$, then
\begin{equation}\label{es_01}
\begin{split}
 & \norm{J-1}_{L^\infty(\Omega)} +\norm{A}_{L^\infty(\Omega)} \le \hal, \\
 & \norm{\N-1}_{L^\infty(\Gamma)} + \norm{K-1}_{L^\infty(\Gamma)} \le \hal, \text{ and }  \\
 & \norm{K}_{L^\infty(\Omega)} + \norm{\mathcal{A}}_{L^\infty(\Omega)} \ls 1.
 \end{split}
\end{equation}
Also, the map $\Phi$ defined by \eqref{mapping_def} is a diffeomorphism.
\end{lem}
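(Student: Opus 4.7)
The plan is to reduce every claim to a single $L^\infty$ estimate on $\bar\eta$ and $\nabla\bar\eta$ in terms of $\norm{\eta}_{W^{5/2}_\delta}$, and then invoke the explicit algebraic formulas \eqref{AJK_def} together with $\N = \N_0 - \p_1 \eta \, e_1$.

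First I would establish the pointwise control of $\bar\eta$. By the embedding results in Appendix \ref{app_weight} and the one-dimensional Sobolev embedding, one has $W^{5/2}_\delta((-\ell,\ell)) \hookrightarrow C^1([-\ell,\ell])$, so
\begin{equation*}
\norm{\eta}_{C^1([-\ell,\ell])} \ls \norm{\eta}_{W^{5/2}_\delta}.
\end{equation*}
The bounded extension operator $E:H^s(-\ell,\ell) \to H^s(\Rn{})$ preserves Sobolev regularity, the lower Poisson operator $\P$ from \eqref{poisson_def} maps $H^s(\Rn{})$ into $H^{s+1/2}$ of the lower half-plane, and the shift $x\mapsto (x_1,x_2-\zeta_0(x_1))$ is a smooth diffeomorphism (since $\zeta_0 \in C^\infty$). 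In the resulting regularity class the two-dimensional Sobolev embedding gives $\bar\eta \in C^1(\bar\Omega)$, whence
\begin{equation*}
\norm{\bar\eta}_{L^\infty(\Omega)} + \norm{\nabla \bar\eta}_{L^\infty(\Omega)} \ls \norm{\eta}_{W^{5/2}_\delta} \le \sqrt{\gamma}.
\end{equation*}

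Next I would exploit the formulas \eqref{AJK_def}. Since $\phi$, $\phi'$, and $1/\zeta_0$ are smooth and bounded on $\Omega$ (using Theorem \ref{zeta0_wp} to ensure $\min_{[-\ell,\ell]} \zeta_0 > 0$), one reads off directly
\begin{equation*}
\norm{A}_{L^\infty(\Omega)} + \norm{J-1}_{L^\infty(\Omega)} \ls \norm{\bar\eta}_{L^\infty} + \norm{\nabla\bar\eta}_{L^\infty} \ls \sqrt{\gamma}.
\end{equation*}
Choosing $\gamma$ sufficiently small yields the first inequality in \eqref{es_01}. In particular $J\ge 1/2$ pointwise, so $K = J^{-1}$ satisfies $\norm{K}_{L^\infty}\le 2$ and $\norm{K-1}_{L^\infty} = \norm{(1-J)/J}_{L^\infty} \le 2\norm{J-1}_{L^\infty}$, which can also be made $\le 1/2$ by further shrinking $\gamma$. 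The bound on $\A$ then follows from its matrix form in \eqref{A_def}. On $\Sigma$, the identity $\N = \N_0 - \p_1 \eta\, e_1$ combined with the trace bound $\norm{\p_1 \eta}_{L^\infty} \ls \norm{\eta}_{W^{5/2}_\delta}$ completes the $\N$ estimate.

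Finally, to show that $\Phi(\cdot,t)$ is a diffeomorphism of $\Omega$ onto $\Omega(t)$, I would use the inverse function theorem together with the explicit structure of $\Phi$ in \eqref{mapping_def}. Since $\det(\nabla\Phi) = J \ge 1/2$, $\Phi$ is a local $C^1$ diffeomorphism. Globally, for each fixed $x_1$, the map $x_2 \mapsto \Phi_2(x_1,x_2)$ has derivative $J>0$ and is therefore strictly increasing, so $\Phi$ is injective; surjectivity onto $\Omega(t)$ follows from the boundary correspondence recorded immediately after \eqref{mapping_def} together with a connectedness argument.

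The main obstacle is the first step: pushing the weighted control of $\eta$ through the composition extension-Poisson-shift to obtain genuine $L^\infty$ control of $\nabla\bar\eta$ up to the corners. All later bounds are purely algebraic once this embedding is established. The assumption $\delta<1$ is exactly what keeps the weighted norm strong enough to yield the required $C^1$ regularity near the corner points of $\p\Omega$.
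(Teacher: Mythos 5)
Your proof is correct and follows essentially the route that the paper itself indicates (the paper's own ``proof'' simply invokes the weighted embeddings of Appendix \ref{app_weight} and Sobolev product estimates, deferring the details to Lemma 2.4 of \cite{gt_hor}). Your write-up fills in exactly those details: the chain $W^{5/2}_\delta((-\ell,\ell)) \hookrightarrow H^{s+1/2}((-\ell,\ell)) \hookrightarrow C^1([-\ell,\ell])$ (valid for all $0<\delta<1$ since $s+1/2 > 3/2$), propagation through $E$, $\P$, and the smooth shift to obtain $\bar\eta \in C^1(\bar\Omega)$, the direct algebraic reading of \eqref{AJK_def} and \eqref{A_def}, and the inverse-function-theorem plus monotonicity-and-boundary-correspondence argument for the diffeomorphism claim, all match what the cited reference carries out in the $3$D analogue.
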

\begin{proof}
These follow from the weighted embeddings proved in Appendix \ref{app_weight} and the usual Sobolev product estimates.  We refer to Lemma 2.4 of \cite{gt_hor} for more details in the case of a $3D$ domain.
\end{proof}

\section{Equilibrium surface}\label{app_surf}

In this appendix we collect some well-known facts about the equilibrium capillary surface problem \eqref{zeta0_eqn}.

\subsection{Uniqueness}

We begin by proving that there is at most one solution to \eqref{zeta0_eqn}.  This is proven in greater generality in Theorem 5.1 of \cite{finn}, but we record the simple $1-D$ proof here for the reader's convenience.

\begin{thm}\label{eq_surf_unique}
There exists at most one solution to the problem
\begin{equation} 
\begin{cases}
g \zeta - \sigma \mathcal{H}(\zeta) = P & \text{in }(-\ell,\ell) \\
\frac{\zeta'}{ \sqrt{1+(\zeta')^2} }(\pm \ell) = \pm \frac{\jg}{\sigma}.
\end{cases}
\end{equation}
\end{thm}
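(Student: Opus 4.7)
The plan is to use the standard monotonicity argument for the mean curvature operator, exploiting the fact that both the gravity term and the capillary term have the same sign when tested against the difference of two solutions.

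Suppose $\zeta_1, \zeta_2 \in C^2([-\ell,\ell])$ are two solutions to the problem with the same constant $P$. Set $w = \zeta_1 - \zeta_2$. Subtracting the PDE for $\zeta_2$ from that for $\zeta_1$ yields
\begin{equation}
g w - \sigma \p_1\!\left( \frac{\p_1 \zeta_1}{\sqrt{1+|\p_1 \zeta_1|^2}} - \frac{\p_1 \zeta_2}{\sqrt{1+|\p_1 \zeta_2|^2}} \right) = 0 \qquad \text{in } (-\ell,\ell).
\end{equation}
I would multiply this identity by $w$ and integrate over $(-\ell,\ell)$, integrating by parts the capillary term. The resulting boundary contribution is
\begin{equation}
-\sigma\, w \left( \frac{\p_1 \zeta_1}{\sqrt{1+|\p_1 \zeta_1|^2}} - \frac{\p_1 \zeta_2}{\sqrt{1+|\p_1 \zeta_2|^2}} \right) \bigg|_{-\ell}^{\ell},
\end{equation}
which vanishes identically because the boundary condition $\frac{\p_1 \zeta_i}{\sqrt{1+|\p_1\zeta_i|^2}}(\pm\ell) = \pm \jg/\sigma$ forces the parenthesized quantity to vanish at both endpoints (independently of the values of $\zeta_i$ there).

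This leaves the identity
\begin{equation}
\int_{-\ell}^\ell g\, w^2 \, dx_1 + \sigma \int_{-\ell}^\ell (\p_1 \zeta_1 - \p_1 \zeta_2)\!\left( \frac{\p_1 \zeta_1}{\sqrt{1+|\p_1 \zeta_1|^2}} - \frac{\p_1 \zeta_2}{\sqrt{1+|\p_1 \zeta_2|^2}} \right) dx_1 = 0.
\end{equation}
The key observation is that the scalar function $f(s) = s/\sqrt{1+s^2}$ is strictly increasing on $\mathbb{R}$, so by the mean value theorem the second integrand is pointwise non-negative. Combined with $g > 0$ and the first integrand being non-negative, each term must vanish. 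In particular $w \equiv 0$, so $\zeta_1 = \zeta_2$.

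I expect no serious obstacle here: the argument is entirely elementary once the two solutions share the same $P$, because that makes the subtracted equation homogeneous and the boundary conditions are designed precisely so that the boundary term in the integration by parts drops out. The only slightly delicate point is verifying that this boundary vanishing uses only the prescribed Dirichlet-type condition on the contact slope and not any information about $\zeta_i(\pm\ell)$, but that follows immediately from the explicit form of the boundary condition.
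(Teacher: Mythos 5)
Your proof is correct and follows essentially the same route as the paper's: subtract, test against the difference, integrate by parts, observe the boundary term vanishes because both solutions share the Neumann-type boundary data, and invoke monotonicity of $s \mapsto s/\sqrt{1+s^2}$ together with $g>0$ to force the difference to vanish. The only cosmetic difference is that you cite the mean value theorem for the monotonicity inequality, while the paper writes out an explicit auxiliary function $\varphi(t)$ and integrates its nonnegative derivative; these are interchangeable ways of stating the same pointwise inequality.
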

\begin{proof}
Suppose that $\zeta_1,\zeta_2$ are two solutions.  We subtract the equations for $\zeta_2$ from the equations for $\zeta_1$, multiply by $\zeta_1 -\zeta_2$, and integrate by parts over $(-\ell,\ell)$ to deduce that 
\begin{multline}\label{eq_surf_unique_1}
  \int_{-\ell}^\ell g\abs{\zeta_1 - \zeta_2}^2 + \sigma \left(\frac{\zeta_1'}{ \sqrt{1+(\zeta_1')^2} } - \frac{\zeta_2'}{ \sqrt{1+(\zeta_2')^2} } \right) (\zeta_1' - \zeta_2')  \\
 = \left.\sigma \left(\frac{\zeta_1'}{ \sqrt{1+(\zeta_1')^2} } - \frac{\zeta_2'}{ \sqrt{1+(\zeta_2')^2} } \right) (\zeta_1 - \zeta_2) \right\vert_{- \ell}^\ell =0.
\end{multline}
Set $f(z) = z/\sqrt{1+z^2}$ and $\varphi(t) = (x-y)(f(y+t(x-y)) - f(y))$ for fixed pair $x,y \in \Rn{}$.  Then 
\begin{equation}
 \varphi(0) = 0 \text{ and } \varphi'(t) = \frac{\abs{x-y}^2}{(1 + \abs{y + t(x-y)}^2)^{3/2}} \ge 0,
\end{equation}
and so 
\begin{equation}\label{eq_surf_unique_pos}
 (x-y)(f(y)-f(x)) = \varphi(1) = \varphi(0) + \int_0^1 \varphi'(t) dt \ge 0
\end{equation}
for any $x,y \in \Rn{}$.  Applying this inequality in \eqref{eq_surf_unique_1} then shows that
\begin{equation}
 \int_{-\ell}^\ell g \abs{\zeta_1 -\zeta_2}^2 \le 0
\end{equation}
and hence $\zeta_1 = \zeta_2$.
\end{proof}

\subsection{Existence}

We now prove the existence of solutions to \eqref{zeta0_eqn}.  Consider the function $h:(1,\infty) \to (0,\infty)$ given by 
\begin{equation}
h(r) :=  \int_{0}^{\arcsin(\abs{\jg}/\sigma) } \frac{\cos(\psi)}{\sqrt{r-\cos(\psi)}}d\psi.
\end{equation}
It's easy to show that the mapping $h$ is decreasing and satisfies 
\begin{equation}
\lim_{r \to 1} h(r) = \infty \text{ and } \lim_{r \to \infty} h(r) = 0. 
\end{equation}
From this we deduce that there exists a unique 
\begin{equation}
 C = C(g,\sigma,\abs{\jg},\ell) \in (1,\infty)
\end{equation}
such that $h(C) =  \ell  \sqrt{\frac{2g}{\sigma}} \in (0,\infty)$, which is equivalent to  
\begin{equation}\label{C_cond}
 \ell = \sqrt{\frac{\sigma}{2g}} \int_{0}^{\arcsin(\abs{\jg}/\sigma) } \frac{\cos(\psi)}{\sqrt{C-\cos(\psi)}}d\psi,
\end{equation}

With the constant $C = C(g,\sigma,\abs{\jg},\ell) \in (1,\infty) $ determined by \eqref{C_cond} we define the mapping $\Xi :[\arcsin(-\abs{\jg}/\sigma),\arcsin(\abs{\jg}/\sigma)] \to [-\ell,\ell]$ via 
\begin{equation}\label{surf_xi_def}
 \Xi(z) = \sqrt{\frac{\sigma}{2g}} \int_{0}^{z} \frac{\cos(\psi)}{\sqrt{C-\cos(\psi)}}d\psi.
\end{equation}
It's easy to see that $\Xi$ is a smooth increasing diffeomorphism that is an odd function on $[-\ell,\ell]$.  We use $\Xi$ to construct the equilibrium capillary surface.

\begin{thm}\label{chi_thm}
Suppose that $ \jg/\sigma \in (-1,1).$ Define $\chi:[-\ell,\ell]\to \Rn{}$ by 
\begin{equation}
 \chi(x) = \sgn(\jg) \sqrt{\frac{2\sigma}{g}} \sqrt{C(g,\sigma,\abs{\jg},\ell) - \cos(\Xi^{-1}(x))}
\end{equation}
with the understanding that $\chi$ is simply $0$ when sgn$(\jg) = \jg =0$ (in this case we cannot evaluate $\Xi$, but we don't need to).  Then $\chi$ is smooth on $[-\ell,\ell]$ and an even function.  Moreover, $\chi$ is the unique solution to 
\begin{equation} 
\begin{cases}
g \chi - \sigma \mathcal{H}(\chi) = 0& \text{in }(-\ell,\ell) \\
\frac{\chi'}{ \sqrt{1+(\chi')^2} }(\pm \ell) = \pm \frac{\jg}{\sigma}.
\end{cases}
\end{equation} 
\end{thm}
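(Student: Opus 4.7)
The proof will proceed by treating $\phi := \Xi^{-1}(x)$ as an angle variable and directly verifying that the formula defines a smooth solution; the uniqueness assertion then follows immediately from Theorem \ref{eq_surf_unique}. The case $\jg = 0$ is trivial, since then $\chi \equiv 0$ solves the system, so the main work is for $\jg \ne 0$. In that case, the identity \eqref{C_cond} guarantees that $C > 1$ is the unique value making $\Xi$ an odd, strictly increasing diffeomorphism from $[-\arcsin(|\jg|/\sigma),\arcsin(|\jg|/\sigma)] \subset (-\pi/2,\pi/2)$ onto $[-\ell,\ell]$. Because $\Xi'(z) = \sqrt{\sigma/(2g)}\,\cos(z)/\sqrt{C-\cos(z)}$ is smooth and strictly positive on this interval (noting $C > 1 \ge \cos z$ and $\cos z > 0$), the inverse $\phi := \Xi^{-1}$ is smooth on $[-\ell,\ell]$, and hence so is $\chi = \sgn(\jg)\sqrt{2\sigma/g}\sqrt{C-\cos\phi}$. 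The evenness of $\chi$ follows from the fact that $\Xi$ is odd, which forces $\phi$ to be odd and $\cos\phi$ to be even.

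Next, I will compute $\chi'$ explicitly. Differentiating the formula and substituting the relation
\[
\phi'(x) = \frac{1}{\Xi'(\phi)} = \sqrt{\frac{2g}{\sigma}}\,\frac{\sqrt{C-\cos\phi}}{\cos\phi}
\]
gives, after cancellation,
\[
\chi'(x) \;=\; \sgn(\jg)\,\tan\phi(x).
\]
Since $\phi \in (-\pi/2,\pi/2)$ we have $\cos\phi > 0$, so
\[
\frac{\chi'(x)}{\sqrt{1+(\chi'(x))^2}} \;=\; \sgn(\jg)\,\sin\phi(x).
\]
At $x = \pm\ell$ we have $\phi(\pm\ell) = \pm \arcsin(|\jg|/\sigma)$, hence $\sin\phi(\pm\ell) = \pm |\jg|/\sigma$, and multiplying by $\sgn(\jg)$ produces the required boundary values $\pm \jg/\sigma$.

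For the interior equation I will differentiate $\sgn(\jg)\sin\phi$ once more, using the expression for $\phi'$ above:
\[
\Bigl(\tfrac{\chi'}{\sqrt{1+(\chi')^2}}\Bigr)'(x) \;=\; \sgn(\jg)\cos\phi\cdot\phi'(x) \;=\; \sgn(\jg)\sqrt{\tfrac{2g}{\sigma}}\sqrt{C-\cos\phi},
\]
so that
\[
\sigma\,\mathcal{H}(\chi)(x) \;=\; \sgn(\jg)\sqrt{2g\sigma}\sqrt{C-\cos\phi(x)} \;=\; g\,\chi(x),
\]
which is precisely the ODE $g\chi - \sigma\mathcal{H}(\chi) = 0$. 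Uniqueness of the solution then follows directly from Theorem \ref{eq_surf_unique} applied with $P = 0$.

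I do not expect a serious obstacle: the entire argument is a verification once the correct change of variables $\phi = \Xi^{-1}(x)$ is identified. The only subtlety is sign bookkeeping when $\jg < 0$, where the factor $\sgn(\jg)$ must conspire with $\sin\phi(\pm\ell) = \pm|\jg|/\sigma$ to produce the desired boundary value $\pm\jg/\sigma$; this is handled uniformly by writing the boundary trace as $\sgn(\jg)\sin\phi$ and invoking $\sgn(\jg)|\jg| = \jg$.
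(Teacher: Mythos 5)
Your proof is correct and takes essentially the same approach as the paper: both verify the formula directly via the substitution $\phi = \Xi^{-1}(x)$, establish $\chi'/\sqrt{1+(\chi')^2} = \sgn(\jg)\sin\phi$ to read off the boundary conditions, and invoke Theorem \ref{eq_surf_unique} for uniqueness. The only difference is cosmetic: you obtain the ODE by differentiating $\sgn(\jg)\sin\phi$ directly, whereas the paper differentiates the auxiliary identity $1/\sqrt{1+(\chi')^2} = C - \tfrac{g}{2\sigma}\chi^2$ and cancels a factor of $\chi'$; your route is slightly cleaner since it sidesteps the (easily patched) vanishing of $\chi'$ at $x=0$.
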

\begin{proof}
It's obvious that $\chi$ is smooth and even.  A direct computation shows that $\chi$ satisfies 
\begin{equation}\label{chi_thm_1}
 \frac{\chi'}{\sqrt{1+(\chi')^2}} = \sgn(\jg) \sin(\Xi^{-1}(x)) \text{ and } \frac{1}{\sqrt{1+(\chi')^2}} = \cos(\Xi^{-1}(x)) = C(g,\sigma,\abs{\jg},\ell) - \frac{g}{2\sigma} \chi^2.
\end{equation}
Upon evaluating the first equation in \eqref{chi_thm_1} at $x = \pm \ell$ we find that 
\begin{equation}
 \frac{\chi'}{ \sqrt{1+(\chi')^2} }(\pm \ell) = \sgn(\jg) \sin(\arcsin(\pm \abs{\jg}/\sigma))  = \pm \frac{\jg}{\sigma}.
\end{equation}
The second equation in \eqref{chi_thm_1} shows that 
\begin{equation}
 g \chi \chi' + \sigma\left(\frac{1}{\sqrt{1+(\chi')^2}} \right)' =0 \text{ and hence } 
 g \chi - \sigma \left(\frac{\chi'}{\sqrt{1+(\chi')^2}} \right)'=0.
\end{equation}
Uniqueness follows from Theorem \ref{eq_surf_unique}.
\end{proof}

Theorem \ref{chi_thm} only constructs a special equilibrium capillary surface for which the pressure vanishes.  However, we can use it to recover arbitrary solutions.

\begin{thm}\label{eq_surf_thm}
Suppose that $\jg/\sigma \in (-1,1)$.  Let $\chi: [-\ell,\ell] \to \Rn{}$ be the function given in Theorem \ref{chi_thm}, and set 
\begin{equation}
 M_{min} = \int_{-\ell}^\ell \left( \chi(x) - \min_{(-\ell,\ell)} \chi\right) dx \ge 0. 
\end{equation}
Then for each $M_{top} > M_{min}$ there exists a unique, smooth, even function $\zeta_0 :[-\ell,\ell] \to (0,\infty)$ and a constant $P_0 \in \Rn{}$ such that 
\begin{equation} 
\begin{cases}
g \zeta_0 - \sigma \mathcal{H}(\zeta_0) = P_0 & \text{in }(-\ell,\ell) \\
\frac{\zeta_0'}{ \sqrt{1+(\zeta_0')^2} }(\pm \ell) = \pm \frac{\jg}{\sigma}.
\end{cases}
\end{equation}
Moreover, 
\begin{equation}
 M_{top} = \int_{-\ell}^\ell \zeta_0(x) dx \text{ and } P_0 =  \frac{g M_{0} -2 \jg}{2\ell}.
\end{equation}
\end{thm}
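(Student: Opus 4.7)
The plan is to exploit the fact that the mean curvature operator $\mathcal{H}$ is invariant under vertical translation, so the constant-pressure problem reduces to the zero-pressure problem solved in Theorem \ref{chi_thm}. Concretely, I will seek a solution of the form $\zeta_0 = \chi + c$ for some constant $c \in \mathbb{R}$. Since $\mathcal{H}(\chi + c) = \mathcal{H}(\chi)$ and $(\chi+c)' = \chi'$, the boundary condition $\chi'/\sqrt{1+(\chi')^2}(\pm \ell) = \pm \jg/\sigma$ from Theorem \ref{chi_thm} transfers verbatim, and the bulk equation becomes $g\chi + gc - \sigma \mathcal{H}(\chi) = gc$, so setting $P_0 := gc$ produces a solution to the PDE for any choice of $c$.

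Next I would use the mass constraint to pin down $c$. Integrating $g\chi - \sigma \mathcal{H}(\chi) = 0$ over $(-\ell,\ell)$ and applying the boundary condition gives
\begin{equation}
 \int_{-\ell}^\ell \chi = \frac{\sigma}{g}\left. \frac{\chi'}{\sqrt{1+(\chi')^2}} \right|_{-\ell}^\ell = \frac{2\jg}{g}.
\end{equation}
Thus the requirement $M_{top} = \int_{-\ell}^\ell \zeta_0 = \int_{-\ell}^\ell \chi + 2\ell c$ forces
\begin{equation}
 c = \frac{1}{2\ell}\left(M_{top} - \frac{2\jg}{g}\right), \qquad P_0 = gc = \frac{gM_{top} - 2\jg}{2\ell},
\end{equation}
matching the formula in the statement.

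To verify positivity, note that $\min_{[-\ell,\ell]} \zeta_0 = \min \chi + c$, and the condition $\zeta_0 > 0$ is equivalent to $c > -\min \chi$, i.e.\
\begin{equation}
 M_{top} > \int_{-\ell}^\ell \chi - 2\ell \min \chi = \int_{-\ell}^\ell (\chi - \min \chi)\,dx = M_{min}.
\end{equation}
So the hypothesis $M_{top} > M_{min}$ gives exactly $\zeta_0 > 0$ on $[-\ell,\ell]$. Smoothness and evenness of $\zeta_0$ are inherited directly from the corresponding properties of $\chi$ established in Theorem \ref{chi_thm}. Finally, uniqueness of the pair $(\zeta_0, P_0)$ follows from Theorem \ref{eq_surf_unique}: given any solution $\tilde\zeta_0$ with pressure $\tilde P_0$, integration of the bulk equation as above forces $\tilde P_0 = (gM_{top}-2\jg)/(2\ell) = P_0$, and then $\tilde\zeta_0 - P_0/g$ solves the zero-pressure problem with the same boundary data as $\chi$, so Theorem \ref{eq_surf_unique} yields $\tilde\zeta_0 = \chi + P_0/g = \zeta_0$.

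No step is really an obstacle here: the translation trick collapses the problem to a uniqueness statement already proved, and the only quantitative content is the elementary computation of $\int \chi$ and the resulting identification of the critical mass $M_{min}$. The mild care point is simply to verify that the formula $c = (M_{top} - 2\jg/g)/(2\ell)$ indeed exceeds $-\min \chi$ precisely when $M_{top} > M_{min}$, which is immediate by rearrangement.
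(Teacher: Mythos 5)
Your proof is correct and follows essentially the same route as the paper: set $\zeta_0$ to be a vertical translate of $\chi$, use translation invariance of $\mathcal{H}$, and pin down the shift via the mass constraint. The paper writes $\zeta_0 = \chi - \min\chi + h$ with $h = (M_{top}-M_{min})/(2\ell)$ while you write $\zeta_0 = \chi + c$ with $c = (M_{top} - 2\jg/g)/(2\ell)$; since $\int_{-\ell}^{\ell}\chi = 2\jg/g$, these coincide, and your version usefully spells out the ``simple calculations'' the paper leaves implicit (in particular the computation of $\int\chi$, the identification of $P_0$, and the equivalence of $\zeta_0>0$ with $M_{top}>M_{min}$).
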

\begin{proof}
We simply set $\zeta_0 = \chi - \min_{(-\ell,\ell)} \chi + h$ for $h \in (0,\infty)$ determined by 
$h = (M_{top}- M_{min})/(2\ell).$ The stated results then follow from Theorem \ref{chi_thm} and simple calculations. 
\end{proof}

\subsection{Variational characterization}

Although we have not used the calculus of variations to construct $\zeta_0$ we can still show that $\zeta_0$ satisfies a minimization principle.

\begin{thm}\label{surf_min}
 Let $\mathscr{I}$ be the energy functional defined by \eqref{zeta0_energy}.  Let $\zeta_0$, $M_{top}$, and $P_0$ be as in Theorem \ref{eq_surf_thm}.  If $\eta \in W^{1,1}((-\ell,\ell))$ is such that $ \int_{-\ell}^\ell \eta = M_{top}$, then $\mathscr{I}(\zeta_0) \le \mathscr{I}(\eta)$.
\end{thm}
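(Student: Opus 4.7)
The strategy is a direct convexity argument: the integrand defining $\mathscr{I}$ is convex in $(\zeta,\zeta')$, so any critical point of the constrained variational problem is automatically a constrained minimizer. The role of the Lagrange multiplier $P_0$ and the Young boundary condition is to make the first-variation term vanish on the constraint manifold.

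I would first set $\varphi = \eta - \zeta_0 \in W^{1,1}((-\ell,\ell))$ and observe that the mass constraint gives $\int_{-\ell}^{\ell} \varphi = 0$. Next, I would record the two pointwise convexity inequalities that underlie the proof: for all $x,y\in\Rn{}$,
\begin{equation}
\tfrac{g}{2}\abs{y}^2 \ \ge\ \tfrac{g}{2}\abs{x}^2 + gx(y-x), \qquad \sqrt{1+\abs{y}^2} \ \ge\ \sqrt{1+\abs{x}^2} + \frac{x}{\sqrt{1+\abs{x}^2}}(y-x),
\end{equation}
both of which follow from convexity of the respective scalar functions on $\Rn{}$. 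Applying the first with $x=\zeta_0$, $y=\eta$ and the second with $x=\zeta_0'$, $y=\eta'$, integrating in $x_1$, and subtracting the linear boundary terms $\jg(\eta(\pm\ell)) - \jg(\zeta_0(\pm\ell))$ yields
\begin{equation}
\mathscr{I}(\eta) - \mathscr{I}(\zeta_0) \ \ge\ \int_{-\ell}^{\ell} g\zeta_0 \varphi + \sigma \frac{\zeta_0'\varphi'}{\sqrt{1+\abs{\zeta_0'}^2}} \, dx_1 - \jg(\varphi(\ell) + \varphi(-\ell)).
\end{equation}

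The key step is then to show that the right-hand side is zero. Since $\zeta_0\in C^\infty([-\ell,\ell])$ and $\varphi\in W^{1,1}((-\ell,\ell))\hookrightarrow C^0([-\ell,\ell])$, integration by parts is fully justified, and one obtains
\begin{equation}
\int_{-\ell}^{\ell} \sigma \frac{\zeta_0'\varphi'}{\sqrt{1+\abs{\zeta_0'}^2}} \, dx_1 \ =\ \left[\sigma\frac{\zeta_0'}{\sqrt{1+\abs{\zeta_0'}^2}} \varphi \right]_{-\ell}^{\ell} - \int_{-\ell}^{\ell}\sigma\, \mathcal{H}(\zeta_0)\, \varphi \, dx_1.
\end{equation}
The Young boundary condition $\sigma\zeta_0'/\sqrt{1+\abs{\zeta_0'}^2}(\pm\ell) = \pm \jg$ cancels exactly the $\jg(\varphi(\ell)+\varphi(-\ell))$ contribution, and the Euler--Lagrange equation $g\zeta_0 - \sigma\mathcal{H}(\zeta_0) = P_0$ converts the remaining bulk term into $P_0\int_{-\ell}^{\ell}\varphi = 0$, where the last equality is the mass constraint.

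The only subtle point is the mild regularity of the competitor $\eta$: one must verify that $\mathscr{I}(\eta)$ is well-defined and finite, which is immediate from $\sqrt{1+\abs{\eta'}^2}\le 1+\abs{\eta'}$, the $W^{1,1}\hookrightarrow C^0$ embedding, and hence boundedness of $\eta$. I do not expect any real obstacle; the proof is a textbook convexity/Lagrange-multiplier argument, and the only minor care required is in the integration by parts and in treating the case $\mathscr{I}(\eta)=+\infty$ trivially.
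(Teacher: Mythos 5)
Your proof is correct and rests on the same two ingredients as the paper's: convexity of the integrand and vanishing of the constrained first variation at $\zeta_0$ via the Euler--Lagrange equation, the Young boundary condition, and the mass constraint. The only cosmetic difference is that you invoke the supporting-line inequality $h(y)\ge h(x)+h'(x)(y-x)$ directly, whereas the paper reaches the same conclusion by showing $\frac{d}{dt}\mathscr{I}(\zeta_0+t\psi)\ge 0$ (using the monotonicity estimate \eqref{eq_surf_unique_pos}) and integrating over $t\in[0,1]$ — an equivalent, slightly more roundabout packaging of the same convexity fact.
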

\begin{proof}
 Let $\psi \in  W^{1,1}((-\ell,\ell)) $ be such that $\int_{-\ell}^\ell \psi =0$.  Then 
\begin{equation}
 \int_{-\ell}^\ell (\zeta_0 + t \psi) = M_{top} \text{ for all }t \in \Rn{}.
\end{equation}
We then compute, 
\begin{multline}
 \frac{d}{dt} \mathscr{I}(\zeta_0 + t \psi) = \int_{-\ell}^\ell g \zeta_0 \psi + \sigma \frac{\zeta_0' \psi'}{\sqrt{1+(\zeta_0')^2}} -\jg(\psi(\ell) + \psi(-\ell) ) \\
 + \sigma \int_{-\ell}^\ell \left(\frac{\zeta_0' + t \psi'}{\sqrt{1+\abs{\zeta_0' + t \psi'}^2}} -  \frac{\zeta_0' }{\sqrt{1+(\zeta_0')^2}} \right) \frac{(\zeta_0' + t \psi' - \zeta_0')}{t}.
\end{multline}
The ODE satisfied by  $\zeta_0$ allows us to integrate by parts to deduce that 
\begin{equation}
 \int_{-\ell}^\ell g \zeta_0 \psi + \sigma \frac{\zeta_0' \psi'}{\sqrt{1+(\zeta_0')^2}} -\jg(\psi(\ell) + \psi(-\ell) ) = \int_{-\ell}^\ell P_0 \psi =0.
\end{equation}
On the other hand, \eqref{eq_surf_unique_pos} tells us that 
\begin{equation}
  \sigma \int_{-\ell}^\ell \left(\frac{\zeta_0' + t \psi'}{\sqrt{1+\abs{\zeta_0' + t \psi'}^2}} -  \frac{\zeta_0' }{\sqrt{1+(\zeta_0')^2}} \right) \frac{(\zeta_0' + t \psi' - \zeta_0')}{t} \ge 0 \text{ for all }t \in \Rn{}.
\end{equation}
Thus 
\begin{equation}
  \frac{d}{dt} \mathscr{I}(\zeta_0 + t \psi) \ge 0 \text{ for all }t \in \Rn{},
\end{equation}
which in particular means that 
\begin{equation}\label{surf_min_1}
 \mathscr{I}(\zeta_0 + \psi) = \mathscr{I}(\zeta_0) + \int_0^1  \frac{d}{dt} \mathscr{I}(\zeta_0 + t \psi)  dt \ge \mathscr{I}(\zeta_0).
\end{equation}

Now, if $\eta \in W^{1,1}((-\ell,\ell))$ is such that $ \int_{-\ell}^\ell \eta = M_{top}$, then we may apply \eqref{surf_min_1} to  $\psi = \zeta_0 - \eta$ to deduce that 
\begin{equation}
 \mathscr{I}(\eta) = \mathscr{I}(\zeta_0 +\psi) \ge \mathscr{I}(\zeta_0).
\end{equation}

\end{proof}

\textbf{Acknowledgments:}  We would like to thank Weinan E for bringing this problem to our attention and for fruitful discussions.  We would also like to extend our thanks to the Beijing International Center for Mathematical Research for hosting us.


\end{document}